\newtheorem{thm}{Theorem} [section]
\newtheorem{lemma}[thm]{Lemma}
\newtheorem{corollary}[thm]{Corollary}
\newtheorem{prop}[thm]{Proposition}
\theoremstyle{definition}
\newtheorem{defn}[thm]{Definition}
\newtheorem{example}[thm]{Example}
\newtheorem{ansatz}[thm]{Ansatz}
\newtheorem{remark}[thm]{Remark}
\begin{document}

\numberwithin{equation}{section}

\newcommand{\hs}{\mbox{\hspace{.4em}}}
\newcommand{\ds}{\displaystyle}
\newcommand{\bd}{\begin{displaymath}}
\newcommand{\ed}{\end{displaymath}}
\newcommand{\bcd}{\begin{CD}}
\newcommand{\ecd}{\end{CD}}

\newcommand{\on}{\operatorname}
\newcommand{\proj}{\operatorname{Proj}}
\newcommand{\bproj}{\underline{\operatorname{Proj}}}

\newcommand{\spec}{\operatorname{Spec}}
\newcommand{\Spec}{\operatorname{Spec}}
\newcommand{\bspec}{\underline{\operatorname{Spec}}}
\newcommand{\pline}{{\mathbf P} ^1}
\newcommand{\aline}{{\mathbf A} ^1}
\newcommand{\pplane}{{\mathbf P}^2}
\newcommand{\aplane}{{\mathbf A}^2}
\newcommand{\coker}{{\operatorname{coker}}}
\newcommand{\ldb}{[[}
\newcommand{\rdb}{]]}

\newcommand{\Sym}{\operatorname{Sym}^{\bullet}}
\newcommand{\Symp}{\operatorname{Sym}}
\newcommand{\Pic}{\bf{Pic}}
\newcommand{\Aut}{\operatorname{Aut}}
\newcommand{\PAut}{\operatorname{PAut}}

\newcommand{\too}{\twoheadrightarrow}
\newcommand{\C}{{\mathbf C}}
\newcommand{\Z}{{\mathbf Z}}
\newcommand{\Q}{{\mathbf Q}}
\newcommand{\Cx}{{\mathbf C}^{\times}}
\newcommand{\Cbar}{\overline{\C}}
\newcommand{\Cxbar}{\overline{\Cx}}
\newcommand{\cA}{{\mathcal A}}
\newcommand{\cS}{{\mathcal S}}
\newcommand{\cV}{{\mathcal V}}
\newcommand{\cM}{{\mathcal M}}
\newcommand{\bA}{{\mathbf A}}
\newcommand{\cB}{{\mathcal B}}
\newcommand{\cC}{{\mathcal C}}
\newcommand{\cD}{{\mathcal D}}
\newcommand{\D}{{\mathcal D}}
\newcommand{\cs}{{\mathbf C} ^*}
\newcommand{\boldc}{{\mathbf C}}
\newcommand{\cE}{{\mathcal E}}
\newcommand{\cF}{{\mathcal F}}
\newcommand{\bF}{{\mathbf F}}
\newcommand{\cG}{{\mathcal G}}
\newcommand{\G}{{\mathbb G}}
\newcommand{\cH}{{\mathcal H}}
\newcommand{\CI}{{\mathcal I}}
\newcommand{\cJ}{{\mathcal J}}
\newcommand{\cK}{{\mathcal K}}
\newcommand{\cL}{{\mathcal L}}
\newcommand{\baL}{{\overline{\mathcal L}}}

\newcommand{\fI}{{\mathfrak I}}
\newcommand{\fJ}{{\mathfrak J}}
\newcommand{\fF}{{\mathfrak F}}
\newcommand{\Mf}{{\mathfrak M}}
\newcommand{\bM}{{\mathbf M}}
\newcommand{\bm}{{\mathbf m}}
\newcommand{\cN}{{\mathcal N}}
\newcommand{\theo}{\mathcal{O}}
\newcommand{\cP}{{\mathcal P}}
\newcommand{\cR}{{\mathcal R}}
\newcommand{\Pp}{{\mathbb P}}
\newcommand{\boldp}{{\mathbf P}}
\newcommand{\boldq}{{\mathbf Q}}
\newcommand{\bbL}{{\mathbf L}}
\newcommand{\cQ}{{\mathcal Q}}
\newcommand{\cO}{{\mathcal O}}
\newcommand{\Oo}{{\mathcal O}}
\newcommand{\cY}{{\mathcal Y}}
\newcommand{\OX}{{\Oo_X}}
\newcommand{\OY}{{\Oo_Y}}
\newcommand{\otY}{{\underset{\OY}{\ot}}}
\newcommand{\otX}{{\underset{\OX}{\ot}}}
\newcommand{\cU}{{\mathcal U}}\newcommand{\cX}{{\mathcal X}}
\newcommand{\cW}{{\mathcal W}}
\newcommand{\boldz}{{\mathbf Z}}
\newcommand{\qgr}{\operatorname{q-gr}}
\newcommand{\gr}{\operatorname{gr}}
\newcommand{\rk}{\operatorname{rk}}
\newcommand{\SH}{{\underline{\operatorname{Sh}}}}
\newcommand{\End}{\operatorname{End}}
\newcommand{\uEnd}{\underline{\operatorname{End}}}
\newcommand{\Hom}{\operatorname{Hom}}
\newcommand{\uHom}{\underline{\operatorname{Hom}}}
\newcommand{\uHomY}{\uHom_{\OY}}
\newcommand{\uHomX}{\uHom_{\OX}}
\newcommand{\Ext}{\operatorname{Ext}}
\newcommand{\bExt}{\operatorname{\bf{Ext}}}
\newcommand{\Tor}{\operatorname{Tor}}

\newcommand{\inv}{^{-1}}
\newcommand{\airtilde}{\widetilde{\hspace{.5em}}}
\newcommand{\airhat}{\widehat{\hspace{.5em}}}
\newcommand{\nt}{^{\circ}}
\newcommand{\del}{\partial}

\newcommand{\supp}{\operatorname{supp}}
\newcommand{\GK}{\operatorname{GK-dim}}
\newcommand{\hd}{\operatorname{hd}}
\newcommand{\id}{\operatorname{id}}
\newcommand{\res}{\operatorname{res}}
\newcommand{\lrar}{\leadsto}
\newcommand{\im}{\operatorname{Im}}
\newcommand{\HH}{\operatorname{H}}
\newcommand{\TF}{\operatorname{TF}}
\newcommand{\Bun}{\operatorname{Bun}}

\newcommand{\F}{\mathcal{F}}
\newcommand{\Ff}{\mathbb{F}}
\newcommand{\nthord}{^{(n)}}
\newcommand{\Gr}{{\mathfrak{Gr}}}

\newcommand{\Fr}{\operatorname{Fr}}
\newcommand{\GL}{\operatorname{GL}}
\newcommand{\gl}{\mathfrak{gl}}
\newcommand{\SL}{\operatorname{SL}}
\newcommand{\ff}{\footnote}
\newcommand{\ot}{\otimes}
\def\Ext{\operatorname {Ext}}
\def\Hom{\operatorname {Hom}}
\def\Ind{\operatorname {Ind}}
\def\bbZ{{\mathbb Z}}

\newcommand{\nc}{\newcommand}
\nc{\ol}{\overline} \nc{\cont}{\on{cont}} \nc{\rmod}{\on{mod}}
\nc{\Mtil}{\widetilde{M}} \nc{\wb}{\overline} 
\nc{\wh}{\widehat}  \nc{\mc}{\mathcal}
\nc{\mbb}{\mathbb}  \nc{\K}{{\mc K}} \nc{\Kx}{{\mc K}^{\times}}
\nc{\Ox}{{\mc O}^{\times}} \nc{\unit}{{\bf \on{unit}}}
\nc{\boxt}{\boxtimes} \nc{\xarr}{\stackrel{\rightarrow}{x}}

\nc{\Ga}{\G_a}
 \nc{\PGL}{{\on{PGL}}}
 \nc{\PU}{{\on{PU}}}

\nc{\h}{{\mathfrak h}} \nc{\kk}{{\mathfrak k}}
 \nc{\Gm}{\G_m}
\nc{\Gabar}{\wb{\G}_a} \nc{\Gmbar}{\wb{\G}_m} \nc{\Gv}{G^\vee}
\nc{\Tv}{T^\vee} \nc{\Bv}{B^\vee} \nc{\g}{{\mathfrak g}}
\nc{\gv}{{\mathfrak g}^\vee} \nc{\BRGv}{\on{Rep}\Gv}
\nc{\BRTv}{\on{Rep}T^\vee}
 \nc{\Flv}{{\mathcal B}^\vee}
 \nc{\TFlv}{T^*\Flv}
 \nc{\Fl}{{\mathfrak Fl}}
\nc{\BRR}{{\mathcal R}} \nc{\Nv}{{\mathcal{N}}^\vee}
\nc{\St}{{\mathcal St}} \nc{\ST}{{\underline{\mathcal St}}}
\nc{\Hec}{{\bf{\mathcal H}}} \nc{\Hecblock}{{\bf{\mathcal
H_{\alpha,\beta}}}} \nc{\dualHec}{{\bf{\mathcal H^\vee}}}
\nc{\dualHecblock}{{\bf{\mathcal H^\vee_{\alpha,\beta}}}}
\newcommand{\ramBun}{{\bf{Bun}}}
\newcommand{\ramBuno}{\ramBun^{\circ}}

\nc{\Buntheta}{{\bf Bun}_{\theta}} \nc{\Bunthetao}{{\bf
Bun}_{\theta}^{\circ}} \nc{\BunGR}{{\bf Bun}_{G_\BR}}
\nc{\BunGRo}{{\bf Bun}_{G_\BR}^{\circ}}
\nc{\HC}{{\mathcal{HC}}}
\nc{\risom}{\stackrel{\sim}{\to}} \nc{\Hv}{{H^\vee}}
\nc{\bS}{{\mathbf S}}
\def\BRep{\operatorname {Rep}}
\def\Conn{\operatorname {Conn}}

\nc{\Vect}{{\operatorname{Vect}}}
\nc{\Hecke}{{\operatorname{Hecke}}}

\newcommand{\ZZ}{{Z_{\bullet}}}
\nc{\HZ}{{\mc H}\ZZ} \nc{\eps}{\epsilon}

\nc{\CN}{\mathcal N} \nc{\BA}{\mathbb A}

 \nc{\BB}{\mathbb B}

\nc{\ul}{\underline}

\nc{\bn}{\mathbf n} \nc{\Sets}{{\on{Sets}}} \nc{\Top}{{\on{Top}}}
\nc{\IntHom}{{\mathcal Hom}}

\nc{\Simp}{{\mathbf \Delta}} \nc{\Simpop}{{\mathbf\Delta^\circ}}

\nc{\Cyc}{{\mathbf \Lambda}} \nc{\Cycop}{{\mathbf\Lambda^\circ}}

\nc{\Mon}{{\mathbf \Lambda^{mon}}}
\nc{\Monop}{{(\mathbf\Lambda^{mon})\circ}}

\nc{\Aff}{{\on{Aff}}} \nc{\Sch}{{\on{Sch}}}

\nc{\bul}{\bullet}
\nc{\module}{{\operatorname{-mod}}}

\nc{\dstack}{{\mathcal D}}

\nc{\BL}{{\mathbb L}}

\nc{\BD}{{\mathbb D}}

\nc{\BR}{{\mathbb R}}

\nc{\BT}{{\mathbb T}}

\nc{\SCA}{{\mc{SCA}}}
\nc{\DGA}{{\mc DGA}}

\nc{\DSt}{{DSt}}

\nc{\lotimes}{{\otimes}^{\mathbf L}}

\nc{\bs}{\backslash}

\nc{\Lhat}{\widehat{\mc L}}

\newcommand{\Coh}{\on{Coh}}

\nc{\QCoh}{QC}
\nc{\QC}{QC}
\nc{\Perf}{\on{Perf}}
\nc{\Cat}{{\on{Cat}}}
\nc{\dgCat}{{\on{dgCat}}}
\nc{\bLa}{{\mathbf \Lambda}}

\nc{\BRHom}{\mathbf{R}\hspace{-0.15em}\on{Hom}}
\nc{\BREnd}{\mathbf{R}\hspace{-0.15em}\on{End}}
\nc{\colim}{\on{colim}}
\nc{\oo}{\infty}
\nc{\Mod}{\on{Mod} }

\nc\fh{\mathfrak h}
\nc\al{\alpha}
\nc\la{\alpha}
\nc\BGB{B\bs G/B}
\nc\QCb{QC^\flat}
\nc\qc{\on{QC}}

\def\w{\wedge}
\nc{\vareps}{\varepsilon}

\nc{\fg}{\mathfrak g}

\nc{\Map}{\on{Map}} \nc{\fX}{\mathfrak X}

\nc{\ch}{\check}
%\nc{\fg}{\mathfrak g}
\nc{\fb}{\mathfrak b} \nc{\fu}{\mathfrak u} \nc{\st}{{st}}
\nc{\fU}{\mathfrak U}
\nc{\fZ}{\mathfrak Z}
\nc{\fB}{\mathfrak B}

 \nc\fc{\mathfrak c}
 \nc\fs{\mathfrak s}

\nc\fk{\mathfrak k} \nc\fp{\mathfrak p}
\nc\fq{\mathfrak q}

\nc{\BRP}{\mathbf{RP}} \nc{\rigid}{\text{rigid}}
\nc{\glob}{\text{glob}}

\nc{\cI}{\mathcal I}

\nc{\La}{\mathcal L}

\nc{\quot}{/\hspace{-.25em}/}

\nc\aff{\mathit{aff}}
\nc\BS{\mathbb S}

\nc\Loc{{\mc Loc}}
\nc\Tr{{\on{Tr}}}
\nc\Ch{{\mc Ch}}

\nc\ftr{{\mathfrak {tr}}}
\nc\fM{\mathfrak M}

\nc\Id{\operatorname{Id}}

\nc\bimod{\on{-bimod}}

\nc\ev{\operatorname{ev}}
\nc\coev{\operatorname{coev}}

\nc\pair{\operatorname{pair}}
\nc\kernel{\operatorname{kernel}}

\nc\Alg{\operatorname{Alg}}

\nc\init{\emptyset_{\text{\em init}}}
\nc\term{\emptyset_{\text{\em term}}}

\nc\Ev{\on{Ev}}
\nc\Coev{\on{Coev}}

\nc\es{\emptyset}
\nc\m{\text{\it min}}
\nc\M{\text{\it max}}
\nc\cross{\text{\it cr}}
\nc\tr{\on{tr}}

\nc\perf{\on{-perf}}
\nc\inthom{\mathcal Hom}
\nc\intend{\mathcal End}

\newcommand{\Sh}{\mathit{Sh}}

\nc{\Comod}{\on{Comod}}
\nc{\cZ}{\mathcal Z}

\def\interiorsymbol {\on{int}}

\nc\frakf{\mathfrak f}
\nc\fraki{\mathfrak i}
\nc\frakj{\mathfrak j}
\nc\BP{\mathbb P}
\nc\stab{st}
\nc\Stab{St}

\nc\fN{\mathfrak N}
\nc\fT{\mathfrak T}
\nc\fV{\mathfrak V}

\nc\Ob{\on{Ob}}

\nc\fC{\mathfrak C}
\nc\Fun{\on{Fun}}

\nc\Null{\on{Null}}

\nc\BC{\mathbb C}

\nc\loc{\on{Loc}}

\nc\hra{\hookrightarrow}
\nc\fL{\mathfrak L}
\nc\R{\mathbb R}
\nc\CE{\mathcal E}

\nc\sK{\mathsf K}
\nc\sL{\mathsf L}
\nc\sC{\mathsf C}

\nc\Cone{\mathit Cone}

\nc\fY{\mathfrak Y}
\nc\fe{\mathfrak e}
\nc\ft{\mathfrak t}

\nc\wt{\widetilde}
\nc\inj{\mathit{inj}}
\nc\surj{\mathit{surj}}

\nc\Path{\mathit{Path}}
\nc\Set{\mathit{Set}}
\nc\Fin{\mathit{Fin}}

\nc\cyc{\mathit{cyc}}

\nc\per{\mathit{per}}

\nc\sym{\mathit{symp}}
\nc\con{\mathit{cont}}
\nc\gen{\mathit{gen}}
\nc\str{\mathit{str}}
\nc\rsdl{\mathit{res}}
\nc\impr{\mathit{impr}}
\nc\rel{\mathit{rel}}
\nc\pt{\mathit{pt}}
\nc\naive{\mathit{nv}}
\nc\forget{\mathit{For}}

\nc\sH{\mathsf H}
\nc\sW{\mathsf W}
\nc\sE{\mathsf E}
\nc\sP{\mathsf P}
\nc\sB{\mathsf B}
\nc\sS{\mathsf S}
\nc\fH{\mathfrak H}
\nc\fP{\mathfrak P}
\nc\fW{\mathfrak W}
\nc\fE{\mathfrak E}
\nc\sx{\mathsf x}
\nc\sy{\mathsf y}

\nc\ord{\mathit{ord}}

\nc\sm{\mathit{sm}}

\nc\rhu{\rightharpoonup}
\nc\dirT{\mathcal T}
\nc\dirF{\mathcal F}
\nc\link{\mathit{link}}
\nc\cT{\mathcal T}

\newcommand{\ssupp}{\mathit{ss}}
\newcommand{\cyl}{\mathit{Cyl}}
\newcommand{\ball}{\mathit{B(x)}}

 \nc\ssf{\mathsf f}
 \nc\ssg{\mathsf g}
\nc\sq{\mathsf q}
 \nc\sQ{\mathsf Q}
 \nc\sR{\mathsf R}

\nc\fa{\mathfrak a}
\nc\fA{\mathfrak A}

\nc\trunc{\mathit{tr}}
\nc\pre{\mathit{pre}}
\nc\expand{\mathit{exp}}

\nc\Sol{\mathit{Sol}}
\nc\direct{\mathit{dir}}

\nc\out{\mathit{out}}
\nc\Morse{\mathit{Morse}}
\nc\arb{\mathit{arb}}
\nc\prearb{\mathit{pre}}

\nc\BZ{\mathbb Z}
\nc\proper{\mathit{prop}}
\nc\torsion{\mathit{tors}}
\nc\Perv{\mathit{Perv}}
\nc\IC{\operatorname{IC}}

\nc\Conv{\operatorname{Conv}}
\nc\Span{\operatorname{Span}}

\nc\image{\operatorname{image}}
\nc\lin{\mathit{lin}}

\nc\inverse{\operatorname{inv}}

\nc\real{\operatorname{Re}}
\nc\imag{\operatorname{Im}}

\nc\fw{\mathfrak w}

\nc\sign{\on{sgn}}
\nc\conic{\mathit{con}}

\nc\even{\mathit{ev}}
\nc\odd{\mathit{odd}}
\nc\add{\mathit{add}}

\nc\orient{\mathit{or}}

\nc\op{\mathit{op}}
%%%%%%%%%%%%%%%%%%%%%%%%%%%%%%%%%%%%%%%%%%%%%%%%%%%%%%%
%%%%%%%%%%%%%%%%%%%%%%%%%%%%%%%%%%%%%%%%%%%%%%%%%%%%%%%

\title[Mirror symmetry for the Landau-Ginzburg $A$-model $M= \BC^n, W=z_1 \cdots z_n$]{Mirror symmetry for the Landau-Ginzburg 
$A$-model $M= \BC^n, W=z_1 \cdots z_n$}

\author{David Nadler}
\address{Department of Mathematics\\University of California, Berkeley\\Berkeley, CA  94720-3840}
\email{nadler@math.berkeley.edu}

\begin{abstract}
We calculate the category of branes in the Landau-Ginzburg $A$-model with background $M= \BC^n$ and superpotential $W=z_1 \cdots z_n$ in the form of microlocal sheaves along a natural Lagrangian skeleton. Our arguments employ
the framework of perverse schobers, and our results
 confirm expectations from mirror symmetry. \end{abstract}

\maketitle

%%%%%%%%%%%%%%%%%%%%%%%%%%%%%%%%%%%%%%%%%%%%%%%%%%%%%%%
%%%%%%%%%%%%%%%%%%%%%%%%%%%%%%%%%%%%%%%%%%%%%%%%%%%%%%%

\tableofcontents

%%%%%%%%%%%%%%%%%%%%%%%%%%%%%%%%%%%%%%%%%%%%%%%%%%%%%%%
%%%%%%%%%%%%%%%%%%%%%%%%%%%%%%%%%%%%%%%%%%%%%%%%%%%%%%%
%%%%%%%%%%%%%%%%%%%%%%%%%%%%%%%%%%%%%%%%%%%%%%%%%%%%%%%

\section{Introduction}

The aim of this paper is to establish a homological mirror symmetry equivalence for  
 the Landau-Ginzburg $A$-model with background $M= \BC^n$
and superpotential $W= z_1\cdots z_n$.
It presents new challenges due to the fact that the critical locus $\{dW = 0\} \subset M$ %of the superpotential $W = z_1 \cdots z_n$
is not smooth or proper.
Its fundamental role is witnessed by the fact that its mirror variety is the $(n-2)$-dimensional pair of pants, the open complement of $n$ generic hyperplanes in $\BP^{n-2}$. 
The results of this paper strengthen and generalize to arbitrary dimensions the results of~\cite{N3d} for the case 
 $M= \BC^3, W= z_1 z_2 z_3$
 though the arguments
differ.
 Here we emphasize the role of symmetry in simplifying the calculation, while in~\cite{N3d}
we broke symmetry following the theory developed in~\cite{Narb, Nexp}. 
What results may appeal to audiences in several fields with distinct practices:

\smallskip

{\em (1) Constructible/Microlocal sheaves.} While our arguments employ universal paradigms that could apply in many settings, we have adopted the technical framework of microlocal sheaves~\cite{KS}. The calculation of categories of constructible sheaves forms  a longstanding central challenge
in Geometric Representation Theory  (notably stemming from Kazhdan-Lustzig theory~\cite{KL} and Lusztig's character sheaves~\cite{laumon, lusztig}), and prominently in the Geometric Langlands program
 (for example, in the Geometric Satake correspondence~\cite{BD, Gi, MV}). The rapidly growing industry of symplectic resolutions and their quantizations (see for example~\cite{BPW}) provides a broader setting where  microlocalization becomes a basic construction.   
Recent advances (\cite{STZ, STWZ}) have also broadened the impact of constructible sheaves and their microlocalizations on
symplectic and enumerative invariants. In particular,   our calculation in the case of $M= \BC^3, W= z_1 z_2 z_3$ 
established in \cite{N3d} appears prominently in work of Treumann-Zaslow~\cite{TZ} on Legendrian surfaces.

\smallskip

{\em (2) Homological mirror symmetry.} A natural motivation for our main result is homological mirror symmetry for Landau-Ginzburg models.
  For background on homological mirror symmetry, and specifically the
 Landau-Ginzburg model studied here, we refer the reader to the beautiful paper~\cite{AAEKO} and the references therein. 
  It establishes the ``opposite direction" of homological mirror symmetry 
  between the Landau-Ginzburg $B$-model of $M=\BC^3, W= z_1 z_2 z_3$,
 in the form of the derived category of singularities, and the $A$-model of  $\BP^1\setminus \{0, 1, \oo\}$, in the form of the wrapped Fukaya category.
 (For a brief discussion about the different guises of the $A$-model, see Remark~\ref{rem ansatz} below and the references therein.)
This can be viewed as a refinement of the results of Seidel~\cite{seidelgenustwo}, which in turn are generalized  by Sheridan~\cite{sheridan} 
 to a matching of
   the endomorphism algebras of the structure sheaf of the origin in the
   Landau-Ginzburg $B$-model of    $M=\BC^n, W= z_1 \cdots z_n$
   and
   of  a distinguished compact brane in the $A$-model of the $(n-2)$-dimensional pair of pants.
    For the direction of homological mirror symmetry considered here, there is also work in progress~\cite{AA} with results parallel to those of this paper.

\smallskip

{\em (3) Categorified sheaf theory.} A third setting for our results and arguments is the nascent subject of categorified sheaf theory. In traditional sheaf theory, a distinguished role is played by the nearby and vanishing cycles, which encode the Morse theory of sections. To formalize a similar structure  for sheaves of categories, Kapranov-Schechtman~\cite{KapS}
proposed the notion of perverse schober. In its most basic realization, the natural map from the
vanishing to nearby cycles is replaced by a spherical functor from a vanishing to nearby dg category. A motivating example is given by  the $A$-model of a Lefschetz fibration, where the vanishing dg category at each critical point is the local Landau-Ginzburg model. One expects the $A$-model of more general superpotentials to also provide perverse schobers, and our main technical work confirms this for  $M= \BC^n$, $W= z_1\cdots z_n$.

%%%

\subsection{Main result}
%Throughout what follows, 
Set $M=\BC^n$, with coordinates $z_1 = r_1 e^{i\theta_1}, \ldots, z_n= r_n e^{i\theta_n}$, and superpotential $W=z_1 \cdots z_n$,
The origin $0\in \BC$ is the only critical value of $W$,
and we 
set 
$$
\xymatrix{
M_0 = W^{-1}(0) = \bigcup_{a=1}^n \{ z_a = 0\}
&
M_1 = W^{-1}(1) \simeq (\BC^\times)^{n-1}
}
$$
$$
\xymatrix{
M_{>0} = W^{-1}(\R_{>0}) \simeq (\BC^\times)^{n-1} \times \BR_{>0}
& 
M^\times = W^{-1}(\BC^\times) \simeq (\BC^\times)^{n}
}
$$ 

We also write
 $T=(S^1)^n$
for the standard $n$-torus, $\ft = \BR^n$ for its Lie algebra, $T^\circ  \simeq (S^1)^{n-1}\subset T$
for the kernel of the diagonal character, $\ft^\circ \subset \ft$ for its Lie algebra, 
and work with a natural  symplectic identification
$$
\xymatrix{
%M^\times \simeq (\BC^\times)^{n} \simeq T^*T \simeq T\times\ft
%&
M_1\simeq (\BC^\times)^{n-1} \simeq T^*T^{\circ}\simeq T^\circ\times\ft^\circ
}
$$

% and we write 
 
%

Following the  paradigms of 
Landau-Ginzburg $A$-models, 
we will focus on the geometry of $M$ above a cut in the plane $\BC$, specifically the non-negative real ray $\BR_{\geq 0}\subset \BC$.
%Let $\BR_{\geq 0}\subset \BC$ be the positive real ray.
%To arrive at a concrete calculation, 
We introduce a natural   Lagrangian skeleton
$
L\subset M,
$
defined in polar coordinates by the equations
$$
\xymatrix{
\displaystyle
\sum_{a=1}^n \theta_a = 0 &\mbox{ and } & \theta_a = 0 \mbox{ when } r_a \not = r_\m,
\mbox{ for } a=1,\ldots,n 
}
$$
%in terms of polar coordinates $z_a = r_a e^{i \theta_a}$, and 
where we set $r_\m =\min\{ r_a\, |\, a=1,\ldots, n\}$.
It is a  closed Lagrangian subvariety,  conic with respect to positive real scalings,
%living above  $\BR_{\geq 0}\subset \BC$.
and equal to the closure of its open subspace
$
L^\times =L \cap M^\times  = L \cap M_{>0}.
$ 
 Therefore  it is determined by its fiber 
$
L_1 = L \cap M_1,
$
which is itself a Lagrangian subvariety of $M_1$. %\simeq(\BC^\times)^{n-1}$.

Under the identification
$M_1\simeq  T^*T^{\circ}$,
the Lagrangian subvariety $L_1 \subset M_1$
transports to a conic Lagrangian subvariety $\Lambda_\Sigma \subset  T^* T^\circ$
of a simple combinatorial nature
$$
\xymatrix{
\Lambda_\Sigma = \bigcup_{\sigma \in \Sigma} \sigma T^\circ \times \sigma  \subset T^\circ \times (\ft^\circ)^*
}
$$
Here $\Sigma \subset (\ft^\circ)^*$ is the complete fan on the images $\ol e_1, \ldots, \ol e_n \in (\ft^\circ)^*$ of the coordinate vectors $ e_1, \ldots, e_n \in \ft^*$ under the restriction $\ft^*\to (\ft^\circ)^*$,
and given a positive cone $\sigma\in \Sigma$, we write  $\sigma T^\circ \subset T^\circ$ for the subtorus with Lie algebra the orthogonal
subspace $\sigma^\perp \subset \ft^\circ$.

Returning to the 
Landau-Ginzburg $A$-model, 
%of $M= \BC^n, W= z_1\cdots z_n$,
we would like to study $A$-branes within $M$ running along the Lagrangian skeleton $L$, as found in the infinitesimal Fukaya-Seidel category~\cite{Seidel}, or transverse to $L$, as found in the partially wrapped Fukaya category~\cite{AS, Aur}.
In some generality, these two variants are expected to be in duality (in parallel with $B$-model dualities as found
in~\cite{BNP}), and in the specific situation at hand, each should in fact be self-dual and  equivalent to microlocal sheaves on $M$ supported along $L$. 
%For the moment, let us suggest a model for the first version, but see Remark~\ref{rem versions} for the modifications that
%lead to a model of  the second.
%In any case, we will not work with Floer theory but rather microlocal geometry.

\begin{ansatz}\label{ansatz msh}
The category of branes in the Landau-Ginzburg $A$-model of $M= \BC^n, W= z_1\cdots z_n$ with Lagrangian skeleton $L\subset M$ is given by the dg category of microlocal sheaves on $M$ supported along $L$.
\end{ansatz}

\begin{remark}\label{rem ansatz}
The ansatz is compatible with the broad expectation, realized in numerous situations,
 that given  $L\subset M$ a Lagrangian skeleton of an exact symplectic manifold, there are equivalent approaches to its ``quantum category" of $A$-branes: the Floer-Fukaya-Seidel theory of Lagrangian intersections and pseudo-holomorphic disks \cite{FOOO, Seidel} (analysis); the Kashiwara-Schapira theory of microlocal  sheaves~\cite{KS} (topology); the theory of holonomic modules over deformation quantizations, exemplified by $\cD$-modules~\cite{bernstein}
(algebra); and finitary models following expectations of Kontsevich~\cite{kont, Narb, Nexp} (combinatorics). 
In particular, since all of our constructions ultimately lie in cotangent bundles, 
one could translate our results into the traditional language of Fukaya categories
following~\cite{NZ, Nequiv}. Furthermore, there is work in progress~\cite{ENS, GPS} detailing such equivalences
more generally for Weinstein manifolds.  When the dust settles, the results of this paper, and perhaps more interestingly, its methods,   should hold  independently
of the specific language used to describe $A$-branes.

%The close relation between microlocal sheaves and holonomic modules over deformation quantizations stems from the Riemann-Hilbert correspondence~\cite{K1, K2, M1, M2}, and there are numerous confirmations of their close relation with 
%Lagrangian intersection theory.
% including proposals
%within the context of mirror symmetry~\cite{TZ} to regard $A$-branes as microlocal sheaves as in the ansatz.
\end{remark}

\begin{remark}
One can argue that $L\subset M$ is the most fundamental Lagrangian skeleton for  the 
Landau-Ginzburg model $M= \BC^n, W= z_1\cdots z_n$, but it is by no means the only possibility.
For example, we discuss below the alternative ``singular thimble" $L_c \subset M$,
which is proper over  $\BR_{\geq 0}\subset \BC$ and can be thought of as the smallest nondegenerate Lagrangian skeleton.
Thanks to the inclusion $L_c \subset L$, our results for $L$ easily imply results for $L_c$, which we record in some corollaries below.
But there are other distinct possibilities associated to alternative Lagrangian skeleta of the fiber $M_1 \simeq T^* T^\circ$, for example conic Lagrangian subvarieties $\Lambda_{\Sigma'} \subset  T^* T^\circ$
 defined by alternative fans $\Sigma'\subset (\ft^\circ)^*$. We expect our techniques to extend easily to this level of generality,
 and more broadly to other Landau-Ginzburg models as well.

\end{remark}

Now we will state our main theorem. %It is the  calculation of microlocal sheaves on $M$ supported along $L$.
Fix a base field $k$ of characteristic zero.

Let $\mu\Sh_{L}(M)$ denote the dg category of microlocal sheaves of $k$-vector spaces supported along 
the Lagrangian skeleton $L \subset M$. In Sections~\ref{ss prelim} and~\ref{ss micro shs}, we explain how to  work with  such microlocal sheaves 
 building on the foundations of Kashiwara-Schapira~\cite{KS}. Roughly speaking, 
 we  identify the contactification $N = M\times \BR$ with the one-jet bundle $JX = T^*X \times \BR$
 of the base manifold $X= \BR^n$, and
%compatibly with the natural projections to $X\times \BR$.
  then observe that its symplectification  is equivalent to an 
 open conic subspace $\Omega_X \subset T^*( X\times \BR)$. %, compatibly with the natural projections to $X\times \BR$.
 The symplectification and contactification come with natural maps
 $$
 \xymatrix{
 \Omega_X \ar[r]^-s &  JX \simeq N\ar[r]^-c &  M
 }
 $$
and we  lift $L\subset M$  along the projection $c$ to the  Legendrian subvariety $L\times\{0\} \subset N$, then
transport it to $JX$, and  take its inverse-image under $s$
 to arrive at a conic Lagrangian subvariety
$\Lambda \subset \Omega_X$.
The fact that $L\subset M$ is conic implies that $\Lambda\subset \Omega_X$ is in fact biconic, and in particular conic for a contracting action on $X \times \BR$ with fixed locus the origin.
In this setting, one can define microlocal sheaves as a localization of conic constructible sheaves on $X$ such that  the intersection
of their singular support
 with $\Omega_X$ lies within $\Lambda$.

Thanks to the comprehensive work~\cite{KS}, microlocal sheaves enjoy powerful functoriality
induced by similar functoriality for constructible sheaves.
Microlocal kernels induce microlocal transformations, and Hamiltonian reductions
induce natural functors. %and the coherent-constructible correspondence recalled above, 
For example, %if we  set $M^\times = W^{-1}(\BC^\times)$, and  $L^\times = L \cap M^\times$, then 
the open inclusion $M^\times \subset M$ provides a restriction functor 
$$
\xymatrix{
J^*:\mu\Sh_{L}(M)\ar[r] &   \mu\Sh_{L^\times}(M^\times) % \simeq\Sh_{\Lambda}(T^{n-1}) \simeq \Coh(\BP^{n-1})
}
$$
and the Lagrangian correspondence $M^\times \leftarrow M_{>0} \to M_1$
% if we  set $M_1 = W^{-1}(1)$, and  $L_1 = L \cap M_1$, then
leads to an equivalence
 $$
\xymatrix{
 \mu\Sh_{L^\times}(M^\times) \ar[r]^-\sim & \mu\Sh_{L_1}(M_1)
}
$$

Going further, 
the identification
$M_1 \simeq T^* T^\circ$
%if we set 
%$\Lambda \subset  T^*( T^{n-1})$ to be the transport of
%% with the cotangent bundle of a compact torus, 
% $L_1 \subset (\BC^\times)^{n-1}$, then
 allows us to pass from microlocal sheaves to a more concrete dg category of constructible sheaves
 $$
\xymatrix{
 \mu\Sh_{L_1}(M_1) \ar[r]^-\sim & \Sh_{\Lambda_\Sigma}(T^\circ)
}
$$
Moreover, the conic Lagrangian subvariety $\Lambda_\Sigma \subset T^* T^\circ$ is the singular support condition  appearing in the most basic instance
$$
\xymatrix{
 \Sh_{\Lambda_\Sigma}( T^{\circ})  \ar[r]^-\sim & \Coh(\BP^{n-1})
}
$$
of the coherent-constructible correspondence~\cite{B, ccc, T} between dg categories of constructible and coherent sheaves.

Here the projective space $\BP^{n-1}$ arises as  the $\check T^\circ$-toric variety for the complete fan $\Sigma \subset (\ft^\circ)^*$ 
and algebraic torus $\check T^\circ \simeq (\G_m^\times)^{n-1}$ dual to the compact torus $T^\circ \simeq (S^1)^{n-1}$.
The conic Lagrangian subvariety $\Lambda_\Sigma \subset T^* T^\circ$ contains the zero-section $T^{\circ} \subset T^* T^{\circ}$, the singular support condition 
appearing in the usual Fourier equivalence
$$
\xymatrix{
\Loc(T^\circ) \simeq \Sh_{T^{\circ}}( T^{\circ})  \ar[r]^-\sim & \Coh_{\torsion}(\check T^\circ) 
}
$$
between finite-rank local systems and  torsion sheaves.

%In particular, it contains the  singular thimble
% $$
%\xymatrix{
%L_c = \{ (z_1, \ldots, z_n) \in M \, |\, W(z_1, \ldots, z_n)\in \BR_{\geq 0}, |z_1| = \cdots  = |z_n|\} \subset M
%}
%$$
%which is proper  over the non-negative real ray $\BR_{\geq 0}\subset \BC$.

%
%Set $M^\times  = W^{-1}(\BC^\times)$,  consider the open inclusion 
%$$
%\xymatrix{
%J:M^\times  \ar@{^(->}[r] & M
%}
%$$
%It induces a natural restriction functor on microlocal sheaves
% $$
%\xymatrix{
%J^*:\mu\Sh_{L}(M)  \ar[r] & \mu\Sh_{L^\times}(M^\times)
%}
%$$
Now to state our main theorem, %on the projective space $\BP^{n-1}$, 
consider the section 
$$
\xymatrix{
s:\cO_{\BP^{n-1}} \ar[r] & \cO_{\BP^{n-1}}(1)
&
s([x_1, \ldots, x_n]) = x_1 + \cdots + x_n
}$$
and the inclusion of its zero-locus
$$
\xymatrix{
i:\BP^{n-2} \simeq \{ s= 0\} \ar@{^(->}[r] & \BP^{n-1} 
}$$
The specific coefficients of $s$ are not  important only the $\check T^\circ$-invariant fact that they are all non-zero.
Consider the corresponding pushforward
on bounded dg categories of coherent complexes
$$
\xymatrix{
i_*:\Coh(\BP^{n-2}) \ar[r] & \Coh(\BP^{n-1})
}
$$

Here is our main theorem (appearing as Theorem~\ref{thm main} below).

\begin{thm}\label{intro main thm}
There is a 
%canonical equivalence
%$$
%\xymatrix{
% \mu\Sh_{L}(M) \ar[r]^-\sim & \Coh(\BP^{n-2})
%}
%$$
%fitting into a commutative diagram
commutative diagram with horizontal equivalences 
$$
\xymatrix{
\ar[d]_{J^*} \mu\Sh_{L}(M) \ar[r]^-\sim & \Coh(\BP^{n-2})\ar[d]^-{i_*} \\
 \mu\Sh_{L^\times}(M^\times) \ar[r]^-\sim & \Coh(\BP^{n-1})
}
$$
\end{thm}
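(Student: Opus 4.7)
The plan is to deduce the bottom row of the square directly from the chain of identifications assembled in the excerpt, then to compute $\mu\Sh_L(M)$ by analyzing microlocal sheaves across the singular fiber $M_0 \subset M$, and finally to match the restriction functor $J^*$ with the closed pushforward $i_*$ under these identifications.

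For the bottom row, I compose the equivalences
$$\mu\Sh_{L^\times}(M^\times) \simeq \mu\Sh_{L_1}(M_1) \simeq \Sh_{\Lambda_\Sigma}(T^\circ) \simeq \Coh(\BP^{n-1})$$
arising, respectively, from Hamiltonian reduction along the $\BR_{>0}$-factor in the Lagrangian correspondence $M^\times \leftarrow M_{>0} \to M_1$, the symplectic identification $M_1 \simeq T^*T^\circ$ transporting $L_1$ to the conic Lagrangian $\Lambda_\Sigma$, and the coherent-constructible correspondence for the complete fan $\Sigma$. Functoriality of each step packages these into a single equivalence whose compatibility with further restriction functors will be essential to the commutativity of the square.

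For the top row, I exploit the $T^\circ$-symmetry of the triple $(M, W, L)$ together with the $\BZ/n$-symmetry permuting the coordinates $z_1, \ldots, z_n$. Away from $M_0$ the skeleton $L$ is $T^\circ$-equivariantly tame, while along $L \cap M_0$ it degenerates in a stratified manner indexed by which subset of $\{z_1, \ldots, z_n\}$ vanishes. A microlocal sheaf on $L$ thus amounts to an object of $\mu\Sh_{L^\times}(M^\times)$ together with compatible extensions across each degenerate stratum. Passing through the bottom-row identifications, the gluing datum for each stratum corresponds to restriction of line bundles to the associated toric boundary divisor of $\BP^{n-1}$, and assembling all strata produces the Koszul-type cofiber sequence that cuts out the hyperplane $\{s = x_1 + \cdots + x_n = 0\} \subset \BP^{n-1}$. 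This simultaneously realizes $\mu\Sh_L(M)$ as the essential image of $i_*$ and identifies it with $\Coh(\BP^{n-2})$, yielding both the top equivalence and the commutativity of the square.

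The main obstacle lies in the preceding paragraph: showing that $J^*$ is fully faithful and identifying its image precisely with $i_* \Coh(\BP^{n-2})$. This requires a careful microlocal analysis near the critical fiber, reducing via the $T^\circ$- and $\BZ/n$-symmetries to a tractable local model at the origin, and matching the $A$-side spherical functor — the vanishing cycles of $W$ at $0 \in \BC$ in the perverse schober formalism of Kapranov--Schechtman — with the spherical adjunction attached to the hyperplane inclusion on the $B$-side. The symmetric form of $s$ as the sum of all homogeneous coordinates is precisely what is needed to make this matching $\check T^\circ$-equivariantly canonical.
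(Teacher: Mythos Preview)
Your proposal has a genuine gap at its core: the assertion that $J^*$ is fully faithful is false, and with it the strategy of identifying $\mu\Sh_L(M)$ with the essential image of $J^*$ collapses. Under the target equivalence, $J^*$ corresponds to $i_*:\Coh(\BP^{n-2})\to\Coh(\BP^{n-1})$ for a smooth hypersurface, and on derived categories this is \emph{not} fully faithful: the Koszul resolution gives $i^*i_*\cF \simeq \cF \oplus \cF\otimes\cL_Y^\vee[1]$, so $\Hom_X(i_*\cF,i_*\cG)$ strictly contains $\Hom_Y(\cF,\cG)$. The essential image of $i_*$ is not a full subcategory, and no amount of stratified gluing along $M_0$ will produce $\Coh(\BP^{n-2})$ as a subcategory of $\Coh(\BP^{n-1})$.

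What the paper does instead is monadic. The hard technical work is to construct left and right adjoints $(J_!,J^*,J_*)$, not directly but by first building a \emph{spherical pair} over the double cut $\BR\subset\BC$ (two rays, not one), where the axioms avoid explicit units and counits. From this one extracts that $J^*$ is spherical, hence the monad $J^*J_!$ sits in a triangle with the identity and the nearby monodromy $T_{\Psi,\ell}$. A separate monodromy calculation via the multiplicative system $\cA_\tau\in\Sh_c(T')$ identifies $T_{\Psi,\ell}$ with $\otimes\,\cO_{\BP^{n-1}}(-1)$, so the monad is $\otimes\,\Cone(s)$ for some $s:\cO(-1)\to\cO$. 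That $s$ is nonzero at each coordinate point follows from the observation (which you do gesture at) that $L\setminus L_c$ looks like $\BR_{\geq 0}\times(\text{something})$ near each $e_\alpha$, forcing microlocal sheaves to vanish there. Finally, since $J^*$ is conservative (a dimension count on $L_0$), Lurie's Barr--Beck theorem gives $\mu\Sh_L(M)\simeq\Mod_{J^*J_!}(\Coh(\BP^{n-1}))\simeq\Coh(\BP^{n-2})$. Your sketch never constructs $J_!$, never computes the monodromy, and substitutes a vague ``Koszul-type cofiber'' for what is actually a Barr--Beck argument.
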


The theorem immediately implies a subsidiary mirror equivalence 
%for the Landau-Ginzburg model $M=\BC^n$,
%$W=z_1 \cdots z_n$ 
which some readers may find more expected.
Introduce the proper Lagrangian skeleton $L_c  \subset M$ defined in polar coordinates by the equations 
$$
\xymatrix{
\displaystyle
\sum_{a=1}^n \theta_a = 0 & \mbox{ and } & r_a = r_b, \mbox{ for } a, b = 1,\ldots, n
}$$
%in terms of polar coordinates $z_a = r_a e^{i\theta_a}$. 
It is a  closed Lagrangian subvariety, conic with respect to positive real scalings, 
and proper over  $\BR_{\geq 0}\subset \BC$.
It can be viewed as a ``singular thimble"  in that it is the cone over the vanishing torus
$$
\xymatrix{
L_c = \Cone(T^\circ) \subset M
}$$  

Let $\mu\Sh_{L_c}(M) \subset \mu\Sh_{L}(M)$ denote the full dg subcategory of microlocal sheaves of $k$-vector spaces supported along 
 $L_c \subset M$. 

For each $a=1,\ldots, n$,
introduce the hyperplane $ \BP^{n-3}_a =\{x_a = 0\}\subset \BP^{n-2}$ cut out by 
the corresponding coordinate $x_a$ of the ambient $\BP^{n-1}$.
Introduce  the inclusion of the ``open simplex" given by the complement of these hyperplanes 
$$
\xymatrix{
j:\Delta^{n-2} = \BP^{n-2} \setminus \bigcup_{a=1}^n \BP^{n-3}_a \ar@{^(->}[r] &  \BP^{n-2}
}
$$

Pushforward along $j$ provides a full embedding $\Coh_\torsion(\Delta^{n-2}) \subset \Coh(\BP^{n-2})$ of torsion sheaves supported on 
$\Delta^{n-2} \subset \BP^{n-2}$.

The theorem immediately restricts to an equivalence on full dg subcategories.

\begin{corollary}
There is a 
canonical equivalence 
$$
\xymatrix{
\mu\Sh_{L_c}(M) \ar[r]^-\sim & \Coh_\torsion(\Delta^{n-2})
}
$$
\end{corollary}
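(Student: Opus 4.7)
The plan is to derive the corollary from Theorem~\ref{intro main thm} by restricting the equivalence $\mu\Sh_L(M) \simeq \Coh(\BP^{n-2})$ to full subcategories on each side. Both $\mu\Sh_{L_c}(M) \subset \mu\Sh_L(M)$ and $\Coh_\torsion(\Delta^{n-2}) \subset \Coh(\BP^{n-2})$ are defined by support conditions, so the task reduces to matching these conditions across the equivalence of the main theorem.

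Since $i:\BP^{n-2} \hookrightarrow \BP^{n-1}$ is a closed embedding, $i_*$ is fully faithful, and by the commutativity of the diagram so is the open restriction $J^*:\mu\Sh_L(M) \to \mu\Sh_{L^\times}(M^\times)$. Consequently $\mu\Sh_{L_c}(M)$ is equivalent via $J^*$ to its essential image inside $\mu\Sh_{L^\times}(M^\times)$, namely the full subcategory of objects lying in the image of $J^*$ and whose microsupport is contained in $L_c \cap L^\times$.

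To identify this image on the coherent side, I would transport $L_c \cap L^\times$ through the Hamiltonian reduction $M^\times \leftarrow M_{>0} \to M_1 \simeq T^* T^\circ$ used in the proof of the main theorem. Because $L_c \cap L^\times$ is the locus where all $r_a$ are equal and positive, the reduction sends it to the zero section $T^\circ \subset T^* T^\circ$. Via the coherent-constructible correspondence $\Sh_{\Lambda_\Sigma}(T^\circ) \simeq \Coh(\BP^{n-1})$, constructible sheaves microsupported on this zero section correspond to local systems on $T^\circ$, equivalently to torsion coherent sheaves on the open toric orbit $\check T^\circ \subset \BP^{n-1}$. Intersecting this subcategory of $\Coh(\BP^{n-1})$ with the essential image of $i_*$ restricts the support to $\BP^{n-2} \cap \check T^\circ = \Delta^{n-2}$, and pulling back along $i$ recovers precisely $\Coh_\torsion(\Delta^{n-2})$, as desired.

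The main technical point will be to track the support conditions carefully through the chain of equivalences, and in particular to verify that an object of $\mu\Sh_L(M)$ has microsupport contained in $L_c$ if and only if its image under $J^*$ has microsupport contained in $L_c \cap L^\times$. The origin $0 \in M$ lies in $L_c \setminus L^\times$, but since $J^*$ is fully faithful, the behavior of any microlocal sheaf across the origin is already determined by its restriction to $M^\times$; hence the putative extra support condition at $0$ imposes no further constraint. This reduces the identification of support conditions to the open part $M^\times$, where it follows from the analysis above.
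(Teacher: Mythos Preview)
Your approach is the same as the paper's, which simply asserts that the theorem ``immediately restricts to an equivalence on full dg subcategories'' without further detail. Your elaboration is correct in outline, and the identification of $L_c \cap L^\times$ with the zero-section $T^\circ$ under Hamiltonian reduction, and hence with $\Coh_\torsion(\check T^\circ)$ under the coherent-constructible correspondence, is exactly right.

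There is one imprecision in your last paragraph. You correctly isolate the key point: for $\cF \in \mu\Sh_L(M)$, one must show that $\cF$ has microsupport in $L_c$ if and only if $J^*\cF$ has microsupport in $L_c^\times$. But your justification via full faithfulness of $J^*$ does not by itself control microsupport, and your focus on the origin is slightly misplaced: the concern is not the single point $0 = L_c \setminus L^\times$ (which lies in $L_c$ anyway), but rather all of $L_0 = L \setminus L^\times$, which is strictly larger than $\{0\}$. The clean argument is the dimension bound already used in the paper (see Lemma~\ref{lemma micro cons}): $L_0$ has dimension $n-1 < n = \tfrac{1}{2}\dim M$, so no component of the (Lagrangian) microsupport of $\cF$ can lie in $L_0$. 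Hence the microsupport of $\cF$ is the closure of its restriction to $M^\times$, and if the latter lies in $L_c^\times$, then the former lies in $\overline{L_c^\times} = L_c$. With this correction your argument goes through.
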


To go beyond  torsion sheaves, we can adopt the formalism of wrapped microlocal sheaves introduced in~\cite{Nwms}.
We will not review this notion here but remark that our arguments naturally extend to it and we obtain the following equivalence.

\begin{corollary}
For wrapped microlocal sheaves, there is a 
canonical equivalence 
$$
\xymatrix{
\mu\Sh^w_{L_c}(M) \ar[r]^-\sim & \Coh(\Delta^{n-2})
}
$$
\end{corollary}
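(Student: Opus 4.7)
The plan is to deduce this corollary directly from a wrapped analogue of Theorem~\ref{intro main thm} via the same restriction argument that produced the previous (unwrapped) corollary. The only new ingredient is the wrapped version of the main theorem itself.

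First, I would show that every step in the proof of Theorem~\ref{intro main thm} admits a wrapped counterpart, yielding a commutative diagram
$$
\xymatrix{
\ar[d]_{J^*} \mu\Sh^w_{L}(M) \ar[r]^-\sim & \Coh(\BP^{n-2})\ar[d]^-{i_*} \\
 \mu\Sh^w_{L^\times}(M^\times) \ar[r]^-\sim & \Coh(\BP^{n-1})
}
$$
The open restriction $J^*$, the Hamiltonian reduction along $M^\times \leftarrow M_{>0} \to M_1$, and the identification $M_1 \simeq T^* T^\circ$ all carry over to wrapped microlocal sheaves via the functoriality developed in~\cite{Nwms}. The essential remaining input is the wrapped coherent-constructible correspondence for $\BP^{n-1}$, which replaces the compactness-type condition built into $\Sh_{\Lambda_\Sigma}(T^\circ)$ by the partially wrapped variant appropriate to the non-proper skeleton.

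Second, with the wrapped main theorem in hand, I would restrict to the full subcategory $\mu\Sh^w_{L_c}(M) \subset \mu\Sh^w_{L}(M)$. The defining condition $r_a = r_b$ for the thimble $L_c$ eliminates all microlocal behavior pointing into the coordinate hyperplanes $\{z_a = 0\}\subset M$, which on the coherent side corresponds to dropping the toric boundary divisors $\BP^{n-3}_a \subset \BP^{n-2}$. Thus the equivalence restricts to identify $\mu\Sh^w_{L_c}(M)$ with the image of pushforward $j_*$ from $\Delta^{n-2}$. In the unwrapped corollary this image was $\Coh_\torsion(\Delta^{n-2})$, because only torsion sheaves on the open chart extend to compact objects of $\Coh(\BP^{n-2})$. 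In the wrapped setting this compactness constraint is precisely what is loosened, yielding all of $\Coh(\Delta^{n-2})$.

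The main technical obstacle is certifying the wrapped coherent-constructible correspondence on $\BP^{n-1}$ compatibly with the open restriction $J^*$ and the functor $i_*$, and verifying that wrapping on the $A$-model side transports to the passage from torsion sheaves to all of $\Coh(\Delta^{n-2})$. Concretely, one needs to check that the wrapped generator of $\mu\Sh^w_{L_c}(M)$ corresponds to $\cO_{\Delta^{n-2}}$, or equivalently that positive wrapping of the zero section past the stops matches inverting the toric monomials $x_a$. Once this identification is in place, the corollary follows formally from the wrapped version of Theorem~\ref{intro main thm} by restriction to the subcategory supported on $L_c$.
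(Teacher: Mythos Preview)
Your proposal is correct and follows exactly the approach the paper indicates: the paper does not give a detailed proof but simply remarks that ``our arguments naturally extend'' to the wrapped setting of~\cite{Nwms}, which is precisely the strategy you flesh out. The paper additionally notes (in the subsequent remark) that the two corollaries are related by duality---the unwrapped one arises from the wrapped one by taking exact functionals to $\Perf_k$---which offers a complementary perspective but is not presented as the proof method.
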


\begin{remark}
The two corollaries are related by duality in that the first results from the second by taking
exact functionals to $\Perf_k$ (see~\cite{BNP} for details for coherent sheaves). 
%Given the singular thimble $L_c \subset M$,
One can think of  
$ \mu\Sh_{L_c}(M)$ as the
infinitesimal Fukaya-Seidel category of the Landau-Ginzburg model,
with branes running along the singular thimble $L_c\subset M$,
and
 $ \mu\Sh^w_{L_c}(M) $ as the 
  partially wrapped Fukaya category  of the Landau-Ginzburg model,
  with branes transverse to $L_c\subset M$.
%  One could also consider generalizations of microlocal sheaves, allowing infinite-dimensional microstalks, 
%  and thus realize $ \mu\Sh_{L}(M)/ \mu\Sh_{L_c}(M)^\perp$ in a more direct geometric form.
 \end{remark}

\begin{remark}
The theorem and its corollaries can be viewed as a distinguished instance of homological mirror symmetry
  for hypersurfaces in toric varieties~\cite{AAK}.
The other   Landau-Ginzburg $A$-models arising in the subject can be obtained from 
that of
 $M=\BC^n, W= z_1 \cdots z_n$ by Hamiltonian reduction. 
 %(We learned this perspective from the work in progress~\cite{AA}.)  
Thanks to the functoriality of microlocal sheaves,  the theorem and its corollaries
should imply analogous results for them as well.

\end{remark}

Before continuing on, let us mention one other straightforward application of our results.

In the course of our arguments, to any angle $\theta\in S^1$,
we introduce a Lagrangian skeleton $L(\theta) \subset M$ living over the ray
$e^{2\pi i \theta}\cdot \BR_{\geq 0} \subset \BC$. 
For $\theta = 0$,  this is the Lagrangian skeleton introduced above $L(0) = L$.
For $\theta \not = 0$,  we show that $L(\theta) \subset M$ has equivalent microlocal geometry to
$L(0) = L$, via natural monodromy equivalences,
though they are not even homeomorphic.

Now consider the Landau-Ginzburg model with background $M=\BC^n$ as before, but now with superpotential
$W = z_1^r \cdots z_n^r$. Thus its geometry above a cut in the plane $\BC$
is the same as the geometry  of the original superpotential
%$W = z_1 \cdots z_n$ 
above $r$ cuts.
Fix a collection  of $r$ angles $\Theta \subset S^1$,
and introduce the
corresponding Lagrangian skeleton 
$$
\xymatrix{
% C(\Theta) = \bigcup_{\theta\in \Theta}e^{2\pi i \theta}\cdot \BR_{\geq 0} \subset \BC
% &
L(\Theta) = \bigcup_{\theta \in \Theta} L(\theta)\subset M
}
$$
In accordance with Ansatz~\ref{ansatz msh},
let us take 
the category of branes in the Landau-Ginzburg $A$-model with background  $M= \BC^n$
and superpotential $W= z^r_1\cdots z^r_n$ 
to be the dg category of microlocal sheaves on $M$ 
supported along $L(\Theta)$.

Our results imply the following generalization of Theorem~\ref{intro main thm}.
To state it, 
let $\cM(r)$ be the dg category of diagrams 
of coherent sheaves
$$
\xymatrix{
i_*M_0 & \ar[l] M_1 & \ar[l] M_2 & \ar[l] \cdots & \ar[l] M_{r-1}
}
$$
where $M_0 \in \Coh(\BP^{n-2})$, $M_1,\ldots, M_{r-1} \in \Coh(\BP^{n-1})$, and
$
i:\BP^{n-2} \to  \BP^{n-1} 
$
is the inclusion of the generic linear hyperplane introduced above.

\begin{thm}
Suppose $r = |\Theta|$. Then there is a 
canonical equivalence
$$
\xymatrix{
\mu\Sh_{L(\Theta)}(M) \ar[r]^-\sim & \cM(r)
}
$$

\end{thm}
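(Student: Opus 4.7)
The plan is to deduce the theorem from Theorem~\ref{intro main thm} together with a descent argument along the decomposition
$$
L(\Theta) = \bigcup_{\theta \in \Theta} L(\theta),
$$
in which every pair of distinct $L(\theta), L(\theta')$ meets exactly in the common singular thimble $L_c$, and the open arms $L(\theta) \setminus L_c$ lie pairwise disjointly in $M^\times$ (they lie over distinct sectors of $\BC^\times$).

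First, for each $\theta \in \Theta$ the rotational monodromy equivalence mentioned in the introduction identifies
$$
\mu\Sh_{L(\theta)}(M) \simeq \mu\Sh_L(M), \qquad \mu\Sh_{L(\theta) \cap M^\times}(M^\times) \simeq \mu\Sh_{L^\times}(M^\times),
$$
compatibly with the restriction $J^*$. Combined with Theorem~\ref{intro main thm}, each $\theta$ yields a commutative square of equivalences
$$
\xymatrix{
\mu\Sh_{L(\theta)}(M) \ar[r]^-\sim \ar[d]_{J^*} & \Coh(\BP^{n-2}) \ar[d]^{i_*} \\
\mu\Sh_{L(\theta) \cap M^\times}(M^\times) \ar[r]^-\sim & \Coh(\BP^{n-1})
}
$$
and disjointness of the open arms in $M^\times$ gives $\mu\Sh_{L(\Theta) \cap M^\times}(M^\times) \simeq \prod_{\theta \in \Theta} \Coh(\BP^{n-1})$.

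Next, because $L(\Theta)$ is the union of the $r$ closed Lagrangian pieces $L(\theta)$, all meeting pairwise (and in higher intersections) in the core $L_c$, descent for microlocal sheaves presents $\mu\Sh_{L(\Theta)}(M)$ as the limit of the associated descent diagram built from the $r$ categories $\mu\Sh_{L(\theta)}(M) \simeq \Coh(\BP^{n-2})$ and the common $\mu\Sh_{L_c}(M) \simeq \Coh_\torsion(\Delta^{n-2}) \subset \Coh(\BP^{n-2})$. This presents $\mu\Sh_{L(\Theta)}(M)$ as a well-defined limit, and the remaining task is to identify it with $\cM(r)$.

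The final and hardest step is to repackage this symmetric, cyclic descent datum as the asymmetric linear diagram category $\cM(r)$. Choose a distinguished arm $\theta_0 \in \Theta$ as a basepoint and read the remaining arms in a fixed cyclic order; the basepoint contributes the object $M_0 \in \Coh(\BP^{n-2})$, while each subsequent arm contributes a relative object $M_k \in \Coh(\BP^{n-1})$ recording the discrepancy between that arm and its predecessor after application of $i_*$. The main obstacle is the monodromy bookkeeping: one must trivialize the equivalences $L(\theta) \simeq L$ consistently around $S^1$ and verify that no nontrivial autoequivalence of $\Coh(\BP^{n-2})$ is acquired when one completes a circuit, so that the structure maps in the resulting linear limit are literally $i_*$ and the cutting at $\theta_0$ loses no information. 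Once this is done, one recognizes the limit as precisely $\cM(r)$.
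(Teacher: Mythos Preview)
Your descent approach has a genuine gap. First, the pairwise intersection of distinct $L(\theta)$ and $L(\theta')$ is not the singular thimble $L_c$ but the isotropic piece $L_0 = L(\Theta)\cap M_0$; the cone $L_c$ depends on the angle and is not shared among different arms. More importantly, $L_0$ has dimension strictly less than $n$ (each stratum $\fI L_0$ has codimension $n+|\fI|$ in $M$), so by the support bound used in Lemma~\ref{lemma micro cons} we have $\mu\Sh_{L_0}(M)=0$. If microlocal sheaves obeyed the closed-cover descent you invoke, your limit would collapse to the product $\prod_{\theta\in\Theta}\mu\Sh_{L(\theta)}(M)\simeq\Coh(\BP^{n-2})^r$, which is not $\cM(r)$. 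The case $n=1$, $r=2$ already falsifies this: each ray $L(0), L(\pi)\subset\BC$ supports the zero category, yet $\mu\Sh_{L_\BR}(\BC)\simeq\Perf_k$, matching $\cM(2)$ rather than $0$.

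The paper's route, signaled in the remark following the statement and carried by Theorem~\ref{thm adjoints} and its multi-cut analogue in Lemma~\ref{lem par trans}, is not descent but iterated semiorthogonal decomposition. For each $\theta\in\Theta$ the restriction $J_\theta^*$ to the nearby category $\mu\Sh_{L^\times(\theta)}(M^\times)\simeq\Coh(\BP^{n-1})$ has fully faithful adjoints $J_{\theta!}, J_{\theta*}$, and $\ker(J_\theta^*)=\mu\Sh_{L(\Theta\setminus\{\theta\})}(M)$. This yields
\[
\mu\Sh_{L(\Theta)}(M)=\bigl\langle\,\mu\Sh_{L(\Theta\setminus\{\theta\})}(M),\ \Coh(\BP^{n-1})\,\bigr\rangle,
\]
and peeling off $r-1$ arms in cyclic order leaves a single vanishing category $\Coh(\BP^{n-2})$. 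The gluing functors are the compositions $J_{\theta'}^*J_{\theta!}$, which Theorem~\ref{thm adjoints} identifies (after the monodromy normalization of Lemma~\ref{lem par trans}) with the identity on $\Coh(\BP^{n-1})$ between adjacent arms and with $i_*$ at the final step; this is exactly the $A_{r-1}$-quiver presentation defining $\cM(r)$. Your ``repackaging'' step was attempting to recover this structure, but it cannot be extracted from the closed-cover limit you set up.
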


\begin{remark}
The theorem  fits naturally into the formalism of perverse schobers discussed
immediately below, in particular the semiorthogonal decompositions of spherical pairs and 
their higher analogues. 
It reflects what one 
expects to find 
by taking the $r$th power of the superpotential
of a Landau-Ginzburg $A$-model with a single critical value: its branes should consist of an $A_{r-1}$-quiver of objects from the nearby category augmented by an object of the vanishing category. 
In the most basic example,  
for the Landau-Ginzburg $A$-model with $M= \BC$ and $W=z^r$ (the case $n=1$ of the theorem),
the vanishing category is trivial, and the nearby category is $\Perf_k$.
Thus its 
branes  form
perfect modules over the $A_{r-1}$-quiver
(for more discussion, see for example~\cite{DK, Ncyc}).
\end{remark}

%%%%%%

\subsection{Sketch of arguments}
We outline our arguments here, highlighting the two key notions of perverse schobers and monoidal symmetry. 
They formalize basic principles implicit  in Landau-Ginzburg models and more broadly homological mirror symmetry.
The first encodes the relation between the nearby and vanishing geometry
of branes; the second encodes the
convolution symmetry of branes corresponding  to tensor product  under $T$-duality. 

As outlined above, our starting point  is the restriction functor
$$
\xymatrix{
J^*:\mu\Sh_L(M) \ar[r] & \mu\Sh_{L^\times}(M^\times)
}
$$

Following  Kapranov-Schechtman~\cite{KapS}, we interpret this as part of the diagram defining a perverse schober
on the complex plane $\BC$, with one singular point $0\in \BC$, and a single cut $\BR_{\geq 0} \subset \BC$.
Recall that perverse sheaves on the complex plane $\BC$, with one singular point $0\in \BC$,  are equivalent to
 diagrams of vector spaces 
$$
\xymatrix{
\Phi\ar@<0.75ex>[r]^-p & \ar@<0.75ex>[l]^-q \Psi 
}
$$
such that the  endomorphisms
$$
\xymatrix{
m_\Phi = \id_\Phi - qp
&
m_\Psi = \id_\Psi - pq}
$$
are invertible. The equivalence is given by assigning to a perverse sheaf $\cF$ its nearby and vanishing cycles
$\Psi = \psi_0(\cF)$, $\Phi = \phi_0(\cF)$ at the origin $0\in \BC$ equipped with their canonical maps. In particular,  
 the composite endomorphisms $m_\Psi$, $m_\Phi$ are their monodromy transformations.

Perverse schobers are a categorical analogue of perverse sheaves inspired by the above description.
By definition, as recalled in Section~\ref{ss sph f}, a perverse schober 
on the complex plane $\BC$, with one singular point $0\in \BC$, and a single cut $\BR_{\geq 0} \subset \BC$,
 is simply a spherical functor $$
\xymatrix{
S:\cD_\Phi \ar[r] & \cD_\Psi
}
$$
from a ``vanishing  category" to a ``nearby category".
In one of several equivalent formulations, this means that $S$ fits into an adjoint triple $(S^\ell, S, S^r)$
such that the monodromy functors
$T_{\Phi, r}$, $ T_{\Psi, r}$, $T_{\Psi, \ell}$, $T_{\Phi, \ell}$
defined by the triangles of the units and counits of the adjunctions
$$
\xymatrix{
T_{\Phi, r} = \Cone(u_r)[-1] \ar[r] &  \id_\Phi \ar[r]^-{u_r} & S^r S 
&
SS^r \ar[r]^-{c_r} & \id_\Psi \ar[r] &  \Cone(u_r) = T_{\Psi, r} 
}
$$
$$
\xymatrix{
T_{\Psi, \ell} = \Cone(u_\ell)[-1] \ar[r] &  \id_\Psi \ar[r]^-{u_\ell} & S S^\ell 
&
S^\ell S \ar[r]^-{c_\ell} & \id_\Phi \ar[r] &  \Cone(u_\ell) = T_{\Phi, \ell} 
}
$$
are equivalences. 

%We recommend  the example of the pushforward
%$$
%\xymatrix{
%i_*:\Coh(\BP^{n-2}) \ar[r] & \Coh(\BP^{n-1})
%}
%$$
%which fits into the adjoint triple $(i^*, i_*, i^!)$.

Our main technical work is to show that $J^*$ extends to a spherical functor,
in particular that it fits into an adjoint triple $(J_!, J^*, J_*)$. We will highlight below the primary arguments proving this, but it is worth mentioning here
that we do not establish it directly. We rather introduce a larger Lagrangian skeleton living over the real line $\BR\subset \BC$,
 and construct a perverse schober 
 on the complex plane $\BC$, with one singular point $0\in \BC$,
 and the  double cut $\BR\subset \BC$.

By definition, as recalled in Section~\ref{ss sph p}, a perverse schober 
on the complex plane $\BC$, with one singular point $0\in \BC$,
and the double cut $\BR\subset \BC$ is a spherical pair.
Spherical functors and spherical pairs are the cases $n=1$ and $n=2$
of structures one can formulate for any number of cuts in the complex plane. They are categorical analogues of the quiver presentations of perverse sheaves resulting from such cuts. Given the structure for some number of cuts, 
one can naturally form the structure  for another number of cuts. In particular, a spherical pair always gives rise to a spherical functor,
but there is an advantage to a spherical pair: its axioms do not explicitly involve units and counits of adjunctions.
Roughly speaking, from the perspective of Fukaya categories, it  encodes  the pseudo-holomorphic ``teardrops" defining the units and counits without explicitly counting them.

Once we have  that $J^*$ extends to a spherical functor, we may proceed to
monadically calculate  the Landau-Ginzburg vanishing category $\mu\Sh_L(M)$  in terms of the  
known nearby category
$$
\xymatrix{
\mu\Sh_{L^\times}(M^\times)\ar[r]^-\sim & 
\Coh(\BP^{n-1})
}
$$ 
%and the nearby monodromy $T_{\Psi, \ell}$. 

We first need that $J^*$ is conservative, i.e.~that its kernel is trivial, which is an immediate consequence
of dimension bounds for the support of microlocal sheaves. This special property is an analogue of a perverse sheaf
having no sections strictly supported at the origin $0\in \BC$. 

We next calculate that 
the nearby monodromy $T_{\Psi, \ell}$ corresponds to tensoring with the line bundle $\cO_{\BP^{n-1}}(-1)$, and hence
the monad 
$
J^* J_!,
$
presented as the cone of a morphism of functors 
$$
\xymatrix{
T_{\Psi, \ell} \ar[r] &    \id_{\Psi},
}
$$
corresponds to tensoring with the cone of a morphism of line bundles
$$
\xymatrix{
\cO_{\BP^{n-1}}(-1) \ar[r]^-s &  \cO_{\BP^{n-1}}
}
$$
To see that $s$ is indeed a generic morphism, equivalent to 
$s([x_1, \ldots, x_n]) = x_1 + \cdots + x_n$,
we observe that it must be nonzero at each coordinate point 
of $\BP^{n-1}$.
This is a manifestation of the fact that the superpotential $W$ is a submersion at the generic  point of 
each coordinate hyperplane of $M$, and hence the Landau-Ginzburg model vanishes there (see the discussion of the case $n=1$ in Section~\ref{intro low-dim} below).

Finally, we  verify the remaining technical hypotheses
of Lurie's Barr-Beck theorem~\cite{LurieHA}, appealing to the explicit form of the monad described above.

\medskip

Now let us return to the assertion that $J^*$ 
 is a spherical functor, and discuss the key role of symmetry in our arguments.

Recall that we set $T=(S^1)^n$. Let us focus %with Lie algebra $\ft = \BR^n$.
on the Hamiltonian $T$-action on $M=\BC^n$ by coordinate rotation.

% 
%$$
%\xymatrix{
%(\theta_1, \ldots, \theta_n) \cdot (z_1, \ldots, z_n) = (e^{i \theta_1} z_1, \ldots, e^{i\theta_n} z_n)
%}
%$$
%with moment map
%$$
%\xymatrix{
%\mu:M \ar[r] & \ft^* &
%\mu(z_1, \ldots, z_n) = (r_1^2/2, \ldots, r_n^2/2)
%}$$

By the formalism of microlocal kernels and transforms developed in~\cite{KS}, one expects constructible sheaves on $T$ to give endofunctors of microlocal sheaves on $M$. To make this precise, we 
must take into account the well-known ``metaplectic anomaly"  appearing for example in identities for the Fourier-Sato transform as encoded by the Maslov index. At the most concrete level,
it reflects the fact that rotating a graded Lagrangian line $\ell$  in the plane $\BC$ by a full circle $2\pi$ will return the same line $\ell$ but with grading shifted by two.

Consider the $\BZ$-cover $T' \to T$ defined by the diagonal character $\delta:T\to S^1$.
There is a canonical lift $T^\circ \subset T'$, since by definition $T^\circ\subset T$
is the kernel of $\delta$, and for concreteness, one can choose an isomorphism $T'\simeq T^\circ \times \BR$ if one likes.
Following~\cite{KS}, the monoidal dg category $\Sh_c(T')$ of constructible sheaves on $T'$ with compact support does indeed act on  
microlocal sheaves on $M$.
But the action does not factor through constructible sheaves on $T$ since
if we translate an object $\cA\in \Sh_c(T')$ by an element  $m \in \BZ \simeq \ker(T'\to T)$,
its action  on microlocal sheaves will be shifted by $[2m]$.

With this in hand, we still must address that the endofunctors given by most objects of $\Sh_c(T')$ do not preserve the support condition given by the Lagrangian skeleton $L\subset M$.
%Let us first explore this for the monoidal subcategory $\Sh(T^\circ) \subset \Sh_c(T)$.
To proceed, we recall that a governing property of the coherent-constructible correspondence is that the equivalence 
$$
\xymatrix{
 \Sh_{\Lambda_\Sigma}( T^{\circ})  \ar[r]^-\sim & \Coh(\BP^{n-1})
}
$$
is symmetric monoidal with respect to convolution and tensor product.
We show that convolution by objects of $\Sh_{\Lambda_\Sigma}( T^{\circ}) $,
regarded as objects of   $\Sh_c(T')$ via the lift $T^\circ\subset T'$,
provides  endfunctors of the nearby and vanishing categories
compatible with the  restriction $J^*$.
%
%$$
%\xymatrix{
%J^*:\mu\Sh_L(M) \ar[r] & \mu\Sh_{L^\times}(M^\times)
%}
%$$

By construction, the 
%equivalence outlined earlier
%$$
%\xymatrix{
% \mu\Sh_{L^\times}(M^\times)\ar[r]^-\sim & \Sh_{\Lambda_\Sigma}( T^{\circ}) 
%}
%$$
%exhibits the
 nearby category $ \mu\Sh_{L^\times}(M^\times)$ is a free rank one module over  $\Sh_{\Lambda_\Sigma}( T^{\circ})$.
Thus to define adjoints to $J^*$, it suffices to define their restrictions to a generator
for the monoidal action,
for example, to the microlocal sheaf $\cA\in  \mu\Sh_{L^\times}(M^\times)$
corresponding to the structure sheaf $\cO_{\BP^{n-1}} \in \Coh(\BP^{n-1})$. In our main technical step,
we construct explicit constructible sheaves representing
$J_!\cA, J_*\cA\in  \mu\Sh_{L}(M)$, and confirm the adjunction identities.

Finally, to verify the axioms of a spherical functor and to calculate the monodromy transformations
$T_{\Phi, r}$, $ T_{\Psi, r}$, $T_{\Psi, \ell}$, $T_{\Phi, \ell}$, we appeal to the further symmetry given by a natural multiplicative system of objects $\cA_\tau \in \Sh_c(T')$, indexed by $\tau\in \BR$.
They come equipped with canonical equivalences $\cA_{\tau_1} \star \cA_{\tau_2} \simeq \cA_{\tau_1+\tau_2}$, and $\cA_0$ is  the skyscraper  $k_e$ at the identity $e\in T'$.

Convolution by the multiplicative system  provides a parallel transport of microlocal sheaves on $M$ so that if the support of a microlocal sheaf $\cF$ lies over a cut $e^{i\theta} \BR_{\geq 0} \subset \BC$, then the support of $\cA_\tau\star \cF$ will lie 
over the cut $e^{i(\theta+\tau)} \BR_{\geq 0} \subset \BC$.
When $\tau\in 2\pi \BZ$, we can think of $\cA_\tau$  as the ``convex hull" of the the Dehn twists/Hecke operators for the individual coordinate directions.
It  preserves  
the support condition given by the Lagrangian skeleton $L\subset M$, and
specifically when $\tau = 2\pi$,
enables us to see the monodromy  transformation $T_{\Psi, \ell}$ corresponds to tensor product with $\cO_{\BP^{n-1}}(-1) \in \Coh(\BP^{n-1})$.

%%%%%%%%%%%%%%%%%%%%%%%%%%%%%%%%%%%%%%%%%%%%%%%%%%%%%%

\subsection{Low-dimensional cases}\label{intro low-dim}

The one and two--dimensional cases of our results  are well-known and easy to deduce due to the fact that the critical locus $\{dW = 0\}$
is either empty and $W$ is a submersion ($n=1$), or an isolated point and $W$ is Morse ($n=2$).

Nevertheless, we include a brief discussion of these cases
to help guide the interested reader.
At minimum, our general arguments  appeal to the simple geometry appearing in the submersive case ($n=1$),
 and for completeness it is worth
highlighting it here.
%It also affords an opportunity to play with some of the main structures
%in a simple setting.

\subsubsection{Submersive case: $n=1$} 
The Landau-Ginzburg $A$-model with $M= \BC$ and $W=z^r$, for any $r\geq 1$, is well understood: 
its dg category of branes is equivalent to perfect modules over the $A_{r-1}$-quiver
(for further discussion, see for example~\cite{DK, Ncyc}).
In particular, in our situation where $r=1$, its branes form the zero category,
reflecting the fact that a submersion should not have any nontrivial vanishing geometry. 
%Nevertheless, to track our arguments in higher dimensions, it  may be useful to run them here and see what results.

%First, let us state what the main ingredients are.
In the setting of microlocal sheaves, it is easy to see  that the branes form  the zero category.
Our Lagrangian skeleton is the closed non-negative real ray $L = \BR_{\geq 0} \subset \BC = M$,
or alternatively any closed ray emanating from the origin $0\in M$.
Thus we expect the vanishing category to be the zero category  $\mu\Sh_L(M)= 0$, since no nontrivial microlocal sheaves
have support a manifold with nonempty boundary. 
%Similarly, we expect the nearby category to be equivalent to perfect modules $\mu\Sh_{L^\times}(M^\times) = \Perf_k$, 
%since microlocal sheaves along a manifold can be identified with local systems up to invertible twists.

%
%Thus at the end of the day, we are studying the diagram
%$$
%\xymatrix{
%J^*:0 = \mu\Sh_L(M) \ar[r] & \mu\Sh_{L^\times}(M^\times) \simeq \Perf_k
%}
%$$
%but let us pretend we do not know that the vanishing category is indeed zero.
%
To verify this, let us say more precisely
what we mean by microlocal sheaves.
We understand microlocal sheaves on $M= \BC$ to be microlocal sheaves on the conic open ball
$$
\xymatrix{
\Omega = \{(x, t), (\xi, \eta) \, |\, \eta>0\} \subset T^*\BR^2 
}
$$
obtained by taking the symplectification of the contactification of $M$.
More specifically, we understand microlocal sheaves supported along a conic Lagrangian subvariety $R \subset M$,
so by necessity a finite union of closed rays emanating from the origin $0\in M$,
to be microlocal sheaves on  the associated conic Lagrangian surface
$$
\xymatrix{
\Lambda_R = \{(x, xy), (-\eta y, \eta) \, |\, x+ i y \in R, \, \eta >0\} \subset \Omega
}
$$
obtained by trivially lifting $R$ to a Legendrian in the contactification and then taking its inverse-image
in the symplectification.
Note that since $R \subset M$ is invariant under scaling,  $\Lambda_R \subset \Omega$ is invariant under the additional Hamiltonian  scaling
$$
\xymatrix{
r\cdot ((x, t), (\xi, \eta)) = ((rx, r^2 t), (r^{-1} \xi, r^{-2} \eta))
&
r\in \BR_{>0}
}
$$
Therefore all of the structure of such microlocal sheaves is captured in a small conic neighborhood of the 
central codirection
$\{((0, 0), (0, \eta)) \,|\, \eta>0\} \subset \Omega$.

Now starting with the Lagrangian skeleton given by the closed non-negative real ray $L = \BR_{\geq 0} \subset \BC = M$, we arrive at  the   conic Lagrangian surface
$$
\xymatrix{
\Lambda= \{(x, 0), (0, \eta) \, |\, x\geq 0, \eta >0\} \subset \Omega
}
$$
Note that $\Lambda$  is diffeomorphic to the manifold with nonempty boundary $\BR_{\geq 0} \times \BR_{>0}$, and so indeed the vanishing category $\mu\Sh_L(M)$,
realized in the form $\mu\Sh_\Lambda(\Omega)$, is the zero category.

Alternatively, starting
with the closed conic ray $L(\theta) = e^{i\theta}\cdot \BR_{\geq 0} \subset \BC = M$,  with $\theta\not = \pm \pi/2$, 
of slope $c =  \sin(\theta) /\cos(\theta)$ and
 horizontal direction $d = \cos(\theta)/|\cos(\theta)|$, 
   the associated  conic Lagrangian surface takes the form
$$
\xymatrix{
\Lambda (\theta)=  \{(x,  c x^2), (-\eta cx, \eta) \, |\, dx\geq 0, \, \eta>0\} \subset \Omega
}
$$
In the special case $\theta = \pm \pi/2$, when 
 $L(\pm\pi/2) = \pm i \cdot \BR_{\geq 0} \subset \BC = M$, 
   the associated  conic Lagrangian surface
takes the form
$$
\xymatrix{
\Lambda (\pm\pi/2)=  \{(0, 0), ( \mp \eta y, \eta) \, |\, y\geq 0,  \, \eta>0\} \subset \Omega
}
$$
Rotations of $M=\BC$ identify the original rays and corresponding rotations of $\Omega$ identify the associated 
conic Lagrangian surfaces. 
In any case, each  $\Lambda(\theta)$  is diffeomorphic to $\BR_{\geq 0} \times \BR_{>0}$, and so the vanishing category $\mu\Sh_L(M)$,
realized in the form $\mu\Sh_{\Lambda(\theta)}(\Omega)$, is the zero category.

\begin{remark}  
For the Landau-Ginzburg $A$-model with $M= \BC$ and $W=z^r$, for any $r\geq 1$, we could take as Lagrangian skeleton the union of $r$ closed rays.

For example, for $r=2$, we could take the union $L_\BR = L(0) \cup L(\pi)$ 
of the two real rays, and work with microlocal sheaves on the associated conic Lagrangian surface 
$$
\xymatrix{
\Lambda_\BR =\Lambda(0) \cup \Lambda(\pi)
= \{(x, 0), (0, \eta) \, |\,  \eta>0\} \subset \Omega
}
$$
Note that $\Lambda_\BR$  is diffeomorphic to the manifold $\BR \times \BR_{>0}$, and thus 
the vanishing category takes the expected form
$\mu\Sh_{L_\BR}(M) \simeq \Perf_k$.

Alternatively, we could take the union $L_{i\BR} = L(\pi/2) \cup L(-\pi/2)$
 of the two imaginary rays,
  and work with microlocal sheaves on the associated conic Lagrangian surface 
$$
\xymatrix{
\Lambda_{i\BR} =\Lambda(\pi/2) \cup \Lambda(-\pi/2)
= \{(0, 0), (y, \eta) \, |\,  \eta>0\} \subset \Omega
}
$$
Rotation of $M= \BC$ by $\pi/2$ takes $L_\BR$ to $L_{i\BR}$
and
a corresponding rotation of $\Omega$ takes  $\Lambda_\BR$ to $\Lambda_{i\BR}$.
This leads to a natural  Fourier-Sato type equivalence  of the vanishing categories
$$
\xymatrix{
\mu\Sh_{L_\BR}(M) \ar[r]^-\sim & \mu\Sh_{L_{i\BR}}(M)
}
$$

Going further, rotation by $\pi$   leads to iterating the above equivalence twice, and results in the auto-equivalence of
$\mu\Sh_{L_\BR}(M) \simeq \Perf_k$ given by tensoring with the invertible shifted orientation line 
$\orient_{L_\BR}[1]$.
Rotation by $2\pi$ leads to iterating it four times, and thus results in the auto-equivalence given by the 
shift~$[2]$ alone. This is the most basic instance of the ``metaplectic anomaly" found in the monodromy
of the vanishing category.
%
%
%
%Note that rotation by $\pi$ only provides an auto-equivalence since we happen to have 
%chosen a Lagrangian skeleton $L_\BR$ invariant under it. From the 
%perspective of the original Landau-Ginzburg $A$-model, rotation by $2\pi$ and the resulting
%cohomological shift $[2]$ is the only meaningful auto-equivalence.
%
%
% 

\end{remark}

%%%%%%%%%%%%%%%%%%%%%%%%%%%%%%%%%%%%%%%%%%%%%%%%%%%%%%

\subsubsection{Morse case: $n=2$}

When $M=\BC^2$ and $W = z_1 z_2$, the dg category of the Landau-Ginzburg $A$-model
 will be equivalent to perfect modules $\Perf_k$.
This reflects the fact that a single Morse critical point has a smooth vanishing thimble
and otherwise is a submersion.

%First, let us state what the main ingredients are.
Following our general constructions, we work with a natural symplectic identification of the nearby fiber $M_1 = W^{-1}(1) \simeq \BC^\times$  with the cotangent bundle $T^* S^1$ of the vanishing circle.
We start with the Lagrangian skeleton $L_1 \subset M_1$ given by the union $T^*_\cS S^1 \subset  T^*S^1$ of the conormal bundles  of the stratification $\cS$ by the point $0\in S^1$ and its complement $S^1 \setminus \{0\}$.
We then take the Lagrangian skeleton  $L \subset   M$ to be the closure of the  positive real scalings of $L_1 \subset M_1$. 
%It follows that $L = (\BR_{> 0}\cdot L_1) \cup L_0$ where the boundary $L_0 \subset \BC^2 = M$ consists of pairs $(r_1, r_2)$ with $r_1,r_2 \geq 0$, $r_1 r_2 = 0$.

Away from the vanishing thimble $L_c \subset L$, given by the cone over the vanishing circle $S^1 \subset L_1$, the Lagrangian skeleton $L \subset M$ is diffeomorphic to the manifold with boundary $\BR_{\geq 0 } \times (L_1 \setminus S^1)$.
Thus any microlocal sheaf supported along $L\subset M$ must be trivial away from $L_c \subset L$.
In fact, if we start with an arbitrary Lagrangian skeleton $L_1\subset  M_1$, 
and similarly form the Lagrangian skeleton  $L \subset   M$, the same argument will apply:
since the superpotential is
  a submersion away from $L_c\subset M$, we will find that $L\setminus L_c$ is diffeomorphic to
  $\BR_{\geq 0} \times (L_1 \setminus S^1)$.
    Thus any microlocal sheaf supported along $L\subset M$ must be trivial away from $L_c \subset L$,
  and we can assume that 
 $L_1\subset  M_1$ reduces to the vanishing circle alone,
 so that $L\subset M$ is simply the vanishing thimble. 

Finally, the  vanishing thimble itself $L_c \subset L$ is   diffeomorphic to $\BR^2$,
and
so indeed the vanishing category admits the expected description  $\mu\Sh_L(M)\simeq \Perf_k$.
Let us place this within the mirror equivalence for the nearby category
$$
\xymatrix{
\mu\Sh_{L_1}(M_1) \simeq\Sh_{\cS}(S^1)\ar[r]^-\sim &  \Coh(\BP^1)
}
$$
%for the $A$-model of the nearby fiber.
More specifically, let us discuss how the natural restriction  
$$
\xymatrix{
\mu\Sh_L(M)\ar[r]^-\sim & \mu\Sh_{L_1}(M_1) 
}
$$
corresponds to the pushforward
$$
\xymatrix{
i_*:\Perf_k \simeq \Coh(pt) \ar[r] & \Coh(\BP^1)
}
$$
along the inclusion $i:pt \to \BP^1$ of a point not equal to $0, \oo\in \BP^1$.

First, let us take a direct approach available in this dimension. Under the mirror equivalence for the nearby category,
skyscraper sheaves at points $\lambda\in \G_m = \BP^1\setminus\{0, \infty\}$ correspond to rank $1$,
monodromy $\lambda$ local systems on the vanishing circle $S^1 \subset M_1$. 
And rank $1$ local systems on the vanishing thimble $L_c \subset M$ restrict to trivial rank $1$ local systems on the vanishing circle $S^1 \subset T^*S^1 \simeq M_1$. Thus under suitable
conventions for choosing the  equivalence
$\mu\Sh_{L_1}(M_1) \simeq\Sh_{\cS}(S^1),
$
the inclusion $i:pt \to \BP^1$ will be of the point $1\in\G_m = \BP^1\setminus\{0, \infty\}$

In higher dimensions, we will invoke a generalization of the following argument.
Under the coherent-constructible equivalence for the nearby category,
the restriction of a microlocal sheaf 
 to the non-zero locus of the conormal line $T^*_{0} S^1 \subset T^* S^1 \simeq  M_1$ corresponds to the restriction
of a coherent sheaf to the points $0, \oo\in \BP^1$. Since any object of the vanishing category
must be trivial away from the vanishing thimble $L_c \subset L$,  in particular it must be trivial  along the  non-zero locus of the conormal line $T^*_0 S^1 \subset T^* S^1 \simeq  M_1$. Thus the inclusion $i:pt \to \BP^1$ must be
 of a point not equal to $0, \oo\in \BP^1$.

%%%%%%%%%%%%%%%%%%%%%%%%%%%%%%%%%%%%%%%%%%%%%%%%%%%%%%

\subsection{Acknowledgements}
I  thank  D. Auroux, D. Ben-Zvi, M. Kontsevich, J. Lurie, N. Rozenblyum, V. Shende, N. Sheridan, D. Treumann, H. Williams,
and E. Zaslow for their interest, encouragement, and valuable comments.
% I  also thank  for many  discussions on a broad range of related and unrelated topics.
%I am additionally grateful to D. Treumann for generously creating the pictures appearing in the figures.
Finally, I am grateful to the NSF for the support of grant DMS-1502178.

%%%%%%%%%%%%%%%%%%%%%%%%%%%%%%%%%%%%%%%%%%%%%%%%%%%%%%%
%%%%%%%%%%%%%%%%%%%%%%%%%%%%%%%%%%%%%%%%%%%%%%%%%%%%%%%
%%%%%%%%%%%%%%%%%%%%%%%%%%%%%%%%%%%%%%%%%%%%%%%%%%%%%%%

\section{Perverse Schobers on a disk}\label{s ps}

This section is a synopsis of some of the theory proposed by Kapranov-Schechtman~\cite{KapS}.
In particular, we recall the notion of a perverse schober in its appearances as a spherical functor and  spherical pair.

%%%%%%%%%%%%%%%%%%%%%%%%%%%%%%%%%%%%%%%%%%%%%%%%%%%%%%%

\subsection{Single cut: spherical functors}\label{ss sph f}

Let $\cD_\Phi, \cD_\Psi$ be pre-triangulated dg categories. 

Suppose given a dg functor
$$
\xymatrix{
S:\cD_\Phi \ar[r] & \cD_\Psi
}
$$ 
that admits both a left and right adjoint so that we have adjunctions 
$
%\xymatrix{
(S^\ell, S)
$
and
$
(S, S^r)
%S:\cD_\Phi \ar@{<->}[r] & \cD_\Psi:S^r
%&
%S^\ell:\cD_\Psi \ar@{<->}[r] & \cD_\Phi:S
%}
$
with units and counits 
$$
\xymatrix{
u_r:\id_\Phi\ar[r] & S^r S & c_r: S S^r \ar[r] & \id_\Psi
}
$$
$$
\xymatrix{
u_\ell:\id_\Psi\ar[r] & S S^\ell & c_\ell: S^\ell S \ar[r] & \id_\Phi 
}
$$

Form the natural triangles of functors
$$
\xymatrix{
T_{\Phi, r} = \Cone(u_r)[-1] \ar[r] &  \id_\Phi \ar[r]^-{u_r} & S^r S 
&
SS^r \ar[r]^-{c_r} & \id_\Psi \ar[r] &  \Cone(u_r) = T_{\Psi, r} 
}
$$
$$
\xymatrix{
T_{\Psi, \ell} = \Cone(u_\ell)[-1] \ar[r] &  \id_\Psi \ar[r]^-{u_\ell} & S S^\ell 
&
S^\ell S \ar[r]^-{c_\ell} & \id_\Phi \ar[r] &  \Cone(u_\ell) = T_{\Phi, \ell} 
}
$$
\begin{defn}
We call $S:\cD_\Phi \to \cD_\Psi$ a {\em spherical functor} if it satisfies:

(SF1) $T_{\Psi, r}$ is an equivalence.

(SF2) The natural composition
$$
\xymatrix{
S^r \ar[r] & S^r S S^\ell \ar[r] & T_{\Phi, r} S^\ell[1]
}
$$
is an equivalence.
\end{defn}

\begin{remark} Consider the additional conditions: 

 (SF3) $T_{\Phi, r}$ is an equivalence.

(SF4) The natural composition
$$
\xymatrix{
S^\ell  T_{\Psi, r}[-1]  \ar[r] & S^\ell S S^r \ar[r] & S^r
}
$$
is an equivalence.

A theorem of Anno-Logvinenko~\cite{AL} establishes that any two of the conditions  (SF1) -- (SF4) imply the other two.
\end{remark}

\begin{remark}
For a spherical functor, 
 $T_{\Phi, \ell}$, $T_{\Psi, \ell}$ are 
 respective inverses of 
  $T_{\Phi, r}$, $T_{\Psi, r}$.
\end{remark}

\begin{example}[Smooth hypersurfaces]\label{ex hyper}
Let $X$ be a smooth variety. Let $\cL_X \to  X$ be a line bundle and $\sigma:X\to \cL_X$ a section transverse to the zero section. Let $Y = \{\sigma = 0\}$ be the resulting smooth hypersurface and $i:Y \to X$ its  inclusion.

Let $\Coh(Y)$, $\Coh(X)$  denote the respective dg categories of coherent sheaves.
We will check that the pushforward $i_*:\Coh(Y) \to \Coh(X)$ is a spherical functor.

%Let $\cO_Y \in \Coh(Y)$, $\cO_X \in \Coh(X)$ denote the respective structure sheaves. 
Regard the line bundle $\cL_X$ as an object of $\Coh(X)$,
and its restriction $\cL_Y = i^*\cL_X$ as an object of $\Coh(Y)$.
Regard the section $\sigma$ as a morphism $\sigma:\cO_X\to \cL_X$,
which by duality gives a morphism $\sigma^\vee:\cL_X^\vee\to \cO_X$.

Consider the natural adjunctions 
$$
\xymatrix{
i_*:\Coh(Y) \ar@{<->}[r] &\Coh(X):i^!
&
i^*:\Coh(X) \ar@{<->}[r] & \Coh(Y):i_*
}
$$
Note  the functorial identities
$$
\xymatrix{
i^*(-) \simeq \cO_Y \otimes_{\cO_X} (-)  
&
i^!(-) \simeq  \cL_Y[-1]\otimes_{\cO_X} (-) 
}
$$

The natural triangles of functors associated to the units and counits of the adjunctions
are given by tensoring with the respective triangles of objects
$$
\xymatrix{
  \cL_Y[-2] \ar[r]^-0 &  \cO_Y  \ar[r] &
  \cO_Y \oplus \cL_Y[-1]
&
 \cL_Y[-1] \ar[r] &  \cO_X  \ar[r]^-\sigma &
 \cL_X
}
$$
$$
\xymatrix{
\cL_X^\vee \ar[r]^-{\sigma^\vee} &  \cO_X  \ar[r] & \cO_Y
&
 \cO_Y \oplus \cL_Y[1] \ar[r]  &  \cO_Y 
 \ar[r]^-0 &   \cL_Y[2]
}
$$

Thus if we set $\cD_\Phi = \Coh(Y)$,  $\cD_\Psi= \Coh(X)$ and $S= i_*$,  we find that
$$
\xymatrix{
T_{\Psi, r}(-)  \simeq  \cL_X \otimes_{\cO_X} (-)  &  
T_{\Phi, r}(-)  \simeq  \cL_Y[-2] \otimes_{\cO_Y}(-)
}
$$
are both equivalences. Thus  (SF1) and  (SF3) hold so that $S=i_*$ is a spherical functor.
\end{example}

%%%%%%%%%%%%%%%%%%%%%%%%%%%%%%%%%%%%%%%%%%%%%%%%%%%%%%%

\subsection{Double cut: spherical pairs}\label{ss sph p}

\subsubsection{Semi-orthogonal decompositions}

Let $\cA$ be a pre-triangulated dg category, and $\cB \subset \cA$ a full pre-triangulated dg subcategory.

Let us denote by $J:\cB\to\cA$ the embedding. 
Introduce the full dg subcategories of left and right orthogonals
$$
\xymatrix{
{}^\perp \cB = \{A\in \cA \, |\, \Hom_\cA(A, B) \simeq 0, \mbox{ for all } B\in \cB\}
}
$$
$$
\xymatrix{
 \cB^\perp = \{A\in \cA \, |\, \Hom_\cA(B, A) \simeq 0, \mbox{ for all } B\in \cB\}
}
$$

One says that $\cB$ is left admissible, respectively right admissible, if $J$ admits a left adjoint $J^\ell:\cA\to \cB$,
respectively a right adjoint $J^r:\cA\to \cB$. If either holds, then we have the corresponding identity  ${}^\perp \cB = \ker(J^\ell)$,
respectively ${}^\perp \cB = \ker(J^r)$.
Moreover, 
we have a corresponding semi-orthogonal decomposition in the sense of a functorial triangle
$$
\xymatrix{
C = \Cone(u)[-1] \ar[r] & A \ar[r]^-u & J^\ell J A = B & B\in \cB, C\in {}^\perp \cB
}
$$
$$
\xymatrix{
B' = J J^rA \ar[r]^-c & A \ar[r] &   \Cone(c) = C' &   B'\in \cB,  C'\in  \cB^\perp 
}
$$

Note that if $\cB$ is left admissible, then ${}^\perp \cB$ is right admissible and $({}^\perp \cB)^\perp = \cB$.
Similarly, $\cB$ is right admissible, then $ \cB^\perp$ is left admissible and ${}^\perp (\cB^\perp) = \cB$. 

%%%%%%%%%%%%%%%%%%%%%%%%%%%%%%%%%%%%%%%%%%%%%%%%%%%%%%%

\subsubsection{Spherical pairs}

Suppose we have a diagram of pre-triangulated dg categories
$$
\xymatrix{
\cD^\circ_- & \ar[l]_-{J_{-}^*} \cD \ar[r]^-{J_+^*}&  \cD^\circ_+ 
}
$$

Suppose further that $J_-^*, J_+^*$ admit fully faithful left and right adjoints
so that we have adjoint triples
$$
\xymatrix{
 (J_{-!}, J_-^*, J_{-*})
&
(J_{+!}, J_+^*, J_{+*})
}
$$
Thus we have the right admissible  dg subcategories
$$
\xymatrix{
\cD^\circ_{-!} = J_{-!}(\cD_-^\circ)
&
\cD^\circ_{+!} = J_{+!}(\cD_+^\circ)
}
$$
and the left admissible dg subcategories
$$
\xymatrix{
\cD^\circ_{-*} = J_{-*}(\cD_-^\circ)
&
\cD^\circ_{+*} = J_{+*}(\cD_+^\circ)
}
$$

Introduce the dg subcategories
$$
\xymatrix{
\cD_- = \ker(J_+^*) = {}^\perp(\cD^\circ_{+*})
= (\cD^\circ_{+!})^\perp
&
\cD_+ = \ker(J_-^*)  = {}^\perp(\cD^\circ_{-*})
= (\cD^\circ_{-!})^\perp
}
$$
with embeddings denoted by 
$$
\xymatrix{
\cD_-  \ar[r]^-{I_{-!}} &  \cD & \ar[l]_-{I_{+*}}  \cD_+ 
}
$$
Note that $\cD_-, \cD_+$ are left and right admissible so that  we have adjoint triples
$$
\xymatrix{
 (I_{-}^*, I_{-!}, I_{-}^!)
&
(I^{*}_+, I_{+!}, I_{+}^!)
}
$$
and further that
$$
\xymatrix{
\cD_{-*}^\circ = (\cD_+)^\perp 
&
\cD_{-!}^\circ = {}^\perp(\cD_+)
&
\cD_{+*}^\circ = (\cD_-)^\perp 
&
\cD_{+!}^\circ = {}^\perp(\cD_-)
}
$$

\begin{defn}
 A {\em spherical pair} is a diagram
$$
\xymatrix{
\cD^\circ_- & \ar[l]_-{J_{-}^*} \cD \ar[r]^-{J_+^*}&  \cD^\circ_+ 
}
$$
of functors admitting fully faithful left and right adjoints so that:

 (SP1)
The compositions 
$$
\xymatrix{
J_+^* J_{-*}:  \cD^\circ_-\ar[r] &  \cD^\circ_+
&
J^*_- J_{+*}:  \cD^\circ_+\ar[r] &  \cD^\circ_-
}
$$
are equivalences.

 (SP2) The compositions 
$$
\xymatrix{
I^!_+ I_{-!}: \cD_-\ar[r] & \cD_+
&
I_-^! I_{+!}: \cD_+\ar[r] & \cD_-
}
$$
are equivalences.

\end{defn}

\begin{remark}
If the compositions of (SP1) are equivalences, their respective  inverses are given by the adjoint compositions
$$
\xymatrix{
J_-^* J_{+!}:  \cD^\circ_+\ar[r] &  \cD^\circ_-
&
J^*_+ J_{-!}:  \cD^\circ_-\ar[r] &  \cD^\circ_+
}
$$
and similarly 
if the compositions of (SP2) are equivalences, their respective  inverses are given by the adjoint compositions
$$
\xymatrix{
I_-^* I_{+!}:  \cD_+\ar[r] &  \cD_-
&
I^*_+ I_{-!}:  \cD_-\ar[r] &  \cD_+
}
$$\end{remark}

\begin{lemma}\label{lemma cons sph pair}
Suppose  the compositions
$$
\xymatrix{
J_{-}^*I_{-!}:\cD_-\ar[r] & \cD_-^\circ
&
J_{+}^*I_{+!}:\cD_+\ar[r] & \cD_+^\circ
}
$$ 
are conservative. Then (SP1) implies (SP2).
\end{lemma}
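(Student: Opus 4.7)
My plan is to deduce (SP2) from (SP1) plus the conservativity hypothesis by exhibiting the canonical left adjoint $I_-^* I_{+!}: \cD_+ \to \cD_-$ of $I_+^! I_{-!}: \cD_- \to \cD_+$ as a quasi-inverse, and then obtaining the equivalence $I_-^! I_{+!}$ by the symmetric argument with the roles of $+$ and $-$ exchanged. The strategy reduces to two functorial vanishings which follow from (SP1) together with conservativity.

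In the first step, I apply $I_-^*$ to the semi-orthogonal triangle $I_{+!} I_+^! I_{-!} X \to I_{-!} X \to J_{-*} J_-^* I_{-!} X$ to identify the cofiber of the counit $I_-^* I_{+!} I_+^! I_{-!} X \to X$ with $I_-^* J_{-*} J_-^* I_{-!} X$; similarly, applying $I_+^!$ to $J_{+!} J_+^* I_{+!} Y \to I_{+!} Y \to I_{-!} I_-^* I_{+!} Y$ identifies the cofiber of the unit $Y \to I_+^! I_{-!} I_-^* I_{+!} Y$ with $I_+^! J_{+!} J_+^* I_{+!} Y$. So it suffices to establish the functorial vanishings
\[
I_+^! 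J_{+!} = 0 \in \Fun(\cD_+^\circ, \cD_+), \qquad I_-^* J_{-*} = 0 \in \Fun(\cD_-^\circ, \cD_-).
\]

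The vanishing $I_+^! J_{+!} = 0$ will be proved as follows. Apply the triangle $I_{+!} I_+^! A \to A \to J_{-*} J_-^* A$ with $A = J_{+!} W$ and then $J_+^*$, producing a triangle whose second arrow, after the canonical isomorphism $J_+^* J_{+!} W \simeq W$, takes the form $W \to J_+^* J_{-*} J_-^* J_{+!} W$. By (SP1), the pair $(J_-^* J_{+!}, J_+^* J_{-*})$ is an adjoint equivalence between $\cD_+^\circ$ and $\cD_-^\circ$, since $J_-^* J_{+!}$ is the canonical left adjoint to the equivalence $J_+^* J_{-*}$ and hence agrees with its quasi-inverse. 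The unit of this adjoint equivalence is a natural isomorphism, and by the standard formula for the unit of a composite adjunction it agrees (up to the fully-faithful iso $J_+^* J_{+!} W \simeq W$) with the displayed arrow. Therefore the displayed arrow is an isomorphism, the third term $J_+^* I_{+!} I_+^! J_{+!} W$ vanishes, and conservativity of $J_+^* I_{+!}$ yields $I_+^! J_{+!} W \simeq 0$. The parallel argument, starting from the triangle $J_{+!} J_+^* A \to A \to I_{-!} I_-^* A$ with $A = J_{-*} V$, applying $J_-^*$, and using conservativity of $J_-^* I_{-!}$, yields $I_-^* J_{-*} = 0$.

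Combined with the first step, the counit and unit of $(I_-^* I_{+!}, I_+^! I_{-!})$ are both natural isomorphisms, so $I_+^! I_{-!}$ is an equivalence; the symmetric argument with $+$ and $-$ exchanged gives the equivalence $I_-^! I_{+!}$, completing (SP2). The main subtle point I expect to need care is the naturality check identifying the arrow appearing in the triangle with the unit of the composite adjoint equivalence from (SP1); once this identification is in place, the remainder is formal manipulation of the standard semi-orthogonal triangles together with the conservativity hypothesis, which is precisely what converts vanishing in the open categories $\cD_\pm^\circ$ into vanishing in the closed categories $\cD_\pm$.
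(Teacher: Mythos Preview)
Your proof is correct and follows essentially the same strategy as the paper's: both arguments manipulate the semi-orthogonal triangles associated to the two decompositions, use (SP1) to recognize a unit/counit of the composite adjunction $(J_-^*J_{+!},\,J_+^*J_{-*})$ as an isomorphism, and invoke conservativity of $J_\pm^* I_{\pm!}$ to promote a vanishing in $\cD_\pm^\circ$ to a vanishing in $\cD_\pm$. The paper packages this as a single total-complex computation of $I_{+!}I_+^! I_{-!}I_-^* I_{+!}\cG$ that collapses to $I_{+!}\cG$, leaving the parallel identities to the reader; you instead isolate the two functorial vanishings $I_+^! J_{+!}=0$ and $I_-^* J_{-*}=0$ and then read off that the unit and counit of $(I_-^*I_{+!},\,I_+^!I_{-!})$ are isomorphisms. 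Your organization is arguably cleaner and makes the role of conservativity more transparent, but the mathematical content is the same.
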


\begin{proof}
Let $\cG \in \cD_+$. We will construct a functorial equivalence
$$
I_+^! I_{-!}I^*_-I_{+!}\cG  
\simeq  \cG  
$$
and leave the other parallel equivalences to the reader.

Let $\cF \in \cD$. By assumption, we have a triangle
$$
\xymatrix{
J_{+!}J^*_+\cF\ar[r] & \cF\ar[r] & I_{-!} I^*_-\cF
}
$$
and so can view $I_{-!}I^*_-\cF$ as the complex 
$$
\xymatrix{
J_{+!}J^*_+\cF[1] \ar[r] & \cF
}
$$
Again by assumption, we have a triangle
$$
\xymatrix{
  J_{-*}J_-^*J_{+!}J^*_+\cF[1] \ar[r] & J_{-*}J_-^* \cF \\ 
 \ar[u] J_{+!}J^*_+\cF[1] \ar[r] & \cF\ar[u]  \\
\ar[u] I_{+!}I_+^! J_{+!}J^!_+\cF[1] \ar[r] & I_{+!}I_+^! \cF\ar[u]
}
$$
and so can view $I_{+!}I_+^! I_{-!}I^*_-\cF$ as the total  complex 
$$
\xymatrix{
  J_{-*}J_-^*J_{+!}J^*_+\cF \ar[r] & J_{-*}J_-^* \cF[-1] \\ 
 \ar[u] J_{+!}J^*_+\cF[1] \ar[r] & \cF \ar[u]  
}
$$

Now set $\cF = I_{+!}\cG \in \cD_+$.
% so that 
%$$
%I_{+!}I_+^! I_{-!}I^*_-I_{+!}\cG  =I_{+!} I_+^! I_{-!}I^*_-\cF 
%$$
Then $J^*_-\cF \simeq J^*_-I_{+!}\cG \simeq 0$, so we can view 
$I_{+!}I_+^! I_{-!}I^*_-\cF$ as the  total complex 
$$
\xymatrix{
  J_{-*}J_-^*J_{+!}J^*_+\cF \ar[r] & 0 \\ 
 \ar[u] J_{+!}J^*_+\cF[1] \ar[r] & \cF \ar[u]  
}
$$

Since the total complex $I_{+!}I_+^! I_{-!}I^*_-\cF$ and the right vertical complex $\cF =  I_{+!}\cG$ both result from applying $I_{+!}$ to an object of $\cD_{+}$, the left vertical complex does as well.
Since applying $J_+^*$ to the left vertical arrow produces an equivalence, the left vertical arrow must already be an equivalence since $J_+^* I_{+!}$ is conservative and $I_{+!}$ is fully faithful. Thus the total complex collapses to $\cF$ itself, and we arrive at the sought-after equivalence
$$
I_{+!}I_+^! I_{-!}I^*_-I_{+!}\cG  =I_{+!} I_+^! I_{-!}I^*_-\cF  \simeq \cF =  I_{+!}\cG
$$
\end{proof}

\begin{remark}
We call a spherical pair conservative
if the compositions of the above lemma are conservative.
A conservative spherical pair is an analogue of a perverse sheaf with no sections strictly supported at the origin.
\end{remark}

%%%%%%%%%%%%%%%%%%%%%%%%%%%%%%%%%%%%%%%%%%%%%%%%%%%%%%%

\subsubsection{From spherical pairs to spherical functors}

Given a spherical pair, introduce the diagram of pre-triangulated dg categories
$$
\xymatrix{
S = J^*_+|_{\cD_+}:\cD_\Phi = \cD_+ \ar[r] & \cD^\circ_+ =\cD_\Psi
}
$$

Kapranov-Schechtman~\cite[Proposition~3.8]{KapS} prove the following.

\begin{prop}\label{prop KS}
$S$ is a spherical functor with 
$$
\xymatrix{
T_{\Psi,\ell} \simeq J_+^* J_{-!}J_-^*J_{+!}&
T_{\Psi,r} \simeq J_+^* J_{-*}J_-^*J_{+*}
}$$
$$
\xymatrix{
T_{\Phi,\ell} \simeq I_+^! I_{-!}I_-^!I_{+!}&
T_{\Phi,r} \simeq I_+^* I_{-!}I_-^*I_{+!}
}$$\end{prop}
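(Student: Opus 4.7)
The plan is to compute the four monodromy endofunctors $T_{\Psi,\ell}$, $T_{\Psi,r}$, $T_{\Phi,\ell}$, $T_{\Phi,r}$ directly in the form claimed by the proposition, and then to read off the spherical functor axioms from (SP1)--(SP2). By the Anno--Logvinenko theorem cited in the remark following the definition of a spherical functor, it suffices to verify (SF1) and (SF3), i.e.\ that $T_{\Psi,r}$ and $T_{\Phi,r}$ are equivalences; (SF2) and (SF4) then follow automatically.

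First, since $S = J_+^* \circ I_{+!}$ factors through $\cD$ and each factor carries the requisite adjoints, the adjoints of $S$ are the composites $S^r = I_+^! \circ J_{+*}$ and $S^\ell = I_+^* \circ J_{+!}$. The admissibility of $\cD_\pm, \cD_{\pm!}^\circ, \cD_{\pm*}^\circ \subset \cD$ supplies four functorial semiorthogonal triangles on $\cD$:
\begin{align*}
J_{+!} J_+^* &\to \id_\cD \to I_{-!} I_-^*,
& I_{-!} I_-^! &\to \id_\cD \to J_{+*} J_+^*, \\
J_{-!} J_-^* &\to \id_\cD \to I_{+!} I_+^*,
& I_{+!} I_+^! &\to \id_\cD \to J_{-*} J_-^*,
\end{align*}
in which each second arrow is a unit of adjunction and each first arrow is the corresponding counit.

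To extract $T_{\Psi,\ell}$, evaluate the third triangle on $\cF = J_{+!}\cG$ and apply $J_+^*$, using the fully-faithful identification $J_+^* J_{+!} \simeq \id$. The result is a triangle
$$
J_+^* J_{-!} J_-^* J_{+!} \cG \to \cG \to S S^\ell \cG,
$$
in which the second map coincides with the unit $u_\ell$, giving $T_{\Psi,\ell} \simeq J_+^* J_{-!} J_-^* J_{+!}$. The three other monodromies are obtained by the same recipe: evaluate the appropriate triangle on $J_{+*}\cG$ or $I_{+!}\cG$ and apply $J_+^*$, $I_+^!$, or $I_+^*$, then identify the resulting canonical maps as the relevant units $u_r$ and counits $c_r, c_\ell$.

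With the four formulas in place, (SF1) becomes the statement that $J_+^* J_{-*} J_-^* J_{+*}$ is an equivalence, which is immediate from (SP1) since both inner pairs $J_-^* J_{+*}$ and $J_+^* J_{-*}$ are equivalences. Likewise, (SF3) reduces to the fact that $T_{\Phi,r}$ factors as a composition of equivalences of the type supplied by (SP2) (or their inverses, noted in the remark following (SP2)), hence is itself an equivalence; Anno--Logvinenko then upgrades this to a full spherical functor. The main point requiring care is the identification, after pulling back by $J_+^*$, $I_+^*$, or $I_+^!$ and post-composing with the canonical identities $J_+^* J_{+!} \simeq \id$, $I_+^* I_{+!} \simeq \id$, and $I_+^! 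I_{+!} \simeq \id$, of the SOD map with the specific unit or counit of the composite adjunction $(S^\ell, S)$ or $(S, S^r)$. This is a matter of naturality and uniqueness of adjunction data up to canonical equivalence, but it is the step that must be handled carefully so that the triangles extracted from the spherical pair compute the monodromies of $S$ on the nose rather than some shift thereof.
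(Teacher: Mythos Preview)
The paper does not supply its own proof of this proposition; it simply attributes the result to Kapranov--Schechtman \cite[Proposition~3.8]{KapS} and moves on. Your argument is the natural one and is essentially the standard route: identify $S^\ell = I_+^*J_{+!}$ and $S^r = I_+^!J_{+*}$ as composite adjoints, pull the four semiorthogonal triangles through $J_+^*$, $I_+^*$, or $I_+^!$ after evaluating on $J_{+!}\cG$, $J_{+*}\cG$, or $I_{+!}\cG$, and then read off (SF1) and (SF3) from (SP1) and (SP2), invoking Anno--Logvinenko for the remaining axioms. The point you flag as delicate---that the maps in the pulled-back SOD triangles coincide with the units and counits of the composite adjunctions $(S^\ell,S)$ and $(S,S^r)$---is exactly the bookkeeping that needs to be checked, and it follows from the standard compatibility of units and counits under composition of adjunctions (e.g.\ the counit of $(I_+^*J_{+!}, J_+^*I_{+!})$ is $I_+^*$ applied to the counit of $(J_{+!},J_+^*)$ followed by the counit of $(I_+^*,I_{+!})$). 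Your sketch is correct.
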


\begin{remark}
Note if we start with a conservative spherical pair, then the resulting spherical functor is conservative.
 \end{remark}

%%%%%%%%%%%%%%%%%%%%%%%%%%%%%%%%%%%%%%%%%%%%%%%%%%%%%%%
%%%%%%%%%%%%%%%%%%%%%%%%%%%%%%%%%%%%%%%%%%%%%%%%%%%%%%%
%%%%%%%%%%%%%%%%%%%%%%%%%%%%%%%%%%%%%%%%%%%%%%%%%%%%%%%

\section{Geometry of $M=\BC^n$, $W= z_1\cdots z_n$}\label{s geom}

%%%%%%%%%%%%%%%%%%%%%%%%%%%%%%%%%%%%%%%%%%%%%%%%%%%%%%%

\subsection{Preliminaries}\label{ss prelim}

Let $M=\BC^n$ with coordinates $z_a = x_a + i y_a = r_a e^{i \theta_a}$,
for  $a= 1, \ldots, n$.
%$x_a = \Re(z_a), y_a = \Im(z_a)$.

Equip $M$ with the exact  symplectic form 
$$
\xymatrix{
\displaystyle
\omega_M = \sum_{a=1}^n dx_a dy_a  =  \sum_{a=1}^n r_a dr_a d\theta_a   
}
$$ with primitive 
$$
\xymatrix{
\displaystyle
\alpha_M = \frac{1}{2} \sum_{a=1}^n (x_a dy_a - y_a dx_a) = \frac{1}{2} \sum_{a=1}^n r_a^2 d\theta_a 
}
$$
and  Liouville vector field 
$$
\xymatrix{
\displaystyle
v_M = \frac{1}{2}\sum_{a=1}^n (x_a \partial_{x_a} + y_a \partial_{y_a}) = \frac{1}{2}\sum_{a=1}^n r_a \partial_{r_a}
}
$$ 
characterized by $i_{v_M}\omega_M = \alpha_M$.

We will refer to the above as the {\em conic} exact symplectic structure on $M$.
Note that the Liouville vector field $v_M$ generates the positive real scalings of $M$ as a vector space.

\begin{remark}
It is not particularly significant whether  we work with the above symplectic structure $\omega_M$ 
or its  opposite 
$
-\omega_M = \sum_{a=1}^n dy_a dx_a  = - \sum_{a=1}^n r_a dr_a d\theta_a
$
since they are exchanged by complex conjugation. There is a modest inconvenience that
$\omega_M$ is compatible with the natural  identification  of $M^\times = (\BC^\times)^n$  
with (an open subspace of) $T^*(S^1)^n$,
while $-\omega_M$ is compatible with the natural identification of $M =\BC^n$  
with $T^*\BR^n$.
%
%
%We have chosen to work with $\omega_M$
%since it coincides with the symplectic structure coming from the natural identification
% $M \simeq T^* X$ with the cotangent bundle of the real subspace $X= \BR^n \subset \BC^n = M$.
\end{remark}

By a {\em Lagrangian subvariety} $L\subset M$, we will mean a real analytic subvariety  %$L\subset M$
of pure dimension $n$
such that
the restriction of $\omega_M$ to any submanifold contained within $L$ vanishes.
By an {\em exact Lagrangian subvariety} $L\subset M$, we will mean a Lagrangian subvariety %$L\subset M$ 
that admits a continuous function
$f:L\to \BR$ such that
the restriction of $f$ to any submanifold contained within $L$ is differentiable and a primitive for the restriction of $\alpha_M$.
By a {\em conic Lagrangian subvariety} $L\subset M$, we will mean a Lagrangian subvariety %$L\subset M$ 
invariant under positive real scalings.
Note that any conic Lagrangian subvariety is exact with primitive any constant function. 

\subsubsection{Summary}

In what follows, we record some standard constructions tuned to  our current setting.  
Our aim is to place $M = \BC^n$, with its  given exact symplectic structure, within the microlocal geometry of $X=\BR^n$.

We  first introduce the contactification $N = M\times \BR$, and then identify  it with the one-jet bundle $JX = T^*X \times \BR$,
compatibly with the natural projections to $X\times \BR$.
We  then observe that the symplectification of $JX = T^*X \times \BR$ is equivalent to an 
 open conic subspace $\Omega_X \subset T^*( X\times \BR)$, compatibly with the natural projections to $X\times \BR$.
 
 The symplectification and contactification come with natural maps
 $$
 \xymatrix{
 \Omega_X \ar[r]^-s &  JX \simeq N\ar[r]^-c &  M
 }
 $$
Given an exact Lagrangian subvariety $L\subset M$ with primitive $f:L\to \BR$, we can  lift it along $c$ to a Legendrian graph $\Gamma_{L, -f} \subset N$, then
transport it to $JX$, and finally take its inverse-image under $s$
 to arrive at a conic Lagrangian subvariety
$\Lambda \subset \Omega_X$.
In this way, we will be able to apply the tools of microlocal geometry to study the given exact  symplectic geometry.

\begin{remark}\label{rem symp of cont}
In what follows, we set  conventions so that taking  the symplectification of the contactification of  a conic open subspace $\Omega_Z \subset T^*Z$  produces again such a conic open subspace 
$$
\xymatrix{
\Omega_{Z}'  =\{((z, t), (\zeta, \eta)\, |\, (z, \zeta) \in \Omega_Z, \, \eta>0\} \subset T^*(Z\times \BR)
}
$$
Therefore given a conic Lagrangian subvariety $\Lambda\subset \Omega_Z$, the associated conic Lagrangian subvariety $\Lambda'\subset \Omega_{Z}'$ will have 
equivalent microlocal geometry.
 \end{remark}

%%%%%%%%%%%%%%%%%%%%%%%%%%%%%%%%%%%%%%%%%%%%%%%%%%%%%%%

\subsubsection{Contactification}
Given an exact symplectic manifold $M$, with symplectic form $\omega_M$, and primitive $d\alpha_M = \omega_M$, we will take its contactification
to be the contact manifold $N = M\times \BR$, with contact form $\lambda_N = dt + \alpha_M$,
and contact structure $\xi_N = \ker (\lambda_N)$. Here and in what follows, we often write $t$ for a coordinate on $\BR$. (The choice of $\lambda_N = dt + \alpha_M$ rather than $dt - \alpha_M$ is in the name of the consistency
mentioned in Remark~\ref{rem symp of cont}.)

\medskip

Let us return to specifically $M= \BC^n$ with its conic exact symplectic structure.

Consider the contactification $N = M\times \BR = \BC^n \times \BR$, %with additional coordinate $t$, 
 with the contact form
$$\xymatrix{
\displaystyle
\lambda_N = dt + \alpha_M  = 
dt +  \frac{1}{2} \sum_{a=1}^n (x_a dy_a - y_a dx_a)  =  dt + \frac{1}{2} \sum_{a=1}^n r_a^2 d\theta_a 
}
$$
 and  cooriented contact structure 
 $$
 \xymatrix{
\xi_N = \ker(\lambda_N) \subset TN
}
$$

By a {\em Legendrian subvariety} $\cL\subset N$, we will mean a real analytic subvariety  %$L\subset M$
of pure dimension $n$
such that
any submanifold contained within $\cL$ is tangent to the contact structure $\xi_N$.

Note that an exact Lagrangian subvariety $L\subset M$ equipped with a primitive $f:L\to \BR$ lifts to a Legendrian
graph  %$\Gamma_f \subset M\times \BR = N$.
$$
\xymatrix{
\Gamma_{L, -f} =\{(x, -f(x)) \, | \, x\in L\}  \subset M \times \BR = N
}
$$
In particular, 
a conic Lagrangian submanifold $L\subset M$ lifts  
to the trivial
graph 
$$
\xymatrix{
 \Gamma_{L, 0} = L \times \{0\} \subset M \times \BR = N
}
$$

%%%%%%%%%%%%%%%%%%%%%%%%%%%%%%%%%%%%%%%%%%%%%%%%%%%%%%%

\subsubsection{Identification with one-jets}
Let $X$ be an $n$-dimensional smooth manifold. 

Let $\pi_X:T^* X \to X$ be the cotangent bundle, with points denoted by pairs $(x, \xi) \in T^* X$ with $x\in X$ a point, and $\xi \subset T^*_x X$ a covector.
We will equip $T^*X$
 with its canonical one-form 
 $$\xymatrix{
\displaystyle
\alpha_X =    \sum_{a=1}^n \xi_a d x_a
}
$$
and symplectic form
$$
\xymatrix{
\displaystyle
\omega_X = d\alpha_X =     \sum_{a=1}^n d\xi_a d x_a
}
$$
%in terms of canonical coordinates $x_a, \xi_a$, for $a=1, \ldots, n$.
 Recall that the graph $\Gamma_{df} \subset T^* X$ of the differential of a  function $f:X\to \BR$ is an
 exact Lagrangian submanifold with canonical primitive $f \circ\pi_X |_{\Gamma_{df}}:\Gamma_{df}\to \BR$.

Let $J X = T^* X \times \BR \to X$ be the one-jet bundle,
with points denoted by triples $(x, \xi, t) \in J X$ with $(x, \xi) \in T^* X$ a point and covector, and $t\in \BR$ a number.
 We will equip $J X$
 with its canonical contact form
 $$\xymatrix{
\displaystyle
\lambda_X =   dt- \alpha_X = dt -  \sum_{a=1}^n \xi_a x_a
}
$$
 and   cooriented contact structure 
$$
\xymatrix{
\xi_X = \ker(\lambda_X) \subset  TJ X
}
$$
Recall that the one-jet $J_f \subset J X$ of a function $f:X\to \BR$ is a Legendrian submanifold.

Note that by our conventions, the diffeomorphism
$$
\xymatrix{
J X \ar[r]^-\sim & JX
&
(x, \xi, t) \ar@{|->}[r] & (x, -\xi, t) 
}
$$
intertwines the canonical contact form $\lambda_X= dt- \alpha_X$ with
the contact form $ dt+ \alpha_X$ arisingon $JX$ as the contactification of $T^*X$
 following our conventions.

 \medskip

Now  set $X=\BR^n$ with coordinates $x_a$, for $a=1, \ldots, n$.
%and set $Z=\BR^n \times \BR$ with additional coordinate $t$.

Consider the linear Lagrangian fibration given by taking real parts
$$
\xymatrix{
p: M = \BC^n \ar[r] & \BR^n = X & p(z_1, \ldots, z_n) = (x_1, \ldots, x_n)
}
$$
%given by taking the real part of vectors. 
Note that $p$ is equivariant for real scalings and invariant under conjugation.
%where as usual $z_a = x_a + i y_a$, for $a=1,\ldots, n$.
There is a unique lift  to a Legendrian fibration 
$$
\xymatrix{
\displaystyle
q:  N = \BC^n \times \BR  \ar[r] & \BR^n \times \BR  = X\times \BR & q(z_1, \ldots, z_n, t) = (x_1, \ldots, x_n, t
+\frac{1}{2} \sum_{a=1}^n x_a y_a)
}
$$
%induced by  $p$ and the natural section $s:Y\to M$.
such that the last component of $q$ vanishes on Legendrian lift $\BR^n \times\{0\} \subset \BC^n \times \BR = N$
of the real subspace $\BR^n\subset \BC^n = M$ regarded as a section of $p$.
Note that $q$ is equivariant for simultaneous real scalings of the $x, z$ components and 
squared real scalings of the last components. It is also equivariant for simultaneous  conjugation 
of the $z$ components and 
negation of the last component. 

There is a cooriented contactomorphism
 $$
 \xymatrix{
 N \ar[r]^-\sim & J X
&
(z_1, \ldots, z_n, t) \ar@{|->}[r] & ((x_1, \ldots, x_n), (y_1, \ldots, y_n), t+\frac{1}{2} \sum_{a=1}^n x_a y_a)
}
 $$
 intertwining the Legendrian projection $q:N\to X\times \BR$ and front projection $J X \to X\times \BR$.
 Note that it is equivariant for simultaneous real scalings of the $x, y, z$ components and 
squared real scalings of the last components. It is also equivariant for  simultaneous conjugation of the $z$ components and negation
of the $y$ components and last components.

 \begin{remark}
 The above  contactomorphism is an %(anti-cooriented) 
 instance of the  general observation: given two primitives $d\alpha_M = \omega_M$, $d\alpha_M'= \omega_M$ for a symplectic form %$\omega_M$
 on a manifold $M$,
 if the difference $\alpha_M - \alpha'_M$ is exact, then any primitive $df = \alpha_M - \alpha'_M$ provides a diffeomorphism 
 $$
 \xymatrix{
 F:M\times \BR \ar[r]^-\sim &  M \times \BR
 &
 F(m, t) = (m, t + f(m))
 }
 $$ 
 intertwining the respective contact forms $F^*(dt+\alpha'_M) = dt + \alpha_M$. 
 \end{remark}

%%%%%%%%%%%%%%%%%%%%%%%%%%%%%%%%%%%%%%%%%%%%%%%%%%%%%%%

\subsubsection{Symplectification}

 Let $Z$ be an $(n+1)$-dimensional smooth manifold.

 Let $\pi^\oo_Z:S^\oo Z \to Z$ be the 
 spherically projectivized cotangent  bundle,
%$$
%\xymatrix{
%\pi^\oo_X:S^\oo X = (T^*X \setminus X)/\BR_{>0} \ar[r] & X
%}
%$$ 
with points denoted by pairs $(z, [\xi]) \in S^\oo Z$ with $z\in Z$  a point and $[\xi] = \BR_{>0} \cdot\xi \subset T^*_z Z \setminus \{(z, 0)\}$ a  nontrivial ray.
Consider the canonical line bundle $\cL_Z \to S^\oo Z$ 
with fiber at  $(z, [\xi]) \in S^\oo Z$ the line $\BR\cdot \xi \subset T^*_z Z$.
The canonical one-form $\alpha_Z$ on $T^*Z$  descends to a $\cL_Z^\vee$-valued 
 one-form $\lambda^\oo_Z$ on $S^\oo Z$ whose kernel defines a cooriented contact structure $\xi^\oo_Z \subset T S^\oo Z$.

A choice of Riemannian metric on $Z$ provides an identification of $S^\oo Z$ with the 
resulting unit cosphere bundle  $U^*Z \subset T^*Z$, 
and equivalently, a trivialization of the canonical line bundle $\cL_Z \to S^\oo Z$.
In this case,  the then untwisted one-form $\lambda^\oo_Z$  on $S^\oo Z$ corresponds to the  restriction 
of the canonical one-form $\alpha_Z$ to $U^* Z$

Next,  suppose $Z=X\times \BR$, for an   $n$-dimensional smooth manifold $X$.

 Introduce the open subspace  
$$
\xymatrix{
\Upsilon_X = \{(x, t), [\xi, \eta]) \, |\, \eta>0\}   \subset S^\oo (X \times \BR)
}
$$ 
and fix the diffeomorphism
 $$
 \xymatrix{
%\ar[dr]_-{\varphi_X} 
J X \ar[r]^-\sim  & \Upsilon %\subset S^\oo(X \times \BR) 
&
(x, \xi, t) \ar@{|->}[r] &  ((x, t), [-\xi, 1]) 
%\ar[dl]^-{\pi^\oo_X}\\
 %& X \times \BR& }
 }
 $$
 respecting the natural projections to $X\times \BR$. 
  The canonical line bundle $\cL_{X\times \BR}\to S^\oo (X\times \BR)$ is canonically trivialized over the image,
 and the pullback of the thus untwisted  one-form $\lambda^\oo_{X\times \BR}$ on $S^\oo (X\times \BR)$ is equal to the canonical contact form $\lambda_X$
 on $J X$. %(This last equality is the reason for the negative sign in the definition of the diffeomorphism.)
Thus the above diffeomorphism furnishes a cooriented contactomorphism.

 \medskip

Now  set $Z=X\times \BR = \BR^n \times \BR$. % with additional coordinate $t$.

The composition of our previous two cooriented contactomorphisms provides a cooriented contactomorphism
$$
\xymatrix{
\psi:N\ar[r]^-\sim & JX  \ar[r]^-\sim &  \Upsilon_X
}
$$
$$
\xymatrix{
\psi(z_1, \ldots, z_n, t) = ((x_1, \ldots, x_n), t+\frac{1}{2} \sum_{a=1}^n x_a y_a), [-y_1, \ldots, -y_n, 1])
}
 $$
 intertwining the Legendrian projection $q:N\to X\times \BR$ and the natural projection $\Upsilon_X \to X\times \BR$.
  Note that it is equivariant for simultaneous real scalings of the $x, y, z$ components and 
squared real scalings of the additional components. It is also equivariant for  simultaneous conjugation of the $z$ components and negation
of the $y, t$ components and last base component.

\medskip

For compatibility with standard refererences, which often adopt the  setting of exact symplectic  rather than contact geometry, it is useful to go one step further. 

Let us regard
$T^*(X\times \BR) \setminus (X\times \BR)$
 as the symplectification of $ S^\oo (X\times \BR)$. % in the form of %the punctured cotangent bundle 
Introduce the symplectification of the open subspace  $\Upsilon_X \subset S^\oo( X \times \BR)$
%$$
%\xymatrix{
%\Upsilon = \{(x, t), [\xi, 1])\}   \subset S^\oo (X \times \BR)
%}
%$$ 
in the form of the  conic open subspace 
%$\Omega \subset T^*(X\times \BR)$
%with prescribed spherical projectivization
%$$
%\xymatrix{
%\Upsilon = \Omega/\BR_{>0} 
%}
%$$
$$
\xymatrix{
\Omega_X  = \{((x, t), (\xi, \eta))  \, |\, \eta>0\} \subset  T^*(X\times \BR) \setminus (X\times \BR)
}
$$
%with spherical projectivization
%$
%\Upsilon = \Omega/\BR_{>0}.
%$
Here and in what follows, we say a  subvariety of  $T^*(X \times \BR)$ is conic if it is invariant under  positive real scalings of the cotangent fibers.

Note that taking the inverse-image under the natural map $\Omega_X \to \Upsilon_X$ induces a bijection from subvarieties of $\Upsilon_X$ to conic subvarieties of $\Omega_X$.

\begin{defn}\label{def assoc lag}
To an exact Lagrangian subvariety $L \subset M$ with primitive $f:L\to \BR$, we define the {\em associated Lagrangian subvariety} $\Lambda \subset \Omega_X$ as follows. 

First, we lift $L\subset M$ to the Legendrian graph
$\Gamma_{L, -f} \subset N$ in the contactification, then transport $\Gamma_{L, -f} \subset N$ to the Legendrian subvariety $\Lambda^\oo = \psi(\Gamma_{L, -f}) \subset \Upsilon_X$, and finally take  $\Lambda \subset \Omega_X$ to be the inverse image
of  $\Lambda^\oo \subset \Upsilon_X$ under   
the natural map $\Omega_X \to \Upsilon_X$.

To a conic Lagrangian subvariety $L \subset M$, we always take the zero function as primitive,
and  then define the {\em associated Lagrangian subvariety} $\Lambda \subset \Omega_X$ as above. 
\end{defn}

\begin{remark}\label{rem biconic}
By construction,  the  associated Lagrangian subvariety $\Lambda \subset \Omega_X$ 
of an exact Lagrangian subvariety $L \subset M$ with primitive $f:L\to \BR$ is always conic with 
 respect to positive real scalings of the cotangent fibers.

For a conic Lagrangian subvariety $L \subset M$, 
the  associated Lagrangian subvariety $\Lambda \subset \Omega_X$ 
is additionally conic 
with respect to the commuting Hamiltonian scaling action 
$$
\xymatrix{
r\cdot ((x, t), (\xi, \eta))   = ((rx, r^2 t), (r^{-1}\xi, r^{-2} \eta)) 
&
r\in \BR_{>0}}
$$
induced by the scaling action
$
r\cdot (x, t) = (rx, r^2 t)
$
on the base.
To see this, note that %by our previous identifications,
 $\Lambda \subset \Omega_X$ 
is  conic with respect to the  scaling action 
$$
\xymatrix{
r\cdot ((x, t), (\xi, \eta))   = ((rx, r^2 t), (r\xi, \eta)) 
&
r\in \BR_{>0}
}
$$
and this simply differs from the asserted action by the corresponding squared
scalings of the cotangent fibers under which $\Lambda \subset \Omega_X$ is already invariant.

Furthermore, the above Hamiltonian scaling action contracts the pair
$\Lambda \subset \Omega_X$ to a neighborhood
of the positive codirection 
$$
\xymatrix{
\{((0, 0), (0, \eta)) \, |\, \eta>0\}  \subset \Omega_X
}
$$

We will  use the term {\em biconic} to summarize the above structure of 
the pair $\Lambda \subset \Omega_X$.
\end{remark}

% 
%To any conic Lagrangian subvariety $L\subset M$,
%we associate the conic Lagrangian subvariety $\Lambda \subset \Omega$ characterized by
%$$
%\psi(L\times \{0\})  = \xymatrix{\Lambda/\BR_{>0}  
%}
%$$
%
%
%
%Thus we can calculate invariants of $L\subset M$ in terms of   
%$ \Lambda\subset \Omega(X \times \BR)$.
%Note that the role of the origin $0\in M$ is now played by the positive ray $((0, 0), [0, dt]) \subset \Omega(X \times \BR)$.

%%%%

\subsubsection{Symmetries}
Let $S^1 = \BR/2\pi\BZ$, and $T=(S^1)^n$, with Lie algebra $\ft = \BR^n$.
There is a  Hamiltonian $T$-action on $M=\BC^n$ by coordinate rotation 
$$
\xymatrix{
(\theta_1, \ldots, \theta_n) \cdot (z_1, \ldots, z_n) = (e^{i \theta_1} z_1, \ldots, e^{i\theta_n} z_n)
}
$$
with moment map
$$
\xymatrix{
\mu:M \ar[r] & \ft^* &
\mu(z_1, \ldots, z_n) = (r_1^2/2, \ldots, r_n^2/2)
}$$
It preserves the conic exact symplectic structure,
%as well as any small open ball $B\subset M$ around the origin, 
and its fixed locus is the origin $0\in M$.

There is an induced $T$-action on the contactification $N = \BC^n\times \BR$ which is trivial on the additional factor
$$
\xymatrix{
(\theta_1, \ldots, \theta_n) \cdot (z_1, \ldots, z_n, t) = (e^{i \theta_1} z_1, \ldots, e^{i\theta_n} z_n, t)
}
$$
It preserves the contact structure, %as well as any small open ball $U\subset N$ around the origin, 
and its fixed locus is the transverse curve $\{0\} \times \BR \subset N$. 

By transport along the contactomorphism 
$$
\xymatrix{
\psi:N\ar[r]^-\sim &  \Upsilon_X \subset S^\oo(X \times \BR)
}
$$
there is an induced $T$-action on the open subspace $\Upsilon_X\subset S^\oo(X \times \BR)$.
It preserves the contact structure, 
and its fixed locus is the transverse curve $\{((0, t), [0, 1])\} \subset \Upsilon_X$.

There is an induced Hamiltonian $T$-action on the symplectification $\Omega_X\subset T^*(X\times \BR)$
 with moment map
$$
\xymatrix{
\nu:\Omega_X \ar[r]^-s & \Upsilon_X \ar[r]^-{\psi^{-1}} & N \ar[r]^-c & M \ar[r]^-\mu &  \ft^*
}$$
where $s:\Omega_X\to \Upsilon_X$ is the  projection of the symplectification, 
and  $c: N \to M$ is the  projection of the contactification.
It preserves the conic exact symplectic structure,
and its fixed locus is the conic symplectic surface
$\{(0, t), (0, \eta)) \, |\, \eta>0 \}\subset \Omega_X$.

\begin{remark}
Tracing back through the constructions,
the Hamiltonian $T$-action on $\Omega_X$ originates by viewing $T$ as a maximal torus in the symplectic group of the contact plane at the point
$
((0, 0), [0, 1]) \in \Omega_X^\oo.
$

\end{remark}

%
%
%
%\begin{remark}\label{rem coord action}???
%For $a=1, \ldots, n$, the action of the $a$th coordinate circle $S^1\subset T$
% on the relevant coordinates of $\Omega_X$ is given by the formula
%$$
%\xymatrix{
%\theta_a
%\cdot (x_a, (\xi_a, \eta))
%=
%(\cos(\theta_a) x_a -\sin(\theta_a)(\xi_a/\eta),  
%(\eta \sin(\theta_a)x_a + \cos(\theta_a)\xi_a, \eta))
%}
%$$
%and the $a$th component of the moment map  $\nu$ is given by the function 
%$$
%\xymatrix{
%%\nu_a:\Omega_X \ar[r] & \BR
%%&
%\nu_a = (x_a^2 + (\xi_a/\eta)^2)/2 
%}$$
%
%\end{remark}

Finally, it is useful to recast the Hamiltonian $T$-action on $\Omega_X$ in the form of the action Lagrangian correspondence
$$
\xymatrix{
\cL_{T, \Omega_X} \subset \Omega_X \times \Omega_X^a\times T^*T 
}
$$
$$
\xymatrix{
\cL_{T, \Omega_X} = \{ (\omega_1, -\omega_2, (g, \zeta)) \in \Omega_X \times \Omega_X^a \times T^*T \, |\,\omega_1 =  g\cdot \omega_2,\,  \nu(\omega_1) = \zeta\}  %\subset T^*T \times \Omega \times \Omega^a
}
$$
where $\Omega_X^a \subset  T^* (X\times \BR)$ denotes the antipodal subspace with respect to the negation of covectors.
Note the diffeomorphism
%that projection and the negation of covectors provide a 
%diffeomorphism 
$$
\xymatrix{
\cL_{T, \Omega_X} \ar[r]^-\sim &   \Omega_X \times T
&
( \omega_1, -\omega_2, (g, \zeta)) \ar@{|->}[r] & (\omega_1, g)
}
$$

In particular, for $g\in T$, there is the action Lagrangian correspondence
$$
\xymatrix{
\cL_{g, \Omega_X} \subset \Omega_X \times \Omega_X^a
}
$$
$$
\xymatrix{
\cL_{g, \Omega_X} = \{ (\omega_1, -\omega_2) \in \Omega_X \times \Omega_X^a \, |\,\omega_1 =  g\cdot \omega_2\}
}
$$

\begin{remark}\label{rem action lag fiber}
Fix $g = (\theta_1, \dots, \theta_n)\in T$.
% lies in the $a$th coordinate circle $S^1\subset T$, for some $a=1, \ldots, n$.

Let $Y_g \subset (X\times \BR) \times (X\times \BR)$ be the front projection of 
$\cL_{g, \Omega_X} \subset \Omega_X \times \Omega_X^a$.
To describe it, let $(x_1, \ldots, x_n, t)$, $(x_1', \ldots, x_n', t')$ be coordinates
on the two factors of $(X\times \BR) \times (X\times \BR)$.

First, points of $Y_g$ always satisfy $t' = t$.
If $\theta_a = 0$, they satisfy $x'_a = x_a$,
and if $\theta_a = \pi$, they satisfy $x'_a = - x_a$.
Otherwise, the projection $(x_a, x_a'):Y_g \to \BR^2$ is a fibration.

Thus $Y_g \subset (X\times \BR) \times (X\times \BR)$  is a smooth submanifold with
$\on{codim} Y_g = 1+\#\{a \, | \, \theta_a = 0 \mbox{ or } \pi \}$,
and $\cL_{g, \Omega_X} \subset \Omega_X \times \Omega_X^a$
is the intersection of $\Omega_X$ with its conormal bundle. 
\end{remark}

%%%%%%%%%%%%%%%%%%%%%%%%%%%%%%%%%%%%%%%%%%%%%%%%%%%%%%%

\subsection{Lagrangian skeleta}
We continue with $M=\BC^n$ and the above setup.

Introduce the superpotential  
$$
\xymatrix{
W:M\ar[r] &  \BC &  W(z_1, \ldots, z_n) = z_1\cdots z_n
}
$$
%It is homogenous of degree $n$. 
 
% The critical locus where the differential $dW$ vanishes is the union of the codimension two coordinate planes 
%%consisting of points $(z_1, \ldots, z_n)\in M$ 
%where two or more  coordinates vanish.
%For $n\geq 3$, the critical locus is not proper. 
%For $n= 1$, there are no critical values, and for $n\geq 2$, the critical values consist of the single point $0\in \BC$. 

%Introduce the superpotential
%$$
%\xymatrix{
%W:M\ar[r] & \BC & W(z_1,\ldots, z_n) = z_1\cdots z_n
%}
%$$

Set $M_0 = W^{-1}(0)$, $M^\times = W^{-1}(\BC^\times) = (\BC^\times)^n$.

 For $\theta\in S^1$, let
 $\BC^\times(\theta) \subset \BC^\times$  be the open ray 
 $$
 \xymatrix{
\BC^\times(\theta) =  \{z = re^{i\theta} \, |\, r\in \BR_{>0}\}
}
$$
For $\Theta\subset S^1$ a nonempty finite subset, let $C(\Theta) \subset \BC$ be the closed union of rays
$$
\xymatrix{
C(\Theta) = \{0\} \cup \coprod_{\theta\in \Theta} \BC^\times(\theta)
}
$$
Set $M(\Theta) = W^{-1}(C(\Theta))$ and $M^\times(\theta) = W^{-1}(\BC^\times(\theta))$ so that  
$$
\xymatrix{
M(\Theta) = M_0 \cup \coprod_{\theta\in \Theta} M^\times(\theta)
}
$$
When $\Theta = \{\theta\}$ is a single element, we write $C(\theta)$ in place of $C(\Theta)$, 
and $M(\theta)$ in place of $M(\Theta)$.

Fix a point $z = (z_1, \ldots, z_n)\in \BC^n$ with polar coordinates $z_a = r_a e^{ i \theta_a}$, for $a = 1, \ldots, n$.
%It is useful to introduce polar coordinates $z_a = r_a e^{ i \theta_a}$, for $a = 1, \ldots, n$.
Let $\ell  = \{r_1 , \ldots, r_n\}\subset \BR_{\geq 0}$ be the set of lengths of the coordinates,
and $\ell_0 \in \ell$ the minimum length. Let $I_\m\subset \{1, \ldots, n\}$ comprise those indices $a\in \{1, \ldots, n\}$ whose coordinate is of minimal length 
$r_a = \ell_0$.
%and set $I_\m^c = \{1, \ldots, n\}\setminus I_\m$ to be the complement.
Note $a\not \in I_\m$ implies in particular $r_a>0$.
%Finally, let $Z \subset \{1, \ldots, n\}$ comprise those indices $i\in \{1, \ldots, n\}$ whose coordinate is zero,
%and set  $A = \{1, \ldots, n\}\setminus Z$ to be the complement.
%Note that $\ell_0 = 0$ if and only if $Z$ is nonempty in which case $Z=I$.

Introduce the  subspace $L(\Theta) \subset  M(\Theta)$
cut out by the  equations
$$
\xymatrix{
\theta_a = 0, \mbox{ for } a\not \in I_\m
}
$$
Note that $L(\Theta) \subset  M$ is closed since $M(\Theta) \subset  M$ is closed, and $L(\Theta) \subset M(\Theta)$
results from imposing the above additional equations that become weaker as $I_\m$ increases in size.
When $\Theta = \{\theta\}$ is a single element, we write $L(\theta)$ in place of $L(\Theta)$.

There is a natural decomposition of
$L(\Theta)$ into conic isotropic locally closed submanifolds. 
We have the initial decomposition
 $$
 \xymatrix{
 L(\Theta) = L_0 \cup \coprod_{\theta\in \Theta} L^\times(\theta)
 }
 $$
 %where we set
 $$
 \xymatrix{
 L_0 = L(\Theta) \cap M_0 & L^\times(\theta) = L(\Theta)\cap M^\times(\theta)
 }
 $$

For each nonempty subset $\fI\subset \{1, \ldots, n\}$,
introduce the subspace $\fI L_{0} \subset L_0$ of points with $I_\m = \fI$. This is the 
locally closed submanifold cut out by the equations
$$
\xymatrix{
 r_a = 0, \mbox{ for } a\in \fI &  r_a >  0, \mbox{ for } a\not\in \fI  & \theta_a = 0, \mbox{ for } a\not\in\fI
}
$$
Its codimension is $n+|\fI|$ and it is clearly isotropic.

For each nonempty subset $\fI\subset \{1, \ldots, n\}$,
introduce the subspace $\fI L^\times(\theta) \subset L^\times(\theta)$ of points with $I_\m = \fI$. This is the 
locally closed submanifold cut out by the equations
$$
\xymatrix{
r_a >0, \mbox{ for all } a & r_a = r_b, \mbox{ for } a, b\in \fI & r_a < r_b, \mbox{ for } a\in \fI, b\not \in \fI 
}
$$
$$
\xymatrix{
\theta_a = 0, \mbox{ for } a\not\in\fI
& \sum_a \theta_a = \theta
}
$$
Its codimension is $n$ and it is clearly isotropic hence Lagrangian.

%For  each nonempty subset $\fI\subset \{1, \ldots, n\}$,
Finally, for each nonempty subset $\fI\subset \{1, \ldots, n\}$,
 note the natural identification
$$
\xymatrix{
 \fI L_0\cup \fI L^\times(\theta) \simeq \Cone((S^1)^{|\fI|-1}) \times \BR_{>0}^{n-|\fI|}
}
$$
 
 \begin{example}
 When $\fI= \{1, \ldots, n\}$, we have $\fI L_{0} = \{0\}$ and also
 $$
\xymatrix{
\fI L^\times(\theta) = \{(re^{i\theta_1}, \ldots, re^{i\theta_n})\, |\, r>0, \sum_a \theta_a = \theta\} \simeq (S^1)^{n-1} \times \BR_{>0}
}
$$
so that their union is the closed Lagrangian cone
$$
\xymatrix{
 \fI L_0\cup \fI L^\times(\theta) \simeq \Cone((S^1)^{n-1}) 
}
$$
\end{example}

\begin{lemma}
$L(\Theta)\subset M$  is a closed conic Lagrangian. 
\end{lemma}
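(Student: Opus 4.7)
The plan is to verify the three properties—closed, conic, and Lagrangian of pure dimension $n$—using the stratification into pieces $\fI L_0$ and $\fI L^\times(\theta)$ already introduced. Closedness is essentially immediate: $M(\Theta) \subset M$ is closed since $C(\Theta) \subset \BC$ is closed and $W$ is continuous, and the defining conditions ``$\theta_a = 0$ for $a \notin I_\m$'' cut out a closed subset of $M(\Theta)$ (the condition $a \notin I_\m$ means $r_a > r_\m$, but as $\fI$ grows the conditions weaken, so the limit of points in the set always satisfies some such condition for an appropriate $\fI$).

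For conicity under positive real scaling $z \mapsto r z$ with $r \in \BR_{>0}$: the angles $\theta_a$ and the minimum-index set $I_\m$ are both invariant under uniform scaling of the $r_a$, and $W(rz) = r^n W(z)$ lies on the same ray as $W(z)$, so $M(\Theta)$ is preserved. Hence each defining condition is preserved.

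For pure dimension $n$ and isotropicity, I would go stratum by stratum. The top strata $\fI L^\times(\theta)$ each have dimension $n$. On such a stratum, $r_a = \rho$ for $a \in \fI$, $r_a$ is free for $a \notin \fI$, $\theta_a = 0$ for $a \notin \fI$, and the constraint $\sum_a \theta_a = \theta$ (coming from $W(z) \in \BC^\times(\theta)$) reduces to $\sum_{a \in \fI} \theta_a = \theta$. Therefore the restriction of $\omega_M = \sum_a r_a \, dr_a \, d\theta_a$ becomes
\[
\rho\, d\rho \wedge \Bigl(\sum_{a \in \fI} d\theta_a\Bigr) + \sum_{a \notin \fI} r_a \, dr_a \cdot 0,
\]
and the first sum vanishes because $\sum_{a \in \fI} \theta_a = \theta$ is constant, so $\sum_{a \in \fI} d\theta_a = 0$. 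This gives Lagrangianness of the top strata. For the lower strata $\fI L_0$ (of codimension $n + |\fI|$), each term $r_a \, dr_a \, d\theta_a$ of $\omega_M$ restricts to zero, since for $a \in \fI$ we have $r_a = 0$ and for $a \notin \fI$ we have $\theta_a = 0$ constant.

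Finally, for purity of dimension, I would show that each lower-dimensional stratum $\fI L_0$ lies in the closure of a top-dimensional stratum: fix any $\theta \in \Theta$ and approach a point of $\fI L_0$ from within $\fI L^\times(\theta)$ by letting the common minimal radius $\rho \to 0^+$ while adjusting the angles $\theta_a$, $a \in \fI$, to satisfy $\sum_{a \in \fI} \theta_a = \theta$. The main (mild) obstacle is just keeping careful track of which defining equations survive on which stratum so that the vanishing of $\omega_M$ on the top stratum reduces to the clean identity $\sum_{a \in \fI} d\theta_a = 0$; otherwise the verifications are routine.
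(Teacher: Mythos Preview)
Your proof is correct and follows essentially the same approach as the paper: both use the decomposition into the locally closed pieces $\fI L_0$ and $\fI L^\times(\theta)$, establish closedness from the fact that the defining equations weaken as $I_\m$ grows, observe that each piece is conic and isotropic, and obtain pure dimension $n$ by noting that each $\fI L_0$ lies in the closure of the $n$-dimensional $\fI L^\times(\theta)$. The only difference is that you spell out the vanishing of $\omega_M$ on each stratum explicitly (via the clean identity $\sum_{a\in\fI} d\theta_a = 0$), whereas the paper has already declared these pieces ``clearly isotropic'' in the preceding discussion and simply cites that.
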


\begin{proof}
We have noted that $L(\Theta)\subset M$ is closed. Each piece $\fI L_0$, $\fI L(\theta) \subset M$ is conic and isotropic. Moreover, we have seen that   $\fI L_0$ is in the closure of $ \fI L^\times(\theta)$ and the latter is of dimension $n$. Thus $L(\Theta)\subset M$ is Lagrangian.
\end{proof}

\begin{defn}(Lagrangian skeleton)
By a {\em Lagrangian skeleton} for $M=\BC^n$, $W= z_1\cdots z_n$, we will mean the
closed conic Lagrangian subvariety $L(\theta)\subset M$, for some $\theta\in S^1$.
\end{defn}

%Since $L(\theta)\subset M$ is a conic Lagrangian subvariety,   we have the Legendrian lift 
%$$
%\xymatrix{
%\sL(\theta)  = L(\theta) \times \{0\}  \subset M\times \{0\} \subset N
%}
%$$
%Following the constructions of Sect.~\ref{}, we obtain a conic Lagrangian subvariety
%$
%\Lambda(\theta) \subset \Omega.
%$

%
%\begin{remark}
%From our results, one can deduce analogous results for any closed Lagrangian inside of $L(\theta)\subset M$.
%For example, if we impose $\fI= \{1, \ldots, n\}$, then we find the  cone 
%$$
%\xymatrix{
%\Cone((S^1)^{n-1})  \simeq \fI L_0 \cup \fI L^\times(\theta) \subset L(\theta)
%}
%$$
%\end{remark}

%%%%%%%%%%%%%%%%%%%%%%%%%%%%%%%%%%%%%%%%%%%%%%%%%%%%%%%%%%%%%%%

\subsection{Microlocal interpretation}

%Let $S^1 = \BR/2\pi\BZ$. 
Fix the standard identification $T^*S^1 \simeq S^1 \times \BR$ with canonical coordinates
$(\theta, \xi)$. We have the canonical one-form $\alpha = \xi d\theta$, symplectic form $\omega = d\alpha = d\xi d\theta$, and Liouville vector field $v = \xi \partial_{\xi}$. 

Introduce the product torus $T = (S^1)^n$. % with Lie algebra $\ft \simeq \BR^n$.
Fix the standard identification $T^*T \simeq  T \times \BR^n$ with canonical coordinates
$(\theta_1, \ldots, \theta_n, \xi_1, \ldots, \xi_n)$. We have the canonical one-form $\alpha = \sum_{a=1}^n \xi_a d\theta_a$, symplectic form $\sum_{a= 1}^n \omega = d\alpha_a = d\xi_a d\theta_a$, and Liouville vector field $v = \sum_{a= 1}^n \xi_a \partial_{\xi_a}$. 

Introduce the open subspaces 
$$
\xymatrix{
T^{>0} S^1 = \{\xi>0\} \subset T^* S^1
&
T^{>0} T = (T^{>0} S^1)^n \subset T^* T
}$$
%
%Let $\BC^\times$ have complex coordinate $z = re^{i \theta}$.
%We have 
%the one-form $\alpha = (r^2/2) d\theta$, symplectic form $\omega = d\alpha = rdr d\theta$, and Liouville vector field 
%$v = (r/2)\partial_{r}$.
%
%Fix once and for all the exact symplectic identification
%$$
%\xymatrix{
%\varphi:\BC^\times \ar[r]^-\sim & T^{>0}(S^1) &
%\varphi(re^{i\theta}) = (\theta, r^2/2) 
%}
%$$
%the open subspace 
%$$
%(T^{>0}(S^1))^n \subset T^*((S^1)^n)
%$$
%and  
and the exact symplectic identification
$$
\xymatrix{
\varphi: M^\times  = (\BC^\times)^n \ar[r]^-\sim &  T^{>0} T
}
$$
$$
\xymatrix{ 
\varphi(r_1e^{i\theta_1}, \ldots, r_n e^{i\theta_n}) = (\theta_1, \ldots, \theta_n,  r_1^2/2,\ldots, r_n^2/2)
}
$$
Note that $\varphi$ is equivariant for the natural $T$-actions, and the codirection component of $\varphi$ is simply the restriction of the moment map $\mu$.

Fix $\theta\in S^1$. Recall the Lagrangian skeleton $L(\theta)\subset M$, and specifically
 its open subspace $ L^\times(\theta)\subset M^\times$,
 with locally closed submanifolds 
 $
   \fI L^\times(\theta)\subset L^\times(\theta),
   $
%
%$$
%\xymatrix{
%\Lambda^\times(\theta)\subset M^\times = (\BC^\times)^n
%&
%\fI\Lambda^\times(\theta)\subset \Lambda^\times(\theta)
%}$$
for  nonempty $\fI\subset \{1, \ldots, n\}$.

Transporting them along the above identification,
we obtain a corresponding conic Lagrangian with locally closed submanifolds 
$$
\xymatrix{
L^{>0}(\theta)  =\varphi(L^{\times}(\theta) ) 
&
\fI L^{>0}(\theta) = \varphi(\fI L^{\times}(\theta) ) 
}$$
%for  nonempty $\fI\subset \{1, \ldots, n\}$.

%for  nonempty $\fI\subset \{1, \ldots, n\}$.
%for each nonempty subset $\fI\subset \{1, \ldots, n\}$.
Our aim in this section  is to describe them in microlocal terms.

%%%

\subsubsection{Lagrangians via cones}

Continue with $T=  (S^1)^n$, so that 
$$
\xymatrix{
\chi_*(T)= \Hom(S^1, T) \simeq \BZ^{n} & \chi^*(T) = \Hom(T, S^1) \simeq \BZ^{n}
}
$$
$$
\xymatrix{
\ft = \chi_*(T)\otimes_\BZ \BR \simeq \BR^n & \ft^*= \chi^*(T)\otimes_\BZ \BR \simeq \BR^n 
}
$$

Similarly, set $T^+ = S^1 \times T$, with  
$$
\xymatrix{
\chi_*(T^+) = \Hom(S^1, T^+) \simeq \BZ^{1+n} & \chi^*(T^+)= \Hom(T^+, S^1) \simeq \BZ^{1+n}
}
$$
$$
\xymatrix{
\ft^+ = \chi_*(T^+) \otimes_\BZ \BR \simeq \BR^{1+n} & (\ft^+)^*= \chi^*(T^+) \otimes_\BZ \BR \simeq \BR^{1+n} 
}
$$
 
Let $e_0, e_1, \ldots, e_n \in  \chi^*(T^+) $ be the coordinate vectors, 
so
that  $\ft = \{e_0 = 0\} \subset \ft^+$.

Let  $\tau \in [0, 2\pi)$ be the lift of $\theta \in S^1 = \BR/2\pi \BZ$,
and define
$$
\xymatrix{
\delta = e_1 + \cdots + e_n \in \chi^*(T)  & \delta^+ = -(\tau/2\pi) e_0 + e_1 + \cdots + e_n \in \chi^*(T^+) 
}
$$
and note that $\delta^+|_\ft = \delta$. 

For each nonempty subset $\fI\subset \{1, \ldots, n\}$, 
introduce  the linear span 
$$
\xymatrix{
\fI \sigma_{\lin} = \Span(\{\delta\}\cup\{ e_a \, |\, a\not \in \fI\}) \subset \ft^*
}
$$
Introduce also the relatively open cones
$$
\xymatrix{
\fI\sigma = \Span_{>0}(\{\delta\}\cup\{ e_a \, |\, a\not \in \fI\}) \subset \ft^*
&
\fI\sigma^+ = \Span_{>0}(\{\delta^+\}\cup \{ e_a \, |\, a\not \in \fI\}) \subset (\ft^+)^*
}
$$
where by the positive span we require that all of the listed vectors have positive coefficients.
Note that $\fI\sigma^+|_\ft = \fI\sigma$, and also that $\fI_1 \subset \fI_2$ implies $ \fI_2\sigma \subset \fI_1\sigma$,
$ \fI_2\sigma^+ \subset \fI_1\sigma^+$.
%, and in particular $\fI\sigma = \Span_{>0}(\{e\})$,  $\fI\sigma' = \Span_{>0}(\{e'\})$ when $\fI = \{1, \ldots, n\}$.

Introduce the affine subspace $\ft^+_\aff = \{e_0 = 1\} \subset \ft^+$ and the canonical identification $\ft^+_\aff\simeq \ft$
preserving the coordinates $e_1, \ldots, e_n$.
%For each nonempty subset $\fI\subset \{1, \ldots, n\}$, 
Introduce the orthogonal subspace 
$$
\xymatrix{
(\fI\sigma^+)^\perp =\{ v\in \ft^+ \, |\, \langle v, \lambda\rangle = 0, \mbox{ for all } \lambda\in \fI\sigma^+\}   \subset \ft^+
}
$$
and the affine subspace
$$
\xymatrix{
(\fI\sigma^+)^\perp_\aff =(\fI\sigma^+)^\perp\cap  \ft^+_\aff  \subset \ft^+_\aff \simeq \ft.
}
$$
Note that $\fI_1 \subset \fI_2$ implies $ (\fI\sigma_1^+)^\perp \subset(\fI\sigma_2^+)^\perp$,
 $ (\fI\sigma_1^+)_\aff^\perp \subset(\fI\sigma_2^+)_\aff^\perp$.

Consider the natural projection $q:\ft\to \ft/\chi_*(T) \simeq T$, and
form the image
$$
\xymatrix{
\fI S = q((\fI\sigma^+)^\perp_\aff) \subset T
}$$
Note that $\fI_1 \subset \fI_2$ implies $\fI_1 S \subset\fI_2 S$. 

Note also 
when $\fI = \{1, \ldots, n\}$, we have  that $\fI S \subset T$
is cut out by the equation $\sum_a \theta_a = \theta$, since $(\fI\sigma^+)^\perp_\aff \subset \ft$ 
is cut out by the equation $\sum_a v_a = \tau/2\pi$.
More generally, for any nonempty subset $\fI \subset \{1, \ldots, n\}$,
 we have  that $\fI S \subset T$
is cut out by the further equations  $\theta_a = 0$, for $a\not\in\fI$, since $(\fI\sigma^+)^\perp_\aff \subset \ft$ 
is cut out by the further equations  $v_a = 0$, for $a\not\in\fI$.

Let $T^* T$ be the cotangent bundle of $T$ with its natural identification $T^* T\simeq T\times \ft^*$.
For each nonempty subset $\fI\subset \{1, \ldots, n\}$, introduce the conic Lagrangian subspaces
$$
\xymatrix{
\fI S \times \fI\sigma_\lin \subset T\times \ft^*
&
\fI S \times \fI\sigma \subset T\times \ft^*
}$$
and note the identification
$$
\xymatrix{
T^*_{\fI S} T = \fI S  \times \fI\sigma_\lin
}
$$
Recall the conic locally closed Lagrangian submanifolds 
$$
\xymatrix{
%L^{>0}(\theta)  \subset  T\times \ft^*
%&
\fI L^{>0}(\theta)   \subset (T^{>0}(S^1))^n   \subset T\times \ft^*
}$$

\begin{lemma}\label{lemma micro interpretation}
For a nonempty subset $\fI\subset \{1, \ldots, n\}$, inside of $T^* T \simeq T \times \ft^*$, 
we have 
$$
\xymatrix{
&
\fI L^{>0}(\theta) = \fI S  \times \fI\sigma % \subset (T^{>0}(S^1))^n
}
$$
\end{lemma}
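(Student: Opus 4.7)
The plan is to verify the equality by unpacking both sides as explicit conditions on coordinates $(\theta_1, \ldots, \theta_n, \xi_1, \ldots, \xi_n) \in T \times \ft^* \simeq T^*T$ and checking that they agree.

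First I would compute the left-hand side. The definition of $\fI L^\times(\theta) \subset M^\times$ given in the preceding subsection specifies the locally closed submanifold cut out by $r_a > 0$ for all $a$, $r_a = r_b$ for $a, b \in \fI$, $r_a < r_b$ for $a \in \fI$ and $b \notin \fI$, $\theta_a = 0$ for $a \notin \fI$, and $\sum_a \theta_a = \theta$. Transporting along
$$\varphi(r_1 e^{i\theta_1}, \ldots, r_n e^{i\theta_n}) = (\theta_1, \ldots, \theta_n, r_1^2/2, \ldots, r_n^2/2),$$
the substitution $\xi_a = r_a^2/2$ is a strictly monotone change of variable on $\BR_{>0}$, so the conditions on the $r_a$'s translate verbatim into $\xi_a > 0$, $\xi_a = \xi_b$ for $a, b \in \fI$, and $\xi_a < \xi_b$ for $a \in \fI$, $b \notin \fI$. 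The angle conditions are unaffected.

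Next I would match the right-hand side. The text has already computed $\fI S \subset T$ as the locus cut out by $\sum_a \theta_a = \theta$ together with $\theta_a = 0$ for $a \notin \fI$, which is precisely the angle part above. For the codirection factor $\fI \sigma = \Span_{>0}(\{\delta\} \cup \{e_a \mid a \notin \fI\})$, I would write a general element as
$$c_0\, \delta + \sum_{a \notin \fI} c_a\, e_a, \qquad c_0 > 0,\ c_a > 0 \text{ for } a \notin \fI.$$
Since $\delta = e_1 + \cdots + e_n$, the $e_a$-coefficient equals $c_0$ when $a \in \fI$ and $c_0 + c_a$ when $a \notin \fI$. Setting $\xi_a$ equal to this coefficient yields exactly $\xi_a > 0$ for all $a$, $\xi_a$ constant on $\fI$, and $\xi_a$ strictly larger on the complement of $\fI$. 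Conversely, any such $\xi$-tuple arises from the unique choice $c_0 = \xi_a$ (for $a \in \fI$) and $c_a = \xi_a - c_0 > 0$ (for $a \notin \fI$), so the two descriptions of the codirection locus coincide.

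Combining the torus factor and the codirection factor finishes the matching. I do not expect any substantial obstacle: the statement is essentially a bookkeeping exercise translating the explicit real-coordinate description of the Lagrangian skeleton into the cone-theoretic description on $T^*T$. The only point that requires care is that the strict inequalities $r_a < r_b$ on one side correspond precisely to the positive-span (open cone) convention used to define $\fI\sigma$ on the other side; with that convention fixed, the identification is immediate.
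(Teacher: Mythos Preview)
Your proposal is correct and follows essentially the same approach as the paper: transport the defining equations of $\fI L^\times(\theta)$ through $\varphi$, then match the angle conditions with $\fI S$ and the $\xi$-conditions with the open cone $\fI\sigma$. You are in fact slightly more explicit than the paper in verifying that the $\xi$-conditions coincide with membership in $\Span_{>0}(\{\delta\}\cup\{e_a\mid a\notin\fI\})$, but the argument is the same.
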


\begin{proof}
Recall that $\fI L^\times(\theta) \subset M^\times$ is cut out by the equations
$$
\xymatrix{
r_a >0, \mbox{ for all } a & r_a = r_b, \mbox{ for } a, b\in \fI & r_a < r_b, \mbox{ for } a\in \fI, b\not \in \fI 
}
$$
$$
\xymatrix{
\theta_a = 0, \mbox{ for } a\not\in\fI
& \sum_a \theta_a = \theta
}
$$
Recall that $\fI L^{>0}(\theta) = \varphi(\fI L^\times(\theta))$ for the exact symplectic identification 
$$
\xymatrix{
\varphi: M^\times  = (\BC^\times)^n \ar[r]^-\sim &  (T^{>0}(S^1))^n  \subset T^*((S^1)^n)
}
$$
$$
\xymatrix{ 
\varphi(r_1e^{i\theta_1}, \ldots, r_n e^{i\theta_n}) = (\theta_1, r_1^2/2, \ldots, \theta_n, r_n^2/2)
}
$$
Therefore $\fI L^{>0}(\theta) \subset T^*T$ is cut out by the similar equations
$$
\xymatrix{
\xi_a >0, \mbox{ for all } a & \xi_a = \xi_b, \mbox{ for } a, b\in \fI & \xi_a < \xi_b, \mbox{ for } a\in \fI, b\not \in \fI 
}
$$
$$
\xymatrix{
\theta_a = 0, \mbox{ for } a\not\in\fI
& \sum_a \theta_a = \theta
}
$$

Now we can simply match formulas. We have seen that the last two of the above collections of equations  
together cut out  $\fI S  \subset T$.
The 
first three describe precisely what it means to be in the positive cone 
$\fI\sigma = \Span_{>0}(\{\delta\}\cup\{ e_a \, |\, a\not \in \fI\}) \subset \ft$.
\end{proof}

%%%

\subsubsection{Structure when $\theta= 0$}
Let us  focus further on the case $\theta = 0\in S^1$.

Consider the diagonal character
$$
\xymatrix{
\delta:T \ar[r] &  S^1 &
\delta(\theta_1\ldots, \theta_n) = \theta_1 + \cdots + \theta_n
}
$$
Introduce the subtorus  $T^\circ = \ker(\delta) \subset T$, % consisting of $(\theta_a)_{a=1}^n \in T$ such that $ \sum_{a=1}^n \theta_i = 0$.
with Lie algebra  $\ft^\circ \subset \ft$, and note 
$$
\xymatrix{
\chi_*(T^\circ) =\Hom(S^1, T^\circ) \simeq   \{\delta\}^\perp \subset \chi_*(\ft)
&
\chi^*(T^\circ) = \Hom(T^\circ, S^1) \simeq \chi^*(T)/ \Span(\{\delta\})
}$$
%where 
%as usual $\delta = e_1 + \cdots + e_n \in \chi^*(T)$.

Let $\Sigma \subset \ft^*$ be the complete real fan with rays $\ol e_1, \ldots, \ol e_n \in \chi^*(T^\circ)$ the images of
the coordinate vectors $e_1, \ldots, e_n \in \chi^*(T)$
under the quotient map $\chi^*(T)\to \chi^*(T^\circ)$.
Note that nonempty subsets $\fI \subset \{1, \ldots, n\}$ 
 index the positive cones $\sigma = \Span_{>0}(\{ \ol e_a \, |\, a\not \in \fI\}) \subset  \Sigma$, and
in particular, the subset $\fI = \{1, \ldots, n\}$ indexes  the origin $\sigma = \{0\} \subset \Sigma$.

\begin{remark}
Let $\check T^\circ =\Spec \BC[ \chi_*(T^\circ)]$ denote the complex torus dual to $T^\circ$. Then the complete fan 
$\Sigma \subset \chi^*(T^\circ)$ corresponds to the
$\check T^\circ $-toric variety  $\BP^{n-1}$.
\end{remark}

For each positive cone $\sigma\subset \Sigma$, 
introduce the orthogonal subspace 
$$
\xymatrix{
\sigma^\perp =\{ v\in \ft^\circ \, |\, \langle v, \lambda\rangle = 0, \mbox{ for all } \lambda\in \sigma\}   \subset \ft^\circ
}
$$
Consider the natural projection $q:\ft^\circ \to \ft^\circ/\chi_*(T^\circ) \simeq T^\circ $, and form the image
$$
\xymatrix{
\sigma T^\circ = q(\sigma^\perp) \subset T^\circ
}
$$ 

Define $\Lambda_\Sigma \subset T^* T^\circ \simeq T^\circ \times (\ft^\circ)^*$ to be the conic Lagrangian
$$
\Lambda_\Sigma = \bigcup_{\sigma \subset \Sigma} \sigma T^\circ \times \sigma \subset T^\circ \times (\ft^\circ)^*
$$

\begin{remark}
As we will discuss later, the conic Lagrangian
$
\Lambda_\Sigma \subset T^* T^\circ
$
is the mirror skeleton to the  $\check T^\circ$-toric variety  $\BP^{n-1}$.
% under the coherent-constructible correspondence of~\cite{ccc}.
\end{remark}

The inclusion $T^\circ \subset T$ induces a natural Lagrangian correspondence
$$
\xymatrix{
\ar[d]^\sim T^* T^\circ & \ar@{->>}[l]_-p \ar[d]^\sim  T^* T \times_T T^\circ \ar@{^(->}[r]^-i &\ar[d]^-\sim  T^* T \\
T^\circ \times (\ft^\circ)^* & \ar@{->>}[l]  T^\circ\times\ft^* \ar@{^(->}[r] & T\times \ft^* 
}
$$
compatible with the natural projection  $\ft^* \to  \ft^*/\Span(\{\delta\}) \simeq (\ft^\circ)^*$.

Recall the conic Lagrangian $L^{>0}(0) \subset T^* T$, and
its locally closed submanifolds $ \fI L^{>0}(0) \subset T^* T$,
for nonempty subsets $\fI\subset \{1, \ldots, n\}$.

Introduce the corresponding conic Lagrangian and  locally closed submanifolds
$$
\xymatrix{
\Lambda^\circ  = p(i^{-1}(L^{>0}(0))) \subset T^*T^\circ
&
\fI\Lambda^\circ  = p(i^{-1}(\fI L^{>0}(0))) \subset T^*T^\circ
}
$$

Note that $L^{>0}(0)$ in fact already lies in $T^* T \times_T T^\circ$, 
since its points satisfy $\sum_{a=1}^n \theta_a = 0$,
so the inverse image $i^{-1}$ is unnecessary in the above formulas.

Note also that
 the fibers of $p$ are the cosets of the line  $\Span(\{\delta\}) \simeq \BR$, and their intersections
 with $L^{>0}(0)$ are cosets of the positive ray $\Span_{>0}(\{\delta\}) \simeq \BR_{>0}$.
 Thus the projection $L^{>0}(0)\to \Lambda^\circ$ is simply an $\BR_{>0}$-bundle.

%In particular, when $\theta= 0$, we write $\Lambda^\circ = \Lambda^\circ(0)$.
%Note that $\Lambda^{>0}(\theta)$ already lies in $T^* T|_\theta$, and that 
%$\Lambda^{>0}(0)$ is simply a $\BR_{>0}$-bundle over $\Lambda^\circ$.

%
%Let us simplify our prior notation by setting
%$$
%\xymatrix{
%L=L^{>0}(0) 
%&
%\fI L = \fI L^{>0}(0)
%}$$

\begin{lemma}\label{lemma ham red}
Inside of $T^* T^\circ$,  we have 
$$
\xymatrix{
\Lambda^\circ  = \Lambda_{\Sigma}
 &
 \fI \Lambda^\circ = \sigma T^\circ \times \sigma  
}
$$
where a nonempty subset $\fI \subset \{1, \ldots, n\}$ indexes the positive cone  
$\sigma = \Span_{>0}(\{ \ol e_a \, |\, a\not \in \fI\}) \subset  \Sigma$.
\end{lemma}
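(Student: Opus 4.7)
The plan is to deduce this from Lemma~\ref{lemma micro interpretation} (applied at $\theta = 0$) by explicitly computing the effect of the Lagrangian correspondence $T^*T \xleftarrow{i} T^*T \times_T T^\circ \xrightarrow{p} T^*T^\circ$ on each piece $\fI L^{>0}(0)$, and then taking the union over nonempty $\fI \subset \{1,\ldots,n\}$ to recover the statement about $\Lambda^\circ$. Since the correspondence factors as a product of a ``base'' piece $T^\circ \hookrightarrow T$ and a ``fiber'' piece $\ft^* \twoheadrightarrow (\ft^\circ)^*$, and since Lemma~\ref{lemma micro interpretation} presents $\fI L^{>0}(0) = \fI S \times \fI\sigma$ as a product, I only need to analyze what happens to $\fI S$ and to $\fI\sigma$ separately.

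First I would observe that $i^{-1}$ plays no role: since $\theta = 0$ makes the defining equation for $\fI S$ include $\sum_a \theta_a = 0$, every point of $\fI L^{>0}(0)$ already sits over $T^\circ \subset T$. In particular, $i^{-1}(\fI L^{>0}(0)) = \fI L^{>0}(0)$, now regarded as a subset of $T^\circ \times \ft^* \simeq T^*T \times_T T^\circ$. Next, I would match the base factors: the equations $\sum_a \theta_a = 0$ and $\theta_a = 0$ for $a \notin \fI$ that cut out $\fI S$ in $T$ are exactly the image under $q:\ft^\circ \to T^\circ$ of the linear subspace $\sigma^\perp = \{v \in \ft^\circ \mid v_a = 0, a \notin \fI\}$, since for $v \in \ft^\circ$ the pairing $\langle v, \ol e_a\rangle = v_a$. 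Thus $\fI S = q(\sigma^\perp) = \sigma T^\circ$, with $\sigma = \Span_{>0}(\{\ol e_a \mid a \notin \fI\})$.

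For the fiber factor I would compute directly the image of the relatively open cone $\fI\sigma = \Span_{>0}(\{\delta\} \cup \{e_a \mid a \notin \fI\}) \subset \ft^*$ under the projection $\pi: \ft^* \twoheadrightarrow (\ft^\circ)^* = \ft^*/\Span(\delta)$. Since $\pi(\delta) = 0$ and $\pi(e_a) = \ol e_a$, any element $r\delta + \sum_{a \notin \fI} s_a e_a$ (with $r, s_a > 0$) projects to $\sum_{a \notin \fI} s_a \ol e_a$, and conversely every element of $\Span_{>0}(\{\ol e_a \mid a \notin \fI\})$ lifts (e.g.\ by choosing $r = 1$). Hence $\pi(\fI\sigma) = \sigma$, with the boundary case $\fI = \{1,\ldots,n\}$ giving $\pi(\Span_{>0}(\delta)) = \{0\} = \sigma$. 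Combining, $\fI\Lambda^\circ = p(\fI S \times \fI\sigma) = \sigma T^\circ \times \sigma$, and taking the union over all nonempty $\fI$ indexes precisely the cones of the complete fan $\Sigma$, yielding $\Lambda^\circ = \Lambda_\Sigma$.

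The main thing to watch carefully is the fiber-factor computation: one must check that the relatively open cone $\fI\sigma$ maps \emph{onto} the relatively open cone $\sigma$, not merely into its closure, and that conversely no extraneous points appear (this requires the positivity of $r$, which absorbs the $\delta$-direction without affecting the image). The rest is a bookkeeping exercise in matching defining equations of subtori to orthogonal complements of cones, which is routine once the dualities $\ft^\circ = \ker(\delta)$ and $(\ft^\circ)^* = \ft^*/\Span(\delta)$ are set up.
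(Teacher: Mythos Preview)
Your proposal is correct and follows essentially the same approach as the paper: invoke Lemma~\ref{lemma micro interpretation} at $\theta=0$ to obtain $\fI L^{>0}(0) = \fI S \times \fI\sigma$, identify $\fI S$ with $\sigma T^\circ$ (since $\tau=0$ forces the base to lie in $T^\circ$), and observe that $\fI\sigma$ projects to $\sigma$ under $\ft^* \to (\ft^\circ)^*$. The paper's proof is a terse two-line version of exactly this; you have simply unpacked the details (triviality of $i^{-1}$, surjectivity of the cone projection, the boundary case $\fI=\{1,\ldots,n\}$) that the paper leaves implicit.
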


\begin{proof}
The second assertion refines the first. For the second, by Lemma~\ref{lemma micro interpretation},
we have
$$
\xymatrix{
\fI L^{>0}(0) \simeq \fI S \times \fI\sigma
}
$$
where 
$\fI S = q(\sigma^\perp) \subset T^\circ$ since $\tau=0$,
and
$\fI \sigma= \Span_{>0}(\{\delta\} \cup \{ \ol e_a \, |\, a\not \in \fI\}) \subset  \ft^*$. Hence $\fI S =  \sigma T^\circ $, and
$\fI \sigma$ projects to $\sigma$.
\end{proof}

%%%%%%%%%%%%%%%%%%%%%%%%%%%%%%%%%%%%%%%%%%%%%%%%%%%%%%%%%%%%%%%

\subsection{Canonical section}

Recall for any $\theta\in S^1$, the Lagrangian skeleton $ L(\theta) \subset M$ admits a decomposition
 $$
 \xymatrix{
 L(\theta) = L_0 \cup \coprod_{\theta\in \Theta} L^\times(\theta)
 }
 $$
 %where we set
 $$
 \xymatrix{
 L_0 = L(\theta) \cap M_0 & L^\times(\theta) = L(\theta)\cap M^\times(\theta)
 }
 $$ 

Recall the decomposition of $L_0$ into  the locally closed submanifolds  $\fI L_{0} \subset L_0$,
for nonempty subsets $\fI\subset \{1, \ldots, n\}$, cut out by the equations
$$
\xymatrix{
 r_a = 0, \mbox{ for } a\in \fI & \theta_a = 0, \mbox{ for } a\not\in\fI
}
$$
Note that points of $L_0$ are completely described by their radial coordinates and the angular coordinates are either
not well-defined or set equal to zero.  

Recall the complete fan $\Sigma\subset (\ft^\circ)^*$
 with rays  $\ol e_1, \ldots, \ol e_n \in \chi^*(T^\circ)$ the images of $e_1, \ldots, e_n \in \chi^*(T)$
under the quotient map $\chi^*(T)\to \chi^*(T^\circ)$.
Recall that nonempty subsets $\fI \subset \{1, \ldots, n\}$ 
 index the positive cones $\sigma = \Span_{>0}(\{ \ol e_a \, |\, a\not \in \fI\}) \subset  \Sigma$, and
in particular, the subset $\fI = \{1, \ldots, n\}$ indexes  the origin $\sigma = \{0\} \subset \Sigma$. 

%
%For each nonempty subset $\fI\subset \{1, \ldots, n\}$,
%let us write $\fI \sigma \subset \Sigma$ for the cone with rays indexed by the complement $\{1, \ldots, n\}\setminus \fI$.

%For each $\sigma\in \Sigma$, let $\sigma T^\circ \subset T^\circ$ denote
%the image of the orthogonal subspace $\sigma^\perp \subset \ft $ under the  quotient map $\ft^\circ \to T^\circ$.

\begin{lemma}\label{lemma homeo fiber}
%The negative of the restriction of the moment map for the $T^\circ$-action on $M$ provides 
We have a piecewise-linear homeomorphism 
$$
\xymatrix{
h_0:L_0 \ar[r]^-\sim & (\ft^\circ)^* \simeq \BR^{n-1}
&
h_0(r_1, \ldots, r_n)  = - r_1 \ol e_1 - \cdots - r_n \ol e_n
}
$$
that takes the locally closed submanifold $\fI L_{0} \subset L_0$ homeomorphically to the corresponding opposite  cone $ -\sigma \subset -\Sigma$.
\end{lemma}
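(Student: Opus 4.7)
The plan is to identify $L_0$ with the quotient of $\BR^n$ by the diagonal line $\BR\delta$, observe that $h_0$ is well defined on that quotient because $\ol e_1 + \cdots + \ol e_n = 0$, and then recognize the resulting map as a linear isomorphism to $(\ft^\circ)^*$.

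First I would unpack the defining equations of $L(\theta)$ to see that a point of $L_0 = L(\theta) \cap M_0$ is completely determined by its radial coordinates. Indeed, since the point lies in $M_0$, at least one $r_a$ vanishes, hence $r_\m = 0$, and for $a \notin I_\m$ the equation $\theta_a = 0$ forces $z_a = r_a > 0$, while for $a \in I_\m$ the coordinate $z_a = 0$ regardless of the (ill-defined) angle. Thus $L_0$ is identified with the boundary $\{(r_1, \ldots, r_n) \in \BR_{\geq 0}^n \,|\, \min_a r_a = 0\}$, and the stratum $\fI L_0$ corresponds to $\{r_a = 0 \text{ for } a \in \fI,\, r_a > 0 \text{ for } a \notin \fI\}$.

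Next I would verify that $h_0$ descends to a well-defined map on $\BR^n / \BR \delta$, where $\delta = (1,\ldots,1)$, because shifting $r_a \mapsto r_a + c$ changes $h_0$ by $-c(\ol e_1 + \cdots + \ol e_n) = 0$. Then I would factor $h_0$ as the composition
$$
L_0 \hookrightarrow \BR^n / \BR \delta \longrightarrow (\ft^\circ)^*.
$$
The first arrow is a homeomorphism because every coset $[v]$ has the unique representative $v - (\min_a v_a) \delta$ with nonnegative coordinates and minimum zero. The second arrow is a linear map between $(n-1)$-dimensional real vector spaces; it is an isomorphism since its kernel would consist of $r$ with $\sum r_a e_a \in \Span(\delta)$, forcing $r \in \BR \delta$.

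For the stratum-wise refinement, the restriction of $h_0$ to $\fI L_0$ sends $(r_a)_{a \notin \fI}$ with $r_a > 0$ to $-\sum_{a \notin \fI} r_a \ol e_a$. Since the unique linear relation among $\ol e_1, \ldots, \ol e_n$ is $\sum_a \ol e_a = 0$, any proper subset is linearly independent; with $\fI$ nonempty, the collection $\{\ol e_a \,|\, a \notin \fI\}$ thus spans the simplicial cone $\sigma$ freely. Hence the restriction is a linear homeomorphism from $\BR_{>0}^{n - |\fI|}$ onto the relative interior $-\sigma$ of the opposite cone, completing the piecewise-linear stratification-preserving identification. The only point requiring care is the bookkeeping around the coordinates with $r_a = 0$, where the angular data is ill defined but irrelevant since such coordinates contribute trivially to $h_0$.
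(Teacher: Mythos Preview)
Your proof is correct and follows the same approach as the paper: identify $L_0$ with the set of nonnegative $n$-tuples having at least one vanishing coordinate, and match the strata $\fI L_0$ with the opposite cones $-\sigma$ via the vanishing/positivity pattern of the coordinates. Your factorization through $\BR^n/\BR\delta$, using that each coset has a unique representative with minimum coordinate zero, is a clean way to exhibit the bijectivity that the paper's brief argument leaves implicit.
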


\begin{proof}
Note that $L_0 \subset M$ consists  of $n$-tuples  $(r_1, \ldots, r_n) \in M$ of 
real non-negative radii
with
at least one radius
equal to zero. The corresponding submanifolds and cones are cut out by the  vanishing and positivity of 
the respective radii and coordinate coefficients.
\end{proof}

\begin{remark}
Motivation for the negative signs in the definition of $h_0$ can be found in natural extensions of it immediately below.
\end{remark}

Now fix a representative $\tau\in (-2\pi, 2\pi)$ projecting to $\theta\in S^1$.

% and let $\sign(\tau) \in \{\pm 1\}$ be equal to $1$,
%when $\tau\in [0, 2 \pi)$, and $-1$,
%when $\tau\in (-2\pi, 0)$. 

We will construct a closed conic Lagrangian subvariety $P(\tau) \subset L(\theta)$ and a homeomorphism
$$
\xymatrix{
h = g \times w :P(\tau) \ar[r]^-\sim & (\ft^\circ)^*\times C(\theta) \simeq \BR^{n-1} \times \BR_{\geq 0}
}
$$
where the second factor $w:P(\tau) \to C(\theta)$ is simply the restriction of the superpotential $W:M\to \BC$.
Furthermore, above the origin $0\in C(\theta)$, the homeomorphism will be that of the previous lemma
$$
\xymatrix{
h|_0 = h_0:P(\tau)|_0 = L_0   \ar[r]^-\sim & (\ft^\circ)^* \simeq \BR^{n-1}
}
$$
%
%\begin{example}
%When $\tau = 0$, we will see that $P(0) \subset L(\theta)$  consists  
%of real non-negative $n$-tuples of radii $(r_1, \ldots, r_n) \in M$ with possibly all radii non-zero,
%and 
%the first factor of the homeomorphism will be 
%$$
%\xymatrix{
%g:P(0) \ar[r] & (\ft^\circ)^*  \simeq \BR^{n-1} 
%&
%g(r_1, \ldots, r_n) =  - r_1 \ol e_1 - \cdots -r_n \ol e_n
%}
%$$
%\end{example}

For $a=1, \ldots, n$, fix the representative $\tau_a\in [0, 2\pi)$  projecting to $\theta_a\in S^1$, if $\tau\geq 0$,
or alternatively $\tau_a\in (-2\pi, 0]$, if $\tau< 0$.

\begin{defn}[Canonical section]
Define $P^\times(\tau) \subset L^\times(\theta)$ to be the closed conic Lagrangian cut out by the
single additional  equation
$$
\xymatrix{
 \sum_{a=1}^n \tau_a = \tau
}
$$

Define $P(\tau) \subset L(\theta)$ to be the closed conic Lagrangian $P(\tau) = L_0 \cup P^\times (\tau)$.

\end{defn}

\begin{remark}
Note that $P(\tau)$ is equivalently the closure of  $P^\times (\tau)$ regarded as a subspace of $ L(\theta)$
or as a subspace of $M$. 
\end{remark}

Recall  the decomposition of $L^\times(\theta)$ into the 
locally closed submanifolds $\fI L^\times(\theta)$,
for nonempty subsets $\fI\subset \{1, \ldots, n\}$.
%, cut out by the equations
%$$
%\xymatrix{
%r_a >0, \mbox{ for all } a & r_a = r_b, \mbox{ for } a, b\in \fI & r_a < r_b, \mbox{ for } a\in \fI, b\not \in \fI 
%}
%$$
%$$
%\xymatrix{
%\theta_a = 0, \mbox{ for } a\not\in\fI
%& \sum_a \theta_a = \theta
%}
%$$
%%Its codimension is $n$ and it is clearly isotropic hence Lagrangian.
Taking intersections, we obtain a decomposition of $P^\times(\tau)$ into  
locally closed submanifolds 
$$
\xymatrix{
\fI P^\times(\tau)  = P^\times(\tau) \cap \fI L^\times(\theta)
}
$$
%for nonempty subsets $\fI\subset \{1, \ldots, n\}$, 
cut out by the equations
$$
\xymatrix{
r_a >0, \mbox{ for all } a & r_a = r_b, \mbox{ for } a, b\in \fI & r_a < r_b, \mbox{ for } a\in \fI, b\not \in \fI 
}
$$
$$
\xymatrix{
\tau_a = 0, \mbox{ for } a\not\in\fI
& \sum_a \tau_a = \tau
}
$$

Transporting them along the identification $\varphi$,
we obtain a conic Lagrangian with locally closed submanifolds 
$$
\xymatrix{
P^{>0}(\tau)  =\varphi(P^{\times}(\tau) ) 
&
\fI P^{>0}(\tau) = \varphi(\fI P^{\times}(\tau) ) 
}$$
for nonempty subsets $\fI\subset \{1, \ldots, n\}$.

Let $\Delta(\tau) \subset T$ be the simplex with $ \sum_{a=1}^n \tau_a = \tau$.
Note that we have  
$$
\xymatrix{
P^{>0}(\tau)  = L^{>0}(\theta) \times_T \Delta(\tau)
}
$$

For a nonempty subset $\fI \subset \{1, \ldots, n\}$, introduce the relatively open subsimplex
 $\fI \Delta(\tau) \subset \Delta(\tau)$  defined by the equations $\tau_a \not = 0$,  for  $a\in\fI$,
 and  $\tau_a = 0$,  for  $a\not\in\fI$.

As an immediate consequence of Lemma~\ref{lemma micro interpretation}, we have the following description.

\begin{lemma}\label{lemma micro interpretation slice}
For a nonempty subset $\fI\subset \{1, \ldots, n\}$, inside of $T^* T \simeq T \times \ft^*$, 
we have 
$$
\xymatrix{
&
\fI P^{>0}(\tau) = \fI \Delta(\tau)  \times \fI\sigma % \subset (T^{>0}(S^1))^n
}
$$
\end{lemma}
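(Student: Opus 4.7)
The plan is to refine Lemma~\ref{lemma micro interpretation} by incorporating the single added equation $\sum_{a=1}^n \tau_a = \tau$ that distinguishes $P^\times(\tau) \subset L^\times(\theta)$. Comparing the defining equations listed just above for $\fI P^\times(\tau)$ with those of $\fI L^\times(\theta)$, the only new constraint is this lifted equation; all radial and residual angular conditions coincide.

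Applying the symplectic identification $\varphi:(r_a e^{i\theta_a}) \mapsto (\theta_a, r_a^2/2)$, the radial conditions $r_a > 0$, $r_a = r_b$ for $a, b \in \fI$, and $r_a < r_b$ for $a \in \fI, b \not\in \fI$ translate to the corresponding conditions on $\xi_a = r_a^2/2$ on the second factor of $T^*T \simeq T \times \ft^*$. These are precisely the defining equations of the open cone $\fI\sigma$, as already verified in Lemma~\ref{lemma micro interpretation}, so the second factor of $\fI P^{>0}(\tau)$ is exactly $\fI\sigma$. Crucially, these equations depend only on the cotangent coordinates, hence do not interfere with the angular slicing imposed by the remaining constraints.

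For the first factor, the angular conditions $\theta_a = 0$ for $a \not\in\fI$ and $\sum_a \theta_a = \theta$, together with the lifted selection $\sum_a \tau_a = \tau$, cut out $\fI S \cap \Delta(\tau) \subset T$. Since the chosen lifts satisfy $\tau_a \in [0, 2\pi)$ or $(-2\pi, 0]$, the equation $\theta_a = 0$ forces $\tau_a = 0$ for $a \not\in\fI$; combined with the locally closed stratum condition that $\fI P^{>0}(\tau)$ be disjoint from strata $\fJ P^{>0}(\tau)$ with $\fJ \subsetneq \fI$ (forcing $\tau_a \neq 0$ for $a \in \fI$, in parallel with the strict inequality $r_a < r_b$ on the radial side), this identifies the first factor precisely with the relatively open subsimplex $\fI\Delta(\tau)$. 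Combining the two factors yields the claimed product decomposition $\fI P^{>0}(\tau) = \fI\Delta(\tau) \times \fI\sigma$ inside $T\times\ft^*$. The only real bookkeeping is matching the relatively open structure on both sides; the calculation is otherwise an immediate consequence of Lemma~\ref{lemma micro interpretation}, with the slicing by $\sum \tau_a = \tau$ simply cutting the subtorus $\fI S$ down to the face $\fI\Delta(\tau)$ of the simplex $\Delta(\tau)$.
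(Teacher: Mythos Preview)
Your approach matches the paper's: the lemma is stated there as an immediate consequence of Lemma~\ref{lemma micro interpretation}, and you correctly spell this out by observing that $\fI P^{>0}(\tau)$ is obtained from $\fI L^{>0}(\theta) = \fI S \times \fI\sigma$ by imposing the single additional constraint $\sum_a \tau_a = \tau$ on the base factor, with the cotangent factor $\fI\sigma$ carrying over verbatim.

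One step does not work as written. You justify $\tau_a \neq 0$ for $a \in \fI$ by appealing to disjointness of $\fI P^{>0}(\tau)$ from the strata $\fJ P^{>0}(\tau)$ with $\fJ \subsetneq \fI$. But that disjointness is already forced by the cotangent factor --- the open cones $\fI\sigma$ and $\fJ\sigma$ are disjoint whenever $\fI \neq \fJ$ --- and therefore imposes no constraint whatsoever on the angular coordinates. The identification of the base factor with the relatively open face $\fI\Delta(\tau)$ should instead be read off directly as $\fI S \cap \Delta(\tau)$, without invoking strata; the paper does not justify the openness condition either, and for the subsequent use in Proposition~\ref{prop homeo slice} only the description over each face matters.
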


Now set $r = r_1 \cdots r_n$, and define the first factor of the sought-after homeomorphism to be
$$
\xymatrix{
g: P(\tau) \ar[r] & (\ft^\circ)^* \simeq \BR^{n-1}
&
g(r_1 e^{i\theta_1}, \ldots, r_n e^{i\theta_n}) = (r\tau_1 - r_1)\ol e_1 + \cdots + (r\tau_n - r_n)\ol e_n 
}
$$
Observe that when $r=0$, this clearly restricts to the homeomorphism $h_0$.
%$$
%\xymatrix{
%h_0:L_0 \ar[r]^-\sim & (\ft^\circ)^*\simeq \BR^{n-1} 
%}
%$$

%where the map to the second factor is simply projection $L^\times(\theta) \to C^\times(\theta)$.

\begin{prop}\label{prop homeo slice}
The map $g:P(\tau) \to (\ft^\circ)^*$ provides the first factor of a homeomorphism 
$$
\xymatrix{
h = g \times w:P(\tau) \ar[r]^-\sim & (\ft^\circ)^* \times C(\theta) \simeq \BR^{n-1} \times \BR_{\geq 0}
}
$$
with second factor $w:P(\tau) \to C(\theta)$ the restriction of the superpotential $W:M\to \BC$.
\end{prop}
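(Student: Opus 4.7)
The plan is to decompose both sides compatibly and prove $h$ restricts to a homeomorphism on each piece of the decomposition, then verify gluing. The map is manifestly continuous and well-defined: each $\bar{e}_a \in (\ft^\circ)^*$, and $W(P(\tau)) \subset C(\theta)$ since $P(\tau) \subset L(\theta) \subset W^{-1}(C(\theta))$. The splitting $P(\tau) = L_0 \sqcup P^\times(\tau)$ matches $w^{-1}(\{0\}) \sqcup w^{-1}(\BC^\times(\theta))$. Over $\{0\}$ we have $r = 0$, so $g|_{L_0}$ reduces to $h_0$ of Lemma~\ref{lemma homeo fiber}, which is already a homeomorphism onto $(\ft^\circ)^* \times \{0\}$.

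Over $\BC^\times(\theta) \cong \BR_{>0}$, stratify $P^\times(\tau)$ by nonempty $\fI \subset \{1,\ldots,n\}$. Parametrize $\fI P^\times(\tau)$ by $s = r_\m > 0$, gaps $d_a = r_a - s > 0$ for $a \notin \fI$, and angles $(\tau_a)_{a \in \fI} \in \fI\Delta(\tau)$, setting $r_a = s$ for $a \in \fI$ and $\tau_a = 0$ for $a \notin \fI$; then $R = |W| = s^{|\fI|}\prod_{a \notin \fI}(s + d_a)$. Shifting the lift to $\ft^*$ by $s\delta \in \Span(\delta)$ (invisible in $(\ft^\circ)^*$), one obtains
$$
g = \sum_{a \in \fI} R\tau_a\,\bar{e}_a - \sum_{a \notin \fI} d_a\,\bar{e}_a.
$$
To invert, given $(\lambda, R)$, lift $\lambda$ to $\vec{c} = (c_a) \in \ft^*$ and seek $\fI$ and $\eta \in \BR$ with $c_a + \eta = R\tau_a$ for $a \in \fI$ and $c_a + \eta = -d_a$ for $a \notin \fI$. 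The constraint $\sum_{a \in \fI} \tau_a = \tau$ forces $\eta = (R\tau - \sum_{a \in \fI} c_a)/|\fI|$, and the sign conditions (the sign of $\tau_a$ matches that of $\tau$ on $\fI$, and $d_a > 0$ off $\fI$) determine $\fI$ as the set of indices of $\vec{c}$ on the appropriate side of the threshold $-\eta$; a sorting and monotonicity argument (in the size of $\fI$) shows exactly one such $\fI$ exists for each $(\lambda, R)$. The remaining $s$ is then the unique positive solution to $s^{|\fI|}\prod_{a\notin\fI}(s + d_a) = R$, available by strict monotonicity of the left-hand side as a function of $s$.

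Continuity of the inverse within each stratum is immediate from the explicit formulas. Continuity across strata, including at $R \to 0^+$ where $s \to 0$ and the point converges to $h_0^{-1}(\lambda) \in L_0$, follows from direct inspection. Alternatively, one verifies $h$ is proper---if $W = r$ is bounded but some $r_a \to \infty$, then some other $r_b \to 0$, making the differences between coordinates of any lift of $g$ to $\ft^*$ diverge---and invokes that a continuous proper bijection between Hausdorff spaces is a homeomorphism. The main technical obstacle is the combinatorial step of uniquely identifying $\fI$ from $(\lambda, R)$: this is a convexity fact reflecting the completeness of the fan $-\Sigma$ on $(\ft^\circ)^*$ coupled with the simplex constraint $\sum \tau_a = \tau$, and at $R = 0$ it specializes precisely to Lemma~\ref{lemma homeo fiber}.
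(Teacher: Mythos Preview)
Your proposal is correct and follows essentially the same route as the paper: reduce to $L_0$ via Lemma~\ref{lemma homeo fiber}, stratify $P^\times(\tau)$ by the minimal-index set $\fI$, and simplify $g$ on each stratum to the same expression $\sum_{a\in\fI} r\tau_a\,\bar e_a - \sum_{a\notin\fI} d_a\,\bar e_a$ (your shift by $s\delta$ is exactly what makes this match the paper's formula $r\sum_{a\in\fI}\tau_a\bar e_a - \sum_{b\notin\fI} r_b\bar e_b$ in $(\ft^\circ)^*$). The only difference is emphasis: the paper asserts without detail that the stratum images disjointly decompose $\BR^{n-1}\times\BR_{>0}$ and invokes Lemma~\ref{lemma micro interpretation slice}, whereas you supply that missing step explicitly via the threshold/sorting argument for $\fI$ and the monotone equation for $s$, and add the properness check for global continuity of the inverse.
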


\begin{proof}
By Lemma~\ref{lemma homeo fiber}, it suffices to study the restriction to $P^\times(\tau)$.

When $\tau = 0$,  observe that all angles vanish, $P^\times(0)\subset M^\times$
 consists  of $n$-tuples  $(r_1, \ldots, r_n) \in M^\times$ of positive
radii, and the map reduces to the homeomorphism
%$$
%\xymatrix{
%h : \BR_{> 0}^n \ar[r]^-\sim & \BR^{n-1} \times \BR_{>  0}
%}
%$$
$$
\xymatrix{
h(r_1, \ldots, r_n) =  ( - r_1\ol e_1 - \cdots  - r_n\ol e_n, r)
}
$$

When $\tau >  0$, observe that $P^\times(\tau)\subset M^\times$
 consists  of $n$-tuples  $(r_1e^{i\tau_1}, \ldots, r_ne^{i\tau_n}) \in M^\times$ satisfying the following.

First,
the   angles $(\tau_1, \ldots, \tau_n)\in \BR_{\geq 0}^n $ form the simplex $\Delta(\tau)$. % = \{ \sum_{a=1}^n \tau_a = \tau\}$.
%For a nonempty subset $\fI \subset \{1, \ldots, n\}$, let us write $\fI \Delta  \subset \Delta$ for the relatively open subsimplex
%where $\tau_a \not = 0$ when $a \in \fI$.

Second,
by Lemma~\ref{lemma micro interpretation slice}, 
 above $\fI \Delta(\tau)  \subset \Delta(\tau)$,  we have 
$$
\xymatrix{
P^\times(\tau)|_{\fI\Delta} =  \fI \Delta(\tau) \times \fI \sigma
&
\fI \sigma = \Span_{ >0}(\{e\} \cup \{e_a \, |\, a\not \in \fI\})
}
$$
%In other words,
%there is $s\in \BR_{>0}$, and $(s_1, \ldots, s_n) \in \BR_{\geq 0}^n$, with $s_a = 0$ when $a\in \fI$, such that the radii satisfy 
%$(r_1, \ldots, r_n) = (s+s_1, \ldots, s+ s_n)$.
Thus  above $\fI \Delta(\tau)  \subset \Delta(\tau)$, the  map  takes the form 
$$
\xymatrix{
h(r_1, \ldots, r_n, \tau_1, \ldots, \tau_n) =  (r \sum_{a\in\fI} \tau_a \ol e_a - \sum_{b\not \in \fI} r_b \ol e_b, r)
}
$$
and  hence provides an inclusion 
$$
\xymatrix{
P^\times(\tau)|_{\fI\Delta(\tau)} \ar@{^(->}[r] &   \BR^{n-1} \times \BR_{> 0}
}
$$

The images of the above inclusions decompose  $\BR^{n-1} \times \BR_{> 0}$
into disjoint subspaces indexed by nonempty subsets $\fI \subset \{1, \ldots, n\}$.
 Thus $h$ provides a bijection 
 $$
\xymatrix{
P^\times(\tau) \ar[r] &   \BR^{n-1} \times \BR_{> 0}
}
$$
 and by the description of Lemma~\ref{lemma micro interpretation slice},  it is a homeomorphism.
 
 When $\tau <  0$, a similar analysis holds.
\end{proof}

\begin{corollary}
For $\tau_1, \tau_2\in (-2\pi,2\pi)$
representing $\theta_1 \not = \theta_2\in S^1$, 
the union $P(\tau_1) \cup P(\tau_2)\subset L(\theta_1) \cup L(\theta_2)$ admits a homeomorphism 
$$
\xymatrix{
H:P(\tau_1) \cup P(\tau_2) \ar[r]^-\sim & (\ft^\circ)^* \times (C(\theta_1) \cup C(\theta_2) \simeq \BR^{n-1} \times \BR 
}
$$
\end{corollary}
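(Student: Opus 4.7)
The plan is to glue together the two homeomorphisms provided by Proposition~\ref{prop homeo slice}, one for each $\tau_i$, along their common intersection, which will turn out to be the singular fiber $L_0$.

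First, I would observe that $P(\tau_1) \cap P(\tau_2) = L_0$ when $\theta_1 \neq \theta_2$. Indeed, each $P^\times(\tau_i)$ sits inside $M^\times(\theta_i) = W^{-1}(\BC^\times(\theta_i))$, and the open rays $\BC^\times(\theta_1)$ and $\BC^\times(\theta_2)$ are disjoint in $\BC$, so $P^\times(\tau_1) \cap P^\times(\tau_2) = \emptyset$. The remaining pieces are both equal to $L_0 = P(\tau_i) \cap M_0$. On the target side, $C(\theta_1) \cap C(\theta_2) = \{0\}$, and we have the obvious piecewise-linear identification
\[
C(\theta_1) \cup C(\theta_2) \;\simeq\; \BR,
\]
sending $C(\theta_1)$ to $\BR_{\geq 0}$ (via $r e^{i\theta_1} \mapsto r$) and $C(\theta_2)$ to $\BR_{\leq 0}$ (via $r e^{i\theta_2} \mapsto -r$), with common value $0$ at the origin.

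Next, I would apply Proposition~\ref{prop homeo slice} to obtain homeomorphisms
\[
h_i = g_i \times w_i : P(\tau_i) \stackrel{\sim}{\longrightarrow} (\ft^\circ)^* \times C(\theta_i), \qquad i=1,2.
\]
The key compatibility check is that $h_1$ and $h_2$ agree on the common subset $L_0$. The second factor $w_i$ is just the restriction of $W$, so both restrict to $W|_{L_0} \equiv 0$ and hence land in the common point $0 \in C(\theta_1) \cap C(\theta_2)$. For the first factor, recall that the formula for $g_i$ contains the terms $r\tau_a - r_a$, but on $L_0$ we have $r = r_1 \cdots r_n = 0$, so $g_i|_{L_0}$ reduces to
\[
g_i(r_1, \ldots, r_n) = -r_1 \ol e_1 - \cdots - r_n \ol e_n,
\]
which is precisely the homeomorphism $h_0$ of Lemma~\ref{lemma homeo fiber}. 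Thus $h_1|_{L_0} = h_2|_{L_0} = h_0$.

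Finally, I would invoke the standard gluing principle: given two homeomorphisms of closed subspaces that agree on their intersection (a closed subspace of each), they assemble into a homeomorphism of the union. Concretely, define
\[
H : P(\tau_1) \cup P(\tau_2) \longrightarrow (\ft^\circ)^* \times (C(\theta_1) \cup C(\theta_2)) \simeq \BR^{n-1} \times \BR
\]
by $H|_{P(\tau_i)} = h_i$. Continuity holds because $P(\tau_1), P(\tau_2)$ are closed in $P(\tau_1) \cup P(\tau_2)$ and the restrictions agree on the intersection; bijectivity holds because the images $(\ft^\circ)^* \times C(\theta_i)$ meet only in $(\ft^\circ)^* \times \{0\}$, matching the overlap $L_0$ under $h_0$; and $H$ is a homeomorphism because its inverse can be defined analogously piecewise. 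There is no serious obstacle here---the only step requiring any care is the verification that $g_1|_{L_0} = g_2|_{L_0} = h_0$, which is immediate from the vanishing of $r$ on $L_0$.
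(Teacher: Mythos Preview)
Your proof is correct and follows exactly the paper's approach: apply Proposition~\ref{prop homeo slice} to each $P(\tau_i)$ and glue along the common intersection $L_0$, using that both restrictions there coincide with the map $h_0$ of Lemma~\ref{lemma homeo fiber}. The paper's proof is the one-line version of what you have spelled out in detail.
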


\begin{proof}
Take the homeomorphisms constructed above on each piece of the union $P(\tau_1) \cup P(\tau_2)$ and note that they agree on the intersection $L_0 =P(\tau_1) \cap P(\tau_2)$.
\end{proof}

%%%%%%%%%%%%%%%%%%%%%%%%%%%%%%%%%%%%%%%%%%%%%%%%%%%%%%%
%%%%%%%%%%%%%%%%%%%%%%%%%%%%%%%%%%%%%%%%%%%%%%%%%%%%%%%
%%%%%%%%%%%%%%%%%%%%%%%%%%%%%%%%%%%%%%%%%%%%%%%%%%%%%%%

\section{Landau-Ginzburg $A$-model}\label{s lg model}

%%%%%%%%%%%%%%%%%%%%%%%%%%%%%%%%%%%%%%%%%%%%%%%%%%%%%%%

\subsection{Microlocal sheaves}\label{ss micro shs}

This section collects mostly standard material from~\cite{KS} tailored to our setting.

%
%%%%%%%%%%%%%%%%%%%%%%%%%%%%%%%%%%%%%%%%%%%%%%%%%%%%%%%%
%
%\subsubsection{Generalities}
%

\subsubsection{Setup}

Let $Z$ be a real analytic manifold. 

Consider the cotangent bundle  and 
its spherical projectivization 
$$
\xymatrix{
\pi:T^*Z\ar[r] & Z
&
\pi^\oo:S^\oo Z = (T^*Z \setminus Z)/\BR_{>0}\ar[r] & Z
}
$$
with their respective standard exact symplectic and contact structures.

For convenience, fix a Riemannian metric on $Z$, so that in particular we have an identification
with the unit cosphere bundle
$$
\xymatrix{
S^\oo Z\simeq U^*Z \subset T^*Z
}
$$

Consider a closed conic Lagrangian subvariety and its Legendrian spherical projectivization
$$
\xymatrix{
\Lambda \subset T^*Z
&
\Lambda^\oo = (\Lambda\cap  (T^*Z \setminus Z))/\BR_{>0} \subset S^\oo Z
}
$$
%Let $\Lambda^\oo \subset S^\oo Z$ be a Legendrian subvariety 
Introduce the front projection 
$$
\xymatrix{
Y = \pi^\oo(\Lambda^\oo)\subset Z
}
$$ 
In the generic situation, the restriction 
$$
\xymatrix{
\pi^\oo|_{\Lambda^\oo}:\Lambda^\oo\ar[r] & Y 
}
$$ is finite so that $Y\subset Z$
is a hypersurface. 

Fix  $\cS=\{Z_\alpha\}_{\alpha\in A}$  a Whitney stratification  of $Z$ such that $Y\subset Z$ is a union of strata. Hence
we have  inclusions
$$
\xymatrix{
\Lambda \subset T^*_\cS Z = \coprod_{\alpha\in A} T^*_{Z_\alpha} Z
&
\Lambda^\oo \subset S^\oo_\cS Z = \coprod_{\alpha\in A} S^\oo_{Z_\alpha} Z
}
$$ 
where we take the union of conormal bundles to strata % $Z_\alpha\subset Z$,
and their spherical projectivizations.
%$$
%\xymatrix{
%T^*_{Z_\alpha}Z\subset T^* Z
%&
%S^\oo_{Z_\alpha}Z\subset S^\oo Z
%}
%$$

%Let $\Lambda \subset T^*Z \setminus Z$ be the conic Lagrangian with spherical projectivization
%$$
%\xymatrix{
%\Lambda^\oo = \Lambda/\BR_{>0}
%}
%$$

\subsubsection{Sheaves}

Fix a field $k$ of characteristic zero. 

Let $\Sh(Z)$ denote the dg category of  constructible complexes of sheaves of $k$-vector spaces on $Z$.
Let $\Sh_\cS(Z) \subset \Sh(Z)$ denote the full dg subcategory of   $\cS$-constructible complexes.
We will abuse terminology and refer to objects of $\Sh(Z)$ as constructible sheaves.
 All functors between dg categories of constructible sheaves will be derived in the dg sense, though the notation may not explicitly reflect it.  
 
 Recall to any $\cF \in\Sh(Z) $, we can assign its
singular support 
$$
\xymatrix{
\ssupp(\cF) \subset T^* Z
}
$$ which is a closed conic Lagrangian subvariety,
and also its spherical projectivization 
$$\xymatrix{
\ssupp^\oo(\cF) = (\ssupp(\cF)\setminus (T^*Z \setminus Z))/\BR_{>0} \subset S^\oo Z
}
$$
which is a closed Legendrian subvariety.
% and in fact a closed subset of $T^*_\cS Z$.

\begin{example}\label{ex conv}
To fix conventions, suppose $i:U\to Z$ is the inclusion of an open subspace whose closure is a submanifold with boundary modeled on a Euclidean halfspace. Then the singular support  $ \ssupp(i_* k_U) \subset T^*Z$ of the standard extension $i_* k_U\in \Sh(Z)$ %of the constant sheaf on $U$ 
consists 
of the union of $U \subset Z$ and the inward conormal codirection along the boundary $\partial U \subset Z$.
More precisely, if near a point $z\in \partial U$, we have $U = \{f > 0\}$, for 
a local coordinate $f$, then $ \ssupp(i_* k_U)|_z$ is the closed ray $\R_{\geq 0} \langle df|_z\rangle $. 

More generally, suppose $i:U\to Z$ is the inclusion of an open subspace whose closure is a submanifold with corners
modeled  on a Euclidean quadrant. 
Then the singular support  $ \ssupp(i_* k_U) \subset T^*Z$ consists of the inward conormal cone along the boundary $\partial U \subset Z$. More precisely, if near a point $z\in \partial U$, we have $U = \{f_1, \ldots, f_k > 0\}$, for 
local coordinates $f_1, \ldots, f_k$, then $  \ssupp(i_* k_U)|_z$
 is the closed cone
$\R_{\geq 0} \langle df_1|_z, \ldots, df_k|_z\rangle$. 

\end{example}

For a conic Lagrangian subvariety $\Lambda\subset T^*Z$,
we write $\Sh_{\Lambda}(Z) \subset \Sh(Z)$ for the full dg category of objects $\cF \in \Sh(Z)$ with singular support satisfying 
$
\ssupp(\cF) \subset \Lambda.
$

The inclusion $\Lambda\subset T^*_\cS Z$ implies the full inclusion 
$\Sh_{\Lambda}(Z)\subset \Sh_\cS(Z)$, and more generally, an inclusion $\Lambda\subset \Lambda'$
implies the full inclusion 
$\Sh_{\Lambda}(Z)\subset \Sh_{\Lambda'}(Z)$. 

For the zero-section $\Lambda = Z$, there is a canonical equivalence
$\Sh_{\Lambda}(Z)\simeq \Loc(Z)$ with the full dg subcategory $\Loc(Z) \subset \Sh(Z)$ of local systems.
For the antipodal  conic Lagrangian subvariety  $-\Lambda \subset T^*Z$, Verdier duality provides a canonical equivalence
$$
\xymatrix{
\BD_Z:\Sh_{\Lambda}(Z)^{\op} \ar[r]^-\sim & \Sh_{-\Lambda}(Z)
}
$$

When $U\subset Z$ is an open subset, we will abuse notation and write 
 $\Sh_\cS(U) \subset \Sh(U)$ for complexes constructible with respect to $\cS \cap U$,
 % = \{Z_\alpha \cap U\}_{\alpha \in A}$,
and $\Sh_\Lambda(U) \subset \Sh(U)$ 
for complexes with singular support lying in 
$
\Lambda \cap \pi^{-1}(U).
$

\begin{example}\label{ex torus tensor cat}
Let $T \simeq (S^1)^n$ be a torus.

%
%
%
%Recall the previous notation $T=  (S^1)^n$, with $\chi_*(T)= \Hom(S^1, T) = \BZ^{n}$, $\chi^*(T) = \Hom(T, S^1) = \BZ^{n}$
%so that $\ft = \chi_*(T)\otimes_\BZ \BR$, $\ft^*= \chi^*(T)\otimes_\BZ \BR$.
%Recall that $q:\ft \to \ft/\chi_*(T) \simeq T$ denotes the quotient map.

Let $m:T\times T \to T$ be the multiplication map, and $\iota:T\to T$ the inverse map.
Then $\Sh(T)$ is a tensor category with respect to convolution
$$
\xymatrix{
\cF_1\star \cF_2 = m_!(\cF_1\boxtimes \cF_2) &
\cF_1, \cF_2\in \Sh(T)
}
$$
 with  unit 
 $k_{e}\in \Sh(T)$ the skyscraper at the identity $e\in T$, and
 duals given by  
 $$
 \xymatrix{
 \cF^\vee = \iota_!\mathbb D_T(\cF)
 &
 \cF\in \Sh(T)
  }
  $$
%where $\mathbb D_T$ denotes Verdier duality. 

The full dg subcategory $\Loc(T) \simeq \Sh_T(T)$ of local systems 
is a monoidal ideal, and admits the non-unital monoidal Fourier description
$$
\xymatrix{
\Loc(T) \simeq \Sh_T(T) \ar[r]^-\sim & \Coh_\torsion(\check T)
}
$$
where $\check T \simeq (\G_m)^n$ is the dual torus, and $\Coh_\torsion(\check T)$ its dg category of torsion
sheaves.

Let  $i:S  \to T$  be the inclusion of a subtorus.
Then $\Sh(S)$ is similarly a tensor category, and pushforward along $i$
induces a fully faithful  tensor functor
$$
\xymatrix{
i_*: \Sh(S)\ar@{^(->}[r]  & \Sh(T)
}
$$

Let  $p:T'  \to T$  be a covering group, possibly with infinite but discrete kernel.
Then the full dg subcategory $\Sh_c(T') \subset \Sh(T')$ of objects with compact support is similarly a tensor category, and pushforward along $p$
induces a fully faithful  tensor functor
$$
\xymatrix{
p_!\simeq p_*: \Sh(T')\ar[r]  & \Sh(T)
}
$$

\end{example}

\begin{example}\label{ex ccc}
Recall the torus $T^\circ$ and the conic Lagrangian $\Lambda_\Sigma \subset T^*T^\circ$ associated to the complete fan
$\Sigma\subset (\ft^\circ)^*$.
Recall the dual torus $\check T^\circ$ and that the complete fan 
$\Sigma \subset  (\ft^\circ)^*$ corresponds to the
$\check T^\circ$-toric variety  $\BP^{n-1}$.

The full dg subcategory 
$\Sh_{\Lambda_\Sigma} (T^\circ)\subset \Sh(T^\circ)$ is a tensor subcategory,
and
a basic instance of the coherent-constructible correspondence of~\cite{B, ccc, T} is a canonical 
tensor equivalence
$$
\xymatrix{
\Sh_{\Lambda_\Sigma} (T^\circ) \ar[r]^-\sim & \Coh(\BP^{n-1})
}
$$
 where $\Coh(\BP^{n-1})$ is equipped with its usual tensor product.

Alternatively, we could work with 
the antipodal conic Lagrangian subvariety
$-\Lambda_\Sigma \subset T^*T^\circ$.
The choice is largely a matter of conventions thanks to the auxiliary equivalences
provided by the inverse map and Verdier duality
$$
\xymatrix{
\iota_*:\Sh_{\Lambda_\Sigma} (T^\circ) \ar[r]^-\sim &
\Sh_{-\Lambda_\Sigma} (T^\circ)
&
\BD_{T^\circ}:\Sh_{-\Lambda_\Sigma} (T^\circ) \ar[r]^-\sim &
\Sh_{\Lambda_\Sigma} (T^\circ)^\op
}
$$
The full dg subcategory 
$\Sh_{-\Lambda_\Sigma} (T^\circ)\subset \Sh(T^\circ)$ is also a tensor subcategory,
and the  inverse map provides a tensor equivalence.
%whose composition is the tensor dual.

Let us mention two further compatibilities
among many the coherent-constructible equivalence  enjoys:

%
%i) $\Sh_{\Lambda_\Sigma} (T^\circ)\subset \Sh(T^\circ)$ is a tensor subcategory,
%and the equivalence is a tensor equivalence where $\Coh(\BP^{n-1})$ is equipped with its usual tensor structure.
 
 i) For $a = 1, \ldots, n$, introduce variables $\tau_a \in (0, 2\pi)$, and consider  the open simplex 
 $$
 \xymatrix{
 d:\Delta = \{(\tau_1, \ldots, \tau_n) \, |\,  \sum_{a=1}^n \tau_a = 2\pi\}
 \ar@{^(->}[r] &  T^\circ
}
$$
Then $\ssupp(d_*k_{\Delta}) \subset \Lambda_\Sigma$,
and the equivalence takes   $d_*k_{\Delta}\in \Sh_{\Lambda_\Sigma} (T^\circ)$
to $\cO_{\BP^{n-1}}(-1) \in \Coh(\BP^{n-1})$.

ii) On the one hand, recall that 
over the identity $e\in T^\circ$, the fiber of $\Lambda_\Sigma$ is the complete fan
$ \Sigma$. Moreover, recall that
 the smooth locus of $\Lambda_\Sigma|_e\simeq \Sigma$ is the union of the open cones
$$
\xymatrix{
\sigma_\alpha = \Span_{>0}(\{ \ol e_a \, |\, a\not = \alpha \}) \subset  \Sigma
&
\alpha\in \{1, \ldots, n\}
}
$$

Given a covector $(e, \xi_\alpha) \in \sigma_\alpha$ in such an open cone, we can form the vanishing cycles
$$
\xymatrix{
\phi_\alpha:\Sh_{\Lambda_\Sigma}(T^\circ)\ar[r] & \Perf_k
&
\phi_\alpha(\cF) = \Gamma_{\{f_\alpha\geq 0\}}(U; \cF)
}
$$
where $f_\alpha:T^\circ\to\BR$ is any smooth function with $f_\alpha(e) = 0, df_\alpha|_e = \xi_\alpha$, and  $U\subset T^\circ$ is a sufficiently small open ball
around $e$.

On the other hand,  for $\alpha\in \{1, \ldots, n\}$, introduce  the inclusion of the $\alpha$-coordinate line
$$
 \xymatrix{
 i_\alpha:\pt = \{[e_\alpha]\}\ar@{^(->}[r] &  \BP^{n-1}
 }
$$
and the induced pullback functor
$$
\xymatrix{
i_\alpha^*:\Coh(\BP^{n-1}) \ar[r] & \Coh(pt) \simeq \Perf_k
}
$$
 
 Then the  equivalence extends to a commutative diagram
$$
\xymatrix{
\ar[dr]_-{\phi_\alpha}\Sh_{\Lambda_\Sigma} (T^\circ) \ar[rr]^-\sim && \Coh(\BP^{n-1})\ar[dl]^-{i^*_\alpha}\\
& \Perf_k &
}
$$

\end{example}
%
%\begin{remark}
%One can work equivalently with the exact symplectic geometry of $T^* Z \setminus Z$ or the contact geometry of $S^\oo X$. We can view the singular support, as recalled above, as a conic Lagrangian subvariety of $T^*X$, or alternatively, discard the zero section and view the singular support as a Legendrian subvariety of $S^\oo X$. 
%With this perspective, one typically writes $\Sh_\Lambda(X)$ in place of $\Sh_{\Lambda^\oo}(X)$ 
%\end{remark}

%%%

\subsubsection{Microlocal sheaves}

Let 
$\Omega_Z \subset T^*Z$ be a conic open subspace,  and let $\Lambda \subset T^*Z$  be a closed conic Lagrangian subvariety.
Only the intersection $\Lambda \cap \Omega_Z$ will play a role, and we will often not specify $\Lambda$ outside of $\Omega_Z$.

%
%spherical projectivization $\Omega_Z^\oo\subset S^\oo Z$.
%We will write $B\subset Z$ for  the image . 

Let $\mu\Sh_{\Lambda}(\Omega_Z)$ denote the dg category of microlocal sheaves on $\Omega_Z$ supported along $\Lambda$. 
 %Let us recall some of its useful basic properties. 
It is useful to view $\mu\Sh_{\Lambda}(\Omega_Z)$  as the sections over $\Lambda$ of a natural sheaf of dg categories with local sections admitting the following concrete descriptions.
Note for $(x, \xi) \in \Lambda$ there are two local cases: either 1) $\xi = 0$ so that locally
$\Omega_Z$ is  the cotangent bundle $T^*B$ of a small open ball $B\subset Z$, or  2) $\xi \not = 0$ so that locally $\Omega_Z$ is the symplectification of a small 
open ball  $\Omega_Z^\oo \subset S^\oo Z$.  

Case 1) For $B= \pi(\Omega_Z)$, there is always a canonical functor $\Sh_\Lambda(B) \to \mu\Sh_\Lambda(\Omega_Z)$,
and when $\Omega_Z = T^*B$, this functor is in fact an equivalence
$$
\xymatrix{
\Sh_\Lambda(B) \ar[r]^-\sim & \mu\Sh_\Lambda(T^*B)
}
$$

Case 2) Suppose $\Omega_Z \subset T^*Z$ is the symplectification of
%the spherical projectivization 
%$$
%\xymatrix{
%\Omega_Z^\oo = (\Omega_Z \cap (T^*Z \setminus Z))/\BR_{>0}\subset S^\oo Z
%}
%$$ is 
a small open ball $\Omega_Z^\oo\subset S^\oo Z$. % around a point $(x, [\xi]) \in S^\oo Z$.
 % around a point $(x, \xi) \in T^*Z \setminus Z$.
By applying a contact transformation, we may arrange to be in the generic situation where  the
front projection 
$$
\xymatrix{
\pi^\oo|_{\Lambda^\oo}:\Lambda^\oo\ar[r] & Y 
}
$$ 
is finite so that $Y= \pi^\oo(\Lambda^\oo )\subset Z$ is a hypersurface.
%
%
%Let $\mu\Sh_{\Lambda_B}(B)$ denote the dg category of  microlocal sheaves on $B$ supported along $\Lambda_B$.
%Within our working context, it admits two  simple equivalent descriptions. (In general, the induced map $\pi_0(\Lambda_x)  \to \pi_0(\Lambda_B)$ is a bijection, and one can treat each connected component of $\Lambda_B$ separately.) 
%

For $B = \pi(\Omega_Z)$, the natural functor
$\Sh_{\Lambda}(B)\to \mu \Sh_{\Lambda}(\Omega_Z)$
 induces  an
equivalence on the quotient dg category
$$
\xymatrix{
\Sh_{\Lambda}(B)/\Loc(B)\ar[r]^-\sim  & \mu \Sh_{\Lambda}(\Omega_Z)
}
$$
where  $\Loc(B) \subset \Sh(B)$ denotes the full dg subcategory of local systems, or in other words complexes with  singular support lying in the zero-section $B\subset T^*B$.

Alternatively, in this case, 
%let $i_x:\{x\}\to Z$ denote the inclusion. 
%Let $\Sh(B)^0_! \subset \Sh(Z)$ denote the full dg subcategory of $\cF \in \Sh(Z) $
%such that $i^!_x\cF \simeq 0$.
%Let $\Sh_{\Lambda_B}(B)^0_! \subset  \Sh_{\Lambda_B}(B) $ denote the full dg subcategory of $\cF \in \Sh_{\Lambda_B}(B) $
%such that $i^!_x\cF \simeq 0$.
%Alternatively, 
%let $\Gamma_c:\Sh(B)\to \Mod_k$ denote the functor of global sections with compact support. Under the natural identification $\Sh(\{x\}) \simeq \Mod_k$, for any $\cF\in \Sh_{\cS_B}(B)$, in particular for $\cF\in \Sh_{\Lambda_B}(B)$, there is a natural equivalence $i_x^!\cF \simeq \Gamma_c(B, \cF)$, and hence 
introduce the respective full dg subcategories 
$$
\xymatrix{
\Sh_{\Lambda}(B)^0_* \subset  \Sh_{\Lambda}(B) &
\Sh_{\Lambda}(B)^0_! \subset  \Sh_{\Lambda}(B) }
$$
of complexes $\cF \in \Sh_{\Lambda}(B) $
with no sections and no  compactly-supported sections 
$$
\xymatrix{
\Gamma(B, \cF) \simeq 0
&
\Gamma_c(B, \cF) \simeq 0
}$$
%Let $\Sh_\Lambda(B)^0_! \subset \Sh_\Lambda(Z)$ denote the full dg subcategory of $\cF \in \Sh_\Lambda(Z) $
%such that $i^!_x\cF \simeq 0$, or equivalently  $\Gamma_c(B,  \cF)\simeq 0$.
Then the natural functor $\Sh_{\Lambda}(B)\to \mu \Sh_{\Lambda}(\Omega_Z)$ restricts to equivalences
$$
\xymatrix{
\Sh_{\Lambda}(B)^0_*\ar[r]^-\sim  & \mu \Sh_{\Lambda}((\Omega_Z)
&
\Sh_{\Lambda}(B)^0_!\ar[r]^-\sim  & \mu \Sh_{\Lambda}((\Omega_Z)
}
$$

More generally, if we happen not to be in the generic situation, let $\Sh_{\Lambda}(B, \Omega_Z)\subset \Sh(B)$
denote the full dg subcategory of objects $\cF\in \Sh(B)$ with singular support satisfying $\ssupp(\cF) \cap \Omega_Z \subset \Lambda$.
Then there is a natural equivalence
$$
\xymatrix{
\Sh_{\Lambda}(B, \Omega_Z)/K(B, \Omega_Z)\ar[r]^-\sim  & \mu \Sh_{\Lambda}(\Omega_Z)
}
$$
where  $K(B, \Omega_Z)\subset\Sh_{\Lambda}(B, \Omega_Z)$ denotes the full dg subcategory of 
 objects $\cF\in \Sh(B)$ with singular support satisfying $\ssupp(\cF) \cap \Omega_Z = \emptyset$.

\begin{remark}\label{rem micro when conic} 
We will not encounter complicated gluing for microlocal sheaves.

When not in  Case 1), we will have a contracting action $\alpha:\BR_{>0}\times Z\to Z$ 
with a unique fixed point, the pair $\Lambda\subset \Omega_Z$ will be biconic for the additional induced Hamiltonian action and
contracted by it to a neighborhood of a single codirection
based at the fixed point.
Thus the situation will be equivalent to Case 2),
and
we will have an equivalence 
$$
\xymatrix{
\Sh^\conic_{\Lambda}(Z, \Omega_Z)/K^\conic(Z, \Omega_Z)\ar[r]^-\sim  & \mu \Sh_{\Lambda}(\Omega_Z)
}
$$
where $\Sh^\conic_{\Lambda}(Z, \Omega_Z)\subset \Sh_{\Lambda}(Z, \Omega_Z)$, $K^\conic(Z, \Omega_Z)\subset
K(Z, \Omega_Z)$
denote  the respective full dg subcategories of $\alpha$-conic objects.
In this way, we will be able to work with $\mu \Sh_{\Lambda}(\Omega_Z)$ concretely as a localization of 
$\Sh^\conic_{\Lambda}(Z, \Omega_Z)$ all at once, and in particular be in the local setting studied in detail in ~\cite[Ch.~VI]{KS}.  See Remark~\ref{rem biconic} for the precise situation we will encounter.
\end{remark}

\begin{remark}
We will primarily work with microlocal sheaves supported along a fixed
  closed conic Lagrangian subvariety $\Lambda \subset \Omega_Z$. 
 An inclusion $\Lambda\subset \Lambda'$ of such  induces a full embedding 
  $\mu\Sh_\Lambda(\Omega_Z) \subset \mu\Sh_{\Lambda'}(\Omega_Z)$.
It is sometimes convenient to not specify the support, for example if we have a collection
of $\Lambda \subset \Omega_Z$ in mind, and then we will write 
 $\mu\Sh(\Omega_Z)$ for the union of the dg categories $\mu\Sh_{\Lambda}(\Omega_Z)$ over all
such $\Lambda \subset \Omega_Z$ under consideration.
\end{remark}

\begin{example}\label{ex cod 1}
Suppose $Z = \BR$.
Inside of $T^*\BR \simeq \BR\times \BR$, introduce the conic Lagrangian subvariety and  conic open subspace 
$$
\xymatrix{
 \Lambda =   \BR \cup \{(0, \eta) \, |\, \eta > 0\}    &
  \Omega_Z   =    \{(t, \eta) \, |\, \eta>0\} 
 }
 $$

Then there are canonical equivalences
$$
\xymatrix{
\Perf_k \ar[r]^-\sim &  \Sh_\Lambda ( Z)^0_! \ar[r]^-\sim & \mu \Sh_\Lambda(\Omega_Z)
&
V \ar@{|->}[r] & j_{+*}p^*V
}
$$
induced by the correspondence
$$
\xymatrix{
pt & \ar[l]_-{p_+}  \BR_{<0} \ar@{^(->}[r]^-{j_+}  & \BR
}
$$

Similarly,  there are canonical equivalences
$$
\xymatrix{
\Perf_k \ar[r]^-\sim &  \Sh_\Lambda ( Z)^0_* \ar[r]^-\sim & \mu \Sh_\Lambda(\Omega_Z)
&
V \ar@{|->}[r] & j_{-!}p_-^! V
}
$$
induced by the correspondence
$$
\xymatrix{
pt & \ar[l]_-{p_-}  \BR_{<0} \ar@{^(->}[r]^-{j_-}  & \BR
}
$$

Furthemore, the composite functors are naturally equivalent
$$
\xymatrix{
j_{-!}p_-^! \simeq j_{+*}p_+^* :\Perf_k \ar[r]^-\sim &  \mu\Sh_\Lambda(\Omega_Z)
}
$$
An inverse equivalence is induced by the hyperbolic localization
$$
\xymatrix{
\phi: \Sh_\Lambda( Z) \ar[r] &\Perf_k  
&
\phi(\cF) = i_0^* i_+^! \cF
}
$$
with respect to the inclusions
$$
\xymatrix{
 i_0: X = \{0\}  \ar@{^(->}[r] &  \BR_{\geq 0} 
 &
 i_+:\BR_{\geq 0} \ar@{^(->}[r] & \BR
}
$$

The constructions $j_{+*}p_+^*$ and
$j_{-!}p_-^!$ provide  respective left and right  adjoints to the natural
 microlocalization functor 
 $$
\xymatrix{
 \Sh_\Lambda( Z) \ar[r] &  \mu\Sh_{\Lambda}(\Omega_Z) \simeq \Perf_k
 }
 $$ 
 realized by functorial equivalences
 $$
 \xymatrix{
 \Hom(j_{+*}p_+^*\cL, \cF) \simeq \Hom(\cL, \phi(\cF))
&
\Hom( \phi(\cF), V) \simeq \Hom(\cF, j_{-!}p_-^!V) 
 }
 $$
 
\end{example}

\begin{example}\label{ex cod 2}
Suppose $Z = \BR^2$. 

Suppose $g_\pm:\BR \to \BR$ are smooth functions
with $g_+(0) = 0$, $g_+(s) >0$, for $s>0$, and $g_-(s)= - g_+(s)$. We will only use their restrictions to $\BR_{\geq 0} \subset \BR$.

Inside of $T^*\BR^2 \simeq \BR^2 \times \BR^2$, introduce
 the conic open subspaces
 $$
\xymatrix{
\Omega_{Z, \pm} = \{(s, t), (\xi, \eta))\, |\, s>0, \eta> 0\} 
 }
 $$
and conic Lagrangian subvarieties
$$
\xymatrix{
\Lambda_\pm = \{(s, f_\pm(s)), (-\eta dg_\pm(s), \eta))\, |\, s>0, \eta> 0\} 
 }
 $$
Inside of $T_{(0,0)}^*\BR^2 \simeq \BR^2$, introduce the  cone
$$
\xymatrix{
\Lambda_0 = \Span_{\geq 0}( (-dg_+(0), 1), (-dg_-(0), 1)) 
 }
 $$
 Form the total conic Lagrangian subvariety and  conic open subspace 
$$
\xymatrix{
 \Lambda =  \BR^2 \cup \Lambda_+ \cup \Lambda_0 \cup \Lambda_-
 & 
  \Omega_Z   =    \{((s, t), (\xi, \eta) \, |\, \eta>0\} 
 }
 $$

Consider the  iterated inclusions 
$$
\xymatrix{
U \ar@{^(->}[r]^-u & V \ar@{^(->}[r]^-v & \BR^2
}
$$
$$
\xymatrix{
U = \{(s, t) \in \BR_{>0} \times \BR  \, | \, g_-(s) < t < g _+(s)\}
&
V = \{(s, t) \in \BR_{>0} \times \BR  \, | \, g_-(s) \leq t < g _+(s)\}
}
$$

Then there is a canonical equivalence
$$
\xymatrix{
\Perf_k  \ar[r]^-\sim & \mu \Sh_\Lambda(\Omega_X)
&
V \ar@{|->}[r] & v_!u_*V_U
}
$$
factoring through the coincident full dg subcategories
$\Sh_\Lambda ( \BR^2 )^0_!  = \Sh_\Lambda (  \BR^2 )^0_! \subset \Sh(\BR^2)$.

Finally, the open restrictions provide further equivalences
 $$
\xymatrix{
\mu \Sh_\Lambda(\Omega_X)\ar[r]^-\sim & \mu \Sh_{\Lambda_\pm}(\Omega_{X, \pm})
}
$$
Note that  each pair $\Lambda_\pm \subset \Omega_{Z, \pm}$ is locally modeled
on the pair of Example~\ref{ex cod 1}.
When we compare each composite equivalence
 $$
\xymatrix{
c_\pm:\Perf_k\ar[r]^-\sim &  \mu \Sh_\Lambda(\Omega_X)\ar[r]^-\sim & \mu \Sh_{\Lambda_\pm}(\Omega_{X, \pm})
}
$$
with the equivalence $c = j_{-!}p_-^! \simeq j_{+*}p_+^*$ of Example~\ref{ex cod 1},
we see that $c_-$ agrees with $c$, %the equivalence  induced by $j_{+*} p_+^*$,
but $c_+$ agrees with  $c\otimes\orient_\BR[-1]$,
%the equivalence  induced by $j_{-!} p_-^* \simeq 
%j_{-!} p_-^![-1] \otimes\orient_\BR$, 
where we shift by $-1$ and twist by the orientation line $\orient_\BR$ of the second factor of $\BR^2$.
%$\{s_0\} \times \BR \subset \BR^2$ for any $s_0 >0$.
\end{example}

%%%%%%%%%%%%%%%%%%%%%%%%%%%%%%%%%%%%%%%%%%%%%%%%%%%%%%%

\subsubsection{Twisted symmetry}

Let us focus here on the setting of Remark~\ref{rem micro when conic},
and specifically the setting of Remark~\ref{rem biconic}.

Set $Z = X \times \BR = \BR^n\times\BR$,
and consider 
 the  conic open subspace 
$$
\xymatrix{
\Omega_X  = \{((x, t), (\xi, \eta))  \, |\, \eta>0\} \subset  T^*(X\times \BR) 
}
$$

Let $\Lambda\subset \Omega_X$ be a closed biconic Lagrangian subvariety in the sense of 
Remark~\ref{rem biconic}, so conic with respect to the positive scaling
of covectors, and also conic  
with respect to the commuting Hamiltonian scaling action 
%$$
%\xymatrix{
%r\cdot ((x, t), (\xi, \eta))   = ((rx, r^2 t), (r^{-1}\xi, r^{-2} \eta)) 
%&
%r\in \BR_{>0}}
%$$
induced by the scaling action on the base
$$
\xymatrix{
\alpha:\BR_{>0} \times X\times \BR \ar[r] & X\times \BR
&
\alpha(r, (x, t)) = (rx, r^2 t)
}
$$
Recall that the Hamiltonian scaling action contracts the pair
$\Lambda \subset \Omega_X$ to a neighborhood
of the positive codirection 
$$
\xymatrix{
\{((0, 0), (0, \eta)) \, |\, \eta>0\} \subset \Lambda\subset   \Omega_X
}
$$
Thus microlocal sheaves on $\Omega_X$ supported along $\Lambda$ can be represented by $\alpha$-conic
constructible sheaves on $X\times \BR$, or alternatively by their restrictions to any small open ball around the origin.

Next, recall the Hamiltonian $T$-action on $\Omega_X$ with moment map $\nu:\Omega_X \to \ft^*$ and action Lagrangian correspondence
$$
\xymatrix{
\cL_{T, \Omega_X} \subset \Omega_X \times \Omega_X^a\times T^*T 
}
$$
$$
\xymatrix{
\cL_{T, \Omega_X} = \{ (\omega_1, -\omega_2, (g, \zeta)) \in \Omega_X \times \Omega_X^a \times T^*T \, |\,\omega_1 =  g\cdot \omega_2,\,  \nu(\omega_1) = \zeta\}  %\subset T^*T \times \Omega \times \Omega^a
}
$$
and in particular, for $g\in T$, the action Lagrangian correspondence
$$
\xymatrix{
\cL_{g, \Omega_X} \subset \Omega_X \times \Omega_X^a
}
$$
$$
\xymatrix{
\cL_{g, \Omega_X} = \{ (\omega_1, -\omega_2) \in \Omega_X \times \Omega_X^a \, |\,\omega_1 =  g\cdot \omega_2\}
}
$$

%%%

The theory of microlocal kernels and transformations~\cite[Ch. VII]{KS}
provides, for each $g\in T$, an integral transform equivalence
$$
\xymatrix{
\Phi_g:\mu \Sh_{\Lambda}(\Omega_X) \ar[r]^-\sim & \mu \Sh_{g(\Lambda)}(\Omega_{X})
&
\Phi_g(\cF) = \cK_g\circ \cF
}
$$ 
following the notation of~\cite[Definition 7.1.3]{KS},
where the microlocal kernel $\cK_g$,  to be specified momentarily, is rank one along the smooth 
action Lagrangian correspondence $\cL_{g, \Omega_X}$.

We would like to highlight the twisted nature of compositions of the above equivalences.
%We will not need the full theory but will only encounter the following simple situation. 
First, for the identity $e\in T$, let us normalize $\cK_e$ so that $\Phi_e$ is the identity.
Next, for any $g\in T$, let us attempt to specify $\cK_g$ by continuity: for a path $\gamma_s:[0,1]\to T$,
with $\gamma_0 = e, \gamma_1 = g$, there is a unique $\cK_g(\gamma)$ 
given by parallel transporting $\cK_e$.
But for a loop $\gamma_s:[0,1]\to T$,
with $\gamma_0 = \gamma_1 = e$, we find that $\cK_e(\gamma)$  is not necessarily equivalent to $\cK_e$.

\begin{prop}\label{prop twist calc}
There is a canonical equivalence
$$
\xymatrix{
\cK_{e}(\gamma)  \simeq \cK_e[2\langle \delta, \ol \gamma\rangle]
}
$$
where $\delta\in \chi^*(T)$ is the diagonal character, $\ol \gamma \in \pi_1(T) \simeq \chi_*(T)$
is the class of $\gamma_s$, and we shift by twice their natural pairing.
\end{prop}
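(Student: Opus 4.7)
The plan is to reduce the computation to $n=1$ by multiplicativity of the kernels, and then to extract the anomaly as the standard Maslov shift of a full rotation of the cotangent fiber.

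First, I would record that the construction $g\mapsto \cK_g$ is compatible with convolution: for loops or paths $\gamma, \eta$ in $T$, the kernel $\cK_{g_1}(\gamma)\circ \cK_{g_2}(\eta)$ corresponds to the composition $\cL_{g_1,\Omega_X}\circ \cL_{g_2,\Omega_X}=\cL_{g_1g_2,\Omega_X}$ of action Lagrangian correspondences, hence is canonically equivalent to $\cK_{g_1g_2}(\gamma\cdot\eta)$ by the functoriality of microlocal kernels developed in~\cite[Ch.~VII]{KS}. Since $\pi_1(T)\simeq \chi_*(T)=\BZ\langle e_1,\ldots,e_n\rangle$ is generated by the coordinate loops $\gamma_a:[0,1]\to T$ with $\gamma_a(s)=(0,\ldots,2\pi s,\ldots,0)$, and since the proposed answer $[2\langle\delta,\ol\gamma\rangle]$ is additive in $\ol\gamma$, it suffices to verify the statement for a single $\gamma_a$, where $\ol\gamma_a=e_a$ and $\langle\delta,e_a\rangle=1$, i.e.\ to show $\cK_e(\gamma_a)\simeq \cK_e[2]$.

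Second, for the loop $\gamma_a$ only the $a$-th $S^1$-factor acts nontrivially. The $T$-action on $M=\BC^n$ and the induced action on $\Omega_X$ split as a product of the $a$-th coordinate action and a trivial action on the remaining coordinates, and similarly the action Lagrangian correspondence $\cL_{g,\Omega_X}$ splits as a product of the one-dimensional correspondence in the $a$-th factor and the diagonal in the remaining factors (see Remark~\ref{rem action lag fiber}). By the external product structure for microlocal kernels, $\cK_e(\gamma_a)$ is the external product of $\cK_e$ in the other factors with the analogous kernel in the $a$-th factor. Thus I am reduced to computing the anomaly of a full rotation of a single $\BC\subset M$ under its $S^1$-action by coordinate rotation.

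Third, in this one-dimensional model the Hamiltonian $S^1$-action generated by $r^2/2$ rotates the $(x,y)$-plane $\BC$, hence rotates the associated conic Lagrangian $\Lambda\subset \Omega_X\subset T^*(\BR\times\BR)$ by fiberwise rotation around the central codirection $\{((0,0),(0,\eta))\,|\,\eta>0\}$. After one full rotation $\gamma_a$ at $s=1$ the rotated Lagrangian coincides as a set with the original, but parallel transport of the microlocal kernel along this loop produces the Maslov discrepancy of a loop of graded Lagrangian planes winding once around the origin, which is $+2$ in the grading. This is the metaplectic anomaly alluded to in the introduction and is the content of the identification of the Fourier--Sato transform with a rotation by $\pi/2$ (iterated four times gives the shift $[2]$). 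I would make this precise by choosing an explicit representative of $\cK_e$ near the fixed codirection (the constant sheaf on a half-space compatible with Example~\ref{ex cod 1}) and following it through the conic contact isotopy generated by $\nu$, comparing the result with $\cK_e[2]$ via the identification in Example~\ref{ex cod 1}.

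Fourth, summing the contributions over the coordinate factors for a general loop $\gamma$ with class $\ol\gamma=\sum_a m_a e_a$ gives a total shift of $[2\sum_a m_a]=[2\langle\delta,\ol\gamma\rangle]$, as claimed. The main obstacle is step three: carefully fixing the sign and magnitude of the shift, so that the answer is precisely $+2$ (and not $-2$ or some other even integer) per coordinate rotation. This requires pinning down the grading conventions for microlocal kernels in the sense of~\cite[Ch.~VII]{KS} and checking compatibility with the normalization $\Phi_e=\id$ and with the orientation/conic conventions fixed in Remark~\ref{rem symp of cont} and Definition~\ref{def assoc lag}. Once these conventions are aligned, the remaining calculation is local near the fixed codirection and is carried out by the explicit half-space model of Example~\ref{ex cod 1} rotated through $2\pi$.
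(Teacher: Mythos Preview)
Your proposal is correct and follows essentially the same route as the paper: reduce by multiplicativity to a single coordinate loop, split off the inactive factors to land in the $n=1$ model, and identify the full rotation with a shift by $2$ via the Fourier--Sato transform. The paper's proof carries out your step three explicitly by tracking a concrete representative (the microlocalization of $k_X$) through the rotation, computing its front projection at each angle, and invoking the KS identities \cite[Lemma~3.7.10, Proposition~3.7.12]{KS} to pin down the shift at the critical angles $\pm\pi/2$; this is exactly the ``explicit half-space model'' computation you flagged as the main obstacle but did not perform.
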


% for the corresponding coordinate directions of $\Omega_X$.

%%

\begin{proof}
For simplicity, we will focus on the one-dimensional calculation, as arises for each 
coordinate circle $S^1\subset T$, and not carry along the additional fixed coordinate
directions.

Thus we set $X = \BR$ and consider $T= S^1 = \BR/2\pi\BZ$ acting on $\Omega_X \subset T^*(\BR^2)$.

%One could consult Remark~\ref{rem coord action} for explicit formulas for the action.
 
Consider the continuous family of integral transform equivalences
$$
\xymatrix{
\Phi_{r}:\mu \Sh(\Omega_X) \ar[r]^-\sim & \mu \Sh(\Omega_{X})
&
r\in \BR
}
$$ 
normalized so that $\Phi_0$ is the identity, for $0 \in \BR$. 
We seek to show $\Phi_{2\pi}\simeq [2]$, where only the specific twist of the identity functor is in question.

It suffices to act upon  the $\Omega_X$-microlocalization $\cF$  of the constant sheaf $k_{X}$
along the first coordinate direction $X = \BR \times\{0\}$ and calculate what results. The singular support of
 $k_{X}$ is the conormal bundle $T^*_{X} \BR^2$, and we will denote by $\Lambda = T^*_{X} \BR^2 \cap \Omega_X$ its relevant part.
% 
%$$
%\xymatrix{
%\Lambda = \{((x, 0), (0, \eta)) \, |\, \eta>0\} \subset \Omega_X 
%}
%$$

%As  in Section~\ref{intro low-dim},
 Rotation by $\theta \not =\pm \pi/2 \in S^1$ takes $\Lambda \subset \Omega_X$ to the smooth conic Lagrangian surface
$$
\xymatrix{
\Lambda (\theta)=  \{(x, c x^2), (-\eta cx, \eta) \, |\, \, \eta>0\} \subset \Omega_X
}
$$
where we set
$c =  \sin(\theta) /\cos(\theta)$ from here on,
and rotation by $\theta = \pm \pi/2$ takes it to
the smooth conic Lagrangian surface
$$
\xymatrix{
\Lambda (\pm\pi/2)=  \{(0, 0), (  \eta y, \eta) \, |\,   \, \eta>0\} \subset \Omega_X
}
$$
Note that $\Lambda(\theta) =\Lambda(-\theta)$ since we happen to have chosen $\Lambda = \Lambda(0)$ to be invariant under rotation
by $\theta = \pi$.

Let $X(\theta) = \pi(\Lambda(\theta)) \subset \BR^2$ be the front projection.
For $\theta \not = \pm \pi/2$, it
is the parabola  
$$
\xymatrix{
X(\theta) = \{( x,  c x^2) \} \subset \BR^2
%&
%c =  \sin(\theta) /\cos(\theta)
}
$$
and for $\theta = \pm \pi/2$,  it
is the origin $X( \pm \pi/2) = \{(0, 0)\}$.

Now for $r\in \BR$, with image $\theta\in S^1$,
 let us calculate the microlocal sheaf $\Phi_r(\cF)$. 
It  will be rank one along $\Lambda(\theta) \subset \Omega_X$,
with its particular twist what we seek. 

To start, recall that $\cF$ is represented by the constant sheaf $k_X$  along the first coordinate 
direction $X = \BR \times\{0\}$. Alternatively, following Example~\ref{ex cod 1},
it is also represented by the  extensions $j_{+*}k_{X_+}$ and $j_{-!}(k_{X_-}\otimes\orient_Y)[1]$
along the open inclusions
$$
\xymatrix{
j_+:X_+ = \BR \times\BR_{>0} \ar@{^(->}[r] & \BR^2
&
j_-:X_- = \BR \times\BR_{<0} \ar@{^(->}[r] & \BR^2}
$$
where $\orient_Y$ is  the line of orientations of the second coordinate direction $Y = \{0\} \times \BR$.

For $r\in (-\pi/2, \pi/2)$, with image $\theta\in S^1$, by continuity, $\Phi_r(\cF)$ is represented by the constant sheaf 
$k_{X(\theta)}$ on the parabola $X(\theta)$.
Alternatively,
it is also represented by the  extensions $j(\theta)_{+*}k_{X(\theta)_+}$ and $j(\theta)_{-!}(k_{X(\theta)_-}
\otimes\orient_Y)[1]$  
along the open inclusions
$$
\xymatrix{
j(\theta)_+:X(\theta)_+ = \{(x, t) \, |\, t> cx^2\} \ar@{^(->}[r] & \BR^2
&
j_-:X(\theta)_- =\{(x, t) \, |\, t< cx^2\} \ar@{^(->}[r] & \BR^2}
$$
%where as usual $c =  \sin(\theta) /\cos(\theta)$

When $r \to \pi/2$,   the representative $j(\theta)_{+*}k_{X(\theta)_+}$ 
limits to the extension
$i_{+*} k_{W_+}$
  along the inclusion of the ray
$$
\xymatrix{
i_+: W_+  =\{(0, t) \, |\, t>0\} \ar@{^(->}[r] & \BR^2
}
$$
To keep track of twists, it is worth noting the relation via the inverse Fourier-Sato transform~\cite[Definition 3.7.8]{KS}
in the first coordinate direction
$$
\xymatrix{
i_{+*} k_{W_+} \simeq (j_{+*}k_{X_+})^{\vee_X}
}
$$
as appears in \cite[Lemma 3.7.10]{KS}.
Observe as well that the  $\Omega_X$-microlocalization of $i_{+*} k_{W_+}$ is alternatively represented by the
 skyscraper $k_{(0,0)}$ at the origin.
  Thus we conclude that $\Phi_{\pi/2}(\cF)$ is represented by $k_{(0,0)}$.

Similarly, when $r \to -\pi/2$,   the representative $j(\theta)_{-!}(k_{X(\theta)_-}
\otimes\orient_Y)[1]$ limits to the extension
$i_{-!} (k_{W_-}\otimes\orient_{X\times Y})$
  along the inclusion of the ray
$$
\xymatrix{
i_-: W_-  =\{(0, t) \, |\, t<0\} \ar@{^(->}[r] & \BR^2
}
$$
where $\orient_{X\times Y}$ is  the line of orientations of $\BR^2 = X\times Y$.
%where $\ell$ is  the line of orientations of $\BR^2$. 
Again there is the relation via the Fourier-Sato transform
in the first coordinate direction
$$
\xymatrix{
i_{-!}(k_{W_-}\otimes \orient_{X\times Y}) \simeq (j_{-!}(k_{X_-}\otimes\orient_Y)[1])^{\wedge_X}
}
$$
Observe as well that the  $\Omega_X$-microlocalization of $i_{-!} (k_{W_-}\otimes \orient_{X\times Y})$ 
is alternatively represented by the
 skyscraper sheaf $k_{(0,0)} \otimes \orient_X[-1]$ at the origin.
 Thus we conclude that $\Phi_{-\pi/2}(\cF)$ is represented by $k_{(0,0)} \otimes \orient_X[-1]$.

Therefore starting with  $\Phi_{-\pi/2}(\cF)$, and applying  $\Phi_{\pi}$, we obtain the
identity 
$$
\xymatrix{
\Phi_{\pi/2}(\cF) \simeq \Phi_{-\pi/2}(\cF)\otimes\orient_X[1]
}
$$
This can be viewed as a reflection of the standard identity~\cite[Proposition 3.7.12]{KS}
 for the square of the inverse Fourier-Sato transform in the first coordinate
direction
$$
\xymatrix{
k_{(0,0)}\otimes \orient_X[1] \simeq (k_{(0,0)})^{\vee_X\vee_X}
}
$$

Iterating this, we obtain the canonical equivalence $\Phi_{2\pi} \simeq [2]$ as asserted. 
This concludes the one-dimensional calculation, and higher-dimensional generalizations follow
by the same argument run independently on the relevant coordinate directions.
  \end{proof}

It is convenient to encode the above twist in the following form.
 Introduce  the $\BZ$-cover 
 $$
 \xymatrix{
 T' = T\times_{S^1} \BR \ar[r] & T
 }
 $$
defined by the diagonal character $\delta:T\to S^1$, and the universal cover $\BR\to S^1$.
  
 Then for $ g'\in  T'$, with image $g\in T$,  
we have unambiguous   integral transform equivalences
$$
\xymatrix{
\Phi_{g'}:\mu \Sh_{\Lambda}(\Omega_{X}) \ar[r]^-\sim & \mu \Sh_{g(\Lambda)}(\Omega_{X})
}
$$ 
obeying evident composition laws.
Furthermore, elements $m\in \BZ \simeq \ker(T'\to T) $ of the kernel  act by the invertible functor
$$
\xymatrix{
\Phi_{m}(\cF) \simeq \cF[2m]
&
\cF\in \mu \Sh_{\Lambda}(\Omega_{X})
}
$$

\begin{remark}
Following the literature on gradings in Fukaya categories, and specifically graded Lagrangians~\cite{SeidelGraded}, here is an intuitive way to think about the 
above twist.

Let $\kappa_{\Omega_X}$ be the complex canonical bundle of $\Omega_X$,
with respect to a compatible almost complex structure,
and  let $\kappa^{\otimes 2}_{\Omega_X}$ be its bicanonical bundle.
The embedding ${\Omega_X} \subset T^*(X\times\BR)$ provides a canonical trivialization 
$$
\xymatrix{
\tau_X:\kappa^{\otimes 2}_{\Omega_X} \ar[r] &  \BC
}
$$ 
by the top-exterior power of the tangent bundle of the zero-section $X\times\BR\subset T^*(X\times\BR)$.
%Any other trivialization  is a scale $\tau = \mu \tau_{X}$ by an invertible complex-valued function
%$\mu:\Omega_X \to \BC^\times$. 

Let $\Lambda_\sm \subset \Lambda$ denote the smooth locus,
so that we have the tangent bundle $T\Lambda_\sm \subset T \Omega_X$.
%defines a point $\ell \in \cL(\Omega_Z)$.
Taking the argument of $\tau_X$ applied to the top-exterior power
of $T\Lambda_\sm \subset T \Omega_X$ produces a phase 
$$
\xymatrix{
\varphi_X:\Lambda_\sm \to S^1
}
$$
Define the  grading $\BZ$-torsor  to be the base change under the phase
$$
\xymatrix{
\Lambda'_\sm = \Lambda_\sm \times_{S^1} \BR \ar[r] & \Lambda_\sm
}
$$ 

For a path $\gamma_s:[0,1]\to T$,
with $\gamma_0 = e, \gamma_1 = g$, there is an isomorphism
of grading $\BZ$-torsors 
$$
\xymatrix{
\Lambda'_\sm \ar[r]^-\sim &  g(\Lambda'_\sm)
}
$$
 And for a loop $\gamma_s:[0,1]\to T$,
with $\gamma_0 = \gamma_1 = e$, 
the resulting automorphism 
of the grading $\BZ$-torsor $\Lambda'_\sm$
is
 equal to translation by $2\langle \delta, \ol\gamma_s\rangle \in \BZ$.

This completely captures the twists on microlocal sheaves on $\Omega_X$ supported along $\Lambda$,
since the twists
are determined along the smooth locus $\Lambda_\sm$.
\end{remark}

Finally, it is useful to expand the scope of the above symmetry beyond individual group elements.
Note that the $T'$-action on $\Omega_Z$, via the cover $T'\to T$,  is encoded 
by the action Lagrangian correspondence
$$
\xymatrix{
\cA_{T', \Omega_X}  = \cA_{T, \Omega_X}  \times_T T' \subset \Omega_X \times \Omega_X^a\times T^*T'
}
$$
%$$
%\xymatrix{
%\cA_{T', \Omega_Z} = \{ (\omega_1, -\omega_2, (g', -\zeta)) \in \Omega_Z \times \Omega_Z^a \times T^*T' \, |\,\omega_1 =  g'\cdot \omega_2,\,  \nu(\omega_1) = -\zeta\}  %\subset T^*T \times \Omega \times \Omega^a
%}
%$$

Let $\Sh_c(T') \subset \Sh(T')$ be the  full  dg subcategory of objects with compact support.
Then we have a monoidal convolution action
$$
\xymatrix{
\star:\Sh_c(T') \otimes \mu\Sh(\Omega_X) \ar[r] & \mu\Sh(\Omega_X)
}
$$ 
$$
\xymatrix{
\cA\star\cF = \Phi_{\cK}(\cA \boxtimes \cF ) = \cK\circ (\cA \boxtimes \cF ) 
&
\cA\in \Sh_c(T'),
\cF \in   \mu\Sh(\Omega_X)
}
$$  
where the microlocal kernel $\cK$ is a rank one local system along the smooth action Lagrangian
correspondence $\cA_{T', \Omega_X}$,
normalized so that  the monoidal unit $\cA_0 = k_e \in \Sh_c(T')$ acts by the identity functor.
One recovers the prior symmetries for group elements by convolving with skyscraper sheaves at points.

Let 
$\add:\BZ \times T'\to T'$ denote the translation action by the kernel $Z \simeq \ker(T'\to T)$.
Returning to the twists discussed above, for $m\in \BZ$, there is an equivalence of monoidal actions
$$
\xymatrix{
(\add(m)_*\cA) \star \cF \simeq (\cA[2m])\star \cF
&
\cA\in \Sh_c(T'),
\cF \in   \mu\Sh(\Omega_X)
}
$$  
To see this concretely, one can express objects of $\Sh_c(T')$ in terms of objects defined on fundamental domains for the $\BZ$-cover $T'\to T$, and then translate by elements of the kernel for which we have already calculated the twist.

\begin{remark}

To concisely formalize the above structure, one could introduce the dg category $\tau\Sh(T)$ of twisted constructible sheaves as the 
$\BZ$-coinvariants
of $\Sh_c(T')$ where for $m\in \BZ$, the translation $\add(m)_*$ is identified with the cohomological shift  $[2m]$.
Then the above $\Sh_c(T')$-action on $\mu\Sh(\Omega_X)$ factors through the natural map 
$\Sh_c(T') \to \tau\Sh(T)$. 

Informally speaking, objects of $\tau\Sh(T)$ are constructible sheaves with grading defined with respect to the nonconstant background bicanonical trivialization given by the diagonal character $\delta:\pi_1(T)\to S^1$.
This is the  bicanonical trivialization arising by restricting the constant bicanonical trivialization from $M = \BC^n$ to the unit torus $T = (S^1)^n \subset \BC^n = M$.
\end{remark}

%
%
%...
%
%Given the $\BZ$-cover $T'\to T$, 
%define the dg category $\tau\Sh(T)$ of  twisted constructible sheaves to comprise $\BZ$-equivariant  objects
%$\tilde \cF\in \Sh(T')$ where $n\in \BZ$ 
%acts simultaneously
% by its deck transformation and the cohomological shift $[2n]$. 
%Then the formalism of microlocal kernels and transformations~\cite{KS} provides a convolution functor
%$$
%\xymatrix{
%\Phi:\tau\Sh(T) \otimes \mu\Sh(\Omega_Z) \ar[r] & \mu\Sh(\Omega_Z)
%}
%$$ 
%
%Recall the kernel $T^\circ  \simeq (S^1)^{n-1}\subset T$ of the diagonal character
%$\delta\in \chi^*(T) \simeq \BZ^n$. 
Finally, note that the kernel $T^\circ \subset T$ of the diagonal character $\delta\in \chi^*(T)$
admits a canonical lift $T^\circ \subset T'$ since the cover $T'\to T$ is defined by $\delta\in \chi^*(T)$.
Pushforward along the canonical lift $T^\circ \subset T'$ provides a monoidal embedding $\Sh(T^\circ) \subset \Sh_c(T')$,
and we may restrict the  above monoidal convolution  functor to a monoidal  action
$$
\xymatrix{
\star:\Sh(T^\circ) \otimes \mu\Sh(\Omega_X) \ar[r] & \mu\Sh(\Omega_X)
}
$$

\subsection{Nearby and vanishing categories}\label{ss cat def}

Now we will form the dg category of  microlocal sheaves on the the exact symplectic manifold $M= \BC^n$ supported  
 along the Lagrangian skeleton 
$$
\xymatrix{
L(\theta) \subset M & \theta\in S^1
}
$$
or more generally, along the finite union of skeleta 
$$
\xymatrix{
L(\Theta) = \bigcup_{\theta\in \Theta} L(\theta) \subset M
&
\Theta\subset S^1
}$$

Set $X= \BR^n$, and $Z = X\times \BR = \BR^n \times \BR$.

Recall that in Section~\ref{ss prelim}, we constructed a conic open subspace $\Omega_X \subset T^*(X \times \BR)$.
Furthermore, recall that  in Definition~\ref{def assoc lag}, 
to a conic Lagrangian subvariety $L \subset M$, we 
associated a biconic Lagrangian subvariety $\Lambda \subset \Omega_X$.
Applying this to the Lagrangian skeleton $L(\theta) \subset M$, we obtain a biconic Lagrangian subvariety denoted by
$$
\xymatrix{
\Lambda(\theta) \subset \Omega_X & \theta\in S^1
}
$$
or more generally, applying this to the finite union $L(\Theta) \subset M$, we obtain a  biconic Lagrangian subvariety denoted by
$$
\xymatrix{
\Lambda(\Theta) = \bigcup_{\theta\in \Theta} \Lambda(\theta) \subset \Omega_X
&
\Theta\subset S^1
}
$$

\begin{defn}[Vanishing category]\label{def vanishing cat}
For $\theta\in S^1$, define the vanishing category
$$ 
\xymatrix{
\mu \Sh_{L(\theta)}(M)
}$$ 
to be the dg category $\mu \Sh_{\Lambda(\theta)}(\Omega_X)$ 
 of microlocal sheaves on $\Omega_X$
supported along $\Lambda(\theta)$.

More generally, for  finite nonempty $\Theta\subset S^1$, define the multi-vanishing category
$$
\xymatrix{
\mu \Sh_{L(\Theta)}(M)
}$$ 
to be the dg category $\mu \Sh_{\Lambda(\Theta)}(\Omega_X)$ 
 of microlocal sheaves on $\Omega_X$
supported along $\Lambda(\Theta)$.

\end{defn}

We will similarly form 
 the dg category of  microlocal sheaves on the exact symplectic manifold
$M^\times = (\BC^\times)^n$ supported along
the Lagrangian skeleton
$$
\xymatrix{
L^\times(\theta) \subset M^\times & \theta\in S^1
}
$$
via the corresponding conic open subspace $\Omega_X^\times \subset T^*(X\times \BR) \setminus (X\times \BR)$,
and biconic Lagrangian subvariety 
$$
\xymatrix{
\Lambda^\times (\theta) = \Lambda(\theta) \cap \Omega_X^\times  & \theta\in S^1
}
$$

%
%Recall the constructions of Sect.~\ref{} model the exact symplectic geometry of $M^\times$ with a conic open subset $\Omega^\times \subset \Omega$, and the conic Lagrangian $L^\times(\theta) = L(\theta) \cap M^\times$ determines the conic Lagrangian 
%$\Lambda^\times(\theta) = \Lambda(\theta) \cap \Omega^\times$.
%
%

\begin{defn}[Nearby category]
For $\theta\in S^1$, define the nearby category
$$
\xymatrix{
 \mu \Sh_{L^\times(\theta)}(M^\times)
}
$$ 
to be the dg category $\mu \Sh_{\Lambda^\times(\theta)}(\Omega_X^\times)$ 
 of microlocal sheaves on $\Omega_X^\times$
supported along $\Lambda^\times(\theta)$.
\end{defn}

The main goal of this paper is to calculate the vanishing category $\mu \Sh_{L(\theta)}(M)$
in terms of the much simpler nearby category $\mu \Sh_{L^\times(\theta)}(M^\times)$.
There is an evident restriction functor
$$
\xymatrix{
\mu \Sh_{L(\theta)}(M)\ar[r] & \mu \Sh_{L^\times(\theta)}(M^\times)
}
$$
and our main technical results will construct and characterize its adjoints.

To tackle the nearby category, let us first establish the following lemma 
which treats the case $\theta = 0$
and then appeal to monodromy equivalences in general.

\begin{lemma}\label{lem open equiv}
There is an equivalence
$$
\xymatrix{
 \Sh_{\Lambda_\Sigma} (T^\circ) \ar[r]^-\sim &
\mu \Sh_{L^\times(0)}(M^\times)
}
$$
\end{lemma}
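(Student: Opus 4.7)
The plan is to build the equivalence as a composition of three steps: first transport everything to the cotangent bundle $T^*T$ via the symplectomorphism $\varphi$; second, descend along the Hamiltonian reduction by the diagonal direction to land on $T^*T^\circ$; and third, invoke the cotangent-bundle case of microlocal sheaves (Case~1 of Section~\ref{ss micro shs}) to pass to genuine constructible sheaves.

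For the first step, the exact symplectic identification $\varphi:M^\times\xrightarrow{\sim} T^{>0}T$ carries the skeleton $L^\times(0)$ to $L^{>0}(0)\subset T^{>0}T\subset T^*T$, and by functoriality of microlocal sheaves under symplectomorphisms we obtain
\[
\mu\Sh_{L^\times(0)}(M^\times)\simeq \mu\Sh_{L^{>0}(0)}(T^{>0}T).
\]
For the third step, once we have the equivalence $\mu\Sh_{L^{>0}(0)}(T^{>0}T)\simeq \mu\Sh_{\Lambda_\Sigma}(T^*T^\circ)$, the fact that $\Lambda_\Sigma$ is a closed conic Lagrangian in a genuine cotangent bundle containing the zero section allows us to invoke the cotangent-bundle equivalence $\mu\Sh_{\Lambda_\Sigma}(T^*T^\circ)\simeq \Sh_{\Lambda_\Sigma}(T^\circ)$ recorded in Section~\ref{ss micro shs}.

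The content lies in the middle step. By Lemma~\ref{lemma micro interpretation}, every point of $L^{>0}(0)$ satisfies $\sum_a\theta_a=0$, so $L^{>0}(0)$ already sits inside $T^*T\times_T T^\circ$ and no intersection is needed. Moreover, as noted right before Lemma~\ref{lemma ham red}, the fibers of the projection $p:T^*T\times_T T^\circ\to T^*T^\circ$ are cosets of the line $\Span\{\delta\}\subset\ft^*$, and their intersections with $L^{>0}(0)$ are the positive rays $\Span_{>0}\{\delta\}$. Thus $p\colon L^{>0}(0)\to \Lambda^\circ=\Lambda_\Sigma$ is a free, proper $\BR_{>0}$-bundle. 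The Lagrangian correspondence
\[
T^*T^\circ\xleftarrow{\;p\;} T^*T\times_T T^\circ\xhookrightarrow{\;i\;} T^*T
\]
is the Hamiltonian reduction along the diagonal direction $\delta$, and by the theory of microlocal kernels for smooth Lagrangian correspondences \cite[Ch.~VII]{KS}, it induces an equivalence on microlocal sheaves for supports compatible with the correspondence. Applied to $L^{>0}(0)$ and $\Lambda_\Sigma$, this yields the desired middle equivalence.

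The main obstacle is showing that the reduction step is genuinely an equivalence and not merely a functor. Conceptually, this is the assertion that conic microlocal sheaves along a conic Lagrangian are unchanged when one quotients by a free, proper $\BR_{>0}$-action tangent to the conic structure. Rather than invoking the general machinery, the cleanest alternative is to produce an explicit slice: select the locus where the $\delta$-component of the covector takes a fixed positive value (for instance, normalize $\xi_a=r$ on indices in $\fI$), observe this slice is transverse to the $\BR_{>0}$-orbits and maps homeomorphically to $\Lambda_\Sigma$, and verify directly that pullback-then-restrict produces a quasi-inverse to the reduction functor. Either approach reduces Lemma~\ref{lem open equiv} to the combinatorial matching $\Lambda^\circ=\Lambda_\Sigma$ already established in Lemma~\ref{lemma ham red}.
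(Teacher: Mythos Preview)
Your overall strategy matches the paper's: transport via $\varphi$ to $T^{>0}T$, then reduce along the diagonal direction to $T^*T^\circ$, using Lemma~\ref{lemma ham red} to identify the resulting support with $\Lambda_\Sigma$. The difference lies in how the reduction step is executed. You invoke the general machinery of microlocal kernels for Lagrangian correspondences (or a slice argument) to produce the equivalence $\mu\Sh_{L^{>0}(0)}(T^{>0}T)\simeq \mu\Sh_{\Lambda_\Sigma}(T^*T^\circ)$, whereas the paper works directly at the level of constructible sheaves: the forward functor is simply pushforward $i_*:\Sh(T^\circ)\to\Sh(T)$ along the inclusion $i:T^\circ\hookrightarrow T$, and the inverse is the hyperbolic localization $\phi_\delta(\cF)=i_0^*i_+^!\cF$ for the function $\delta$ cutting out $T^\circ$. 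This is more elementary and has a concrete payoff: because the equivalence is literally $i_*$, its compatibility with the convolution action of $\Sh_{\Lambda_\Sigma}(T^\circ)$ is immediate (Remark~\ref{rem tensor comp}), which the paper needs later. Your route is correct but leaves that compatibility as an extra verification; the paper's explicit description makes it transparent.
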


\begin{proof}
Recall the exact symplectic identification
$$
\xymatrix{
\varphi: M^\times  = (\BC^\times)^n \ar[r]^-\sim &  T^{>0}((S^1)^n)  = T^{>0} T
}
$$
%$$
%\xymatrix{ 
%\varphi(r_1e^{i\theta_1}, \ldots, r_n e^{i\theta_n}) = (\theta_1, r_1^2/2, \ldots, \theta_n, r_n^2/2)
%}
%$$
and the transported conic Lagrangian
$$
\xymatrix{
L^{>0}(0)  =\varphi(L^{\times}(0) ) 
}$$
We seek an equivalence
$$
\xymatrix{
\Sh_{\Lambda_\Sigma} (T^\circ) \ar[r]^-\sim & 
 \mu \Sh_{L^{>0}(0)}(T^{>0} T)
}
$$

Recall the inclusion $i:T^\circ \to T$ induces a natural Lagrangian correspondence
$$
\xymatrix{
\ar[d]^\sim T^* T^\circ & \ar@{->>}[l]_-p \ar[d]^\sim  T^* T \times_T T^\circ \ar@{^(->}[r]^-i &\ar[d]^-\sim  T^* T \\
T^\circ \times (\ft^\circ)^* & \ar@{->>}[l]  T^\circ\times\ft^* \ar@{^(->}[r] & T\times \ft^* 
}
$$
compatible with the natural projection  $\ft^* \to \ft^*/\Span(\{\delta\}) \simeq (\ft^\circ)^*$.

Recall  that $L^{>0}(0)\subset T^* T \times_T T^\circ$, 
and the projection $L^{>0}(0)\to \Lambda^\circ$ is simply a $\Span_{>0}(\{\delta\})$-bundle.
Therefore by Lemma~\ref{lemma ham red}, pushforward along the inclusion 
$$
\xymatrix{
i_*: \Sh(T^\circ) \ar[r] &  \Sh(T)
}
$$
induces the desired functor, with inverse  induced by the hyperbolic localization
$$
\xymatrix{
\phi_\delta: \Sh(T) \ar[r] &\Sh(T^\circ)  
&
\phi_\delta(\cF) = i_0^* i_+^! \cF
}
$$
with respect to the inclusions
$$
\xymatrix{
 i_0:T^\circ \ar@{^(->}[r] & T[0, \epsilon) 
 &
 i_+:T[0, \epsilon) \ar@{^(->}[r] & T
}
$$
where 
$T[0, \epsilon) = f^{-1}([0, \epsilon)
$
 for any function smooth function $f:T\to\BR$ with $T^\circ = f^{-1}(0)$, $df|_{T^\circ} = e$, and
 sufficiently  small $\epsilon>0$.  
\end{proof}

%
%\begin{remark}\label{rem equiv on unit}
%Note that by construction, under the composite equivalence
%$$
%\xymatrix{
%\Sh_{\Lambda_\Sigma}(T^\circ) \ar[r]^-\sim &
%\mu \Sh_{L^\times(0)}(M^\times)  \ar[r]^-\sim & \mu \Sh_{L^{>0}(0)}(T^{>0} T) 
%}
%$$ 
%the skyscraper at the identity $k_e\in \Sh_{\Lambda_\Sigma}(T^\circ)$ 
%is taken to the class of the skyscraper at the identity 
%$i_* k_e\in \Sh(T)$.
%%We will denote by 
%%$$
%%\xymatrix{
%%\cF_0 \in  \mu \Sh_{L^\times(0)}(M^\times) 
%%}
%%$$
%%the corresponding object.
%\end{remark}

Coupling the lemma with the case of the coherent-constructible correspondence recalled in Example~\ref{ex ccc} gives the following.

\begin{corollary}\label{cor open equiv}
The nearby category for $\theta = 0$ admits a mirror equivalence
$$
\xymatrix{
\mu \Sh_{L^\times(0)}(M^\times) \ar[r]^-\sim &   \Coh(\BP^{n-1})
}
$$
\end{corollary}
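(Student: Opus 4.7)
The plan is to obtain the desired equivalence by concatenating two equivalences that are already in hand at this point in the paper, so that essentially no new work is required beyond checking that they compose.

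First, I would invoke Lemma~\ref{lem open equiv}, which was just proved immediately above, to identify
\[
\mu\Sh_{L^\times(0)}(M^\times)\;\simeq\;\Sh_{\Lambda_\Sigma}(T^\circ).
\]
The construction there goes through the symplectic identification $\varphi:M^\times\xrightarrow{\sim} T^{>0}T$, the Hamiltonian reduction Lagrangian correspondence for the diagonal character $\delta\in\chi^*(T)$, and the realization of $\Lambda^\circ$ as $\Lambda_\Sigma$ supplied by Lemma~\ref{lemma ham red}. Concretely, the equivalence is implemented by pushforward $i_*$ along $i:T^\circ\hookrightarrow T$ in one direction, with hyperbolic localization $\phi_\delta=i_0^* i_+^!$ transverse to $T^\circ\subset T$ providing the inverse.

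Second, I would apply the basic instance of the coherent--constructible correspondence recalled in Example~\ref{ex ccc}: for the complete fan $\Sigma\subset(\ft^\circ)^*$ corresponding to the toric variety $\BP^{n-1}$ of the dual torus $\check T^\circ$, there is a canonical monoidal equivalence
\[
\Sh_{\Lambda_\Sigma}(T^\circ)\;\xrightarrow{\sim}\;\Coh(\BP^{n-1}).
\]
Composing the two equivalences yields the claimed mirror equivalence
\[
\mu\Sh_{L^\times(0)}(M^\times)\;\xrightarrow{\sim}\;\Coh(\BP^{n-1}).
\]

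There is no real obstacle here, since both input equivalences have been established (or cited, in the case of the coherent--constructible correspondence~\cite{B, ccc, T}); the corollary is a formal consequence. The only point worth remarking on, which will matter later when one tracks the functor $J^*$ and its adjoints, is that both equivalences are compatible with the monoidal structures given by convolution on the constructible/microlocal side and tensor product on the coherent side, so that the composite is itself a monoidal equivalence. This will be used below to transport the $\Sh_{\Lambda_\Sigma}(T^\circ)$-action on $\mu\Sh_{L^\times(0)}(M^\times)$ built from the twisted $T$-symmetry into the $\Coh(\BP^{n-1})$-action by tensor product, identifying the nearby category as a free rank-one module over $\Coh(\BP^{n-1})$.
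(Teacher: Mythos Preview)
Your proof is correct and matches the paper's own approach exactly: the corollary is stated immediately after the sentence ``Coupling the lemma with the case of the coherent-constructible correspondence recalled in Example~\ref{ex ccc} gives the following,'' so the intended argument is precisely the composition of Lemma~\ref{lem open equiv} with the equivalence of Example~\ref{ex ccc}. Your additional remark about monoidal compatibility is accurate and anticipates Remark~\ref{rem tensor comp}.
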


%%%%%%%%%%%%%%%%%%%%%%%%%%%%%%%%%%%%%%%%%%%%%%%%%%%%%%%

\subsection{Symmetry and monodromy}

Recall the torus $T \simeq (S^1)^n$ and subtorus $i:T^\circ \to T$.

%
%
%
%Recall the previous notation $T=  (S^1)^n$, with $\chi_*(T)= \Hom(S^1, T) = \BZ^{n}$, $\chi^*(T) = \Hom(T, S^1) = \BZ^{n}$
%so that $\ft = \chi_*(T)\otimes_\BZ \BR$, $\ft^*= \chi^*(T)\otimes_\BZ \BR$.
%Recall that $q:\ft \to \ft/\chi_*(T) \simeq T$ denotes the quotient map.

Following Example~\ref{ex torus tensor cat}, recall  that $\Sh(T)$ is a tensor dg category with respect to convolution,
and pushforward %along the inclusion $i:T^\circ \to T$ 
induces a tensor embedding
$
\Sh(T^\circ) \subset \Sh(T).
$
%Recall as well the $\BZ$-cover $T'\to T$. 
%Recall  that $\Sh_c(T')$ is a tensor dg category with respect to convolution,
%and pushforward along the inclusion $i:T^\circ \to T$ 
%induces  tensor embedding
%$
%\Sh(T^\circ) \subset \Sh(T).
%$

We will study here how appropriate objects of $\Sh(T)$ act on the nearby category,
vanishing category, and more generally, on the  multi-vanishing category. 
%
% $\mu\Sh_{L^\times (\theta)}(M^\times)$
%and
% vanishing category $\mu\Sh_{L (\theta)}(M)$, for $\theta\in S^1$,
%and more generally, on the  multi-vanishing category $\mu\Sh_{L^\times (\Theta)}(M)$, 
%for finite nonempty $\Theta\subset  S^1$.
 Recall by the constructions of Section~\ref{ss prelim}
 and definitions of Section~\ref{ss cat def}, we set $X= \BR^n$, and take these categories to comprise suitable 
microlocal sheaves
 on the biconic open subspace $\Omega_X \subset T^*(X\times \BR)$.
% 
% $$
% \xymatrix{
% \Omega_X = \{((x, t), (\xi, \eta) \, |\, \eta>0\}  \subset T^*(X \times \BR) \setminus (X\times \BR)
% }
% $$

%  and its  conic
% open subspace $\Omega^\times_X \subset \Omega_X$.
% We then construct respective conic Lagrangian subvarieties so that
%the   vanishing category  is given by 
% $\mu\Sh_{\Lambda^\times (\theta)}(\Omega_X^\times)$,
% the nearby category by 
% $\mu\Sh_{\Lambda (\theta)}(\Omega_X)$, %for $\theta\in S^1$,
%and the multi-vanishing category by $\mu\Sh_{\Lambda (\Theta)}(\Omega_X)$. 
%%for finite nonempty $\Theta\subset  S^1$.

Following the discussion of Section~\ref{ss micro shs},
% introduce the  dg category $\tau\Sh(T)$ of twisted constructible sheaves and recall the monoidal action
% $$
%\xymatrix{
%\star:\tau \Sh(T) \otimes \mu\Sh(\Omega_X) \ar[r] & \mu\Sh(\Omega_X)
%}
%$$ 
%More concretely, 
introduce the $\BZ$-cover $T'\to T$ defined by the diagonal character $\delta\in \chi^*(T)$,
 the  tensor dg category $\Sh_c(T')$ of constructible sheaves on $T'$ with compact support, 
and the monoidal action
 $$
\xymatrix{
\star: \Sh_c(T') \otimes \mu\Sh(\Omega_X) \ar[r] & \mu\Sh(\Omega_X)
}
$$ 
Recall  there is an equivalence of monoidal actions
$$
\xymatrix{
(\add(m)_*\cA) \star \cF \simeq (\cA[2m])\star \cF
&
\cA\in \Sh_c(T'),
\cF \in   \mu\Sh(\Omega_Z)
}
$$  
where
$m\in \BZ \simeq \ker(T'\to T)$, and
$\add:\BZ \times T'\to T'$ is the translation action.
Recall as well the natural lift $T^\circ \subset T'$ provides a tensor embedding
$\Sh(T^\circ) \subset \Sh_c(T')$ 
allowing us to restrict the above monoidal action.

% 
%and the resulting monoidal  action
%$$
%\xymatrix{
%\star:\Sh(T^\circ) \otimes \mu\Sh(\Omega_X) \ar[r] & \mu\Sh(\Omega_X)
%}
%$$ 
%

%%%%%%%%%%%%%%%%%%%%%%%%%%%%%%%%%%%%%%%

\subsubsection{Symmetry}

To apply the above symmetries to specific objects of $\Sh_c(T')$, we need to know they respect supports.
Here we will focus on the tensor subcategory
$\Sh(T^\circ) \subset \Sh_c(T')$,
and
following Example~\ref{ex ccc}, the further tensor subcategory $\Sh_{\Lambda_\Sigma}(T^\circ) \subset \Sh(T^\circ)$.

\begin{lemma}\label{lem symmetry}
For $\theta\in S^1$,
the convolution action of $\Sh_{\Lambda_\Sigma}(T^\circ)$ preserves the nearby category
$\Sh_{L^\times(\theta)}(M^\times)$ and vanishing category
$ \mu \Sh_{L(\theta)}(M)$,
and is compatible with restriction
$$
\xymatrix{
\mu \Sh_{L(\theta)}(M)\ar[r] & \mu \Sh_{L^\times(\theta)}(M^\times)
}
$$

More generally, for finite nonempty $\Theta\subset S^1$, 
the convolution action of $\Sh_{\Lambda_\Sigma}(T^\circ)$ preserves the 
multi-vanishing category
$ \mu \Sh_{L(\Theta)}(M)$,
and is compatible with restriction
$$
\xymatrix{
\mu \Sh_{L(\theta)}(M)\ar[r] & \bigoplus_{\theta\in \Theta} \mu \Sh_{L^\times(\theta)}(M^\times)
}
$$
\end{lemma}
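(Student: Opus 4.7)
The plan is to reduce the claim to a purely geometric support inclusion on the action Lagrangian correspondence, and then verify that inclusion stratum by stratum using the combinatorial descriptions of $\Lambda_\Sigma$ and $L(\theta)$ from Section~\ref{s geom}. First I would observe that $T^\circ = \ker(\delta\colon T\to S^1)$ preserves the superpotential $W=z_1\cdots z_n$, since $W(g\cdot z) = e^{i\delta(g)}W(z) = W(z)$ for $g\in T^\circ$. Hence $T^\circ$ preserves each $M(\theta)$, each $M^\times(\theta)$, and each $M(\Theta)$, and the induced action on $\Omega_X$ restricts compatibly to $\Omega_X^\times\subset\Omega_X$. By the microlocal calculus of integral transforms from~\cite[Ch.~VII]{KS}, for any $\cA\in\Sh_{\Lambda_\Sigma}(T^\circ)$ and $\cF\in\mu\Sh(\Omega_X)$ one has
\begin{equation*}
\ssupp(\cA\star\cF) \;\subseteq\; \{\, g\cdot\omega \mid (g,\zeta)\in\Lambda_\Sigma,\ \omega\in\ssupp(\cF),\ \nu^\circ(\omega)=\zeta\,\}.
\end{equation*}
The lemma therefore reduces to the set-theoretic inclusion $\Lambda_\Sigma\cdot_{T^\circ}\Lambda(\theta)\subseteq\Lambda(\theta)$ together with its open and multi-component variants.

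For the verification, fix a cone $\sigma = \Span_{>0}\{\overline e_a : a\notin \fI\}\subset \Sigma$, so that by Lemma~\ref{lemma ham red} the associated subtorus $\sigma T^\circ\subset T^\circ$ consists of those $(g_1,\ldots,g_n)\in T^\circ$ with $g_a=0$ for $a\notin\fI$. Given $(g,\zeta)\in \sigma T^\circ \times\sigma$ and $\omega\in\Lambda(\theta)$ with $\nu^\circ(\omega)=\zeta$, the moment-map formula constrains the radii of $\omega$ to satisfy $r_a=r_b$ for $a,b\in\fI$ and $r_a<r_b$ for $a\in\fI$, $b\notin\fI$. Consequently $\fI\subseteq I_\m(\omega)=:\fI'$, and I would read off the angles from the description of $\fI' L(\theta)$: namely $\theta_a=0$ for $a\notin\fI'$ and $\sum_a\theta_a=\theta$. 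Translation by $g$ leaves all radii intact (so $I_\m$ is unchanged), modifies only those angles indexed by $\fI\subseteq\fI'$, and preserves the sum $\sum_a\theta_a=\theta$ because $g\in T^\circ$. Hence $g\cdot\omega$ still lies in $\fI' L(\theta)\subset\Lambda(\theta)$. The same calculation applies verbatim inside $\Omega_X^\times$ for the nearby category, and extends to $\Lambda(\Theta)=\bigcup_\theta\Lambda(\theta)$ by taking unions, since $T^\circ$ preserves each $M(\theta)$ separately and the pieces $L^\times(\theta)$ are disjoint in $M^\times$, giving the direct-sum target for $J^*$.

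Compatibility of the action with the restriction functor $\mu\Sh_{L(\theta)}(M)\to\mu\Sh_{L^\times(\theta)}(M^\times)$ is then automatic: the convolutions on both sides are defined by a single $T^\circ$-equivariant integral kernel, whose restriction to the $T^\circ$-invariant open $\Omega_X^\times$ implements the nearby convolution. The main technical subtlety one must not overlook is the cohomological twist of Proposition~\ref{prop twist calc}; this is harmless here because objects of $\Sh_{\Lambda_\Sigma}(T^\circ)$ are pushed forward to $\Sh_c(T')$ along the canonical section $T^\circ\hookrightarrow T'$ of the $\BZ$-cover, so the monoidal action carries no grading anomaly. The only substantive step is therefore the stratum-by-stratum computation of the preceding paragraph; everything else is formal consequence of the Kashiwara--Schapira machinery already developed in Section~\ref{ss micro shs}.
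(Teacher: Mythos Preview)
Your proposal is correct and follows essentially the same route as the paper: both reduce the statement to a support inclusion for the action Lagrangian correspondence and verify it stratum by stratum, matching the cone $\sigma$ in $\Lambda_\Sigma$ to the piece of $L(\theta)$ with the same index $\fI$ via the moment map, and then observing that $\sigma T^\circ$ acts only on the angles indexed by $\fI$ while preserving $\sum_a\theta_a$. The only cosmetic differences are that the paper first reduces the vanishing case to the nearby case by the closure argument (since $L(\theta)=\overline{L^\times(\theta)}$ and $M_0$ supports no $n$-dimensional isotropics) and then transports the computation to $T^{>0}T$ via $\varphi$, whereas you work directly in $M$ and handle $L_0$ and $L^\times(\theta)$ simultaneously; also note that your moment-map constraint actually forces $\fI'=\fI$ (not merely $\fI\subseteq\fI'$), though the weaker inclusion you state already suffices.
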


\begin{proof}
 The $T$-action preserves $M^\times = W^{-1}(\BC^\times)$, so convolution is compatible with restriction. 

Recall that $M_0 = W^{-1}(0)$ is the union of the coordinate hyperplanes in $M=\BC^n$,
and hence does not contain any $n$-dimensional isotropic submanifolds. Thus for $\theta\in S^1$, if 
convolution by objects of $\Sh_{\Lambda_\Sigma}(T^\circ)$ preserves the nearby category $\mu\Sh_{L^\times(\theta)}(M^\times)$, 
then it will also preserve the vanishing category $\Sh_{L(\theta)}(M)$, since $L(\theta)$ is the closure of  $L^\times(\theta)$. 
Moreover, if 
convolution by objects of $\Sh_{\Lambda_\Sigma}(T^\circ)$ preserves the vanishing category $\mu\Sh_{L^\times(\theta)}(M^\times)$, for any $\theta\in S^1$,
then it will also preserve the multi-vanishing category $\mu\Sh_{L(\Theta)}(M)$, for  finite nonempty
$\Theta\subset S^1$, since  $L(\Theta)$ is the union of $L(\theta)$, for $\theta\in \Theta$. 
Thus it suffices to show that convolution by objects of $\Sh_{\Lambda_\Sigma}(T^\circ)$  
preserves the nearby category $\mu\Sh_{L^\times(\theta)}(M^\times)$, for fixed $\theta \in S^1$.

Recall the exact symplectic identification
$$
\xymatrix{
\varphi: M^\times  = (\BC^\times)^n \ar[r]^-\sim &  T^{>0}((S^1)^n)  = T^{>0} T
}
$$
%$$
%\xymatrix{ 
%\varphi(r_1e^{i\theta_1}, \ldots, r_n e^{i\theta_n}) = (\theta_1, r_1^2/2, \ldots, \theta_n, r_n^2/2)
%}
%$$
and the transported conic Lagrangian
$$
\xymatrix{
L^{>0}(\theta)  =\varphi(L^{\times}(\theta) ) 
}$$

Since $\varphi$ is $T$-equivariant, it suffices to show that convolution by objects of $\Sh_{\Lambda_\Sigma}(T^\circ)$  
preserves the category $\mu \Sh_{L^{>0}(\theta)}(T^{>0}T)$.
More concretely, it suffices to see that  the correspondence induced by multiplication 
takes  $\Lambda_\Sigma \times L^{>0}(\theta) \subset T^*T^\circ \times T^{>0} T$ back into $L^{>0}(\theta) \subset T^{>0} T$.

To confirm this, recall   the decompositions
$$
\xymatrix{
L^{>0}(\theta) = \bigcup_{\fI} \fI S \times \fI \sigma \subset T \times \ft^*
&
\Lambda_\Sigma = \bigcup_{\sigma}  \sigma T^\circ \times \sigma \subset T^\circ \times (\ft^\circ)^*
}
$$
and that the index sets are matched by a nonempty subset  $\fI = \{ 1, \ldots, n\}$ determining the cone  
$\sigma = \Span_{>0}(\{e_a \, |\, a\not \in \fI\})  \subset \Sigma$. Furthermore,  the cones in the second factors are compatible under the projection
$\ft^* \to \ft^*/\Span(\delta) \simeq (\ft^\circ)^*$ in the sense that
$
\sigma = \fI \sigma /\Span_{>0}(\delta).
$

Since the positive cones of $\Sigma$ are disjoint, it remains to check  for fixed $\sigma \subset \Sigma$,
and corresponding $\fI$, the multiplication of $\fI S$ by elements of $\sigma T^\circ$ lies back within $\fI S$.
But recall that $\fI S$ is cut out by $\theta_a = 0$, for $a\not\in \fI$, and 
$\sum_{a=1}^n \theta_a = \theta$,
and $\sigma T^\circ$ 
is cut out by $\theta_a = 0$, for $a\not\in \fI$, and $\sum_{a=1}^n \theta_a = 0$.
\end{proof}

\begin{remark}\label{rem tensor comp}
In the special case $\theta = 0$,
note that the canonical equivalence
$$
\xymatrix{
 \Sh_{\Lambda_\Sigma} (T^\circ) \ar[r]^-\sim &
\mu \Sh_{L^\times(0)}(M^\times)
}
$$
of Lemma~\ref{lem open equiv}  is naturally compatible with
the convolution action of $\Sh_{\Lambda_\Sigma}(T^\circ)$
since by construction,
%the composite equivalence
%$$
%\xymatrix{
%\Sh_{\Lambda_\Sigma}(T^\circ) \ar[r]^-\sim &
%\mu \Sh_{L^\times(0)}(M^\times)  \ar[r]^-\sim & \mu \Sh_{L^{>0}(0)}(T^{>0} T) 
%}
%$$ 
it is induced by pushforward along the inclusion $i:T^\circ \to T$.
\end{remark}

%%%

\subsubsection{Monodromy}

Recall the $\BZ$-cover $T'\to T$ defined by the diagonal character $\delta\in \chi^*(T)$.
Note the canonical identification of Lie algebras $\ft' = \ft \simeq \BR^n$, and let us write $q':\ft' \to \ft'/\ker(\delta) \simeq T'$ for the natural map.
%
%and the natural projection $q:\ft\to \ft/\chi_*(T) \simeq T$.

For $\tau\in \BR^\times$, let $\sign(\tau)\in \{\pm 1\}$ be its sign. Consider the
inclusion  $i_\tau:\Delta(\tau)\to \ft'$
of the relatively open simplex 
$$
\xymatrix{
\Delta(\tau) = \{(\tau_1, \ldots, \tau_n) \in \ft' \, |\,  \sign(\tau) \tau_a >0, \mbox{ for all } a = 1, \ldots, n, \mbox{ and }\sum_{a=1}^n \tau_a = \tau\}
}
$$
%$$
%\xymatrix{
%\Delta_\tau = \ol{\Delta^\circ_\tau} = \{(\tau_1, \ldots, \tau_n) \in \ft \, |\,  \tau_a \geq 0, \mbox{ for all } a = 1, \ldots, n, \mbox{ and }\sum_{a=1}^n \tau_a = \tau\}
%}
%$$
%and denote its inclusion by
%$$
%\xymatrix{
%i_\tau:\Delta(\tau)\ar@{^(->}[r] & \ft'
%%&
%%i_\tau:\Delta_\tau\ar@{^(->}[r] & \ft
%}
%$$

%Recall we write $\Sh_c(T')$ for the tensor category of constructible sheaves on $T'$ with compact support.
Let $\cA_0 = k_{e}\in \Sh_c(T')$ be the skyscraper at the identity $e\in T'$.
For $\tau >0$, let $\cA_\tau\in \Sh_c(T')$  be the pushforward of the $*$-extension of the constant sheaf 
$$
\xymatrix{
\cA_\tau = q'_* i_{\tau*} k_{\Delta(\tau)}
}
$$
For $\tau < 0$, let $\cA_\tau\in \Sh_c(T')$ be the pushforward of the  $!$-extension of the Verdier dualizing complex
$$
\xymatrix{
\cA_\tau = q'_* i_{!} \omega_{\Delta(\tau)}
}
$$

Note the canonical convolution equivalences
$$
\xymatrix{
\cA_{\tau_1} \star \cA_{\tau_2} \simeq\cA_{\tau_1 +\tau_2} 
}
$$
and in particular that $\cA_\tau$ is invertible with inverse
$$
\xymatrix{
\cA_{\tau}^\vee \simeq \iota_*\mathbb D_{T'}\cA_\tau \simeq \cA_{-\tau}
}
$$

\begin{lemma}\label{lem par trans}
For $\tau \in \BR$, and $\theta \in S^1= \BR/2\pi \BZ$,
convolution with $\cA_\tau\in \Sh_c(T')$ provides monodromy equivalences
$$
\xymatrix{
\cA_\tau\star:\mu \Sh_{L(\theta)}(M) \ar[r]^-\sim& \mu \Sh_{L(\theta+\tau)}(M)
&
\cA_\tau\star:\mu \Sh_{L^\times(\theta)}(M^\times) \ar[r]^-\sim& \mu \Sh_{L^\times(\theta+\tau)}(M^\times)
}
$$
fitting into a commutative diagram with restriction
$$
\xymatrix{
\ar[d]\mu \Sh_{L(\theta)}(M) \ar[r]^-\sim & \mu \Sh_{L(\theta+\tau)}(M)\ar[d]\\
\mu \Sh_{L^\times(\theta)}(M^\times) \ar[r]^-\sim & 
\mu \Sh_{L^\times(\theta+\tau)}(M^\times)
}
$$

%$$
%\xymatrix{
%\mu \Sh_{L(\theta)}(M)\ar[r] & \mu \Sh_{L^\times(\theta)}(M^\times)
%}
%$$

More generally, for $\tau \in \BR$, and finite nonempty  $\Theta \subset S^1= \BR/2\pi \BZ$,
convolution with $\cA_\tau\in \Sh_c(T')$ provides a monodromy equivalence
 $$
\xymatrix{
{\cA_\tau\star}:\mu \Sh_{L(\Theta)}(M) \ar[r]^-\sim& \mu \Sh_{L(\Theta+\tau)}(M)\\
}
$$
fitting into a commutative diagram with restriction
$$
\xymatrix{
\ar[d]\mu \Sh_{L(\Theta)}(M) \ar[r]^-\sim & \mu \Sh_{L(\Theta+\tau)}(M)\ar[d]\\
\bigoplus_{\theta\in \Theta}
\mu \Sh_{L^\times(\theta)}(M^\times) \ar[r]^-\sim & 
\bigoplus_{\theta\in \Theta}
\mu \Sh_{L^\times(\theta+\tau)}(M^\times)
}
$$
%$$
%\xymatrix{
%\mu \Sh_{L(\theta)}(M)\ar[r] & \bigoplus_{\theta\in \Theta} \mu \Sh_{L^\times(\theta)}(M^\times)
%}
%$$
%
%
% commutative diagram
%$$
%\xymatrix{
%\ar[d]\mu \Sh_{\Lambda(\Theta)}(M) \ar[rr]_-\sim^-{\cA_\tau\star} && \mu \Sh_{\Lambda(\Theta+\tau)}(M)\ar[d]\\
%\bigoplus_{\theta\in \Theta}
%\mu \Sh_{\Lambda^\times(\theta)}(M^\times) \ar[rr]_-\sim^-{\cA_\tau\star} && 
%\bigoplus_{\theta\in \Theta}
%\mu \Sh_{\Lambda^\times(\theta+\tau)}(M^\times)
%}
%$$
%where the horizontal arrows are equivalences and the vertical arrows are the natural restrictions.
\end{lemma}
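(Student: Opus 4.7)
The plan is to reduce the lemma to a single support-preservation claim. The convolution identity $\cA_\tau \star \cA_{-\tau} \simeq \cA_0 = k_e$, together with the fact that the monoidal unit acts as the identity on $\mu\Sh(\Omega_X)$, shows that convolution by $\cA_\tau$ is an autoequivalence of $\mu\Sh(\Omega_X)$. Hence it suffices to verify that $\cA_\tau \star (-)$ carries $\mu\Sh_{L^\times(\theta)}(M^\times)$ into $\mu\Sh_{L^\times(\theta+\tau)}(M^\times)$; the reverse inclusion follows by applying the same argument to $\cA_{-\tau}$. Compatibility with the restriction from $M$ to $M^\times$ is automatic from the $T$-invariance of $M^\times$, exactly as in the proof of Lemma~\ref{lem symmetry}, and since $M_0$ contains no $n$-dimensional isotropic submanifolds, preservation on the open stratum forces preservation on the full skeleton $L(\theta)$. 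Finally, the multi-skeleton statement follows from the single-skeleton case because $L(\Theta+\tau) = \bigcup_{\theta \in \Theta} L(\theta+\tau)$ and convolution respects this decomposition.

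For the core support calculation, I would work under the identification $\varphi: M^\times \simeq T^{>0}T$, so that $L^\times(\theta)$ becomes $L^{>0}(\theta) = \bigcup_{\fI} \fI S \times \fI\sigma$ by Lemma~\ref{lemma micro interpretation}, where $\fI S \subset T$ is cut out by the equations $\theta_a = 0$ for $a \notin \fI$ together with $\sum_a \theta_a = \theta$, and $\fI\sigma = \Span_{>0}(\{\delta\} \cup \{e_a \mid a \notin \fI\}) \subset \ft^*$. The sheaf $\cA_\tau$ is supported on the image in $T'$ of the open simplex $\Delta(\tau) \subset \ft'$, and its singular support in $T^*T'$ is the conormal variety of the closed simplex: along the relative interior it is the ray $\BR_{\geq 0}\langle\delta\rangle$, and along the face where $\tau_a = 0$ for $a \notin \fI$ it is precisely the closed cone $\fI\sigma$. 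This matches stratum by stratum the cone structure already present in $L^{>0}(\theta)$, which is the geometric reason the convolution closes up.

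Using the action Lagrangian correspondence $\cL_{T,\Omega_X}$ and the composition rule for microlocal kernels from~\cite{KS}, convolving a stratum $\fI S \times \fI\sigma$ against the corresponding face stratum of $\cA_\tau$ translates the base by an element of $\delta^{-1}(\tau) \subset T'$, carrying $\fI S$ to the analogous slice for $\theta + \tau$ while leaving the cotangent cone within $\fI\sigma$. Assembling the strata produces $L^{>0}(\theta+\tau)$, as required. The main obstacle I anticipate is matching the boundary behavior of $\Delta(\tau)$ with the boundary structure of the Lagrangian cones $\fI\sigma$ uniformly in the composition; the cleanest way around this is to use the family structure $\cA_{\tau_1} \star \cA_{\tau_2} \simeq \cA_{\tau_1+\tau_2}$ to interpret $\cA_\tau \star (-)$ as a one-parameter family of equivalences whose starting value at $\tau = 0$ is the identity, so that it suffices to verify the support statement infinitesimally along the moment-map direction dual to $\delta$. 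Alternatively, one can test the support condition on a single generator, for instance the object corresponding to $\cO_{\BP^{n-1}}$ under Corollary~\ref{cor open equiv}, and propagate via the $\Sh_{\Lambda_\Sigma}(T^\circ)$-module structure of Remark~\ref{rem tensor comp}.
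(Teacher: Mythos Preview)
Your proposal is correct, and in fact the ``alternative'' you sketch in the last sentence is precisely the paper's argument. The paper reduces (as you do) to the nearby category via invertibility of $\cA_\tau$ and the closure argument from Lemma~\ref{lem symmetry}, then makes one further reduction you omit: by the composition law $\cA_{\tau_1}\star\cA_{\tau_2}\simeq\cA_{\tau_1+\tau_2}$ it suffices to take $\theta=0$ and $\tau\in[0,2\pi)$. At that point, since convolution by $\cA_\tau$ commutes with the $\Sh_{\Lambda_\Sigma}(T^\circ)$-action and the nearby category at $\theta=0$ is a free rank-one module over $\Sh_{\Lambda_\Sigma}(T^\circ)$ generated by the image of the monoidal unit (Remark~\ref{rem tensor comp}), one only needs $\ssupp(\cA_\tau)\cap T^{>0}T\subset L^{>0}(\tau)$. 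This is immediate from Lemma~\ref{lemma micro interpretation slice}, which identifies $\ssupp(\cA_\tau)\cap T^{>0}T$ with $P^{>0}(\tau)\subset L^{>0}(\tau)$.

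Your primary approach, tracking the stratum-by-stratum effect of the action Lagrangian correspondence on $\fI S\times\fI\sigma$, would also work, and your description of $\ssupp(\cA_\tau)$ is exactly what is needed. But the ``obstacle'' you anticipate about matching boundary cones across strata is real, and the paper sidesteps it entirely by the module-structure reduction. So your instinct to test on a single generator was the right one; promote it from alternative to main line and the argument becomes a paragraph.
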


\begin{proof}
Convolution by  $\cA_\tau \in \Sh_c(T')$ is invertible with inverse given by convolution by
 the dual $\cA_{-\tau}  \simeq \cA_\tau^\vee \in \Sh_c(T')$. Thus the lemma follows if convolution by  $\cA_\tau \in \Sh_c(T')$ maps the stated categories to the respective stated categories.
 
 As in the proof of Lemma~\ref{lem symmetry}, it suffices to establish the assertion for the nearby category in the form 
 $$
\xymatrix{
\cA_\tau\star:\mu \Sh_{L^{>0}(\theta)}(T^{>0} T) \ar[r] & \mu \Sh_{L^{>0}(\theta+\tau)}(T^{>0} T)
}
$$
Moreover, by composition of convolutions, it suffices to assume $\theta = 0$, and $\tau\in [0, 2\pi)$, and establish
the assertion for 
 $$
\xymatrix{
\cA_\tau\star:\mu \Sh_{L^{>0}(0)}(T^{>0} T) \ar[r] & \mu \Sh_{L^{>0}(\tau)}(T^{>0} T)
}
$$

Since $\Sh_c(T')$ is a tensor category, convolution by $\cA_\tau\in \Sh_c(T')$ commutes in particular with convolution by objects
of $\Sh_{\Lambda_\Sigma}(T^\circ)$. Hence
by Lemma~\ref{lem symmetry} and Remark~\ref{rem tensor comp}, it suffices to see 
$$
\xymatrix{
\ssupp(\cA_\tau)  \cap T^{>0} T \subset L^{>0}(\tau)  
}
$$
But by Lemma~\ref{lemma micro interpretation slice}, and the conventions of Example~\ref{ex conv},
 we have that
$$
\xymatrix{
\ssupp(\cA_\tau)  \cap T^{>0} T = P^{>0}(\tau) \subset L^{>0}(\tau) 
}
$$
\end{proof}

Thanks to Corollary~\ref{cor open equiv} and  Lemma~\ref{lem par trans}, we have the following generalization of Corollary~\ref{cor open equiv}.
Note that the equivalence obtained here is not canonical since it depends on the choice of $\tau \in \BR$ through
its appearance in Lemma~\ref{lem par trans}. 

\begin{corollary}\label{cor open equiv gen}
Given $\theta\in S^1 = \BR/2\pi\BZ$, for the choice of a lift $\tau\in \BR$,
the nearby category admits a mirror equivalence
$$
\xymatrix{
\mu \Sh_{L^\times(\theta)}(M^\times) \ar[r]^-\sim & \Coh(\BP^{n-1})
}
$$
\end{corollary}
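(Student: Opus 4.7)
The plan is to reduce the general case to the already-established case $\theta = 0$ by transporting along the monodromy equivalences of Lemma~\ref{lem par trans}. Specifically, given $\theta \in S^1$ and a chosen lift $\tau \in \BR$, convolution with the invertible object $\cA_{-\tau} \in \Sh_c(T')$ provides an equivalence
\[
\cA_{-\tau}\star : \mu\Sh_{L^\times(\theta)}(M^\times) \xrightarrow{\;\sim\;} \mu\Sh_{L^\times(\theta - \tau)}(M^\times) = \mu\Sh_{L^\times(0)}(M^\times),
\]
since $\theta - \tau \equiv 0 \pmod{2\pi}$ by construction of $\tau$ as a lift of $\theta$. Composing this with the canonical equivalence of Corollary~\ref{cor open equiv} gives the desired equivalence
\[
\mu\Sh_{L^\times(\theta)}(M^\times) \xrightarrow{\;\sim\;} \mu\Sh_{L^\times(0)}(M^\times) \xrightarrow{\;\sim\;} \Coh(\BP^{n-1}).
\]

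There is essentially no obstacle: the first arrow is an equivalence by Lemma~\ref{lem par trans}, and the second by Corollary~\ref{cor open equiv}. The only subtlety, which is already flagged in the statement, is that the composite is not canonical — different lifts $\tau, \tau' \in \BR$ of the same $\theta \in S^1$ differ by $2\pi m$ for some $m \in \BZ$, and the equivalence of Lemma~\ref{lem par trans} changes by convolution with $\cA_{2\pi m}$. Tracing through the normalizations recalled in Section~\ref{ss micro shs}, convolution with $\cA_{2\pi}$ corresponds under the coherent-constructible equivalence of Example~\ref{ex ccc} to the autoequivalence of $\Coh(\BP^{n-1})$ given by tensoring with $\cO_{\BP^{n-1}}(-1)$ (up to the cohomological shift encoded in the twisted symmetry). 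This is the manifestation of the metaplectic anomaly in the nearby monodromy, and explains why the equivalence genuinely depends on the choice of lift rather than on $\theta$ alone.

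Thus the entire content of the corollary is the composition of two equivalences already established earlier, together with the identification $L^\times(\theta - 2\pi m) = L^\times(\theta)$ (as subsets of $M^\times$) that makes the target of the monodromy functor unambiguous once $\tau$ is fixed.
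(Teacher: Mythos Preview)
Your proof is correct and follows exactly the same approach as the paper: reduce to the case $\theta = 0$ via the monodromy equivalence of Lemma~\ref{lem par trans} (convolution with $\cA_{-\tau}$), then invoke Corollary~\ref{cor open equiv}. Your additional discussion of the dependence on the lift anticipates the content of Corollary~\ref{cor monodromy calc}, which the paper records immediately afterward.
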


Finally, let us record the ambiguity of the equivalence of the corollary by analyzing what happens when $\theta = 0\in S^1= \BR/2\pi\BZ$
and $\tau\in 2\pi \BZ$.

%Recall the  subtorus $T^\circ \subset T$, % consisting of $(\theta_a)_{a=1}^n \in T$ such that $ \sum_{a=1}^n \theta_i = \theta$.
%Choose an index $k \in \{1, \ldots, n\}$ and let $s:T\to T(\theta)$ be the splitting with kernel the $k$th coordinate circle...
%Let $\ft(\theta) \subset \ft$ denote its Lie algebra, and note $\chi_*(T(\theta)) =\Hom(S^1, T(\theta)) \simeq   \{e\}^\perp \subset M$ 
% and $\chi^*(T(\theta)) = \Hom(T(\theta), S^1) \simeq \chi^*(T)/ \Span(\{e\})$.
When $\tau = 2\pi$, note that $\add(-1)_*\cA_{2\pi} \in \Sh_c(T')$ is supported on $T^\circ\subset T'$ 
%By construction, for $\tau  \in  2\pi\BZ$,
%there is a commutative diagram of equivalences
%$$
%\xymatrix{
%\mu \Sh_{\Lambda^\times(0)}(M^\times) \ar[d] \ar[r]^-{\cA_\tau\star} & \mu \Sh_{\Lambda^\times(0)}(M^\times)  \ar[d] \\
%\Sh_{\Lambda^\circ}(T^\circ) \ar[r]^-{a_\tau\star} & \Sh_{\Lambda^\circ}(T^\circ)  
%}
%$$
%where the vertical arrows are the equivalences of Lemma~\ref{lem open equiv}.
and in fact
 $\add(-1)_*\cA_{2\pi}\in\Sh_{\Lambda_\Sigma}(T^\circ)$.
Recall that it corresponds to $\cO_{\BP^{n-1}}(-1) \in \Coh(\BP^{n-1})$
 under the equivalence of Example~\ref{ex ccc}. Thus we %the discussion of Section~\ref{ss micro shs}
 have the following.

%Then $a_\tau = s_!\cA_\tau\in \Sh(T(\theta))$ also satisfy
%$$
%\xymatrix{
%a_{\tau_1} \star a_{\tau_2} \simeq a_{\tau_1 +\tau_2} 
%}
%$$
%and in particular $a_\tau$ is invertible with inverse
%$$
%\xymatrix{
%a_{\tau}^\vee \simeq \iota_!(\mathbb D_T(a_\tau)) \simeq a_{-\tau}
%}
%$$

\begin{corollary}\label{cor monodromy calc}
Fix $\tau = 2\pi m \in  2\pi\BZ$.
Under the  equivalence % 
%$$
%\xymatrix{
%\mu \Sh_{L^\times(\theta)}(M^\times) \ar[r]^-\sim & \Coh(\BP^{n-1})
%}
%$$
of Corollary~\ref{cor open equiv},
convolution by $\cA_\tau\in \Sh_{c}(T')$ corresponds to tensoring with $\cO_{\BP^{n-1}}(-m)[2m]$.
\end{corollary}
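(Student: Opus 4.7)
The plan is to extract the result from the setup immediately preceding the corollary by combining the metaplectic anomaly shift with multiplicativity of the convolution action.

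First I would write $\cA_{2\pi m} = \add(m)_* \cB_m$, where $\cB_m := \add(-m)_* \cA_{2\pi m}$. The metaplectic equivalence $(\add(m)_*\cA)\star\cF \simeq (\cA[2m])\star\cF$ from Section~\ref{ss micro shs} then reduces the claim to exhibiting $\cB_m$ as an object of $\Sh_{\Lambda_\Sigma}(T^\circ)\subset\Sh_c(T')$ corresponding to $\cO_{\BP^{n-1}}(-m)$ under the coherent-constructible equivalence of Example~\ref{ex ccc}; once this is in hand, Remark~\ref{rem tensor comp} together with Lemma~\ref{lem open equiv} translate the convolution action of $\cB_m$ on the nearby category into tensor product by $\cO_{\BP^{n-1}}(-m)$ on $\Coh(\BP^{n-1})$.

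For the base case $m=1$, the support statement for $\cB_1$ on $T^\circ$ is the observation recorded in the text just before the corollary, and the identification $\cB_1 \simeq d_* k_\Delta$ follows by tracing $q':\ft'\to T'$ on the affine hyperplane $\{\sum_a \tau_a = 2\pi\}$ and noting that, after translation by $\add(-1)$, it lands in $T^\circ$ via exactly the open simplex inclusion $d:\Delta\hookrightarrow T^\circ$ appearing in item (i) of Example~\ref{ex ccc}; hence $\cB_1$ corresponds to $\cO_{\BP^{n-1}}(-1)$. For general $m\in\BZ$, I would invoke the canonical convolution equivalence $\cA_{2\pi m}\simeq \cA_{2\pi}^{\star m}$ together with the fact that translation by the discrete abelian subgroup $\ker(T'\to T)\simeq \BZ$ is a group homomorphism, so that $\add(-m)_*$ distributes across the $m$-fold convolution, yielding $\cB_m\simeq \cB_1^{\star m}$. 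Monoidality of the coherent-constructible equivalence then produces the required correspondence $\cB_m\leftrightarrow \cO_{\BP^{n-1}}(-m)$, with negative $m$ handled either by the same argument applied to the dualizing-complex definition of $\cA_\tau$ for $\tau<0$, or by invertibility $\cA_{2\pi m}\simeq \cA_{-2\pi m}^\vee$ combined with $\cO_{\BP^{n-1}}(-m)\simeq \cO_{\BP^{n-1}}(m)^\vee$.

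The step requiring most care is the compatibility bookkeeping in the reduction: verifying that the three structures involved---the metaplectic shift $\add(m)_*\leftrightarrow [2m]$, the distribution of $\BZ$-translation across convolution, and the monoidal coherent-constructible correspondence---interact as tensor-compatible actions on $\mu\Sh_{L^\times(0)}(M^\times)$, and in particular that the $\Sh_{\Lambda_\Sigma}(T^\circ)$-module structure seen via Lemma~\ref{lem open equiv} matches the one obtained by restricting the $\Sh_c(T')$-action along $T^\circ\subset T'$. This is routine but touches every piece of the machinery built in the preceding sections.
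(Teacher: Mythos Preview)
Your proposal is correct and follows exactly the route the paper takes: the paper's proof consists entirely of the sentence preceding the corollary, which records that $\add(-1)_*\cA_{2\pi}\in\Sh_{\Lambda_\Sigma}(T^\circ)$ corresponds to $\cO_{\BP^{n-1}}(-1)$ via Example~\ref{ex ccc}(i), and leaves the metaplectic shift and the passage to general $m$ implicit. You have faithfully unpacked those implicit steps, including the multiplicativity $\cA_{2\pi m}\simeq\cA_{2\pi}^{\star m}$ and the distribution of $\add(-m)_*$ across the convolution (which follows from $\add_g\circ m = m\circ(\add_g\times\id)$ on the abelian group $T'$).
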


%%%%%

\subsection{Adjoints to restriction}

We now arrive at the main technical result of this paper.

%Set $\theta_+ = 0$, $\theta_- = \pi$, and
%$\Theta = \{0, \pi\}\subset S^1 = \BR/2\pi \BZ$.

Let us focus on the skeleta $L(0), L(\pi/2) \subset M$ 
over the respective real rays $\BR_{\geq 0}, \BR_{\leq 0} \subset \BC$,
and  simplify our previous notation by setting
$$
\xymatrix{
L^\times_- = L^\times(\pi) 
&
L^\times_+ = L^\times(0) 
}
$$
$$
\xymatrix{
L_- = L^\times_- \cup L_0 
&
L_+ = L^\times_+ \cup L_0
&
L = %L(\{0, \pi\}) = 
L^\times_- \cup L_0 \cup L^\times_+
}
$$
 
Recall that the open embeddings  
$$
\xymatrix{
L^\times_-  \ar@{^(->}[r]^-{J_-} &  L & \ar@{_(->}[l]_-{J_+}   L^\times_+
}
$$
induce restriction functors
$$
\xymatrix{
\mu\Sh_{L^\times_-}(M^\times) & \ar[l]_-{J^*_-} \mu\Sh_{L}(M) \ar[r]^-{J^*_+} &  \mu\Sh_{L^\times_+}(M^\times) 
}
$$

Here is our main technical result which will be proved in this section.

\begin{thm}\label{thm adjoints}
1) The restriction functors $J_-^*, J_+^*$ admit fully faithful left and right adjoints fitting into adjoint triples
$$
\xymatrix{
 (J_{-!}, J_-^*, J_{-*})
&
(J_{+!}, J_+^*, J_{+*})
}
$$
and intertwining the natural convolution actions of $ \Sh_{\Lambda_\Sigma}(T^\circ)$.

3) The compositions 
$$
\xymatrix{
J^*_+J_{-*} &
J^*_-J_{+*}
}$$ are equivalences
with respective inverses the adjoint compositions  
$$
\xymatrix{
J^*_-J_{+!}
&
J^*_+J_{-!}
}
$$

4) The composition $J_+^* J_{-!} J^*_-J_{+!}$ is equivalent to convolution with $\cA_{2\pi}[-2]
\in \Sh_c(T')$.
\end{thm}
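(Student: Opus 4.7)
The plan is to combine the $\Sh_{\Lambda_\Sigma}(T^\circ)$-symmetry of Lemma~\ref{lem symmetry} with the parallel transport family $\cA_\tau$ of Lemma~\ref{lem par trans}. By Corollary~\ref{cor open equiv} and Example~\ref{ex ccc}, the nearby category $\mu\Sh_{L^\times_+}(M^\times)$ is a free rank-one module over $\Sh_{\Lambda_\Sigma}(T^\circ)$, generated by a distinguished object $\cA$ mirror to the structure sheaf $\cO_{\BP^{n-1}}$. Since $J_\pm^*$ is $\Sh_{\Lambda_\Sigma}(T^\circ)$-equivariant, producing adjoints is tantamount to specifying the candidate objects $J_{\pm!}\cA, J_{\pm*}\cA \in \mu\Sh_L(M)$ and verifying the unit/counit morphisms on $\cA$ alone; equivariance then propagates them canonically to the full category, yielding part (1).

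For the explicit construction, I use the homeomorphism $h = g\times w : P(\tau) \to (\ft^\circ)^* \times C(\theta)$ of Proposition~\ref{prop homeo slice} to transport sheaves on $(\ft^\circ)^*$ to microlocal sheaves supported on $L(\theta)$. Under $h$, the generator $\cA$ corresponds (after microlocalization) to the constant sheaf on the generic slice $\{w = 1\}$. Using $h$ as a combinatorial model for the skeleton $L$, the candidate lifts $J_{+*}\cA$ and $J_{+!}\cA$ are obtained by $*$- and $!$-pushforward of this constant sheaf through the open inclusion of $\BR_{>0}$ into $C(0)$ along the base variable $w$, followed by microlocalization. That $J_{+*}\cA$ is supported on all of $L$ (and not only on $L_+$) reflects the fact that $P(\tau)$ has a nontrivial closure through $L_0$ as $\tau \to 0^\pm$, while $J_{+!}\cA$ extends by zero into $L_-^\times$; this dichotomy is the source of the $*/!$ distinction. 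The unit and counit on $\cA$ then reduce to standard open-inclusion identities on $C(0)$, once combined with the hyperbolic-localization description of $J_\pm^*$ from the proof of Lemma~\ref{lem open equiv}, and fully faithfulness follows from $J_\pm^* J_{\pm*} \simeq \id \simeq J_\pm^* J_{\pm!}$ on $\cA$.

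For part (3), the compositions $J_+^* J_{-*}$ and $J_-^* J_{+*}$ correspond (on the generator $\cA$, and hence by equivariance everywhere) to parallel transporting a nearby sheaf from one ray of $C$ to the other through a single half of the complement: $*$-extending through either the upper half ($\tau = +\pi$) or the lower half ($\tau = -\pi$) and then restricting. By Lemma~\ref{lem par trans} this is convolution by the invertible object $\cA_{\pm\pi}$, with inverses realized by the adjoint compositions $J_-^* J_{+!}, J_+^* J_{-!}$. For part (4), the composition $J_+^* J_{-!} J_-^* J_{+!}$ is parallel transport around a full loop: $J_{+!}$ extends through one half, $J_{-!}$ through the other, each contributing convolution with $\cA_{\pm\pi}$, composing to $\cA_\pi \star \cA_\pi \simeq \cA_{2\pi}$. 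The cohomological shift $[-2]$ is exactly the Maslov twist of Proposition~\ref{prop twist calc}: the loop in $T'$ traced by this composition pairs with the diagonal character $\delta$ to give $1$, and the resulting anomaly contributes the shift, with the sign dictated by the circulation direction forced by the specific adjoint composition (the alternative composition $J_+^* J_{-*} J_-^* J_{+*}$ would give the opposite sign).

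The main obstacle will be two intertwined bookkeeping tasks. First, verifying stratum by stratum that the singular support of the candidate $J_{+*}\cA$ constructed from $P(\tau)$ is contained in $\Lambda(\theta)$ and not larger: the nested loci $\fI L_0$ and $\fI L^\times(\theta)$ interact nontrivially near the collisions of radial coordinates where $|I_\m|$ jumps, and one must check that $*$-extension across these strata introduces no new codirections beyond those prescribed by $\Lambda(\theta)$. Second, reconciling the sign/shift conventions in part (4): the identification with $\cA_{2\pi}[-2]$ rather than $\cA_{2\pi}[+2]$ or $\cA_{-2\pi}[\pm 2]$ demands careful orientation bookkeeping through the $\BZ$-cover $T' \to T$ and the equivalence $\add(m)_* \cA \simeq \cA[2m]$ of Proposition~\ref{prop twist calc}, cross-checked against Corollary~\ref{cor monodromy calc} to match the expected monodromy $T_{\Psi,\ell} \simeq \cO_{\BP^{n-1}}(-1) \otimes -$ dictated by the spherical-functor structure.
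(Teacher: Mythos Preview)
Your overall strategy matches the paper's: reduce to the generator $\cA$ via the $\Sh_{\Lambda_\Sigma}(T^\circ)$-module structure, build an explicit extension, and read off parts (3) and (4) from monodromy. But the mechanism you propose for the construction has a genuine gap. The homeomorphism $h: P(\tau) \to (\ft^\circ)^* \times C(\theta)$ of Proposition~\ref{prop homeo slice} is a homeomorphism of Lagrangian subvarieties, not an identification of ambient symplectic or microlocal data; you cannot transport microlocal sheaves along it, and ``$*$- or $!$-pushforward along the base variable $w$'' has no direct meaning for microlocal sheaves on $M$ supported along $L$. The paper instead works in the front projection $X\times\BR$: it first rotates by $-\pi/2$ to the skeleta $iL = L(-\pi/2)\cup L_0\cup L(\pi/2)$ (this is essential, since $L^\times(0)$ and $L^\times(\pi)$ are not even homeomorphic, so there is no honest geometric symmetry in your chosen picture), and then builds a single explicit constructible sheaf $\tilde\cB = v_! u_* \cL_U$ on $X\times\BR$ whose microlocalization represents both $j_{+!}(k)$ and, up to a twist by an orientation line $\ell[1]$, $j_{-*}(k)$.

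The adjunction identities are then not the open-inclusion identities you cite, but rather follow from a \emph{non-characteristic propagation} argument: for $\tau \in (-\pi/2, \pi/2)$ one has $P^\times(\tau) \cap iL = \emptyset$, so the family $\tilde\cB(-\pi/2, \tau)$ of sheaves sandwiched between $\Gamma_{-\pi/2}$ and $\Gamma_\tau$ computes the same Hom with any $\tilde\cF \in \Sh_{i\Lambda}(X\times\BR, \Omega_X)$ independently of $\tau$. This is the substantive step you are missing; the singular-support containment you flag as the main obstacle is comparatively routine. Finally, the $[-2]$ in part (4) does not come directly from Proposition~\ref{prop twist calc} but from applying the identification $J_-^* j_{+!} \simeq \cY_- \otimes \ell[-1]$ twice, with the square-trivial orientation line $\ell$ arising from Example~\ref{ex cod 2}; your parallel-transport heuristic gives the right answer but is not the actual argument.
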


\begin{remark}
By Corollary~\ref{cor monodromy calc},
under the equivalence
$$
\xymatrix{
\mu \Sh_{L^\times_+}(M^\times) \ar[r]^-\sim & \Coh(\BP^{n-1})
}
$$
convolution with $\cA_{2\pi}[-2]\in \Sh_c(T')$ is given by 
tensoring with 
 $ \cO_{\BP^{n-1}}(-1)\in  \Coh(\BP^{n-1}).
 $
\end{remark}

The proof of the theorem will occupy the rest of this section.
To begin, let us use symmetry to simplify the assertion.

Consider the object $\cA_+  \in \mu\Sh_{L^\times_+}(M^\times)$,
corresponding 
to $\cA_0\in \Sh_{\Lambda_{\Sigma}}(T^\circ)$,
under the equivalence of Lemma~\ref{lem open equiv},
and the object
$\cA_- = \cA_\pi\star \cA_+\in \mu\Sh_{L^\times_-}(M^\times)$. 
 
Note their endomorphisms are scalars,
and they provide equivalences
$$
\xymatrix{
\Sh_{\Lambda_\Sigma}(T^\circ) \ar[r]^-\sim &  \mu\Sh_{L^\times_\pm}(M^\times)
&
\cF\ar@{|->}[r] & \cF\star\cA_\pm
}
$$ 

Introduce the fully faithful embeddings
$$
\xymatrix{
\cY_{\pm}:\Perf_k \ar@{^(->}[r] &  \mu\Sh_{L^\times_\pm}(M^\times)
&
\cY_\pm (V) = V\otimes \cA_\pm
}
$$
Note we have adjoint triples $(\cY^\ell_\pm, \cY_\pm, \cY^r_\pm)$ with adjoints given by
$$
\xymatrix{
\cY^\ell_\pm(\cF) = \Hom(\cF, \cA_\pm)^\vee
&
\cY^r_\pm(\cF) = \Hom(\cA_\pm, \cF)
}
$$

 Introduce the commutative diagram
$$
\xymatrix{
 \ar[d]_-{\cY^\ell_{-}} \mu\Sh_{L^\times_-}(M^\times) & \ar[l]_-{J^*_-} 
\ar[dl]^-{j_-^*} \mu\Sh_{L}(M) 
\ar[dr]_-{j_+^*}
\ar[r]^-{J^*_+} &  \mu\Sh_{L^\times_+}(M^\times)
 \ar[d]^-{\cY^r_{+}}  
 \\
\Perf_{k} &&\Perf_{k}
}
$$
where  we set $j^*_- = \cY^\ell_-  J_-^*$, $j^*_+ = \cY^r_+  J_+^*$.
%(Note that $\cA_+^\vee \simeq \cA_+$ but it is natural to take the dual.)

\begin{prop}\label{prop reduction}
Suppose the restriction functors $j_-^*, j_+^*$ fit into adjoint pairs
$$
\xymatrix{
 ( j_-^*,  j_{-*})
&
(j_{+!},  j_+^*)
}
$$
the canonical maps  are equivalences
$$
\xymatrix{
J^*_-j_{-*}    \ar[r]^-\sim &  \cY_-
&
 \cY_+ \ar[r]^-\sim &J^*_+j_{+!}
}
$$
and there is an equivalence
$$
\xymatrix{
J^*_-j_{+!} \simeq \cY_-\otimes \ell[-1]
%&
%J^*_-j_{+!} \simeq \cY_-
}
$$
where $\ell$ is a square-trivial line $\ell^{\otimes 2} \simeq k$.

Then the conclusions of Theorem~\ref{thm adjoints} hold.
\end{prop}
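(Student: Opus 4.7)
The plan leverages the $\Sh_{\Lambda_\Sigma}(T^\circ)$-module structure on all categories in sight: by Lemma~\ref{lem symmetry} together with Lemma~\ref{lem open equiv} and Remark~\ref{rem tensor comp}, each nearby category $\mu\Sh_{L^\times_\pm}(M^\times)$ is a free module of rank one over $\Sh_{\Lambda_\Sigma}(T^\circ)$ on the generator $\cA_\pm$, and the restrictions $J^*_\pm$ are module functors. Any adjoint of a module functor is itself module-linear, hence determined by its value on the generator, and those values are precisely what the hypotheses of the proposition record. The monodromy equivalence $\cA_\pi \star$ of Lemma~\ref{lem par trans} then exchanges the two sides, transporting data between them.

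The first step is to construct $J_{-*}$ and $J_{+!}$ as $\Sh_{\Lambda_\Sigma}(T^\circ)$-linear extensions of the values on generators: for $\cF \simeq \cG \star \cA_- \in \mu\Sh_{L^\times_-}(M^\times)$, set $J_{-*}(\cF) := \cG \star j_{-*}(k)$, and analogously $J_{+!}(\cG \star \cA_+) := \cG \star j_{+!}(k)$. The adjunction $(J^*_-, J_{-*})$ follows from
\[
\Hom_{\mu\Sh_L(M)}(\cF_0,\, \cG \star j_{-*}(k)) \;\simeq\; \Hom(j^*_-(\cG^\vee \star \cF_0),\, k) \;\simeq\; \Hom(J^*_-\cF_0,\, \cG \star \cA_-),
\]
where the first identification uses rigidity of $\Sh_{\Lambda_\Sigma}(T^\circ) \simeq \Coh(\BP^{n-1})$ (Example~\ref{ex ccc}) and Lemma~\ref{lem symmetry}, while the second uses $(j^*_-, j_{-*})$ together with $(\cY^\ell_-, \cY_-)$. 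Fully faithfulness $J^*_- J_{-*} \simeq \id$ then reduces on the generator to the assumed $J^*_- j_{-*} \simeq \cY_-$; an analogous argument handles $J_{+!}$. The remaining adjoints are defined by monodromy conjugation, $J_{+*} := (\cA_\pi \star) \circ J_{-*} \circ (\cA_{-\pi} \star)$ and $J_{-!} := (\cA_\pi \star) \circ J_{+!} \circ (\cA_{-\pi} \star)$; the commutativity $J^*_\mp \circ (\cA_\pi \star) \simeq (\cA_\pi \star) \circ J^*_\pm$ from Lemma~\ref{lem par trans} and the invertibility of $\cA_\pi \star$ identify these as genuine left and right adjoints to $J^*_\mp$, establishing Part~1.

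For Part~3, observe that $J^*_- J_{+!}$ is tautologically left adjoint to $J^*_+ J_{-*}$, so it suffices to show that the unit $\id \to (J^*_+ J_{-*}) \circ (J^*_- J_{+!})$ is an equivalence. By $\Sh_{\Lambda_\Sigma}(T^\circ)$-linearity this reduces to evaluation on $\cA_+$: the hypothesis $J^*_- J_{+!}(\cA_+) \simeq \cA_- \otimes \ell[-1]$ and the module-linear definition of $J_{-*}$ present the target as $\ell[-1] \otimes J^*_+ j_{-*}(k)$, and the unit becomes a canonical map $\cA_+ \to \ell[-1] \otimes J^*_+ j_{-*}(k)$. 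Verifying this is an equivalence requires the auxiliary identification $J^*_+ j_{-*}(k) \simeq \cA_+ \otimes \ell[1]$, which is the mirror analogue of the given $J^*_- j_{+!} \simeq \cY_- \otimes \ell[-1]$ under the symmetry swapping $(+, *) \leftrightarrow (-, !)$ afforded by rotation and Verdier duality; combined with $\ell^{\otimes 2} \simeq k$ this yields Part~3, and the symmetric statement for $J^*_- J_{+*}$ follows by conjugating with $\cA_\pi \star$.

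Finally, for Part~4 I would compute $J^*_+ J_{-!} J^*_- J_{+!}$ on $\cA_+$ in stages. First, the hypothesis gives $J^*_- J_{+!}(\cA_+) \simeq \cA_- \otimes \ell[-1]$. Second, the rotation formula $J_{-!} = (\cA_\pi \star) \circ J_{+!} \circ (\cA_{-\pi} \star)$, together with $\cA_{-\pi} \star \cA_- = \cA_0 \star \cA_+ = \cA_+$ and module-linearity, yields $J_{-!}(\cA_- \otimes \ell[-1]) \simeq \ell[-1] \otimes \cA_\pi \star j_{+!}(k)$. Third, applying $J^*_+$ and commuting $\cA_\pi \star$ past it via Lemma~\ref{lem par trans}, then invoking the hypothesis $J^*_- j_{+!}(k) \simeq \cA_- \otimes \ell[-1]$, produces
\[
\ell^{\otimes 2}[-2] \otimes (\cA_\pi \star \cA_-) \;\simeq\; \cA_{2\pi}[-2] \star \cA_+,
\]
identifying the whole composition with convolution by $\cA_{2\pi}[-2]$. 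The main obstacle throughout will be meticulous bookkeeping of the direction-dependent twists $\ell$ and the metaplectic shifts accumulated in the rotation steps, and in particular cleanly establishing the auxiliary identity $J^*_+ j_{-*}(k) \simeq \cA_+ \otimes \ell[1]$ required for Part~3 without introducing geometric input beyond the hypotheses at hand.
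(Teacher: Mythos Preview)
Your construction of $J_{-*}$, $J_{+!}$ (Part~1) and the monodromy computation (Part~4) match the paper's proof essentially line for line, including the use of $\cA_\pi$-conjugation to produce the remaining adjoints $J_{-!}$, $J_{+*}$.

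The genuine gap is in Part~3. You choose to verify that $J^*_- J_{+!}$ is an equivalence by checking the unit map of the adjunction $(J^*_- J_{+!},\, J^*_+ J_{-*})$, and you correctly identify that this requires the auxiliary identity $J^*_+ j_{-*}(k) \simeq \cA_+ \otimes \ell[1]$, which is \emph{not} among the hypotheses. Your proposed derivation of it via ``symmetry swapping $(+,*) \leftrightarrow (-,!)$ afforded by rotation and Verdier duality'' is not available at this stage: the hypotheses of the proposition are deliberately asymmetric (only $J^*_- j_{+!}$ is given, not $J^*_+ j_{-*}$), and invoking geometric duality would import exactly the kind of extra input you say you want to avoid.

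The paper sidesteps this entirely by a more direct observation. Since both $\mu\Sh_{L^\times_\pm}(M^\times)$ are free rank-one $\Sh_{\Lambda_\Sigma}(T^\circ)$-modules on the generators $\cA_\pm$, and $J^*_- J_{+!}$ is module-linear, the single computation
\[
J^*_- J_{+!}(\cF \star \cA_+) \;\simeq\; \cF \star J^*_- j_{+!}(k) \;\simeq\; \cF \star \cA_- \otimes \ell[-1]
\]
already exhibits $J^*_- J_{+!}$ as the functor ``send generator to generator and twist by the invertible object $\ell[-1]$'', which is manifestly an equivalence. No unit map, no auxiliary identity. In fact the identity you want, $J^*_+ j_{-*}(k) \simeq \cA_+ \otimes \ell[1]$, is then a \emph{consequence} of this (read it off from the inverse $J^*_+ J_{-*}$, using $\ell^{\otimes 2}\simeq k$), so your argument had the logical dependence reversed.
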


\begin{remark}
We include the  line $\ell$
and
 its square-trivialization $\ell^{\otimes 2} \simeq k$
  in the formulation and in what follows since it arises naturally as an orientation line. But
the validity of the proposition is independent of its appearance since we do not specify
 any characterizing or universal properties of the equivalence it participates in.
\end{remark}

\begin{proof} %[Proof of Proposition~\ref{prop reduction}]
Let $k \in \Perf_k$ denote the rank one vector space. 

For $\cF\in \Sh_{\Lambda_\Sigma}(T^\circ)$, set
$$
\xymatrix{
\cF_- = \cF\star \cA_- \in \mu\Sh_{L^\times_-}(M^\times)
&
\cF_+ = \cF\star \cA_+ \in \mu\Sh_{L^\times_+}(M^\times)
}
$$
 and define candidate adjoints 
$$
\xymatrix{
J_{-*}(\cF_-) = \cF\star j_{-*}(k)
&
J_{+!}(\cF_+) = \cF\star j_{+!}(k)
}
$$
Note they evidently intertwine the natural convolution actions of  $\Sh_{\Lambda_\Sigma}(T^\circ)$.
Once we confirm they provide adjoints, we will have that they are fully faithful since
$$
\xymatrix{
J_-^*J_{-*}(\cF_-) = J_-^*(\cF\star j_{-*}(k))\simeq \cF\star J^*_-(j_{-*}(k))  \simeq \cF\star \cA_- = \cF_-
}
$$
$$
\xymatrix{
J_+^*J_{+!}(\cF_+) = J_+^*(\cF\star j_{+!}(k))\simeq \cF\star J^*_+(j_{+!}(k))  \simeq \cF\star \cA_+ = \cF_+
}
$$
using the assumed canonical equivalences  $J^*_-j_{-*}\simeq  \cY_-$, $ \cY_+\simeq J^*_+j_{+!}$.

Now to see they provide adjoints, for $\cG\in \mu\Sh_{L}(M)$, we  calculate 
$$
\xymatrix{
\Hom( \cG, J_{-*}(\cF_-)) = \Hom(\cG, \cF\star j_{-*}(k))
\simeq 
\Hom(\cF^\vee \star\cG,  j_{-*}(k))
}
$$
$$
\xymatrix{
\simeq \Hom(j_-^*(\cF^\vee \star\cG),  k)
\simeq
\Hom(\Hom(J_-^*(\cF^\vee \star\cG), \cA_-)^\vee,  k)
}
$$
$$
\xymatrix{
\simeq
\Hom(J_-^*(\cF^\vee \star\cG), \cA_-)
\simeq
 \Hom(\cF^\vee \star J_-^*(\cG), \cA_-)
}
$$
$$
\xymatrix{
\simeq \Hom(J_-^*(\cG),  \cF\star\cA_-)
\simeq \Hom( J_-^*(\cG),  \cF_-)
}
$$
and similarly calculate
$$
\xymatrix{
\Hom(J_{+!}(\cF_+), \cG) = \Hom(\cF\star j_{+!}(k), \cG)
\simeq 
\Hom( j_{+!}(k), \cF^\vee \star\cG)
}
$$
$$
\xymatrix{
\simeq \Hom(k, j_+^*(\cF^\vee \star\cG))
\simeq  \Hom(k, \Hom(\cA_-, J_+^*(\cF^\vee \star\cG)))
}
$$
$$
\xymatrix{
\simeq
\Hom(\cA_+, J_+^*(\cF^\vee \star\cG))
\simeq  \Hom(\cA_+, \cF^\vee \star J_+^*(\cG))
}
$$
$$
\xymatrix{
\simeq \Hom(\cF\star \cA_+, J_+^*(\cG)  )
\simeq \Hom( \cF_+, J_+^*(\cG))
}
$$

Next to see that $J^*_- J_{+!}$
 is an equivalence, and so with inverse equivalence its right adjoint
 $J^*_+ J_{-*}$, 
we calculate
%we have assumed that $J^*_-j_{+!}(\cA_+) \simeq \cA_-$,
%and thus
%for
%$\tilde \cF\star \cA_+ \in \mu\Sh_{\Lambda^\times_+}(M^\times)$,
%it suffices to calculate
$$
\xymatrix{
    J^*_- J_{+!}(\cF\star \cA_+) = J^*_- ( \cF\star j_{+!} (k)) 
\simeq
  \cF\star  J^*_- (j_{+!} (k)) 
\simeq  \cF\star  \cA_-\otimes\ell[-1]
}
$$
using the assumed equivalence $J^*_-j_{+!} \simeq \cY_-\otimes\ell[-1]$.
For later use, note in particular $J^*_- J_{+!}(\cA_+)\simeq \cA_-\otimes\ell[-1]$.
% and hence  $J^*_+ J_{-!}(\cA_-)\simeq \cA_+$.

Finally, convolution with $\cA_\pi$, provides evident equivalences
$$
\xymatrix{
J^*_-(\cG) \simeq \cA_\pi\star J^*_+(\cA_\pi^\vee\star\cG)
&
J^*_+(\cG) \simeq \cA_\pi \star J^*_-(\cA_\pi^\vee\star\cG)
}
$$ 
and thus we have the other fully faithful adjoints
$$
\xymatrix{
J_{-!}(\cF_-) \simeq \cA_\pi \star J_{+!}(\cA_\pi^\vee\star\cF_-)
&
J_{+*}(\cF_+) \simeq \cA_\pi\star J_{-*}(\cA_\pi^\vee\star\cF_+)
}
$$
Moreover, 
 $J^*_+ J_{-!}$ is an equivalence,   and so with inverse equivalence its right adjoint $J^*_- J_{+*}$, 
 since
$$
\xymatrix{
J^*_+ J_{-!}(\cF_-)\simeq \cA_\pi\star J^*_-(\cA_\pi^\vee\star \cA_\pi\star J_{+!} (\cA_\pi^\vee\star \cF_-))
\simeq \cA_\pi\star J^*_- J_{+!} (\cA_\pi^\vee\star \cF_-)
}
$$
exhibits it as a composition of equivalences.
Note in particular that 
%$$
%\xymatrix{
%J^*_+ J_{-!}(\cA_-)
%\simeq \cA_\pi\star J^*_-(\cA_\pi^\vee\star \cA_\pi\star J_{+!} (\cA_\pi^\vee\star \cA_-))
%\simeq \cA_\pi\star J^*_- J_{+!} ( \cA_+)
%\simeq \cA_\pi\star \cA_-
%}
%$$
$$
\xymatrix{
J^*_+ J_{-!}(\cA_-)
\simeq \cA_\pi\star J^*_- J_{+!} (\cA_\pi^\vee\star \cA_-))
\simeq \cA_\pi\star J^*_- J_{+!} (\cA_+)
\simeq \cA_\pi\star  \cA_-\otimes\ell[-1]
}
$$
using the previously noted identity $J^*_-J_{+!}(\cA_+) \simeq \cA_-\otimes\ell[-1]$.

Using the previously noted identities
 $J^*_-J_{+!}(\cA_+) \simeq \cA_-\otimes\ell[-1]$ and  
   $J^*_+J_{-!}(\cA_-) \simeq \cA_\pi\star \cA_-\otimes\ell[-1]$,
   and the given isomorphism $\ell^{\otimes 2} \simeq k$,
we have equivalences
$$
\xymatrix{
%J_+^* J_{-!} J^*_+J_{+!} \cY_+(k) \simeq
J_+^* J_{-!} J^*_-J_{+!}(\cA_+) \simeq
J_+^* J_{-!} (\cA_-\otimes\ell[-1])}
\simeq \cA_\pi \star \cA_- [-2]\simeq \cA_{2\pi}\star \cA_+[-2]
%\simeq \cA_{2\pi}\star \cY_+(k) 
$$ Since all of the functors intertwine convolution by objects of $\Sh_{\Lambda_\Sigma}(T^\circ)$,
this establishes  the last asserted equivalence.
\end{proof}
%\begin{remark}
%To prove the theorem, we will need to introduce further natural constructions.
%\end{remark}

Now to prove Theorem~\ref{thm adjoints}, we will verify the assumptions of Proposition~\ref{prop reduction}.

%Set $\tau_+ = 0$, $\tau_- = \pi$.

Let us simplify our prior  notation by setting
$$
\xymatrix{
P^\times_- = P^\times(\pi)  \subset L_-^\times
&
P^\times_+ = P^\times(0)  \subset L_+^\times
}
$$
$$
\xymatrix{
P_- = P^\times_-  \cup L_0 \subset L_-
&
P_+ = P^\times_+ \cup L_0 \subset L_+
&
P =  P^\times_- \cup L_0 \cup P^\times_+ \subset L
}
$$

Recall the homeomorphism
$$
\xymatrix{
h:P \ar[r]^-\sim & \BR^n
}$$
along with its restrictions
$$
\xymatrix{
h_- = h|_{P^\times_-}:P^\times_- \ar[r]^-\sim & \BR^{n-1} \times \BR_{<0}
&
h_+ = h|_{P^\times_+}:P^\times_+ \ar[r]^-\sim & \BR^{n-1} \times \BR_{>0}
}
$$
Thus restriction gives equivalences
$$
\xymatrix{
 \mu\Sh_{P^\times_-}(M^\times)  & \ar[l]_-\sim  \mu\Sh_{P}(M) \ar[r]^-\sim & \mu\Sh_{P_+^\times} (M^\times)
}
$$

Assume for the moment  there is an object 
$$
\xymatrix{
\cA\in \mu\Sh_P(M) \subset \mu\Sh_L(M)
}
$$ 
whose restrictions satisfy
$$
\xymatrix{
\cA|_{P^\times_+} \simeq \cA_+ \in \mu\Sh_{P_+^\times}(M) \subset \mu\Sh_{L_+^\times}(M)
&
\cA|_{P^\times_-} \simeq \cA_-\otimes\ell[-1] \in \mu\Sh_{P_-^\times}(M) \subset \mu\Sh_{L_-^\times}(M)
}
$$
where $\ell$ is a square-trivial line $\ell^{\otimes 2} \simeq k$.
Note that such an object $\cA$, if it exists, must be unique up to equivalence.

Recall the fully faithful embeddings
$$
\xymatrix{
\cY_{\pm}:\Perf_k \ar[r]^-\sim   &\mu\Sh_{P^\times_\pm}(M^\times)   \ar@{^(->}[r] &  \mu\Sh_{L^\times_\pm}(M^\times)
&
\cY_\pm (V) = V\otimes \cA_\pm
}
$$
and introduce the  fully faithful  embedding 
$$
\xymatrix{
\cY:\Perf_k \ar[r]^-\sim &  \mu\Sh_{P}(M)\ar@{^(->}[r] &  \mu\Sh_{L}(M) 
&
\cY (V) = V\otimes \cA
}
$$
%
%Thus we have  a commutative diagram
%$$
%\xymatrix{
%&\ar[dl]_-{\cY_-}  \ar[d]^-\cY\ar[dr]^{\cY_+} \Perf_k &  \\
% \mu\Sh_{\Lambda^\times_-}(M^\times)  & \ar[l]^-{J^*_-}  \mu\Sh_{\Lambda}(M) \ar[r]_-{J^*_+} & \mu\Sh_{\Lambda_+^\times} (M^\times)
%}
%$$

Set $j_{+!} = \cY$, $j_{-*} = \cY\otimes \ell[1]$ so
that
% Define the fully faithful functors
%$$
%\xymatrix{
%j_{-*} = j_{+!} = \cY:\Perf_k \ar[r]^-\sim  & \mu\Sh_{P}(M) \ar@{^(->}[r] &   \mu\Sh_{\Lambda}(M)
%%&
%%j_{-*}(V) = V\otimes \cA 
%}
%$$
%%$$
%%\xymatrix{
%%j_{+!}=\cY:\Perf_k \ar[r]^-\sim  & \mu\Sh_{P}(M) \ar@{^(->}[r] &   \mu\Sh_{\Lambda}(M)
%%%&
%%%j_{+!}(V) = V\otimes\cA
%%}
%%$$
by assumption, there are canonical equivalences
$$
\xymatrix{
J^*_-j_{-!} \simeq \cY_- 
&
 J^*_+j_{+*} \simeq \cY_+
 &
 J^*_-j_{+!} \simeq \cY_-\otimes\ell[-1]
}
$$

%Unwinding the definitions, to verify the adjunctions
%$$
%\xymatrix{
% (j_-^*,j_{-*})
%&
%(j_{+!}, j_+^*)
%}
%$$
%it suffices to establish, for $\cF\in \mu\Sh_L(M)$, functorial equivalences
%$$
%\xymatrix{
% \Hom(J_-^*\cF, \cA_-) \simeq \Hom( \cF, \cA\otimes\ell[1]) 
%&
% \Hom(\cA, \cF)  \simeq\Hom(\cA_+, J^*_+\cF) 
%}
%$$

Thus the following will allow us to invoke Proposition~\ref{prop reduction} and  
in turn establish Theorem~\ref{thm adjoints}.

\begin{thm}\label{thm core}
There is an object 
$$
\xymatrix{
\cA\in \mu\Sh_P(M) \subset \mu\Sh_L(M)
}
$$ 
whose restrictions satisfy
$$
\xymatrix{
\cA|_{P^\times_+} \simeq \cA_+ %\in \mu\Sh_{P_+^\times}(M) \subset \mu\Sh_{L_+^\times}(M)
&
\cA|_{P^\times_-} \simeq \cA_-\otimes\ell[-1] %\in \mu\Sh_{P_-^\times}(M) \subset \mu\Sh_{L_-^\times}(M)
}
$$
where $\ell$ is a square-trivial line $\ell^{\otimes 2} \simeq k$.

Furthermore, 
for $\cF\in \mu\Sh_L(M)$, there are functorial equivalences 
$$
\xymatrix{
 \Hom(J_-^*\cF, \cA_-) \simeq \Hom( \cF, \cA\otimes\ell[1]) 
&
 \Hom(\cA, \cF)  \simeq\Hom(\cA_+, J^*_+\cF) 
}
$$
\end{thm}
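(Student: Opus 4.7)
The plan is to construct $\cA$ as the microlocalization of an explicit constructible sheaf on $Z = X \times \BR = \BR^n \times \BR$ modeled on the local calculations of Examples~\ref{ex cod 1} and~\ref{ex cod 2}, and then to read off the two adjunction equivalences from standard sheaf-theoretic adjunctions applied to this representative. Using the homeomorphism $h = g \times w : P \xrightarrow{\sim} \BR^{n-1} \times \BR$ of Proposition~\ref{prop homeo slice}, transported via the Legendrian lift and contactification identifications of Section~\ref{ss prelim}, I will view $P$ as a Legendrian graph in $Z$ over $\BR^{n-1}$. Imitating the local model of Example~\ref{ex cod 2}, define an open region $U \subset Z$ whose image under $h$ encodes the ``upper wedge'' over $P^\times_+$, let $V \subset Z$ be $U$ adjoined with the central stratum corresponding to $L_0$, and take $\cA$ to be represented by $v_! u_* k_U$ for the inclusions $u : U \hookrightarrow V$, $v : V \hookrightarrow Z$.

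Away from $L_0$, both halves $P^\times_\pm$ are smooth and the representative $v_! u_* k_U$ restricts to the standard models of Example~\ref{ex cod 1} on each side: under Lemma~\ref{lem open equiv}, the restriction $\cA|_{P^\times_+}$ is identified with $\cA_+$, while $\cA|_{P^\times_-}$ is identified with $\cA_- \otimes \ell[-1]$, where $\ell$ is the orientation line of the base direction of $w$ (with $\ell^{\otimes 2} \simeq k$ trivially). For the adjunctions, given any constructible representative $\tilde\cF$ of $\cF \in \mu\Sh_L(M)$, the standard adjunctions for $u_!, u^*, v_!, v^!$ yield $\Hom(\cA, \cF) \simeq \Hom(k_U, u^* v^! \tilde\cF)$, and since $U$ lies in the positive half relative to $L_0$, the right-hand side microlocalizes to $\Hom(\cA_+, J^*_+ \cF)$; dually, applying Verdier duality produces the first equivalence, with the $\ell[1]$ twist arising from the orientation shift across $L_0$ encoded by the comparison of $c_+$ and $c_-$ in Example~\ref{ex cod 2}.

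The main obstacle will be verifying that the singular support of the representative $v_! u_* k_U$ is contained in the conic Lagrangian associated to $P$ near the singular central stratum $L_0$. This stratum decomposes as $L_0 = \coprod_\fI \fI L_0$, indexed by nonempty subsets $\fI \subset \{1, \ldots, n\}$ with codimension $n + |\fI|$, and the required singular support at each piece is governed by the positive cones $\fI\sigma$ of the fan $\Sigma$. The biconic contracting structure on the associated Lagrangian in $\Omega_X$ (Remark~\ref{rem biconic}) reduces the verification to a neighborhood of the origin, and the $T$-equivariance together with compatibility with the $\Sh_{\Lambda_\Sigma}(T^\circ)$-convolution action established in Lemma~\ref{lem symmetry} further localizes the analysis. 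The remaining combinatorial matching of the open region $U$ to the fan $\Sigma$ is the essential technical content, and is what distinguishes this global construction from the purely local models of Examples~\ref{ex cod 1} and~\ref{ex cod 2}.
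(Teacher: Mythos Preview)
Your overall architecture matches the paper's: represent $\cA$ by an explicit $v_!u_*k_U$ on $X\times\BR$ with $U$ a ``wedge'' region bounded by the front projections of the two halves of $P$, then verify restrictions and adjunctions. But there are two genuine gaps.

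First, the paper does not work directly over the real rays. It begins by convolving with $\cA_{-\pi/2}$ to rotate everything onto the imaginary rays $\pm\pi/2$, because the angle $\theta=0$ is degenerate: $L^\times_+=L^\times(0)$ and $L^\times_-=L^\times(\pi)$ are not even homeomorphic, and the front projection of $P(0)$ is qualitatively different from that of $P(\tau)$ for generic $\tau$. After rotation the two halves $iP_\pm$ have symmetric front projections as graphs $\Gamma_\pm=\{t=f_\pm(x)\}$ over the quadrant $Q=\BR^n_{\leq 0}$ with $f_-=-f_+$, and the region $U=\{f_+<t<f_-\}$ can be written down concretely. Your proposal to ``transport $h$ via the Legendrian lift'' is not quite right: $h$ is only a topological homeomorphism of $P$, not an identification of its front projection in $X\times\BR$. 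The open $U$ must be built from the actual Legendrian projection $q$, and the singular support of $v_!u_*k_U$ is then read off directly from the conormal description of $i\Pi_\pm$ over each face $\fJ Q$ of the quadrant --- no appeal to $T$-equivariance or the fan combinatorics is needed or used.

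Second, and more seriously, your adjunction argument has a real hole. You write $\Hom(\cA,\cF)\simeq\Hom(k_U,u^*v^!\tilde\cF)$ and then assert this ``microlocalizes to $\Hom(\cA_+,J_+^*\cF)$'' because $U$ lies on the positive side. But $\tilde\cF$ represents an object supported on all of $iL$, including $iL^\times_-$, and a priori $u^*v^!\tilde\cF$ sees that part too. The paper's actual mechanism is a \emph{non-characteristic propagation}: one interpolates via the family $\tilde\cB(\tau_1,\tau_2)=v(\tau_1,\tau_2)_!u(\tau_1,\tau_2)_*k$ for $\tau_1<\tau_2\in[-\pi/2,\pi/2]$, and uses the key geometric fact that $P^\times(\tau)\cap iL=\emptyset$ for all intermediate $\tau\in(-\pi/2,\pi/2)$, hence $\Pi^\times(\tau)$ is non-characteristic for any $\tilde\cF\in\Sh_{i\Lambda}(X\times\BR,\Omega_X)$. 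This gives $\Hom(\tilde\cB,\tilde\cF)\simeq\Hom(\tilde\cB(-\pi/2,\tau),\tilde\cF)$ for any such $\tau$, and shrinking $\tau\to-\pi/2$ reduces to the local model of Example~\ref{ex cod 1} which computes $\Hom(\cB_+,J_+^*\cF)$. The same family, run in the other direction and combined with Example~\ref{ex cod 2}, produces the identification $\cB|_{iP^\times_-}\simeq\cB_-\otimes\ell[-1]$ with $\ell=\orient_\BR$. Without this propagation step your adjunction claim is unsupported.
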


\begin{proof}
It is convenient to realize the symmetry between $L^\times_-= L^\times(\pi)$ and $L^\times_+
= L^\times(0)$
in a more explicit geometric form. Though convolution by $\cA_\pi$ gives an equivalence 
$$
\xymatrix{
\mu\Sh_{L^\times_+}(M^\times) \ar[r]^-\sim &
\mu\Sh_{L^\times_-}(M^\times)
}
$$
the underlying spaces $L^\times_-, L^\times_+$ are not even homeomorphic.
This is due to the special nature of the angle $0\in S^1$, and the resulting special nature of 
$ L^\times_+ $. Thus we will ``rotate"
all of our constructions by $-\pi/2$ and replace the  angles $0, \pi \in S^1$ with the generic angles
$-\pi/2, \pi/2 \in S^1$. 
%and work with the imaginary rather than real line. 

To this end, let us simplify our prior notation by setting
$$
\xymatrix{
iL^\times_{-} = L^\times( \pi/2) 
&
iL^\times_{+} = L^\times( -\pi/2) 
}
$$
$$
\xymatrix{
iL_{-} = L^\times_- \cup L_0 
&
iL_{+} = L^\times_+ \cup L_0 
&
iL =  iL^\times_- \cup L_0 \cup iL^\times_+
}
$$%
%Recall that the open embeddings  
%$$
%\xymatrix{
%i\Lambda^\times_-  \ar@{^(->}[r] &  i\Lambda & \ar@{_(->}[l]   i\Lambda^\times_+
%}
%$$
%Introduce the natural restriction functors
%$$
%\xymatrix{
%\mu\Sh_{i\Lambda^\times_{-}}(M^\times) & \ar[l]_-{{}'J^*_{-}} \mu\Sh_{i\Lambda}(M) \ar[r]^-{{}'J^*_{+}} &  \mu\Sh_{i\Lambda^\times_{+}}(M^\times) 
%}
%$$
%
%
%
%Convolution with $\cA_{-\pi/2}$ provides canonical equivalences compatible with restriction 
%$$
%\xymatrix{
%\ar[d]_-{J^*_\pm} \mu\Sh_{\Lambda}(M) \ar[r]^-\sim & \mu\Sh_{i\Lambda}(M)\ar[d]^-{{}'J^*_\pm}\\
%\mu\Sh_{\Lambda^\times_\pm}(M^\times) \ar[r]^-\sim & \mu\Sh_{i\Lambda^\times_{\pm}}(M^\times)\\
%}
%$$
%
and similarly 
$$
\xymatrix{
iP^\times_- = P^\times( \pi/2)  \subset iL_-^\times
&
iP^\times_+ = P^\times(- \pi/2)  \subset iL_+^\times
}
$$
$$
\xymatrix{
iP_- = P^\times_-\cup L_0  \subset iL_-
&
iP_+ = P^\times_+\cup L_0  \subset iL_+
&
iP =  iP^\times_- \cup L_0 \cup iP^\times_+ \subset iL
}
$$

\begin{remark}
We caution the reader that the above $\pm$ subscripts %lying over the imaginary axis $i\BR\subset \BC$ 
are chosen to be compatible starting from our prior $\pm$ subscripts and ``rotating" by $-\pi/2$,
 %for Lagrangians lying over the real axis $\BR\subset \BC$,
but they are not compatible with the standard conventions for positive and negative imaginary numbers. 
For example, starting with  $L_{+}$ over the positive real ray $\BR_{\geq 0}\subset \BC$ and  ``rotating" by $-\pi/2$ leads to what we denote by $i L_{+}$
though it lies over the negative imaginary ray $i\BR_{\leq 0}\subset \BC$.
%As we proceed, the notation will turn out to be pleasantly compatible with other conventions. 
\end{remark}

%Recall the homeomorphism
%$$
%\xymatrix{
%{}'h:iP \ar[r]^-\sim & \BR^n
%}$$
%along with its restrictions
%$$
%\xymatrix{
%{}'h_- = {}'h|_{iP^\times_-}:iP^\times_- \ar[r]^-\sim & \BR^{n-1} \times \BR_{<0}
%&
%{}'h_+ = {}'h|_{iP^\times_+}:iP^\times_+ \ar[r]^-\sim & \BR^{n-1} \times \BR_{>0}
%}
%$$

%Convolution with $\cA_{-\pi/2}$ provides canonical equivalences compatible with restriction 
%$$
%\xymatrix{
%\ar[d]_-{J^*_\pm} \mu\Sh_{P}(M) \ar[r]^-\sim & \mu\Sh_{iP}(M)\ar[d]^-{{}'J^*_\pm}\\
%\mu\Sh_{P^\times_\pm}(M^\times) \ar[r]^-\sim & \mu\Sh_{iP^\times_{\pm}}(M^\times)\\
%}
%$$

By Lemma~\ref{lem par trans}, convolution with $\cA_{-\pi/2}$ provides canonical equivalences compatible with restriction
$$
\xymatrix{
\ar[d] \mu\Sh_{L}(M) \ar[r]^-\sim & \mu\Sh_{iL}(M)\ar[d]
&
\ar[d] \mu\Sh_{P}(M) \ar[r]^-\sim & \mu\Sh_{iP}(M)\ar[d]\\
\mu\Sh_{L^\times_\pm}(M^\times) \ar[r]^-\sim & \mu\Sh_{iL^\times_{\pm}}(M^\times)
&
\mu\Sh_{P^\times_\pm}(M^\times) \ar[r]^-\sim & \mu\Sh_{iP^\times_{\pm}}(M^\times)\\
}
$$

Consider the objects 
$$
\xymatrix{
\cB_+ = \cA_{-\pi/2}\star\cA_+ \in \mu\Sh_{iL^\times_+}(M^\times)
&
\cB_- = \cA_{-\pi/2}\star\cA_- \simeq \cA_{\pi/2}\star \cA_+ \in \mu\Sh_{iL^\times_-}(M^\times)
}
$$

It suffices to show there is an object 
$$
\xymatrix{
\cB\in \mu\Sh_{iP}(M) \subset \mu\Sh_{iL}(M)
}
$$ 
whose restrictions satisfy
$$
\xymatrix{
\cB|_{iP^\times_+}  \simeq \cB_+ %\in \mu\Sh_{iP_+^\times}(M) \subset \mu\Sh_{iL_+^\times}(M)
&
\cB|_{iP^\times_-}  \simeq \cB_-\otimes\ell[-1] %\in \mu\Sh_{iP_-^\times}(M) \subset \mu\Sh_{iL_-^\times}(M)
}
$$
where $\ell$ is a square-trivial line $\ell^{\otimes 2} \simeq k$,
and
such that for $\cF\in \mu\Sh_{iL}(M)$, there are functorial equivalences 
$$
\xymatrix{
 \Hom(J_-^*\cF, \cB_-) \simeq \Hom( \cF, \cB\otimes\ell[1]) 
&
 \Hom(\cB, \cF)  \simeq\Hom(\cB_+, J^*_+\cF) 
}
$$

We will explicitly construct $\cB$ by working with the specific Legendrian fibration
introduced in Section~\ref{ss prelim}
and finding a constructible sheaf that represents $\cB$.

Let us rapidly recall some of our prior constructions.

Set $X = \BR^{n}$ with coordinates $x_a$, for $a=1,\ldots, n$, and recall the linear Lagrangian fibration
$$
\xymatrix{
p: M = \BC^n \ar[r] & \BR^n = X & p(z_1, \ldots, z_n) = (x_1, \ldots, x_n)
}
$$
given by taking real parts, and its lift  to a Legendrian fibration 
$$
\xymatrix{
\displaystyle
q:  N = \BC^n \times \BR  \ar[r] & \BR^n \times \BR  = X\times \BR & q(z_1, \ldots, z_n, t) = (x_1, \ldots, x_n, t
+\frac{1}{2} \sum_{a=1}^n x_a y_a)
}
$$

 Recall the open subspace  
$$
\xymatrix{
\Upsilon_X = \{(x, t), [\xi, \eta]) \, |\, \eta>0\}   \subset S^\oo (X \times \BR)
}
$$ 
and the cooriented contactomorphism
$$
\xymatrix{
\psi:N\ar[r]^-\sim &  \Upsilon_X
}
$$
$$
\xymatrix{
\psi(z_1, \ldots, z_n, t) = ((x_1, \ldots, x_n), t+\frac{1}{2} \sum_{a=1}^n x_a y_a), [-y_1, \ldots, -y_n, 1])
}
 $$
 intertwining the Legendrian projection $q:N\to X\times \BR$ and the natural projection $\Upsilon_X \to X\times \BR$.
%  Note that it is equivariant for simultaneous real scalings of the $x, y, z$ components and 
%squared real scalings of the additional components. It is also equivariant for  simultaneous conjugation of the $z$ components and negation
%of the $y, t$ components and last base component.

Recall  the symplectification of  $\Upsilon_X \subset S^\oo( X \times \BR)$
in the form of the  biconic open subspace 
$$
\xymatrix{
\Omega_X  = \{((x, t), (\xi, \eta))  \, |\, \eta>0\} \subset  T^*(X\times \BR) \setminus (X\times \BR)
}
$$

Following Definition~\ref{def assoc lag}, we associate to the conic Lagrangian subvarieties  $iL, i P_\pm, iP\subset M$
the respective biconic 
Lagrangian subvarieties $i\Lambda, i\Pi_\pm, i\Pi \subset \Omega_X$.
 Recall the biconic property encodes  invariance
 under the usual cotangent fiber scaling as well as
under the Hamiltonian  action 
induced by the scaling action
$$
\xymatrix{
\alpha:\BR_{>0} \times X\times \BR \ar[r] &  X \times \BR & 
\alpha(r,  (x, t)) = (rx, r^2 t)
}
$$
%Note that $\alpha$ is contracting with fixed locus the origin.

%
%Finally, recall that by Definition~\ref{def vanishing cat}, the dg category $\mu \Sh_{iL}(M)$ is taken to be the 
%dg category $\mu\Sh_{i\Lambda}(\Omega_X)$ of microlocal sheaves on $\Omega_X$ supported along
%$i\Lambda$.
%Following Remark~\ref{rem micro when conic},
%we have the concrete presentation 
%$$
%\xymatrix{
%\Sh^\conic_{i\Lambda}(X\times \BR, \Omega_X)/K^\conic(X\times \BR, \Omega_X)\ar[r]^-\sim  & \mu \Sh_{i\Lambda}(\Omega_X)
%}
%$$
%where $\Sh^\conic_{i\Lambda}(X\times \BR, \Omega_X)\subset \Sh(X\times \BR)$
%denotes the full dg subcategory of $\alpha$-conic  $\cF\in \Sh(X\times \BR)$ with $\ssupp(\cF) \cap \Omega_X \subset i\Lambda$,
%and  $K^\conic(X\times \BR, \Omega_X)\subset\Sh_{i\Lambda}(X\times \BR, \Omega_X)$ denotes the full dg subcategory of 
%  $\cF\in \Sh^\conic(X\times \BR)$ with  $\ssupp(\cF) \cap \Omega_X = \emptyset$.
%
%
%
%

In order to construct $\cB$, we will record some elementary properties of  $i P_\pm\subset M$ and their behavior under the Legendrian projection 
$q:N\to X\times \BR$.
Analogous properties of $ i\Pi_\pm \subset \Omega_X $ will immediately hold
for the natural projection $\Omega_X \to X\times \BR$ thanks to the fact that
 the contactomorphism $\psi$
 interwines
   $q:N\to X\times \BR$ with the natural projection $\Upsilon_X \to X\times \BR$
  and $ i\Pi_\pm \subset \Omega_X $  are inverse images under the natural map $\Omega_X \to \Upsilon_X$.

  Introduce the closed positive quadrant 
$$ 
\xymatrix{
Q = \BR^n_{\leq 0} \subset \BR^n = X
}
$$
%where $x_a \geq  0$, for $a\in \{1, \ldots, n\}$. 
and more generally, for $\fJ \subset \{1, \ldots, n\}$, 
 the locally closed submanifold 
$$
\xymatrix{
\fJ Q\subset Q
}
$$ 
cut out by the equations $x_a = 0$, for $a\in \fJ$, and $x_a > 0$, for $a\not = 0$. 
Note that $\fJ Q$  is the interior of $Q$, when $\fJ = \emptyset$, and the union $\coprod_{|J| >0 } Q_J$
is the boundary $\partial Q$.

The restriction of $p$ to the isotropic subvariety $L_0 \subset M$ provides a homeomorphism
$$
 \xymatrix{
 L_0 \ar[r]^-\sim & \partial Q \subset X
 }
 $$
 and more precisely, diffeomorphisms
$$
\xymatrix{
 \fJ L_0 \ar[r]^-\sim & \fJ Q\subset X
 &
 |\fJ| >0
 }
 $$
 
 The restriction of $p$ 
to the Lagrangian subvariety $iP_\pm \subset M$ has image 
 $$
 \xymatrix{
 p(iP_\pm) = Q\subset X
 }
 $$
   and the further restriction 
  $$
\xymatrix{
 iP_\pm|_{\fJ Q} \ar[r] & \fJ Q\subset X
 }
 $$ 
is a diffeomorphism, when $|\fJ| \not = 1$,
and 
  a fibration with interval fibers, when $|\fJ | = 1$.

The restriction of $q$ 
 to the isotropic subvariety $L_0 \subset M$ also provides a homeomorphism
$$
 \xymatrix{
 L_0 \ar[r]^-\sim & \partial Q  \times \{0\}\subset X\times \BR
 }
 $$
 and more precisely, diffeomorphisms
$$
\xymatrix{
 \fJ L_0 \ar[r]^-\sim & \fJ Q\times \{0\}\subset X\times \BR
 &
 |\fJ| >0
 }
 $$

 The restriction of $q$ 
to the Lagrangian subvariety $iP_\pm \subset M$ has image  the graph
 $$
 \xymatrix{
 q(iP_\pm) = \Gamma_\pm %= \{(x, f_\pm(x)) \, |\, x\in Q\} 
 \subset X\times \BR
 }
 $$
 of a function 
$
f_\pm:Q\to \BR
$
such that 
$$
\xymatrix{
f_+\leq 0 & f_+|_{\partial Q}  = 0 & 
f_- = - f_+
}
$$

The explicit form of $f_\pm$ will not be important, but let us for example confirm the property $f_+\leq 0$.
By the definition of $q$, we have $f_+ =  \sum_{a=1}^n x_a y_a$ when evaluated on
$iP_+ \subset M$, and by the definition of $iP_+ \subset M$, it lies inside the locus  of points with $x_a \geq 0, y_a\leq 0$,
for $a=1, \ldots, n$.

Following across $\psi$, the restriction of  $\pi_{X\times \BR}:T^*(X \times \BR) \to X\times \BR$ to 
the Lagrangian subvariety $i\Pi_\pm \subset \Omega_X $ 
has image  the same graph
 $$
 \xymatrix{
 \pi_{X\times \BR} (i\Pi_\pm) = \Gamma_\pm \subset X\times \BR
 }
 $$
 
Let us describe the projection $i\Pi_\pm\to \Gamma_\pm$ in microlocal terms.
When $|\fJ| \not = 1$, over $\fJ Q \subset Q$, we find the positive codirection
within the conormal line bundle
$$
\xymatrix{
i\Pi_\pm |_{\fJ Q}  =  \{(x, f_\pm (x) ), (-rdf_\pm (x), r)) \, |\, x\in \fJ Q, r\in \BR_{> 0}\} \subset T^*_{\Gamma_\pm} (X\times \BR) 
}
$$ 
When $|\fJ| = 1$, over $\fJ Q \subset Q$, we find the positive two-dimensional cone bundle
$$
\xymatrix{
i\Pi_\pm |_{\fJ Q}  =  \{(x, f_\pm (x) ), (-rdf_\pm (x), s)) \, |\, x\in \fJ Q, r, s\in \BR_{\geq  0}, r+ s\in \BR_{>0} \} 
}
$$

Now consider the subspaces
$$
\xymatrix{
U = \{ (x, t) \in Q \times \BR \, |\, f_+(x) < t <f_-(x)\} 
}
$$
$$
\xymatrix{
V= \{ (x, t) \in Q \times \BR \, |\, f_+(x) \leq  t < f_-(x)\}
}
$$
and
 their iterated inclusions
$$
\xymatrix{
U\ar@{^(->}[r]^-u & V \ar@{^(->}[r]^-v & X\times \BR
}
$$

Let $\cL_U$ be a locally constant sheaf on $U$, and form  the iterated extension 
$$
\xymatrix{
\tilde \cB = v_! u_* \cL_{U} \in \Sh( X\times \BR)
}
$$
 Following the standard conventions recalled in Example~\ref{ex conv}, observe that 
 $$
 \xymatrix{
 \ssupp(\tilde \cB)  = i\Pi  \cup \ol{U}
 &
 \ssupp(\tilde \cB) \cap \Omega_X = i\Pi
 }
 $$ 
 
 Set $\cB\in \mu\Sh_{iP}(M)$ to be the object represented by $\tilde \cB\in\Sh(X\times\BR)$. 
  Following Example~\ref{ex cod 1}, we may normalize $\cL_U$ in order to have  the agreement
%$\cB\in \mu\Sh_{i P}(M)$ is rank one along $iP \subset M$
%and it is contractible, we have
$$
\xymatrix{
\cB|_{iP^\times_+}  \simeq \cB_+ %\in \mu\Sh_{iP_+^\times}(M) \subset \mu\Sh_{iL_+^\times}(M)
%&
%\cB|_{iP^\times_-}  \simeq \cB_-\otimes\ell[-1] %\in \mu\Sh_{iP_-^\times}(M) \subset \mu\Sh_{iL_-^\times}(M)
}
$$

It remains to show there is an equivalence 
$$
\xymatrix{
%\cB|_{iP^\times_+}  \simeq \cB_+ %\in \mu\Sh_{iP_+^\times}(M) \subset \mu\Sh_{iL_+^\times}(M)
%&
\cB|_{iP^\times_-}  \simeq \cB_-\otimes\ell[-1] %\in \mu\Sh_{iP_-^\times}(M) \subset \mu\Sh_{iL_-^\times}(M)
}
$$
for a square-trivial line $\ell$, and
for $\cF\in \mu\Sh_{iL}(M)$, there are functorial equivalences 
$$
\xymatrix{
 \Hom(J_-^*\cF, \cB_-) \simeq \Hom( \cF, \cB\otimes\ell[1]) 
&
 \Hom(\cB, \cF)  \simeq\Hom(\cB_+, J^*_+\cF) 
}
$$
%We will focus on the second, with the first following by duality or similar arguments.

Recall the family of  conic Lagrangian subvarieties $P(\tau)\subset M$, for $\tau \in (-2\pi, 2\pi)$, for which 
 $iP_+ = P(-\pi/2)$,
$iP_- = P(\pi/2) $. 
Introduce the associated biconic Lagrangian subvarieties  $\Pi(\tau) \subset \Omega_X$,
for $\tau \in (-2\pi, 2\pi)$, for which 
 $i\Pi_+ = \Pi(-\pi/2)$,
$i\Pi_- = \Pi(\pi/2) $. 

In what follows, we will restrict the  parameter to assume that  $\tau \in (-\pi/2, \pi/2)$ to interpolate between 
the points of focus 
$\tau = \pm \pi/2$.

%
%The restriction of $p$ 
%to the Lagrangian subvariety $P(\tau)  \subset M$ provides a homeomorphism
%   $$
%\xymatrix{
% P(\tau) \ar[r]^-\sim & Q
% }
% $$ 
%
%The restriction of $q$ 
%to the Lagrangian subvariety $P(\tau) \subset M$ provides a homeomorphism with image  the graph
%$$
% \xymatrix{
% \pi_{X\times \BR} (\Pi(\tau)) = \Gamma_\tau \subset X\times \BR
% }
% $$
%  of a function 
%$
%f_\tau:Q\to \BR
%$
%such that $f_\tau\leq 0$, when $\tau\geq 0$, and in general
%$$
%\xymatrix{
%f_\tau|_{\partial Q}  = 0 & 
%f_{-\tau} = - f_\tau
%}
%$$

Generalizing the prior discussion, 
we find that the restriction of  $\pi_{X\times \BR}:T^*(X \times \BR) \to X\times \BR$ to 
the Lagrangian subvariety $\Pi(\tau) \subset \Omega_X $ 
has image  the graph
 $$
 \xymatrix{
 \pi_{X\times \BR} (\Pi(\tau)) = \Gamma_\tau \subset X\times \BR
 }
 $$
  of a function 
$
f_\tau:Q\to \BR
$
such that $f_\tau\leq  0$, when $\tau\leq 0$, and in general
$$
\xymatrix{
f_\tau|_{\partial Q}  = 0 & 
f_{-\tau} = - f_\tau
}
$$
%and in particular $f_+ = f_{-\pi/2}$, $f_- = f_{\pi/2}$.
  
In microlocal terms, 
 the projection $\Pi(\tau)\to \Gamma(\tau)$ 
is uniformly the positive codirection
within the conormal line bundle
$$
\xymatrix{
\Pi(\tau)  =  \{(x, f_\tau (x) ), (-rdf_\tau (x), r)) \, |\, x\in Q, r\in \BR_{> 0}\} \subset T^*_{\Gamma_\tau} (X\times \BR) 
}
$$ 

%In particular, when $\tau = 0$, where $f_0 = 0$, we find simply
%$$
%\xymatrix{
%\Pi(0)  =  \{(x, 0 ), (0, r)) \, |\, x\in Q, r\in \BR_{> 0}\} \subset T^*_{Q\times\{0\}} (X\times \BR) 
%}
%$$ 

Next, for  a pair $\tau_1 < \tau_2 \in [-\pi/ 2, \pi/2]$, consider the subspaces
$$
\xymatrix{
U(\tau_1, \tau_2) = \{ (x, t) \in Q \times \BR \, |\, f_{\tau_1}(x) < t <f_{\tau_2}(x)\} 
}
$$
$$
\xymatrix{
V(\tau_1, \tau_2) = \{ (x, t) \in Q \times \BR \, |\, f_{\tau_1}(x) \leq t < f_{\tau_2}(x)\} 
}
$$
and
 their iterated inclusions
$$
\xymatrix{
U(\tau_1, \tau_2) \ar@{^(->}[rr]^-{u(\tau_1, \tau_2)} && V(\tau_1, \tau_2) \ar@{^(->}[rr]^-{v(\tau_1, \tau_2)}  && X\times \BR
}
$$
%Note when $\tau = -\pi/2$, we recover our prior subspaces and inclusions.

Set $\cL_{U(\tau_1, \tau_2)} = \cL_U|_{U(\tau_1, \tau_2)}$, and
introduce the object 
$$
\xymatrix{
\tilde \cB (\tau_1, \tau_2) =v(\tau_1, \tau_2)_!u(\tau_1, \tau_2)_* \cL_{U(\tau_1, \tau_2)} \in \Sh( X\times \BR)
}
$$
and note that 
$$
 \xymatrix{
 \ssupp(\tilde \cB)  = \Pi(\tau_1) \cup \Pi(\tau_2)  \cup \ol{U(\tau_1, \tau_2)}
 &
 \ssupp(\tilde \cB) \cap \Omega_X =  \Pi(\tau_1) \cup \Pi(\tau_2)
 }
 $$ 
 Set $\cB (\tau_1, \tau_2)\in \mu\Sh_{P(\tau_1) \cup P(\tau_2)}(M)$ to be the object represented by 
 $\tilde \cB (\tau_1, \tau_2)\in\Sh(X\times\BR)$.

Note the agreement $\tilde \cB = \tilde \cB (-\pi/2, \pi/2)$,
so that  for $\tau_1 = -\pi/2$, and any $\tau_2\in (-\pi/2, \pi/2]$, we have in particular
$$
\xymatrix{
 \cB (-\pi/2, \tau_2)|_{P^\times(\tau_1)}  \simeq \cB_+ 
}
$$
Thus by continuity in $\tau_1$, for any $\tau_1 <\tau_2 \in [-\pi/2, \pi/2]$, 
we have 
$$
\xymatrix{
 \cB (\tau_1, \tau_2)|_{P^\times(\tau_1)}  \simeq \cA_{\tau_1 + \pi/2}\star \cB_+ 
}
$$
Thus fixing $\tau_2 = \pi/2$, and following Example~\ref{ex cod 2}, we have
$$
\xymatrix{
%\cB|_{iP^\times_+}  \simeq \cB_+ %\in \mu\Sh_{iP_+^\times}(M) \subset \mu\Sh_{iL_+^\times}(M)
%&
 \cB (\tau_1, \pi/2)|_{P^\times(\pi/2)}  \simeq 
 \cA_{\pi}\star \cB_+\otimes\ell[-1]  \simeq
 \cB_-\otimes\ell[-1] %\in \mu\Sh_{iP_-^\times}(M) \subset \mu\Sh_{iL_-^\times}(M)
}
$$
for the square-trivial line $\ell = \orient_\BR$ of orientations on the second factor of the base $X\times \BR$.

Finally, 
for  small $\epsilon>0$, and any $\tilde \cF\in \Sh_{i\Lambda}(X\times\BR, \Omega_X)$, 
representing
 $\cF\in \mu\Sh_{i\Lambda}(\Omega_X)$,
note that 
$\tilde \cB (-\pi/2, -\pi/2+\epsilon)$
represents the microlocal restriction to $i\Pi_+^\times \subset \Omega^\times_X$,
as discussed in Examples~\ref{ex cod 1} and~\ref{ex cod 2},
in the sense of a functorial equivalence
$$
\xymatrix{
 \Hom(\tilde \cB (-\pi/2, -\pi/2+\epsilon), \tilde \cF) \simeq \Hom( \cB_+, \cF)
}
$$
%In other words, the object $\tilde \cB (\pi/2-\epsilon)[-1]$ represents the left adjoint of microlocalization.

For any $\tau \in (-\pi/2, \pi/2)$, we have the key property 
$P^\times(\tau) \cap iL =  \emptyset$, 
and hence $\Pi^\times(\tau) \cap i\Lambda = \emptyset$.
Thus for any $\tau \in (-\pi/2, \pi/2)$, and $\tilde \cF\in \Sh_{i\Lambda}(X\times\BR, \Omega_X)$, we have 
a non-characteristic propagation equivalence,
 highlighted with $\dag$ in the following sequence
$$
\xymatrix{
\Hom(\tilde \cB,  \cF) = \Hom( u_! v_* \cL_{U},  \tilde \cF) 
\simeq \Hom(v_* \cL_{U},  u^!\tilde\cF)  
}
$$
$$
\xymatrix{
\simeq^{\dag} \Hom(v(-\pi/2, \tau)_*k_{U(-\pi/2, \tau)},   u(-\pi/2, \tau)^!\tilde\cF) 
}
$$
$$
\xymatrix{
\simeq
 \Hom(u(-\pi/2, \tau)_!v(-\pi/2, \tau)_*k_{U(-\pi/2, \tau)},   \tilde\cF) 
= \Hom(\tilde \cB (-\pi/2, \tau), \tilde\cF)
}
$$

Write $\tilde \cF \in \on{Ind} \Sh_{i\Lambda}(X\times \BR, \Omega_X)$
 for the ind-object  representing the right adjoint of the microlocalization
 of $\cF\in \mu\Sh_{iL}(M)$.
Then we can assemble a functorial equivalence 
$$
\xymatrix{
\Hom( \cB, \cF) 
\simeq
\Hom(\tilde \cB,\tilde \cF)
\simeq
 \Hom(\tilde \cB (-\pi/2, -\pi/2+\epsilon), \tilde \cF) 
 \simeq 
\Hom( \cB_+, \cF)
}
$$
We leave it the reader to  obtain an analogous functorial equivalence
$$
\xymatrix{
 \Hom(J_-^*\cF, \cB_-) \simeq \Hom( \cF, \cB\otimes\ell[1]) 
}
$$
by a similar argument or by duality.
This concludes the proof of the theorem.
\end{proof}

%%%%%%%%%%%%%%%%%%%%%%%%%%%%%%%%%%%%%%%%%%%%%%%%%%%%%%%

\subsection{Spherical  structure}
Let us return to the setting of Theorem~\ref{thm adjoints}, in particular 
the skeleta over the real rays $\BR_{\geq 0}, \BR_{\leq 0} \subset \BC$,
as  organized by the simplified notation
$$
\xymatrix{
L^\times_- = L^\times(\pi) 
&
L^\times_+ = L^\times(0) 
}
$$
$$
\xymatrix{
L_- = L^\times_- \cup L_0 
&
L_+ = L^\times_+ \cup L_0
&
L = %L(\{0, \pi\}) = 
L^\times_- \cup L_0 \cup L^\times_+
}
$$

The closed embeddings  
$$
\xymatrix{
L_-  \ar@{^(->}[r] &  L & \ar@{_(->}[l]   L_+ 
}
$$
induce fully faithful embeddings
$$
\xymatrix{
\mu\Sh_{L_-}(M)  \ar@{^(->}[r]^-{I_{-!}} &  \mu\Sh_{L}(M) & \ar@{_(->}[l]_-{I_{+!}}   \mu\Sh_{L_+}(M) 
}
$$
and we identify 
$\mu\Sh_{L_-}(M),   \mu\Sh_{L_+}(M)
$
with their images.
%
%Recall the natural open restriction functors
%$$
%\xymatrix{
%\mu_{L^\times(\theta_-)}(M^\times) & \ar[l]_-{J^*_-} \mu_{L(\Theta)}(M) \ar[r]^-{J^*_+} &  \mu_{\Lambda^\times(\theta_+)}(M^\times) 
%}
%$$
%

\begin{lemma}\label{lemma micro cons}
Inside of $\mu\Sh_{L}(M)$, we have
$$
\xymatrix{
\mu\Sh_{L_-}(M) = \ker(J_+^*) 
&
\mu\Sh_{L_+}(M) = \ker(J_-^*) 
&
\mu\Sh_{L_-}(M)  \cap \mu_{L_+}(M) = \{0\} 
}
$$
In particular, the compositions $J_+^* I_{-!}$, $J_-^* I_{+!}$ are conservative.
\end{lemma}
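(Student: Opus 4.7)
The plan is to reduce all three assertions to a single dimension count on the central fiber $L_0 \subset M$. The key geometric input, from the stratification in Section~\ref{s geom}, is that each piece $\fI L_0 \subset L_0$ (indexed by a nonempty $\fI \subset \{1,\dots,n\}$) has real codimension $n + |\fI|$ in $M$, hence real dimension at most $n-1$; passing to the associated biconic Lagrangian $\Lambda_0 \subset \Omega_X$ via Definition~\ref{def assoc lag} adds exactly one dimension from the $\BR_{>0}$ factor of the symplectification, so $\dim_\BR \Lambda_0 \leq n$, strictly less than the Lagrangian dimension $n+1$ of $\Omega_X \subset T^*(X \times \BR)$. By the involutivity theorem of Kashiwara--Schapira, the microsupport of any nonzero object of $\mu\Sh(\Omega_X)$ is coisotropic of dimension at least $n+1$, so any object whose microsupport is contained in $\Lambda_0$ must vanish; that is, $\mu\Sh_{L_0}(M) = 0$.

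For the equalities $\mu\Sh_{L_-}(M) = \ker(J_+^*)$ and $\mu\Sh_{L_+}(M) = \ker(J_-^*)$, I would appeal to the standard principle that the open restriction $J_\pm^*$ kills exactly those $\cF$ whose microsupport in $\Omega_X$ avoids the open subspace corresponding to $L_\pm^\times$. Since $L_-^\times$ and $L_+^\times$ are disjoint open subsets of $L \setminus L_0$, and $L_\mp = L_\mp^\times \cup L_0$, vanishing of $J_\pm^* \cF$ is equivalent to the microsupport of $\cF$ being contained in $L \setminus L_\pm^\times = L_\mp$, i.e.\ to $\cF \in \mu\Sh_{L_\mp}(M)$. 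One direction is immediate from support considerations; the other uses the fact that microlocal restriction to an open conic subset detects non-vanishing of microsupport there.

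For the third assertion, any $\cF \in \mu\Sh_{L_-}(M) \cap \mu\Sh_{L_+}(M)$ has microsupport contained in $L_- \cap L_+ = L_0$, so $\cF = 0$ by $\mu\Sh_{L_0}(M) = 0$. The conservativity statement then follows at once: if $\cF \in \mu\Sh_{L_-}(M)$ has vanishing restriction to $L_-^\times$, then its microsupport lies in $L_-$ and avoids $L_-^\times$, so is confined to $L_0$, forcing $\cF = 0$; the other case is symmetric.

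There is no substantive obstacle here: the only nontrivial ingredient is the coisotropy of microsupport, and everything else amounts to bookkeeping of the explicit stratification of $L$ together with the standard dictionary between open restriction and microsupport of microlocal sheaves.
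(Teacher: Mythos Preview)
Your proposal is correct and follows essentially the same approach as the paper: both arguments identify $\ker(J_\pm^*)$ with $\mu\Sh_{L_\mp}(M)$ via the principle that vanishing of an open microlocal restriction confines the microsupport to the closed complement, and both reduce the vanishing of $\mu\Sh_{L_-}(M)\cap\mu\Sh_{L_+}(M)$ to the dimension bound $\dim L_0 < n$. The paper states the dimension argument directly in $M$, whereas you route it through $\Lambda_0\subset\Omega_X$ and invoke coisotropy of microsupport explicitly, but this is a difference of detail rather than of strategy.
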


\begin{proof}
By definition, if a microlocal sheaf vanishes on an open subset, then its microsupport lies in the closed complement.
This proves the first two identities. For the third, recall that the dimension of the intersection $L_- \cap L_+ = L_0$ is less than $n = (\dim M)/2$ so does not support any nontrivial microlocal sheaves. Finally, 
the identities imply the kernels of $J_+^* I_{-!}$, $J_-^* I_{+!}$ vanish and so they are conservative. 
\end{proof}

\begin{thm}\label{thm main spherical}  The diagram of restriction functors
$$
\xymatrix{
\mu\Sh_{L^\times_-}(M^\times) & \ar[l]_-{J^*_-} \mu\Sh_{L}(M) \ar[r]^-{J^*_+} &  \mu\Sh_{L^\times_+}(M^\times) 
}
$$
forms a conservative spherical pair.
\end{thm}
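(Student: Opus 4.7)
The plan is to deduce the theorem by assembling results already established in this section, matching them against the axioms of a conservative spherical pair from Section~\ref{ss sph p}; essentially no new calculation is required.

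First, part~(1) of Theorem~\ref{thm adjoints} supplies fully faithful left and right adjoints to $J_-^*$ and $J_+^*$, fitting into adjoint triples $(J_{-!}, J_-^*, J_{-*})$ and $(J_{+!}, J_+^*, J_{+*})$. This is precisely the underlying data required of a spherical pair. Axiom (SP1)—that the compositions $J_+^* J_{-*}$ and $J_-^* J_{+*}$ are equivalences—is then the content of part~(3) of Theorem~\ref{thm adjoints}.

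For axiom (SP2), I would invoke Lemma~\ref{lemma cons sph pair}: once (SP1) holds, it suffices to check that the compositions
$$
J_-^* I_{-!} : \mu\Sh_{L_-}(M) \to \mu\Sh_{L_-^\times}(M^\times),
\qquad
J_+^* I_{+!} : \mu\Sh_{L_+}(M) \to \mu\Sh_{L_+^\times}(M^\times)
$$
are conservative. Both are immediate from Lemma~\ref{lemma micro cons}: using the identifications $\mu\Sh_{L_-}(M) = \ker(J_+^*)$ and $\mu\Sh_{L_+}(M) = \ker(J_-^*)$ together with the vanishing intersection $\mu\Sh_{L_-}(M) \cap \mu\Sh_{L_+}(M) = \{0\}$, any object of $\mu\Sh_{L_-}(M)$ killed by $J_-^*$ must lie in $\mu\Sh_{L_-}(M) \cap \mu\Sh_{L_+}(M) = \{0\}$, so $J_-^* I_{-!}$ is conservative; the same argument applies to $J_+^* I_{+!}$. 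This same conservativity furnishes the ``conservative'' qualifier in the conclusion.

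The substantive difficulty lies entirely upstream of the present statement, namely in Theorem~\ref{thm adjoints} (and beneath that, in Theorem~\ref{thm core} with its explicit constructible-sheaf model $v_! u_* \cL_U$ representing the candidate adjoint $\cA$). At this packaging step I anticipate no obstacle beyond bookkeeping: pick out the right clauses of Theorem~\ref{thm adjoints}, invoke Lemma~\ref{lemma cons sph pair} in the direction (SP1)$\Rightarrow$(SP2), and cite Lemma~\ref{lemma micro cons} for the conservativity.
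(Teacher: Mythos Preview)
Your proposal is correct and matches the paper's own proof, which simply cites Lemma~\ref{lemma cons sph pair}, Theorem~\ref{thm adjoints}, and Lemma~\ref{lemma micro cons} without further comment. Your unpacking of the logic is accurate, and you have in fact stated the conservativity with the correct index pairing $J_-^* I_{-!}$, $J_+^* I_{+!}$ (the paper's Lemma~\ref{lemma micro cons} appears to contain a typo writing $J_+^* I_{-!}$, $J_-^* I_{+!}$, but its proof and your argument make clear which compositions are meant).
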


\begin{proof}
Immediate from Lemma~\ref{lemma cons sph pair}, Theorem~\ref{thm adjoints}, and Lemma~\ref{lemma micro cons}.
\end{proof}

Recall the open embedding 
$$
\xymatrix{
L^\times_+  \ar@{^(->}[r] &  L_+
}
$$
with corresponding restriction functor
$$
\xymatrix{
 \mu\Sh_{L_+}(M) 
  \ar[r]^-{J^*} & 
 \mu\Sh_{L^\times_+}(M^\times) 
}
$$

\begin{corollary}  Restriction is a conservative spherical functor
$$
\xymatrix{
\mu\Sh_{L_+}(M) \ar[r]^-{J^*} &  \mu\Sh_{L^\times_+}(M^\times) 
}
$$
\end{corollary}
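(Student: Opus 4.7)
The plan is to obtain this essentially for free from Theorem~\ref{thm main spherical} by invoking the general passage from spherical pairs to spherical functors recorded in Proposition~\ref{prop KS}. First, I would apply that proposition to the conservative spherical pair
$$
\xymatrix{
\mu\Sh_{L^\times_-}(M^\times) & \ar[l]_-{J^*_-} \mu\Sh_{L}(M) \ar[r]^-{J^*_+} &  \mu\Sh_{L^\times_+}(M^\times)
}
$$
to produce a spherical functor $S = J^*_+|_{\cD_+} : \cD_+ \to \cD^\circ_+$.

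Next, I would identify both source and target of $S$ with the categories appearing in the corollary. The target is $\cD^\circ_+ = \mu\Sh_{L^\times_+}(M^\times)$ by definition. For the source, Lemma~\ref{lemma micro cons} identifies $\cD_+ = \ker(J_-^*) = \mu\Sh_{L_+}(M)$ as a full subcategory of $\mu\Sh_{L}(M)$ via $I_{+!}$. Under this identification, the composition $J^*_+ \circ I_{+!}$ coincides with the restriction functor $J^*$ associated to the open inclusion $L^\times_+ \hookrightarrow L_+$, since both are induced by microlocal restriction along the same open subset $M^\times \subset M$ (it is the same geometric restriction, only applied to sheaves whose support has already been constrained to lie in $L_+$ rather than all of $L$).

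Finally, conservativity is supplied directly by the second assertion of Lemma~\ref{lemma micro cons}, which states that $J_+^* I_{+!}$ is conservative; via the identification above, this is exactly conservativity of $J^*$. (Equivalently, one could appeal to the remark following Proposition~\ref{prop KS} that a conservative spherical pair produces a conservative spherical functor.)

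No step here poses a genuine obstacle; the work is entirely bookkeeping. The only subtle point is verifying that the abstractly constructed functor $J^*_+|_{\cD_+}$ agrees with the geometrically defined restriction $\mu\Sh_{L_+}(M) \to \mu\Sh_{L^\times_+}(M^\times)$ attached to the open inclusion $L^\times_+ \hookrightarrow L_+$, and this is immediate from unwinding the definitions of $I_{+!}$ as the fully faithful embedding of sheaves supported on the closed subset $L_+ \subset L$ and of $J^*_+$ as open restriction to $M^\times$.
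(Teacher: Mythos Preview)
Your proposal is correct and matches the paper's own proof essentially verbatim: the paper simply cites Proposition~\ref{prop KS}, Lemma~\ref{lemma micro cons}, and Theorem~\ref{thm main spherical}. The only additional content you supply is the explicit unwinding that $J^*_+|_{\cD_+}$ agrees with the geometric restriction $J^*$, which the paper leaves implicit.
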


\begin{proof}
Immediate from Proposition~\ref{prop KS}, Lemma~\ref{lemma micro cons}, and Theorem~\ref{thm main spherical}.
\end{proof}

%%%%%%%%%%%%%%%%%%%%%%%%%%%%%%%%%%%%%%%%%%%%%%%%%%%%%%%
%%%%%%%%%%%%%%%%%%%%%%%%%%%%%%%%%%%%%%%%%%%%%%%%%%%%%%%
%%%%%%%%%%%%%%%%%%%%%%%%%%%%%%%%%%%%%%%%%%%%%%%%%%%%%%%

\section{Mirror symmetry}

Recall the dual torus $\check T^\circ = \Spec \BC[\chi_*(T^\circ)]$, and the fan $\Sigma\subset (\ft^\circ)^*$ determining the $\check T^\circ$-toric variety
$\mathbb P^{n-1}$. 

Consider the section 
$$
\xymatrix{
s:\cO_{\BP^{n-1}} \ar[r] & \cO_{\BP^{n-1}}(1)
&
s([x_1, \ldots, x_n]) = x_1 + \cdots + x_n
}$$
and the inclusion of its zero-locus
$$
\xymatrix{
i:\BP^{n-2} \simeq \{ s= 0\} \ar@{^(->}[r] & \BP^{n-1} 
}$$
The specific coefficients of $s$ will not not be important only the $\check T^\circ$-invariant fact that they are all non-zero.

%
%Recall the established equivalence
%$$
%\xymatrix{
% \mu\Sh_{L^\times_+}(M^\times)\ar[r]^-\sim &  \Coh(\BP^{n-1})
%}
%$$
%as well as the symmetric monoidal equivalence
%$$
%\xymatrix{  
%\Sh_{\Lambda_\Sigma}(T^\circ) \ar[r]^-\sim & \Coh(\BP^{n-1})
%}
%$$
%that satisfies in particular
%$$
%\xymatrix{  
%\cA_0\ar@{|->}[r] & \cO_{\BP^{n-1}}
%&
%\cA_{2\pi} \ar@{|->}[r] & \cO_{\BP^{n-1}}(1)
%}
%$$

\begin{thm}\label{thm main}
There is a commutative diagram with horizontal equivalences
$$
\xymatrix{
\ar[d]_{J^*} \mu\Sh_{L_+}(M) \ar[r]^-\sim & \Coh(\BP^{n-2})\ar[d]^-{i_*} \\
 \mu\Sh_{L^\times_+}(M^\times) \ar[r]^-\sim & \Coh(\BP^{n-1})
}
$$
\end{thm}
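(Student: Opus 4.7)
The plan is to monadically reconstruct $\mu\Sh_{L_+}(M)$ from $\mu\Sh_{L^\times_+}(M^\times)$ via the conservative spherical functor $J^*$ produced by the corollary to Theorem~\ref{thm main spherical}, and then identify the resulting monad on $\Coh(\BP^{n-1})$ with tensor product against the structure sheaf of a generic hyperplane. More precisely, since $J^*$ is conservative by Lemma~\ref{lemma micro cons} and admits a left adjoint $J_!$ by Theorem~\ref{thm adjoints}, Lurie's Barr--Beck theorem identifies $\mu\Sh_{L_+}(M)$ with modules over the monad $J^*J_!$ acting on $\mu\Sh_{L^\times_+}(M^\times)$; the remaining presentability and colimit-preservation hypotheses follow from the construction of $J_!$ in the proof of Theorem~\ref{thm adjoints} (it is defined by convolution with an object of $\Sh_c(T')$, hence is continuous and intertwines the $\Sh_{\Lambda_\Sigma}(T^\circ)$-action).

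Next I would compute the monad. By Proposition~\ref{prop KS} applied to the spherical pair of Theorem~\ref{thm main spherical}, the nearby monodromy is $T_{\Psi,\ell}\simeq J_+^*J_{-!}J_-^*J_{+!}$, and by clause~(4) of Theorem~\ref{thm adjoints} this is precisely convolution with $\cA_{2\pi}[-2]\in\Sh_c(T')$. Corollary~\ref{cor monodromy calc} then identifies $T_{\Psi,\ell}$ with tensoring with $\cO_{\BP^{n-1}}(-1)$ under the mirror equivalence $\mu\Sh_{L^\times_+}(M^\times)\simeq\Coh(\BP^{n-1})$. The spherical triangle $T_{\Psi,\ell}\to\id_\Psi\to J^*J_!$ (applied to $J^*=J_+^*$ restricted to $\cD_+$) therefore exhibits $J^*J_!$ as the cone of a natural transformation $\cO_{\BP^{n-1}}(-1)\otimes(-)\to\id$, that is, as tensor product with the cone of a global section $s:\cO_{\BP^{n-1}}(-1)\to\cO_{\BP^{n-1}}$, or equivalently by a twist, $\cO_{\BP^{n-1}}\to\cO_{\BP^{n-1}}(1)$.

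The main obstacle, and the key geometric input, is to show that this section $s$ is $\check T^\circ$-generic, i.e.\ that it does not vanish at any of the $n$ coordinate torus-fixed points $[e_1],\ldots,[e_n]\in\BP^{n-1}$. For this I would use the compatibility of the coherent-constructible equivalence with the vanishing cycles functors $\phi_\alpha$ recorded in Example~\ref{ex ccc}(ii), which identifies the restriction $i_\alpha^*$ at $[e_\alpha]$ with the microlocal stalk at a covector in the open cone $\sigma_\alpha\subset \Lambda_\Sigma|_e$. Geometrically, such a stalk corresponds to a point on $L^\times_+$ lying over the open coordinate hyperplane $\{z_\alpha=0\}\setminus\bigcup_{b\ne\alpha}\{z_b=0\}\subset M_0$, where the superpotential $W=z_1\cdots z_n$ is a submersion in the remaining variables. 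By the analysis of the one-dimensional submersive case in Section~\ref{intro low-dim}, the vanishing category there is zero, so the natural transformation $T_{\Psi,\ell}\to\id$ must be an isomorphism after restriction to $[e_\alpha]$; equivalently, $s([e_\alpha])\ne 0$ for each $\alpha$. By $\check T^\circ$-equivariance, up to rescaling, $s$ is then the sum of coordinates $x_1+\cdots+x_n$.

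Finally, once $s$ is identified as a generic section, the cone of $s:\cO_{\BP^{n-1}}(-1)\to\cO_{\BP^{n-1}}$ is exactly $i_*\cO_{\BP^{n-2}}$, and the monad on $\Coh(\BP^{n-1})$ is tensor product $i_*\cO_{\BP^{n-2}}\otimes_{\cO_{\BP^{n-1}}}(-)$. By the projection formula this is the composite $i_*i^*$, which is the monad whose category of modules is $\Coh(\BP^{n-2})$ with comparison functor $i_*$. The Barr--Beck comparison then yields the required equivalence $\mu\Sh_{L_+}(M)\simeq\Coh(\BP^{n-2})$ sitting in a commutative square with $J^*$ and $i_*$ as claimed. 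The only delicate point besides the genericity argument is to keep track of twists so that $s$ acquires its standard form; this is controlled throughout by the normalization of $\cA_+\in\mu\Sh_{L^\times_+}(M^\times)$ mirror to $\cO_{\BP^{n-1}}$ fixed in the proof of Theorem~\ref{thm core}.
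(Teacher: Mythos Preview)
Your overall strategy matches the paper's exactly: compute the monad $J^*J_!$ on $\mu\Sh_{L^\times_+}(M^\times)\simeq\Coh(\BP^{n-1})$, identify it with $i_*i^*$ for a generic hyperplane $i:\BP^{n-2}\hookrightarrow\BP^{n-1}$, and invoke Barr--Beck. Your computation of the monad and the genericity argument for $s$ are essentially the paper's arguments (the paper phrases genericity by observing that $L_+$ near the point $e_\alpha$ with $z_\alpha=0$, $z_a=1$ for $a\neq\alpha$ is a half-space $\BR_{\geq 0}\times\BR^{n-1}$, hence supports no microlocal sheaves; your appeal to the submersive case is the same observation).

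The genuine gap is your treatment of the Barr--Beck hypotheses. You write that ``presentability and colimit-preservation hypotheses follow from the construction of $J_!$\ldots hence is continuous''. This is not right: the categories $\mu\Sh_{L_+}(M)$ and $\Coh(\BP^{n-1})$ are small, not presentable, and continuity of $J_!$ (automatic for a left adjoint) is not the relevant condition. What Lurie's Barr--Beck requires here is that $J^*$-split geometric realizations \emph{exist} in $\mu\Sh_{L_+}(M)$ and are preserved by $J^*$. The paper does not get this for free; it verifies it directly using the specific form of the monad. Given a $J^*$-split diagram with colimit $d\in\Coh(\BP^{n-1})$, one observes $d\simeq i_*d'$ for some $d'\in\Coh(\BP^{n-2})$, chooses $\tilde d\in\Coh(\BP^{n-1})$ with $d\simeq i_*i^!\tilde d\simeq J^*J_*\tilde d$, and then checks that $J_*\tilde d\in\mu\Sh_{L_+}(M)$ furnishes the required colimit. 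This verification genuinely uses that the comonad $J^*J_*$ has the explicit form $i_*i^!$ for a smooth divisor, so it cannot be replaced by an abstract appeal to continuity or presentability. You should either carry out this check or, if you prefer, pass to Ind-completions to run Barr--Beck in the presentable setting and then argue separately that the resulting equivalence restricts to compact objects.
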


\begin{proof}
We will study the monad $A = J^*J_!$ of the adjunction $(J_!, J^*)$.

By Theorem~\ref{thm adjoints} and the spherical functor formalism, under the equivalence 
$$
\xymatrix{
 \mu\Sh_{L^\times_+}(M^\times) \ar[r]^-\sim & \Coh(\BP^{n-1})
}
$$
the monad $A = J^*J_!$ is given by tensoring with the cone of a morphism
$$
\xymatrix{
\cO_{\BP^{n-1}}(-1) \ar[r]^-s & \cO_{\BP^{n-1}}
}
$$

 Now let us calculate the morphism $s$. For each $\alpha= 1, \ldots, n$, let us focus on the Lagrangian skeleton $L_+ \subset M$ near the coordinate vector $e_\alpha  \in L_+$ with $z_a = 1$, for $a=\alpha$, and $z_a = 0$, for $a\not =\alpha$.  
 Observe that $L_+$ locally near $e_\alpha$ is homeomorphic to $\BR_{\geq 0} \times \BR^{n-1}$
 such that $L^\times_+$  corresponds to   $\BR_{> 0} \times \BR^{n-1}$. 
 Thus any object of $\mu\Sh_{L}(M)$ must vanish near $e_\alpha$, and in particular  any object 
 of $\mu\Sh_{L^\times}(M^\times)$ coming by restriction from  $\mu\Sh_{L}(M)$
 must vanish near $e_\alpha$.
 
 Recall the object  $\cA_+\in \mu\Sh_{L^\times}(M^\times)$ corresponding to the structure sheaf $\cO_{\BP^{n-1}} \in \Coh(\BP^{n-1})$.
 By the  above discussion, the object  $ J^*J_!(\cA_+) \in  \mu\Sh_{L^\times}(M^\times)$ 
 vanishes near $e_\alpha$. Thus by the compatibility recalled in Example~\ref{ex ccc},  the corresponding object  
   $\Cone(s) \in \Coh(\BP^{n-1})$ has vanishing stalk at the coordinate line $[e_\alpha] \in \BP^{n-1}$.
   Therefore the map $s$ must be non-zero at $[e_a] \in \BP^{n-1}$, 
 and so the zero locus of $s$ is a generic linear hypersurface 
 $$
 \xymatrix{
 i: \BP^{n-2} \ar[r] &   \BP^{n-1}
 }$$
 
 We have an equivalence of monads $A \simeq i_* i^*$,
  and hence an equivalence of modules
  $$
  \xymatrix{
  \Mod_A (\mu\Sh_{L^\times_+}(M^\times)) \ar[r]^-\sim &  \Coh(\BP^{n-2})
  }
  $$
Note  the comonad $A^\vee = J^* J_*$ is similarly equivalent to $i_*i^!$.

Recall that $J^*$ is conservative. Thus by Lurie's Barr-Beck Theorem~\cite{LurieHA}, to see
the canonical lift
$$
\xymatrix{
 \mu\Sh_{L_+}(M) \ar[r]^-{\tilde J^*} &  \Mod_A (\mu\Sh_{L^\times_+}(M^\times))
 \ar[r]^-\sim &  \Coh(\BP^{n-2})
}
$$
is an equivalence,
it suffices to check the following.  

Let $\cdots \to c_1 \to c_0$ be a complex of objects of $ \mu\Sh_{L_+}(M)$. Suppose the complex
$\cdots \to J^* c_1 \to J^* c_0$ of objects of  $ \mu\Sh_{L^\times_+}(M^\times) \simeq \Coh(\BP^{n-1})$ extends to a split colimit diagram
\vspace{0.5em}
$$
\xymatrix{
\cdots \ar[r] & \ar@/_1pc/[l] J^* c_1 \ar[r] & \ar@/_1pc/[l] J^* c_0 \ar[r]_-a & d\ar@/_1pc/[l]
}
$$
Then we must check that $\cdots \to c_1 \to c_0$ admits a colimit  in $ \mu\Sh_{L_+}(M)$.

First, observe that since $J^*c_0 \in \Coh (\BP^{n-2})$, the splitting implies $d\in \Coh(\BP^{n-2})$, or more precisely that
$d\simeq i_*d'$ where we regard $d'\in  \Coh(\BP^{n-2})$.
Choose an object $\tilde d\in \Coh(\BP^{n-1})$ together with an equivalence 
$$
\xymatrix{
f:d\ar[r]^-\sim & i_*i^!\tilde d \simeq J^* J_* \tilde d
}
$$ 
The counit $c$ of the adjunction $(J^*, J_*)$ provides an extended diagram
$$
\xymatrix{
\cdots \ar[r] & J^* c_1 \ar[r] & J^* c_0 \ar[r]^-a & 
d \ar[r]^-f & J^* J_* \tilde d \ar[r]^-c & \tilde d
}
$$
and then together with the unit $u$ of the adjunction $(J^*, J_*)$ an induced augmented complex
$$
\xymatrix{
\cdots \ar[r] & c_1 \ar[r] & c_0 \ar[rr]^-{J_*(c\circ f\circ a)\circ u} &&  J_*\tilde d
}
$$
%where $u$ denotes the unit of the adjunction $(J^*, J_*)$. 
We claim that this is the sought-after colimit diagram.

To check this, since $J^*$ is conservative, it suffices to see that the complex
$$
\xymatrix{
\cdots \ar[r] & J^*c_1 \ar[r] & J^*c_0 \ar[rr]^-{J^*J_*(c\circ f\circ a)\circ u} &&  J^*J_*\tilde d
}
$$
is a colimit diagram. Since $d$ is a colimit, it suffices to see the following diagram commutes
$$
\xymatrix{
 J^*c_0 \ar[rr]^-{J^*J_*(a)\circ  u} \ar[drr]_-a &&  J^* J_* d \ar[rr]^-{J^* J_*(c \circ f)} \ar[d]^-c &&  J^* J_* \tilde d \\
 && d \ar[urr]^-\sim_-f &&
}
$$
By standard identities for an adjunction, the triangle to the left is commutative.
Thus it suffices to show the  triangle to the right is commutative. With our previous identifications, it admits a reinterpretation
completely in terms of coherent sheaves
$$
\xymatrix{
  i_*i^! d \ar[rr]^-{ i_*i^! (c \circ f)} \ar[d]^-c &&  i_*i^! \tilde d \\
d \ar[urr]^-\sim_-f &&
}
$$
Its commutativity is a straightforward exercise we leave to the reader.
\end{proof}
%%%%%%%%%%%%%%%%%%%%%%%%%%%%%%%%%%%%%%%%%%%%%%%%%%%%%%%

%%%%%%%%%%%%%%%%%%%%%%%%%%%%%%%%%%%%%%%%%%%%%%%%%%%%%%%
%%%%%%%%%%%%%%%%%%%%%%%%%%%%%%%%%%%%%%%%%%%%%%%%%%%%%%%
%%%%%%%%%%%%%%%%%%%%%%%%%%%%%%%%%%%%%%%%%%%%%%%%%%%%%%%
%%%%%%%%%%%%%%%%%%%%%%%%%%%%%%%%%%%%%%%%%%%%%%%%%%%%%%%

%%%%%%%%%%%%%%%%%%%%%%%%%%%%%%%%%%%%%%%%%%%%%%%%%%%%%%%%
%%%%%%%%%%%%%%%%%%%%%%%%%%%%%%%%%%%%%%%%%%%%%%%%%%%%%%%%
%%%%%%%%%%%%%%%%%%%%%%%%%%%%%%%%%%%%%%%%%%%%%%%%%%%%%%%%

%%%%%%%%%%%%%%%%%%%%%%%%%%%%%%%%%%%%%%%%%%%%%%%%%%%%%%%
%%%%%%%%%%%%%%%%%%%%%%%%%%%%%%%%%%%%%%%%%%%%%%%%%%%%%%%
%%%%%%%%%%%%%%%%%%%%%%%%%%%%%%%%%%%%%%%%%%%%%%%%%%%%%%%

\end{document}